\documentclass [12pt]{amsart}
\usepackage[utf8]{inputenc}
\pdfoutput=1
\usepackage{amsmath,amssymb,amsthm,amsfonts}
\usepackage{mathtools}

\usepackage[english]{babel}
\usepackage{comment}
\usepackage{hyperref}
\usepackage{bbm}
\usepackage{mathrsfs}

\usepackage{tikz,graphicx,color}
\usepackage{tikz-cd}
\usepackage{tikz-3dplot}
\usetikzlibrary{calc}
\usetikzlibrary{arrows}
\usetikzlibrary{shapes}
\usetikzlibrary{patterns}
\usetikzlibrary{positioning}
\usetikzlibrary{arrows.meta}
\usetikzlibrary{decorations.markings}
\usetikzlibrary{knots}

\usepackage{epstopdf}

\usepackage[arrow]{xy}
\usepackage{diagbox}
\usepackage{subfig}
\usepackage{arcs}
\usepackage{xcolor}
\usepackage{xspace}

\usepackage[margin=1.0in]{geometry}


\usepackage{enumitem}
\usepackage[b]{esvect}
\usepackage{changepage}
\usepackage{letltxmacro}
\usepackage{thmtools,etoolbox}

\DeclareMathOperator{\newton}{Newton}

\def\myarabic#1{\normalfont(\roman{#1})}
\newlist{theoremlist}{enumerate}{1}
\setlist[theoremlist]{label=\myarabic{theoremlisti},ref={\myarabic{theoremlisti}},itemindent=0pt,labelindent=0pt,
  leftmargin=*,noitemsep}

\makeatletter
\renewcommand{\p@theoremlisti}{\perh@ps{\thetheorem}}
\protected\def\perh@ps#1#2{\textup{#1#2}}
\newcommand{\itemrefperh@ps}[2]{\textup{#2}}
\newcommand{\itemref}[1]{\begingroup\let\perh@ps\itemrefperh@ps\ref{#1}\endgroup}
\makeatother

\usepackage{nameref,hyperref}
\usepackage[capitalize]{cleveref}

\newtheorem{theorem}{Theorem}[section]

\newtheorem{lemma}[theorem]{Lemma}
\newtheorem{proposition}[theorem]{Proposition}
\newtheorem{corollary}[theorem]{Corollary}
\theoremstyle{definition}
\newtheorem{remark}[theorem]{Remark}
\theoremstyle{definition}
\newtheorem{definition}[theorem]{Definition}
\newtheorem{conjecture}[theorem]{Conjecture}

\theoremstyle{definition}

\theoremstyle{definition}
\newtheorem{example}[theorem]{Example}

\crefname{figure}{Figure}{Figures}

\def\figref#1(#2){Figure~\hyperref[#1]{\ref*{#1}(#2)}}

\addtotheorempostheadhook[theorem]{\crefalias{theoremlisti}{theorem}}
\addtotheorempostheadhook[lemma]{\crefalias{theoremlisti}{lemma}}
\addtotheorempostheadhook[proposition]{\crefalias{theoremlisti}{proposition}}
\addtotheorempostheadhook[corollary]{\crefalias{theoremlisti}{corollary}}









\def\<{{\langle}}
\def\>{{\rangle}}

\newcommand{\sm}[6]{\scalebox{1}{$\begin{smallmatrix}
    #1&#2&#3\\#4&#5&#6
   \end{smallmatrix}$}}



\def\Povtp_#1{\Pi_{#1}^{>0}}
\def\Povtnn_#1{\Pi_{#1}^{\geq0}}

\newcounter{todofigure}

\numberwithin{equation}{section}

\makeatletter
\@namedef{subjclassname@2020}{\textup{2020} Mathematics Subject Classification}
\makeatother

\begin{document}
\numberwithin{equation}{section}

\title{Classification of Zamolodchikov Periodic Cluster Algebras}
\author{Ariana Chin}
\address{Department of Mathematics, University of California, Los Angeles, CA 90095, USA}
\email{{\href{mailto:arianagchin@math.ucla.edu}{arianagchin@math.ucla.edu}}}
\thanks{This research did not receive any specific grant from funding agencies in the public, commercial, or not-for-profit sectors.}
\date{October 20, 2025}

\subjclass[2020]{
  Primary:
  13F60, 
  Secondary:
  05E99, 
  22E46
}

\keywords{Cluster algebras, Zamolodchikov periodicity,
commuting Cartan matrices, $T$-systems, Coxeter groups, $W$-graphs.}

\begin{abstract}
    Zamolodchikov periodicity is a property of certain discrete dynamical systems and was one of the primary motivations for the creation of cluster algebras.
    It was first observed by Zamolodchikov in his study of thermodynamic Bethe ansatz, initially for simply-laced Dynkin diagrams.
    It was proved by Keller to hold for tensor products of two Dynkin diagrams, and further shown by Galashin and Pylyavskyy to hold for pairs of commuting simply-laced Cartan matrices of finite type, which Stembridge classified in his study of admissible $W$-cells.
    We prove that the Zamolodchikov periodic cluster algebras are in bijection with pairs of commuting (not necessarily reduced or simply-laced) Cartan matrices of finite type.
    We fully classify all such pairs into 29 infinite families and 14 exceptional types in addition to the 6 infinite families and 11 exceptional types in Stembridge's classification, and show that all of these families can be derived from simply-laced types through two operations preserving Zamolodchikov periodicity, folding and taking transpose.
    Our work holds connections to Kazhdan--Lusztig theory, and our main theorem helps classify all nonnegative $W$-cells for products of two dihedral groups, $W = I_2(p)\times I_2(q)$.
\end{abstract}

\maketitle

\setcounter{tocdepth}{1}
\tableofcontents

\section{Introduction}
\subsection{Zamolodchikov periodicity}
For a generic cluster algebra, repeatedly applying the same nontrivial mutation sequence $\mu_1\mu_2\cdots\mu_k$ will yield infinitely many cluster variables.
However, if $\mathcal{A}$ is a cluster algebra of finite type, any mutation sequence will result in finitely many cluster variables.
In 2003, Fomin and Zelevinsky proved that the finite type cluster algebras are in bijection with finite type Cartan matrices \cite{fomin2}.
In particular, the finite type quiver cluster algebras are in bijection with the simply-laced (type A, D, or E) Cartan matrices, while the finite type cluster algebras with skew-symmetrizable exchange matrices correspond to the (not necessarily ADE) Cartan matrices.

In this paper, we focus on a specific bipartite mutation sequence and analyze which cluster algebras yield finitely many cluster variables under this sequence.

An $n\times n$ matrix $B$ is \textit{bipartite} if there exists a coloring $\epsilon: [n]\rightarrow \{\circ, \bullet\}$ of $[n]$ such that for all $i, j$ of the same color, $b_{ij} = b_{ji} = 0$.
It is well known that mutation is a local move, so mutations $\mu_i$ and $\mu_j$ commute if $b_{ij} = b_{ji} = 0$.
Thus in a bipartite matrix, we may define the \textit{bipartite mutations} $\mu_{\circ}$ and  $\mu_{\bullet}$ as the combined mutation at all white vertices and all black vertices, respectively.
We say a bipartite matrix $B$ is \textit{recurrent} if $\mu_{\circ}(B) = \mu_{\bullet}(B) = -B$. 
In particular, $\mu_{\circ}\mu_{\bullet}(B) = B$. 
A bipartite recurrent matrix $B$ is \textit{Zamolodchikov periodic} if the sequence of mutations $\mu_{\circ}\mu_{\bullet}\mu_{\circ}\mu_{\bullet}\dots$ acting on the seed is periodic.

Zamolodchikov periodicity was first observed by Zamolodchikov in his study \cite{zamolodchikov} of thermodynamic Bethe ansatz, initially focusing on the simply-laced Dynkin diagrams $A_n, D_n$, and $E_n$.
His conjecture was generalized by Ravanini--Valeriani--Tateo \cite{ravanini}, Kuniba--Nakanishi \cite{kuniba}, Kuniba--Nakanishi--Suzuki \cite{kuniba2}, and Fomin--Zelevinsky \cite{fomin3}. 
The conjecture was proved for all tensor products $\Gamma\otimes \Delta$ of Dynkin diagrams by Keller \cite{keller} and later in a different way by Inoue--Iyama--Keller--Kuniba--Nakanishi \cite{inoue, inoue2}.
More recently, Zamolodchikov periodic quivers were fully classified by Galashin--Pylyavskyy in 2019 \cite{pasha}.

\subsection{The main theorem}

The following theorem is the main result of this paper.
\begin{theorem}
\label{mainThm}
    The Zamolodchikov periodic $B$-matrices are in natural bijection with pairs of commuting Cartan matrices $(\Gamma, \Delta)$.
    The period divides $h_{\Gamma} + h_{\Delta}$, where $h_{\Gamma}, h_{\Delta}$ are the Coxeter numbers of $\Gamma, \Delta$.
\end{theorem}
\begin{remark}
    It is worth noting that the different Zamolodchikov periodic $B$-matrices do not necessarily generate distinct cluster algebras.
    For example, there are two distinct Zamolodchikov periodic quivers, $E_6$ and $A_2\otimes A_3$, that produce the same cluster algebra.
    $E_6$ has a period of length 14, while $A_3\otimes A_2$ has a period of length 7.
    Similarly, $D_4$ and $A_2\otimes A_2$ are distinct Zamolodchikov periodic quivers with respective periods 8 and 6 that produce the same cluster algebra.
\end{remark}

Now, we formulate the plan of the paper.
In Section \ref{prelims}, we introduce bipartite dynamics of cluster algebras and their connections to Dynkin diagrams through Stembridge's classification.
Then, the proof of Theorem \ref{mainThm} is broken up into two parts. 
In Section \ref{classification}, we classify all pairs of commuting Cartan matrices as admissible Dynkin biagrams. 
We find that there are 29 infinite families and 14 exceptional types in addition to the 6 infinite families and 11 exceptional types classified in \cite{stembridge}.

In Section \ref{ZP}, we prove that these admissible Dynkin biagrams are in bijection with Zamolodchikov periodic $B$-matrices through a 5-way equivalence whose outline was adapted from \cite{pasha}. 
In particular, we show that the class of Zamolodchikov periodic $B$-matrices is preserved under two operations: folding and taking transpose ($B\mapsto B^T$); see Figures \ref{fig:folding} and \ref{fig:BDCD}. It is a standard fact that folding the $B$-matrix commutes with cluster mutations and thus preserves Zamolodchikov periodicity. On the other hand, taking transpose does not commute with cluster mutations, and thus it is surprising that it preserves the class of Zamolodchikov periodic $B$-matrices.

In Section \ref{$W$-graphs}, we explore connections between admissible Dynkin biagrams and $W$-graphs from Kazhdan--Lusztig theory. In particular, we generalize a result in \cite{stembridge} which connects Zamolodchikov periodic quivers to admissible $W$-cells for $W$ the product of two dihedral groups.

Finally in Section \ref{conjectures}, we conjecture behavior of a variant of tropical mutation on Zamolodchikov periodic $B$-matrices.

\section{Preliminaries}
\label{prelims}
\subsection{Cluster algebras}
Cluster algebras were first introduced in \cite{fomin1} by Fomin and Zelevinsky with applications to dual canonical bases, total positivity in semisimple Lie groups, and Zamolodchikov periodicity for $Y$-systems of finite root systems.

An $n\times n$ matrix $B$ is \textit{skew-symmetrizable} if there exists some $c\in \mathbb{R}_{>0}^n$ such that $\forall i, j\in [n]$, $c_ib_{ij} = -c_jb_{ji}$.
In other words, a matrix $B$ is skew-symmetrizable if there exists some nonzero scaling of the rows $c\in \mathbb{R}_{>0}^n$ that makes the matrix skew-symmetric.

\begin{definition}
    A \textit{labeled seed} in field $\mathcal{F}$ is a pair $(\boldsymbol{x}, B)$ where
    \begin{itemize}
        \item 
            $\boldsymbol{x} = (x_1, \dots, x_n)$ is an $n$-tuple of elements in $\mathcal{F}$ forming a \textit{free generating set}; that is, $x_1, \dots, x_n$ are algebraically independent, and $\mathcal{F} = \mathbb{C}(x_1, \dots, x_n)$,
        \item 
            $B = (b_{ij})$ is an $n\times n$ skew-symmetrizable integer matrix.
    \end{itemize}
\end{definition}
We use the following notation:
\begin{itemize}
    \item 
        $\boldsymbol{x}$ is the (labeled) \textit{cluster} of this seed $(\boldsymbol{x}, B)$,
    \item 
        the elements $x_1, \dots, x_n$ are its \textit{cluster variables},
    \item 
        $B$ is the \textit{exchange matrix}, also known as the $B$-matrix.
\end{itemize}
A \textit{quiver} is a directed graph without loops and cycles of length 2.
When an exchange matrix $B$ is skew-symmetric, the seed can be represented as a quiver $Q$, where $B$ is the adjacency matrix of $Q$.

\begin{definition}
    Let $(\boldsymbol{x}, B)$ be a labeled seed. 
    Let $k\in [n]$.
    The \textit{seed mutation} $\mu_k$ in direction $k$ transforms $(\boldsymbol{x}, B)$ into the new labeled seed $\mu_k(\boldsymbol{x}, B) = (\boldsymbol{x'}, B')$ defined as follows:
    \begin{itemize}
        \item 
            The cluster $\boldsymbol{x'} = (x_1', \dots, x_n')$ is given by $x_j' = x_j$ for all $j\neq k$, whereas $x_k'$ is determined by the \textit{exchange relation}
            \begin{equation}
            \label{mutation}
                x_kx_k' = \prod_{b_{ik} > 0} x_i^{|b_{ik}|} + \prod_{b_{ik} < 0} x_i^{|b_{ik}|}.
            \end{equation}
        \item  
            The exchange matrix $B' = (b_{ij}')$ is given by
            $$b_{ij}' = \begin{cases}
                -b_{ij} & \text{if }i = k \text{ or } j = k,\\
                b_{ij} + b_{ik}b_{kj} & \text{if } b_{ik}, b_{kj} > 0,\\
                b_{ij} - b_{ik}b_{kj} & \text{if } b_{ik}, b_{kj} < 0,\\
                b_{ij} & \text{otherwise.}
            \end{cases}$$
    \end{itemize}
\end{definition}

\begin{definition}
    Let $(\boldsymbol{x}, B)$ be a seed, and let $\mathcal{X}$ be the set of all cluster variables obtained from any sequence of mutations of this seed.
    The \textit{cluster algebra} $\mathcal{A}$ of geometric type over $\mathbb{C}$ generated by the seed $(\boldsymbol{x}, B)$ is the $\mathbb{C}$-subalgebra of the ambient field $\mathbb{C}(\boldsymbol{x})$ generated by all cluster variables: $\mathcal{A} = \mathbb{C}[\mathcal{X}]$.
\end{definition}

In general, $\mathcal{X}$ is an infinite set. When $\mathcal{X}$ is finite, we say the cluster algebra $\mathcal{A}$ is of \textit{finite type}.

\subsection{$T$-systems and Zamolodchikov periodicity}
Given a bipartite recurrent $B$-matrix, one can decompose it as a sum of two skew-symmetrizable matrices $B = \tilde{\Gamma} + \tilde{\Delta}$, $\tilde{\Gamma} = (\tilde{\Gamma}_{ij})$ and $\tilde{\Delta} = (\tilde{\Delta}_{ij})$, as follows:
\begin{align*}
    \tilde{\Gamma}_{ij} \coloneqq \begin{cases}
        b_{ij} & \text{if } b_{ij} > 0, \epsilon_i = \circ, \epsilon_j = \bullet,\\
        b_{ij} & \text{if } b_{ij} < 0, \epsilon_i = \bullet, \epsilon_j = \circ,\\
        0 & \text{otherwise.}
    \end{cases}\qquad
    \tilde{\Delta}_{ij} \coloneqq \begin{cases}
        b_{ij} & \text{if } b_{ij} > 0, \epsilon_i = \bullet, \epsilon_j = \circ,\\
        b_{ij} & \text{if } b_{ij} < 0, \epsilon_i = \circ, \epsilon_j = \bullet,\\
        0 & \text{otherwise.}
    \end{cases}
\end{align*}
Throughout the paper, we often use these matrices as unsigned matrices, so we define the matrices $\Gamma = (\Gamma_{ij})$ and $\Delta = (\Delta_{ij})$ by
$$\Gamma_{ij} \coloneqq |\tilde{\Gamma}_{ij}|, \hspace{5mm} \Delta_{ij}\coloneqq |\tilde{\Delta}_{ij}|$$
for all $i,j\in[n]$.
One can also associate to $B = \tilde{\Gamma} + \tilde{\Delta}$ a \textit{$T$-system}, an infinite system of algebraic equations, defined as follows.
Let $\boldsymbol{x} = (x_1, \dots, x_n)$, and let $\mathbb{Q}(\boldsymbol{x})$ be the field of rational functions in these variables. The \textit{$T$-system} associated with $B$ is a family $T_k(t)$ of elements of $\mathbb{Q}(\boldsymbol{x})$ satisfying the following relations for all $k\in [n]$ and all $t\in \mathbb{Z}$:
\begin{align*}
    T_k(t+1)T_k(t-1) = \prod_{i} T_i(t)^{\Gamma_{ik}} + \prod_j T_j(t)^{\Delta_{jk}}.
\end{align*}

At each time step, when $\epsilon_k = \circ$, $T_k(t+1)$ is only dependent on black indices ($\epsilon_i = \bullet$).
Similarly when $\epsilon_k = \bullet$, $T_k(t+1)$ is only dependent on white indices ($\epsilon_i = \circ$).
Because of this, the $T$-system associated with $B$ splits into two independent components.
For our purposes, we consider only one component.
From now on, assume that the $T$-system is defined only for $k\in [n]$ and $t\in \mathbb{Z}$ such that
\begin{equation}
\label{splitTsystem}
    \epsilon_k = \circ\quad\text{and}\quad t\equiv 0\hspace{-2mm}\pmod{2}\qquad \text{or} \qquad \epsilon_k = \bullet\quad \text{and} \quad t\equiv 1\hspace{-2mm}\pmod{2}.
\end{equation}
The $T$-system is set to the following initial conditions:
\begin{align*}
    T_k(0) = x_k\quad \text{and}\quad T_k(1) = x_k, \quad\text{for all } k \text{ satisfying } (\ref{splitTsystem}).
\end{align*}
Notice that the $T$-system relations are exactly the same as the exchange relations (\ref{mutation}) for cluster mutation in a bipartite recurrent setting. In particular, this initialization will result in the same sequence of cluster variables as the mutation sequence $\mu_{\circ}\mu_{\bullet}\mu_{\circ}\mu_{\bullet}\dots$ acting on the initial seed $(\boldsymbol{x}, B)$.
We say the $T$-system is \textit{periodic} if there exists some positive integer $N$ such that $T_k(t + 2N) = T_k(t)$ for all $k\in[n], t\in\mathbb{Z}$. 
The \textit{period} is the smallest such $N$.

\begin{example}
Consider the following bipartite recurrent $5\times 5$ $B$-matrix $B = \tilde{\Gamma} + \tilde{\Delta}$ with bipartition $\epsilon_1 = \epsilon_3 = \epsilon_4 = \bullet$ and $\epsilon_2 = \epsilon_5 = \circ$.
\begin{align*}
    B = \begin{bmatrix}
        0 & -1 & 0 & 0 & 1\\
        1 & 0 & 1 & -1 & 0\\
        0 & -2 & 0 & 0 & 2\\
        0 & 3 & 0 & 0 & -3\\
        -1 & 0 & -1 & 1 & 0
    \end{bmatrix} = \begin{bmatrix}
        0 & -1 & 0 & 0 & 0\\
        1 & 0 & 1 & 0 & 0\\
        0 & -2 & 0 & 0 & 0\\
        0 & 0 & 0 & 0 & -3\\
        0 & 0 & 0 & 1 & 0
    \end{bmatrix} + \begin{bmatrix}
        0 & 0 & 0 & 0 & 1\\
        0 & 0 & 0 & -1 & 0\\
        0 & 0 & 0 & 0 & 2\\
        0 & 3 & 0 & 0 & 0\\
        -1 & 0 & -1 & 0 & 0
    \end{bmatrix}.
\end{align*}
Let us compute the first few elements of the $T$-system associated with $B$.
\begin{align*}
    T_2(0) &= x_2, \hspace{5mm} T_5(0) = x_5, \hspace{5mm} T_1(1) = x_1, \hspace{5mm} T_3(1) = x_3, \hspace{5mm} T_4(1) = x_4,\\
    T_2(2) &= \frac{x_1x_3^2 + x_4^3}{x_2}, \hspace{5mm} T_5(2) = \frac{x_1x_3^2 + x_4^3}{x_5},\\
    T_1(3) &= \frac{x_1x_3^2 + x_4^3}{x_1x_2} + \frac{x_1x_3^2 + x_4^3}{x_1x_5}, \hspace{5mm} T_3(3) = \frac{x_1x_3^2 + x_4^3}{x_3x_2} + \frac{x_1x_3^2 + x_4^3}{x_3x_5}.
\end{align*}
\end{example}

\begin{definition}
    A bipartite recurrent $B$-matrix is \textit{Zamolodchikov periodic} if its associated $T$-system is periodic.
\end{definition}

\subsection{Tropical $T$-systems and $Y$-systems}
Given a bipartite recurrent $B$-matrix, we define an analogous system of algebraic equations.

Let $\lambda\in\mathbb{R}^n$ be a labeling of the $n$ indices.
The \textit{tropical $T$-system} $\boldsymbol{t}^{\lambda}$ associated with $B$ is a family $\boldsymbol{t}_k^{\lambda}(t)\in\mathbb{R}$ of real numbers satisfying the following  relations for all $k\in [n]$ and all $t\in\mathbb{Z}$:
\begin{align*}
    \boldsymbol{t}_k^{\lambda}(t + 1) + \boldsymbol{t}_k^{\lambda}(t - 1) = \max\left(\sum_i \Gamma_{ik}\boldsymbol{t}_i^{\lambda}(t), \sum_j \Delta_{jk}\boldsymbol{t}_j^{\lambda}(t)\right).
\end{align*}
Again, we only consider the subsystem defined for $k\in [n]$ and $t\in\mathbb{Z}$ satisfying (\ref{splitTsystem}).
The tropical $T$-system is set to the following initial conditions:
\begin{align*}
    \boldsymbol{t}_k^{\lambda}(0) = \lambda_k\quad\text{and}\quad \boldsymbol{t}_k^{\lambda}(1) &= \lambda_k, \quad \text{for all } k \text{ satisfying } (\ref{splitTsystem}).
\end{align*}
In other words, this system is the \textit{tropicalization} of the $T$-system associated with $B$ (see Section \ref{polytopes}).

We say the associated tropical $T$-system $\boldsymbol{t}^{\lambda}$ is \textit{periodic} if there exists a positive integer $N$ such that $\boldsymbol{t}_k^{\lambda}(t + 2N) = \boldsymbol{t}_k^{\lambda}(t)$ for all $k\in[n], t\in\mathbb{Z}$.

\begin{example}
    Let us continue working with the same $5\times 5$ matrix $B$ from our previous example with $\epsilon_1 = \epsilon_3 = \epsilon_4 = \bullet$ and $\epsilon_2 = \epsilon_5 = \circ$.
    Let $\lambda = e_5 = [0, 0, 0, 0, 1]^{\top}$.
    Then, the initial values of $\boldsymbol{t}_k^{\lambda}(t)$ are 
    \begin{align*}
        \boldsymbol{t}_2^{\lambda}(0) = 0, \hspace{5mm} \boldsymbol{t}_5^{\lambda}(0) = 1, \hspace{5mm} \boldsymbol{t}_1^{\lambda}(1) = 0, \hspace{5mm} \boldsymbol{t}_3^{\lambda}(1) = 0, \hspace{5mm}
        \boldsymbol{t}_4^{\lambda}(1) = 0.
    \end{align*}
    The values of the tropical $T$-system $\boldsymbol{t}^{\lambda}$ are given in Table \ref{table:tropicalex}.
    Notice that $\boldsymbol{t}_k^{\lambda}(t + 12) = \boldsymbol{t}_k^{\lambda}(t)$ for all $k\in [6]$.
    This is a special case of the tropical version of Zamolodchikov periodicity where the period is $N = 6$, the Coxeter number of $B_3$ and $G_2$ (see Table \ref{table:1}).
\end{example}
\begin{table}
\tiny
    \begin{tabular}{|c|ccccc|}\hline
    \backslashbox{$t$}{$k$}     &  1  & 2  & 3 & 4 & 5   \\\hline
     $0$      &       & 0     &      &    &  1  \\\hline
     $1$      & 0     &       &   0  &  0 &     \\\hline
     $2$      &       & 0     &      &    &  -1 \\\hline
     $3$      & 0     &       &   0  &  0 &     \\\hline
     $4$      &       & 0     &      &    &  1  \\\hline
     $5$      & 1     &       & 1    &  1 &     \\\hline
     $6$      &       & 3     &      &    &  2  \\\hline
     $7$      & 2     &       & 2    &  2 &     \\\hline
     $8$      &       & 3     &      &    &  4  \\\hline
     $9$      & 2     &       &   2  &  2 &     \\\hline
     $10$     &       & 3     &      &    &  2  \\\hline
     $11$     & 1     &       &   1  &  1 &     \\\hline
     $12$     &       & 0     &      &    &  1  \\\hline
     $13$     & 0     &       &   0  &  0 &     \\\hline
    \end{tabular}
    \caption{The evolution of values of the tropical $T$-system $\boldsymbol{t}_k^{\lambda}(t)$ for $B_3\bowtie_1 G_2$.}
    \label{table:tropicalex}
\end{table}

\subsection{ADE bigraphs}
In \cite{stembridge}, Stembridge studies admissible $W$-cells in the case of $W = I(p)\times I(q)$, direct products of two dihedral groups. He classifies all such $W$-cells which are in 4-to-1 correspondence with \textit{admissible ADE bigraphs}, defined as follows.

An \textit{ADE bigraph} is an ordered pair of simple undirected graphs $(G_1, G_2)$ which do not share edges such that $G_1\cup G_2$ is bipartite and each connected component of $G_1$, $G_2$ is an ADE Dynkin diagram. 
The \textit{dual} of an ADE bigraph $(G_1, G_2)$, denoted $(G_1, G_2)^*$, is defined to be $(G_2, G_1)$.
An ADE bigraph $(G_1, G_2)$ is \textit{admissible} if the adjacency matrices $A_{G_1}$, $A_{G_2}$ commute.
We can depict an ADE bigraph as a single graph with red and blue edges, where the red components are associated to $G_1$ and the blue components are associated to $G_2$.

Notice that the adjacency matrices here are closely related to the Cartan matrices for $G_1$ and $G_2$, $C_{G_1}$ and $C_{G_2}$. In particular, $A_{G_1} = 2I - C_{G_1}$ and $A_{G_2} = 2I - C_{G_2}$, where $A_G$ is the \textit{Coxeter adjacency matrix} of $G$. If $B$ is a skew-symmetric matrix such that the element-wise absolute value $|B|$ is the associated Coxeter adjacency matrix $A_G$, we call $B$ a \textit{signed Coxeter adjacency matrix} of $G$.

\begin{remark}
    One can also visualize admissibility of an ADE bigraph. 
    An ADE bigraph is admissible if and only if the number of length two red-blue paths from $i$ to $j$ is the same as the number of length two blue-red paths from $i$ to $j$ for every pair of vertices $i, j$.
    An example of admissible and nonadmissible ADE bigraphs is shown in Figure \ref{fig:admissibleADE}.
\end{remark}

\begin{figure}
\scalebox{0.5}{
\begin{tikzpicture}

\coordinate (v0x0) at (0.00,0.00);
\coordinate (v0x1) at (0.00,2.00);
\coordinate (v0x2) at (-1.00,4.00);
\coordinate (v0x3) at (2.00,3.00);
\coordinate (v0x4) at (5.00,0.00);
\coordinate (v0x5) at (5.00,2.00);
\coordinate (v0x6) at (5.00,4.00);
\coordinate (v0x7) at (13.00,0.00);
\coordinate (v0x8) at (13.00,2.00);
\coordinate (v0x9) at (12.00,4.00);
\coordinate (v0x10) at (15.00,3.00);
\coordinate (v0x11) at (18.00,0.50);
\coordinate (v0x12) at (18.00,2.50);
\coordinate (v0x13) at (20.00,3.50);
\coordinate (v0x14) at (21.50,1.50);
\coordinate (v0x15) at (21.50,-0.50);

\draw[color=red,line width=0.75mm] (v0x1) to[] (v0x0);
\draw[color=red,line width=0.75mm] (v0x2) to[] (v0x1);
\draw[color=red,line width=0.75mm] (v0x3) to[] (v0x1);
\draw[color=blue,line width=0.75mm] (v0x4) to[] (v0x0);
\draw[color=red,line width=0.75mm] (v0x5) to[] (v0x4);
\draw[color=blue,line width=0.75mm] (v0x5) to[] (v0x1);
\draw[color=red,line width=0.75mm] (v0x6) to[] (v0x5);
\draw[color=blue,line width=0.75mm] (v0x6) to[] (v0x2);
\draw[color=blue,line width=0.75mm] (v0x6) to[] (v0x3);

\draw[color=red,line width=0.75mm] (v0x8) to[] (v0x7);
\draw[color=red,line width=0.75mm] (v0x9) to[] (v0x8);
\draw[color=red,line width=0.75mm] (v0x10) to[] (v0x8);
\draw[color=blue,line width=0.75mm] (v0x11) to[] (v0x7);
\draw[color=blue,line width=0.75mm] (v0x15) to[] (v0x7);
\draw[color=red,line width=0.75mm] (v0x12) to[] (v0x11);
\draw[color=blue,line width=0.75mm] (v0x12) to[] (v0x8);
\draw[color=blue,line width=0.75mm] (v0x14) to[] (v0x8);
\draw[color=red,line width=0.75mm] (v0x13) to[] (v0x12);
\draw[color=blue,line width=0.75mm] (v0x13) to[] (v0x9);
\draw[color=blue,line width=0.75mm] (v0x13) to[] (v0x10);
\draw[color=red,line width=0.75mm] (v0x13) to[] (v0x14);
\draw[color=red,line width=0.75mm] (v0x14) to[] (v0x15);

\draw[fill=white] (v0x0.center) circle (0.2);
\draw[fill=black!75!white] (v0x1.center) circle (0.2);
\draw (-1.25, 2.25) node [anchor=north west][inner sep=0.75pt] {\LARGE{$v_1$}};
\draw[fill=white] (v0x2.center) circle (0.2);
\draw[fill=white] (v0x3.center) circle (0.2);
\draw[fill=black!75!white] (v0x4.center) circle (0.2);
\draw[fill=white] (v0x5.center) circle (0.2);
\draw[fill=black!75!white] (v0x6.center) circle (0.2);
\draw (5.50, 4.25) node [anchor=north west][inner sep=0.75pt] {\LARGE{$v_2$}};
\draw[fill=white] (v0x7.center) circle (0.2);
\draw[fill=black!75!white] (v0x8.center) circle (0.2);
\draw[fill=white] (v0x9.center) circle (0.2);
\draw[fill=white] (v0x10.center) circle (0.2);
\draw[fill=black!75!white] (v0x11.center) circle (0.2);
\draw[fill=white] (v0x12.center) circle (0.2);
\draw[fill=black!75!white] (v0x13.center) circle (0.2);
\draw[fill=white] (v0x14.center) circle (0.2);
\draw[fill=black!75!white] (v0x15.center) circle (0.2);
\end{tikzpicture}}
\caption{\label{fig:admissibleADE} A nonadmissible ADE bigraph (left) and an admissible ADE bigraph (right). The vertices $v_1$ and $v_2$ form a nonadmissible pair.}
\end{figure}
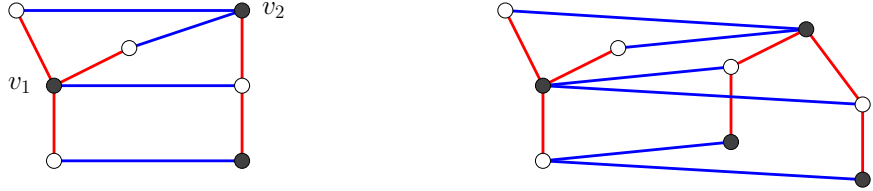

Given an ADE bigraph $(G_1, G_2)$ with bipartition $\epsilon: [n]\rightarrow \{\circ, \bullet\}$, there is a unique associated bipartite quiver $Q$ defined as follows:
\begin{itemize}
    \item 
        If $(u, v)$ is an edge in $G_1$, direct it so that it points from $\circ$ to $\bullet$.
    \item 
        If $(u, v)$ is an edge in $G_2$, direct it so that it points from $\bullet$ to $\circ$.
\end{itemize}
Let $\tilde{\Gamma}, \tilde{\Delta}$ be the resulting signed Coxeter adjacency matrices for $G_1$ and $G_2$, respectively.
Then, $B = \tilde{\Gamma} + \tilde{\Delta}$ is the adjacency matrix of the quiver $Q$.

Stembridge's classification of admissible ADE bigraphs includes 6 infinite families and 11 exceptional types. 
Below, we list a few of these types.

\textbf{Tensor products}.
Let $\Lambda$ and $\Lambda'$ be two Dynkin diagrams of rank $n$ and $m$, respectively.
The \textit{tensor product}, or \textit{box product}, of $\Lambda$ and $\Lambda'$ is denoted $\Lambda\otimes \Lambda'$. 
It consists of $m$ red copies of $\Lambda$ connected to each other with blue edges.
The red and blue Coxeter adjacency matrices $\Gamma$ and $\Delta$ are Kronecker products of the form $A_{\Lambda}\otimes I_m$ and $I_n\otimes A_{\Lambda'}$, respectively.
From this, it is straightforward to see that tensor products are admissible.
An example of the tensor product $D_4\otimes A_4$ is given in Figure \ref{fig:tensor}.

\begin{figure}
\scalebox{0.5}{
\begin{tikzpicture}

\foreach \i in {0, 1, 2, 3}
{
    \coordinate (v\i x0) at (4 * \i + 0.00,0.00);
    \coordinate (v\i x1) at (4 * \i + 0.00,2.00);
    \coordinate (v\i x2) at (4 * \i + -1.00,4.00);
    \coordinate (v\i x3) at (4 * \i + 2.00,3.00);
    \draw[color=red,line width=0.75mm] (v\i x1) to[] (v\i x0);
    \draw[color=red,line width=0.75mm] (v\i x2) to[] (v\i x1);
    \draw[color=red,line width=0.75mm] (v\i x3) to[] (v\i x1);
}
\foreach \i in {0, 1, 2, 3}
{
    \draw[color=blue,line width=0.75mm] (v0x\i) to[] (v1x\i);
    \draw[color=blue,line width=0.75mm] (v1x\i) to[] (v2x\i);
    \draw[color=blue,line width=0.75mm] (v2x\i) to[] (v3x\i);
}

\foreach \i in {0, 2}
{
    \draw[fill=black!75!white] (v\i x0.center) circle (0.2);
    \draw[fill=white] (v\i x1.center) circle (0.2);
    \draw[fill=black!75!white] (v\i x2.center) circle (0.2);
    \draw[fill=black!75!white] (v\i x3.center) circle (0.2);
}

\foreach \i in {1, 3}
{
    \draw[fill=white] (v\i x0.center) circle (0.2);
    \draw[fill=black!75!white] (v\i x1.center) circle (0.2);
    \draw[fill=white] (v\i x2.center) circle (0.2);
    \draw[fill=white] (v\i x3.center) circle (0.2);
}
\end{tikzpicture}}
\caption{\label{fig:tensor} The tensor product $D_4\otimes A_4$.}
\end{figure}
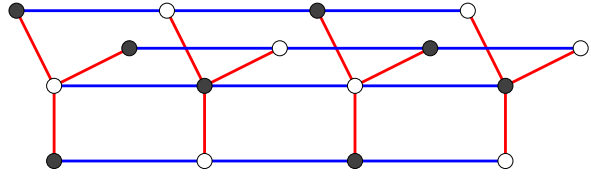

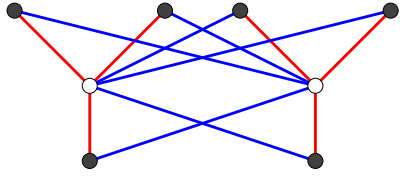
\begin{figure}
\scalebox{0.5}{
\begin{tikzpicture}

\foreach \i in {0, 1}
{
    \coordinate (v\i x0) at (6 * \i + 0.00,0.00);
    \coordinate (v\i x1) at (6 * \i + 0.00,2.00);
    \coordinate (v\i x2) at (6 * \i + -2.00,4.00);
    \coordinate (v\i x3) at (6 * \i + 2.00,4.00);
    \draw[color=red,line width=0.75mm] (v\i x1) to[] (v\i x0);
    \draw[color=red,line width=0.75mm] (v\i x2) to[] (v\i x1);
    \draw[color=red,line width=0.75mm] (v\i x3) to[] (v\i x1);
}
\draw[color=blue,line width=0.75mm] (v0x1) to[] (v1x0);
\draw[color=blue,line width=0.75mm] (v0x0) to[] (v1x1);
\draw[color=blue,line width=0.75mm] (v0x1) to[] (v1x2);
\draw[color=blue,line width=0.75mm] (v0x1) to[] (v1x3);
\draw[color=blue,line width=0.75mm] (v0x2) to[] (v1x1);
\draw[color=blue,line width=0.75mm] (v0x3) to[] (v1x1);
\foreach \i in {0, 1}
{
    \draw[fill=black!75!white] (v\i x0.center) circle (0.2);
    \draw[fill=white] (v\i x1.center) circle (0.2);
    \draw[fill=black!75!white] (v\i x2.center) circle (0.2);
    \draw[fill=black!75!white] (v\i x3.center) circle (0.2);
}
\end{tikzpicture}}
\caption{\label{fig:twist} The twist $D_4\times D_4$.}
\end{figure}

\begin{figure}
\scalebox{0.5}{
\begin{tikzpicture}

\coordinate (v0x0) at (0.00,0.00);
\coordinate (v0x1) at (0.00,2.00);
\coordinate (v0x2) at (2.00,3.00);
\coordinate (v0x3) at (3.50,1.00);
\coordinate (v0x4) at (3.50,-1.00);
\coordinate (v0x5) at (2.00,5.00);

\coordinate (v1x0) at (6.00,5.50);
\coordinate (v1x1) at (6.00,3.50);
\coordinate (v1x2) at (7.50,1.50);
\coordinate (v1x3) at (9.50,2.50);
\coordinate (v1x4) at (9.50,4.50);
\coordinate (v1x5) at (7.50,-0.50);

\draw[color=blue,line width=0.75mm] (v0x0) to[] (v1x5);
\draw[color=blue,line width=0.75mm] (v0x4) to[] (v1x5);
\draw[color=blue,line width=0.75mm] (v0x1) to[] (v1x2);
\draw[color=blue,line width=0.75mm] (v0x3) to[] (v1x2);
\draw[color=blue,line width=0.75mm] (v0x2) to[] (v1x1);
\draw[color=blue,line width=0.75mm] (v0x2) to[] (v1x3);
\draw[color=blue,line width=0.75mm] (v0x5) to[] (v1x0);
\draw[color=blue,line width=0.75mm] (v0x5) to[] (v1x4);
\foreach \i in {0, 1}
{
    \draw[color=red,line width=0.75mm] (v\i x0) to[] (v\i x1);
    \draw[color=red,line width=0.75mm] (v\i x1) to[] (v\i x2);
    \draw[color=red,line width=0.75mm] (v\i x2) to[] (v\i x3);
    \draw[color=red,line width=0.75mm] (v\i x3) to[] (v\i x4);
    \draw[color=red,line width=0.75mm] (v\i x2) to[] (v\i x5);
    \draw[fill=black!75!white] (v\i x0.center) circle (0.2);
    \draw[fill=white] (v\i x1.center) circle (0.2);
    \draw[fill=black!75!white] (v\i x2.center) circle (0.2);
    \draw[fill=white] (v\i x3.center) circle (0.2);
    \draw[fill=black!75!white] (v\i x4.center) circle (0.2);
    \draw[fill=white] (v\i x5.center) circle (0.2);
}
\end{tikzpicture}}
\caption{\label{fig:E6E6} The binding $E_6\ast E_6$.}
\end{figure}
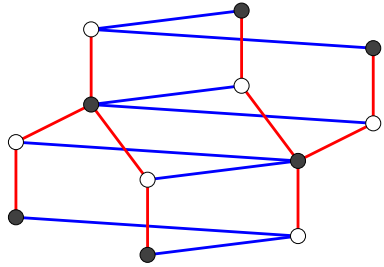

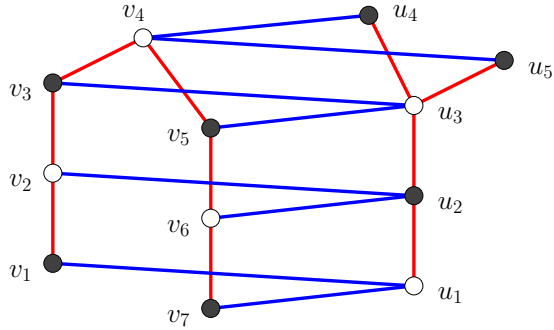
\begin{figure}
\scalebox{0.6}{
\begin{tikzpicture}
\coordinate (v0x0) at (0.00,0.00);
\draw (-1.00, 0.00) node [anchor=north west][inner sep=0.75pt] {\Large{$v_1$}};
\coordinate (v0x1) at (0.00,2.00);
\draw (-1.00, 2.00) node [anchor=north west][inner sep=0.75pt] {\Large{$v_2$}};
\coordinate (v0x2) at (0.00,4.00);
\draw (-1.00, 4.00) node [anchor=north west][inner sep=0.75pt] {\Large{$v_3$}};
\coordinate (v0x3) at (2.00,5.00);
\draw (1.50, 5.75) node [anchor=north west][inner sep=0.75pt] {\Large{$v_4$}};
\coordinate (v0x4) at (3.50,3.00);
\draw (2.50, 3.00) node [anchor=north west][inner sep=0.75pt] {\Large{$v_5$}};
\coordinate (v0x5) at (3.50,1.00);
\draw (2.50, 1.00) node [anchor=north west][inner sep=0.75pt] {\Large{$v_6$}};
\coordinate (v0x6) at (3.50,-1.00);
\draw (2.50, -1.00) node [anchor=north west][inner sep=0.75pt] {\Large{$v_7$}};
\coordinate (v1x0) at (8.00,-0.50);
\draw (8.50, -0.50) node [anchor=north west][inner sep=0.75pt] {\Large{$u_1$}};
\coordinate (v1x1) at (8.00,1.50);
\draw (8.50, 1.50) node [anchor=north west][inner sep=0.75pt] {\Large{$u_2$}};
\coordinate (v1x2) at (8.00,3.50);
\draw (8.50, 3.50) node [anchor=north west][inner sep=0.75pt] {\Large{$u_3$}};
\coordinate (v1x3) at (7.00,5.50);
\draw (7.50, 5.75) node [anchor=north west][inner sep=0.75pt] {\Large{$u_4$}};
\coordinate (v1x4) at (10.00,4.50);
\draw (10.50, 4.50) node [anchor=north west][inner sep=0.75pt] {\Large{$u_5$}};
\foreach \i in {0, 1, 2, 3, 4, 5}
{
    \pgfmathtruncatemacro{\j}{\i + 1};
    \draw[color=red,line width=0.75mm] (v0x\i) to[] (v0x\j);
}
\draw[color=red,line width=0.75mm] (v1x0) to[] (v1x1);
\draw[color=red,line width=0.75mm] (v1x1) to[] (v1x2);
\draw[color=red,line width=0.75mm] (v1x2) to[] (v1x3);
\draw[color=red,line width=0.75mm] (v1x2) to[] (v1x4);
\foreach \i in {0, 1, 2}
{
    \pgfmathtruncatemacro{\j}{6 - \i};
    \draw[color=blue,line width=0.75mm] (v0x\i) to[] (v1x\i);
    \draw[color=blue,line width=0.75mm] (v0x\j) to[] (v1x\i);
}
\draw[color=blue,line width=0.75mm] (v0x3) to[] (v1x3);
\draw[color=blue,line width=0.75mm] (v0x3) to[] (v1x4);

\draw[fill=black!75!white] (v0x0.center) circle (0.2);
\draw[fill=white] (v0x1.center) circle (0.2);
\draw[fill=black!75!white] (v0x2.center) circle (0.2);
\draw[fill=white] (v0x3.center) circle (0.2);
\draw[fill=black!75!white] (v0x4.center) circle (0.2);
\draw[fill=white] (v0x5.center) circle (0.2);
\draw[fill=black!75!white] (v0x6.center) circle (0.2);
\draw[fill=white] (v1x0.center) circle (0.2);
\draw[fill=black!75!white] (v1x1.center) circle (0.2);
\draw[fill=white] (v1x2.center) circle (0.2);
\draw[fill=black!75!white] (v1x3.center) circle (0.2);
\draw[fill=black!75!white] (v1x4.center) circle (0.2);
\end{tikzpicture}}
\caption{\label{fig:AnDn} The binding $A_7\ast D_5$.}
\end{figure}

\textbf{Twists}.
Twists are denoted $\Lambda\times\Lambda$, and consist of two red copies of $\Lambda$ connected to each other with blue edges.
The red and blue Coxeter adjacency matrices are of the form
$$
\Gamma = \begin{bmatrix}
    A_{\Lambda} & 0\\
    0 & A_{\Lambda}
\end{bmatrix}, \qquad \Delta = \begin{bmatrix}
    0 & A_{\Lambda}\\
    A_{\Lambda} & 0
\end{bmatrix},
$$
where $A_{\Lambda}$ is the Coxeter adjacency matrix of $\Lambda$, so it is clear that twists are admissible.
An example of the twist $D_4\times D_4$ is given in Figure \ref{fig:twist}.

$\boldsymbol{E_6\ast E_6}$.
This exceptional bigraph has two red copies of $E_6$, which are connected to each other with blue edges as shown in Figure \ref{fig:E6E6}.

$\boldsymbol{D_5\boxtimes A_7}$.
This exceptional bigraph has one red copy of $D_5$ and one red copy of $A_7$, which are connected to each other with blue edges as shown in Figure \ref{fig:BCBCexceptional}. This family is self-dual.

$\boldsymbol{A_{2n-1}\ast D_{n+1}}$.
Each bigraph in this family has two red components, $A_{2n-1}$ and $D_{n+1}$, which are connected with blue edges as depicted in Figure \ref{fig:AnDn}.
In particular, let $A_{2n-1}$ be labeled with vertices $v_1, \dots, v_{2n-1}$, and let $D_{n+1}$ be labeled with vertices $u_1, \dots, u_{n}, u_{n+1}$ as shown in Figure \ref{fig:AnDn}.
Then, $u_i$ is connected to $v_i$ and $v_{2n-i}$ with a blue edge for all $i\in \{1, 2, \dots, n-1\}$, while $u_n$ and $u_{n+1}$ are connected to $v_n$ with blue edges.
This is a specific instance of the infinite families listed below.

\textbf{The} $\boldsymbol{(A^{m-1}D)_n}$ \textbf{family}.
Each bigraph in this family is obtained by taking $A_{2n-1}\otimes A_{m}$ and then attaching the final (red) copy of $A_{2n-1}$ to an additional red component $D_{n+1}$ with blue edges, $A_{2n-1}\ast D_{n+1}$. 
This family is self-dual; when one swaps the blue and red components, one obtains $(A^{n-1}D)_m$.
In \cite{stembridge}, this is denoted as
\begin{align*}
    (A^{m-1}D)_n &:= (A_{2n-1}\equiv A_{2n-1}\equiv \cdots \equiv A_{2n-1}\ast D_{n+1}).
\end{align*}

\textbf{The} $\boldsymbol{(AD^{m-1})_n}$ \textbf{family}.
Each bigraph in this family is obtained by taking $D_{n+1}\otimes A_{m-1}$ and then attaching the first (red) copy of $D_{n+1}$ to an additional red component $A_{2n-1}$ with blue edges, $A_{2n-1}\ast D_{n+1}$. 
This family is self-dual; when one swaps the blue and red components, one obtains $(AD^{n-1})_m$.
In \cite{stembridge}, this is denoted as
\begin{align*}
    (AD^{m-1})_n &:= (A_{2n-1}\ast D_{n+1}\equiv D_{n+1}\equiv \cdots \equiv D_{n+1}).
\end{align*}


In \cite{pasha}, Galashin--Pylyavskyy proved that the Zamolodchikov periodic quivers are in natural bijection with admissible ADE bigraphs.
For the classification of Zamolodchikov periodic cluster algebras, we generalize the idea of an ADE bigraph to include all Dynkin diagrams.

\subsection{Dynkin biagrams}

A \textit{Dynkin biagram} is a tuple $(\Gamma, \Delta)$ of two (not necessarily irreducible) $n\times n$ Coxeter adjacency matrices such that their sum $B = \Gamma + \Delta$ is bipartite and $\Gamma, \Delta$ share no nonzero entries. 
The \textit{dual} of a Dynkin biagram $(\Gamma, \Delta)$, denoted $(\Gamma, \Delta)^*$, is defined to be $(\Delta, \Gamma)$. 
Finally, a Dynkin biagram $(\Gamma, \Delta)$ is \textit{admissible} if the associated Cartan matrices $C_{\Gamma} = 2I - \Gamma$ and $C_{\Delta} = 2I - \Delta$ commute, or equivalently if $\Gamma$ and $\Delta$ commute.

In this paper, we depict Dynkin biagrams as a union of red Dynkin diagram components associated to $\Gamma$ connected by blue Dynkin diagram components associated to $\Delta$.
An example of admissible and nonadmissible Dynkin biagrams is shown in Figure \ref{fig:admissible}.
\begin{remark}
    The definitions of tensor products and twists extend naturally to include all Dynkin diagrams $\Lambda$, not just ADE types. 
    Because the definition is identical, it is still clear that both tensor products and twists are admissible Dynkin biagrams.
\end{remark}
\begin{remark}
    We can also visualize a Dynkin biagram as a directed graph with red and blue edges, where $\Gamma$ corresponds to red edges and $\Delta$ corresponds to blue edges. 
    We say an edge of a Dynkin diagram is \textit{simple} if it is undirected, and \textit{nonsimple} if it is a directed multi-edge.
    Every ADE Dynkin diagram is made up of simple edges, and every non-ADE Dynkin diagram has a nonsimple edge.
    
    Each simple edge $(u,v)$ can be replaced by two directed edges $(u, v), (v, u)$. A nonsimple edge with corresponding matrix entries $b_{uv}\neq b_{vu}$ can be replaced with $b_{uv}$ directed edges $(u, v)$ and $b_{vu}$ directed edges $(v, u)$. 
    Then, a Dynkin biagram is an arrangement of these edges on a shared bipartite vertex set $[n]$ such that $\Gamma$ and $\Delta$ share no edges.

    Under this characterization, $(\Gamma\Delta)_{ij}$ is the number of red-blue paths from $i$ to $j$.
    Similarly, $(\Delta\Gamma)_{ij}$ is the number of blue-red paths from $i$ to $j$.
    Thus a Dynkin biagram $(\Gamma, \Delta)$ is admissible if the number of red-blue paths from $i$ to $j$ is the same as the number of blue-red paths from $i$ to $j$ for every pair of vertices $i,j\in [n]$.
\end{remark}
\begin{figure}
\scalebox{0.5}{
\begin{tikzpicture}

\coordinate (v0x0) at (0.00,0.00);
\coordinate (v0x1) at (0.00,2.00);
\coordinate (v0x2) at (-1.00,4.00);
\coordinate (v0x3) at (2.00,3.00);
\coordinate (v0x4) at (5.00,0.00);
\coordinate (v0x5) at (5.00,2.00);
\coordinate (v0x6) at (5.00,4.00);

\coordinate (v0x7) at (13.00,0.00);
\coordinate (v0x8) at (13.00,2.00);
\coordinate (v0x9) at (13.00,4.00);
\coordinate (v0x11) at (17.00,0.50);
\coordinate (v0x12) at (17.00,2.50);
\coordinate (v0x13) at (19.00,3.50);
\coordinate (v0x14) at (20.50,1.50);
\coordinate (v0x15) at (20.50,-0.50);

\draw[color=red, line width=0.75mm] (5.0, 3.11) to[] (4.78, 2.89) ;
\draw[color=red, line width=0.75mm] (5.0, 3.11) to[] (5.22, 2.89) ;
\draw[color=red, line width=0.75mm] (4.93, 2.0) to[] (4.93, 4.0) ;
\draw[color=red, line width=0.75mm] (5.07, 2.0) to[] (5.07, 4.0) ;
\draw[color=red,line width=0.75mm] (v0x1) to[] (v0x0);
\draw[color=red,line width=0.75mm] (v0x2) to[] (v0x1);
\draw[color=red,line width=0.75mm] (v0x3) to[] (v0x1);
\draw[color=blue,line width=0.75mm] (v0x4) to[] (v0x0);
\draw[color=red,line width=0.75mm] (v0x5) to[] (v0x4);
\draw[color=blue,line width=0.75mm] (v0x5) to[] (v0x1);
\draw[color=blue,line width=0.75mm] (v0x6) to[] (v0x2);
\draw[color=blue,line width=0.75mm] (v0x6) to[] (v0x3);

\draw[color=red, line width=0.75mm] (13.0, 3.11) to[] (12.78, 2.89) ;
\draw[color=red, line width=0.75mm] (13.0, 3.11) to[] (13.22, 2.89) ;
\draw[color=red, line width=0.75mm] (12.93, 2.0) to[] (12.93, 4.0) ;
\draw[color=red, line width=0.75mm] (13.07, 2.0) to[] (13.07, 4.0) ;
\draw[color=blue, line width=0.75mm] (15.890379966593063, 3.7591350027839114) to[] (16.091350027839116, 3.521624930402215) ;
\draw[color=blue, line width=0.75mm] (15.890379966593063, 3.7591350027839114) to[] (16.12789003897476, 3.960105064029962) ;
\draw[color=blue, line width=0.75mm] (18.994186816410238, 3.4302417969228585) to[] (12.994186816410238, 3.9302417969228585) ;
\draw[color=blue, line width=0.75mm] (19.005813183589762, 3.5697582030771415) to[] (13.005813183589762, 4.069758203077142) ;
\draw[color=red,line width=0.75mm] (v0x8) to[] (v0x7);
\draw[color=blue,line width=0.75mm] (v0x11) to[] (v0x7);
\draw[color=blue,line width=0.75mm] (v0x15) to[] (v0x7);
\draw[color=red,line width=0.75mm] (v0x12) to[] (v0x11);
\draw[color=blue,line width=0.75mm] (v0x12) to[] (v0x8);
\draw[color=blue,line width=0.75mm] (v0x14) to[] (v0x8);
\draw[color=red,line width=0.75mm] (v0x13) to[] (v0x12);
\draw[color=red,line width=0.75mm] (v0x13) to[] (v0x14);
\draw[color=red,line width=0.75mm] (v0x14) to[] (v0x15);

\draw[fill=white] (v0x0.center) circle (0.2);
\draw[fill=black!75!white] (v0x1.center) circle (0.2);
\draw (-1.25, 2.25) node [anchor=north west][inner sep=0.75pt] {\LARGE{$v_1$}};
\draw[fill=white] (v0x2.center) circle (0.2);
\draw[fill=white] (v0x3.center) circle (0.2);
\draw[fill=black!75!white] (v0x4.center) circle (0.2);
\draw[fill=white] (v0x5.center) circle (0.2);
\draw[fill=black!75!white] (v0x6.center) circle (0.2);
\draw (5.50, 4.25) node [anchor=north west][inner sep=0.75pt] {\LARGE{$v_2$}};
\draw[fill=white] (v0x7.center) circle (0.2);
\draw[fill=black!75!white] (v0x8.center) circle (0.2);
\draw[fill=white] (v0x9.center) circle (0.2);
\draw[fill=black!75!white] (v0x11.center) circle (0.2);
\draw[fill=white] (v0x12.center) circle (0.2);
\draw[fill=black!75!white] (v0x13.center) circle (0.2);
\draw[fill=white] (v0x14.center) circle (0.2);
\draw[fill=black!75!white] (v0x15.center) circle (0.2);
\end{tikzpicture}}
\caption{\label{fig:admissible} A nonadmissible Dynkin biagram (left) and an admissible Dynkin biagram (right). The vertices $v_1$ and $v_2$ form a nonadmissible pair.}
\end{figure}

Given a Dynkin biagram $(\Gamma, \Delta)$ on $n$ vertices with bipartition $\epsilon: [n]\rightarrow \{\circ, \bullet\}$, there is an associated bipartite $B$-matrix $B = \tilde{\Gamma} + \tilde{\Delta}$ defined as follows:
\begin{align*}
    \tilde{\Gamma}_{ij} = \begin{cases}
        \Gamma_{ij} & \text{if }\epsilon_i = \circ, \epsilon_j = \bullet,\\
        -\Gamma_{ij} & \text{if }\epsilon_i = \bullet, \epsilon_j = \circ,\\
        0 & \text{otherwise.}
    \end{cases}\qquad 
    \tilde{\Delta}_{ij} = \begin{cases}
        \Delta_{ij} & \text{if }\epsilon_i = \bullet, \epsilon_j = \circ,\\
        -\Delta_{ij} & \text{if }\epsilon_i = \circ, \epsilon_j = \bullet,\\
        0 & \text{otherwise.}
    \end{cases}
\end{align*}
Notice this is consistent with directing red edges $\circ\rightarrow \bullet$, and blue edges $\bullet\rightarrow \circ$.

\begin{example}
Let us find the skew-symmetrizable matrices $\tilde{\Gamma}, \tilde{\Delta}$ associated with the admissible Dynkin biagram $B_3\bowtie_1 G_2$, defined in Section \ref{bindings} and depicted in Figure \ref{fig:BGCG}.
This Dynkin biagram has two red components, $B_3$ and $G_2$, and two blue components, $B_3$ and $G_2$.
The set of vertices is $\{1, 2, 3, 4, 5\}$, where $\epsilon_1 = \epsilon_3 = \epsilon_4 = \bullet$ and $\epsilon_2 = \epsilon_5 = \circ$.
The associated Coxeter adjacency matrices are
\begin{align*}
    \Gamma = \begin{bmatrix}
        0 & 1 & 0 & 0 & 0\\
        1 & 0 & 1 & 0 & 0\\
        0 & 2 & 0 & 0 & 0\\
        0 & 0 & 0 & 0 & 3\\
        0 & 0 & 0 & 1 & 0
    \end{bmatrix}\text{ and } \Delta = \begin{bmatrix}
        0 & 0 & 0 & 0 & 1\\
        0 & 0 & 0 & 1 & 0\\
        0 & 0 & 0 & 0 & 2\\
        0 & 3 & 0 & 0 & 0\\
        1 & 0 & 1 & 0 & 0
    \end{bmatrix}.
\end{align*}
From the bipartite coloring, we see the associated signed Coxeter adjacency matrices are
\begin{align*}
    \tilde{\Gamma} = \begin{bmatrix}
        0 & -1 & 0 & 0 & 0\\
        1 & 0 & 1 & 0 & 0\\
        0 & -2 & 0 & 0 & 0\\
        0 & 0 & 0 & 0 & -3\\
        0 & 0 & 0 & 1 & 0
    \end{bmatrix}\text{ and } \tilde{\Delta} = \begin{bmatrix}
        0 & 0 & 0 & 0 & 1\\
        0 & 0 & 0 & -1 & 0\\
        0 & 0 & 0 & 0 & 2\\
        0 & 3 & 0 & 0 & 0\\
        -1 & 0 & -1 & 0 & 0
    \end{bmatrix}.
\end{align*}
Notice that these are the same matrices as those from our running example.
\end{example}

\section{Classification of commuting Cartan matrices as admissible Dynkin biagrams}
\label{classification}
\subsection{Bindings and folding}
\label{bindings}

\begin{definition}
    Let $(\Gamma, \Delta)$  be a Dynkin biagram. If $\Gamma = \Gamma_1\sqcup \Gamma_2$ and every edge of $\Delta$ connects $\Gamma_1$ to $\Gamma_2$, then $(\Gamma, \Delta)$ is a \textit{binding} of $\Gamma_1$ and $\Gamma_2$.
\end{definition}

For instance, the tensor product of $\Gamma$ with $A_2$, $B_2$, or $G_2$ is a binding of two copies of $\Gamma$. 
We use the notation $$(\Gamma\equiv \Gamma) \coloneqq \Gamma\otimes A_2,\quad (\Gamma\equiv_2 \Gamma) \coloneqq \Gamma\otimes B_2, \quad (\Gamma\equiv_3 \Gamma) \coloneqq \Gamma\otimes G_2,$$ and refer to these as \textit{parallel bindings}.

In the special case that $(\Gamma, \Delta)$ and $(\Delta, \Gamma)$ are both bindings, we refer to this as a \textit{double binding}.
For example, the twist $\Gamma\times \Gamma$ is a double binding of two copies of $\Gamma$.

\begin{definition}
    A \textit{bicolored automorphism} $f: [n] \rightarrow [n]$ is an automorphism that satisfies the following.
    \begin{itemize}
        \item[(i)]
            $\epsilon_{f(i)} = \epsilon_i$ (preserves bipartite coloring of vertices).
        \item[(ii)]
            For all $i_1, i_2$ in the same $f$-orbit $I$, and for all $j\in[n]$, $b_{i_1j}b_{i_2j} \geq 0$ (preserves edge colors).
        \item[(iii)]
            For all $i, j\in [n]$, $b_{f(i)f(j)} = b_{ij}$.
        \item[(iv)]
            For all $i, j$ in the same $f$-orbit $I$, $b_{ij} = b_{ji} = 0$.
    \end{itemize}
\end{definition}

\begin{definition}
    Let $B$ be a bipartite skew-symmetrizable matrix, and let $f$ be a bicolored automorphism on $B$. 
    Then, one may fold $B$ to get $f(B)$, another bipartite skew-symmetrizable matrix with rows and columns indexed by $f$-orbits, defined as 
    $$f(B)_{IJ} \coloneqq \sum_{i\in I}b_{ij}, \text{ for any } j\in J.$$
    Moreover, $f(B)$ splits naturally into $f(B) = f(\Gamma) + f(\Delta)$, where 
    $$f(\Gamma)_{IJ} \coloneqq \sum_{i\in I}\Gamma_{ij} \text{ and } f(\Delta)_{IJ} \coloneqq \sum_{i\in I}\Delta_{ij}, \text{ for some fixed } j\in J.$$
    In particular, the result does not depend on $j\in J$.
\end{definition}

With a bicolored automorphism, one can fold bindings to obtain other bindings.
For instance, there exists a bicolored automorphism that folds the exceptional binding $D_5\boxtimes A_7$ into an exceptional binding of $B_4$ and $C_4$, denoted $B_4\boxtimes C_4$ (see Figure \ref{fig:BCBCexceptional}).

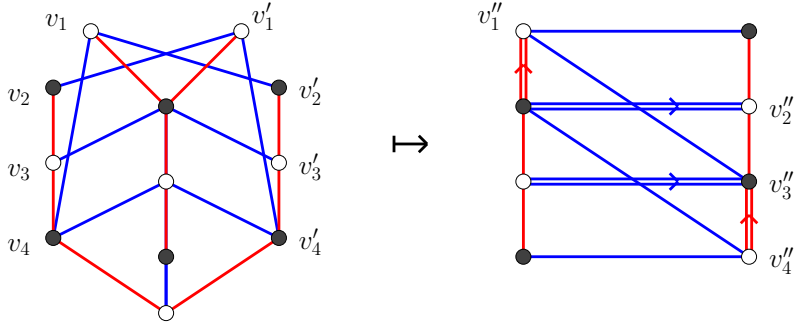
\begin{figure}[h]
\scalebox{0.5}{
\begin{tikzpicture}
\coordinate (u0x0) at (0.00, 0.00);
\coordinate (u0x1) at (0.00, 2.00);
\coordinate (u0x2) at (0.00, 4.00);
\coordinate (u0x3) at (-1.00, 6.00);
\coordinate (u0x4) at (2.00, 5.00);

\coordinate (u1x0) at (4.00, 6.00);
\coordinate (u1x1) at (4.00, 4.00);
\coordinate (u1x2) at (4.00, 2.00);
\coordinate (u1x3) at (5.50, 0.00);
\coordinate (u1x4) at (7.50, 1.00);
\coordinate (u1x5) at (7.50, 3.00);
\coordinate (u1x6) at (7.50, 5.00);

\draw (-1.75, 6.00) node {\LARGE{$v_1$}};
\draw (1.5, 5.50) node {\LARGE{$v_1'$}};

\draw (4.75, 6.00) node {\LARGE{$v_2$}};
\draw (8.25, 5.00) node {\LARGE{$v_2'$}};
\draw (4.75, 4.00) node {\LARGE{$v_3$}};
\draw (8.25, 3.00) node {\LARGE{$v_3'$}};
\draw (4.75, 2.00) node {\LARGE{$v_4$}};
\draw (8.25, 1.00) node {\LARGE{$v_4'$}};

\draw (10.75, 6.00) node [anchor=south west][inner sep=0.75pt] {\LARGE{$v_1''$}};
\draw (18.50, 5.50) node [anchor=south west][inner sep=0.75pt] {\LARGE{$v_2''$}};
\draw (18.50, 3.50) node [anchor=south west][inner sep=0.75pt] {\LARGE{$v_3''$}};
\draw (18.50, 1.50) node [anchor=south west][inner sep=0.75pt] {\LARGE{$v_4''$}};

\draw[color=blue,line width=0.75mm] (u0x0) to[] (u1x3);
\draw[color=blue,line width=0.75mm] (u0x1) to[] (u1x2);
\draw[color=blue,line width=0.75mm] (u0x1) to[] (u1x4);
\draw[color=blue,line width=0.75mm] (u0x2) to[] (u1x1);
\draw[color=blue,line width=0.75mm] (u0x2) to[] (u1x5);
\draw[color=blue,line width=0.75mm] (u0x2) to[] (u1x3);
\draw[color=blue,line width=0.75mm] (u0x3) to[] (u1x0);
\draw[color=blue,line width=0.75mm] (u0x3) to[] (u1x4);
\draw[color=blue,line width=0.75mm] (u0x4) to[] (u1x6);
\draw[color=blue,line width=0.75mm] (u0x4) to[] (u1x2);

\foreach \i in {0, 1, 2, 3, 4, 5}{
    \pgfmathtruncatemacro{\j}{\i + 1};
    \draw[color=red,line width=0.75mm] (u1x\i) to[] (u1x\j);
}
\draw[color=red,line width=0.75mm] (u0x0) to[] (u0x1);
\draw[color=red,line width=0.75mm] (u0x1) to[] (u0x2);
\draw[color=red,line width=0.75mm] (u0x2) to[] (u0x3);
\draw[color=red,line width=0.75mm] (u0x2) to[] (u0x4);
\foreach \i in {0, 2, 4, 6}{
    \draw[fill=black!75!white] (u1x\i .center) circle (0.2);
}
\foreach \i in {1, 3, 5}{
    \draw[fill=white] (u1x\i .center) circle (0.2);
}
\draw[fill=black!75!white] (u0x0.center) circle (0.2);
\draw[fill=white] (u0x1.center) circle (0.2);
\draw[fill=black!75!white] (u0x2.center) circle (0.2);
\draw[fill=white] (u0x3.center) circle (0.2);
\draw[fill=white] (u0x4.center) circle (0.2);

\foreach \i in {0, 1}
{
\coordinate (v\i x0) at (12 + 6 * \i, 0.00);
\coordinate (v\i x1) at (12 + 6 * \i, 2.00);
\coordinate (v\i x2) at (12 + 6 * \i, 4.00);
\coordinate (v\i x3) at (12 + 6 * \i, 6.00);
\draw[color=red,line width=0.75mm] (v\i x1) to[] (v\i x2);
}
\draw[color=red,line width=0.75mm] (v0x0) to[] (v0x1);
\draw[color=red,line width=0.75mm] (v1x2) to[] (v1x3);
\draw[color=red, line width=0.75mm] (12 + 0.0, 5.11) to[] (12 + -0.22, 4.89) ;
\draw[color=red, line width=0.75mm] (12 + 0.0, 5.11) to[] (12 + 0.22, 4.89) ;
\draw[color=red, line width=0.75mm] (12 + -0.07, 4.0) to[] (12 + -0.07, 6.0) ;
\draw[color=red, line width=0.75mm] (12 + 0.07, 4.0) to[] (12 + 0.07, 6.0) ;
\draw[color=red, line width=0.75mm] (18 + 0.0, -4 + 5.11) to[] (18 + -0.22, -4 + 4.89) ;
\draw[color=red, line width=0.75mm] (18 + 0.0, -4 + 5.11) to[] (18 + 0.22, -4 + 4.89) ;
\draw[color=red, line width=0.75mm] (18 + -0.07, -4 + 4.0) to[] (18 + -0.07, -4 + 6.0) ;
\draw[color=red, line width=0.75mm] (18 + 0.07, -4 + 4.0) to[] (18 + 0.07, -4 + 6.0) ;

\draw[color=blue,line width=0.75mm] (v0x0) to[] (v1x0);
\draw[color=blue,line width=0.75mm] (v0x2) to[] (v1x0);
\draw[color=blue,line width=0.75mm] (v0x3) to[] (v1x1);
\draw[color=blue,line width=0.75mm] (v0x3) to[] (v1x3);
\draw[color=blue, line width=0.75mm] (12 + 4.11, 2.0) to[] (12 + 3.89, 2.22) ;
\draw[color=blue, line width=0.75mm] (12 + 4.11, 2.0) to[] (12 + 3.89, 1.78) ;
\draw[color=blue, line width=0.75mm] (12 + 0.0, 2.07) to[] (12 + 6.0, 2.07) ;
\draw[color=blue, line width=0.75mm] (12 + 0.0, 1.93) to[] (12 + 6.0, 1.93) ;
\draw[color=blue, line width=0.75mm] (12 + 4.11, 4.0) to[] (12 + 3.89, 4.22) ;
\draw[color=blue, line width=0.75mm] (12 + 4.11, 4.0) to[] (12 + 3.89, 3.78) ;
\draw[color=blue, line width=0.75mm] (12 + 0.0, 4.07) to[] (12 + 6.0, 4.07) ;
\draw[color=blue, line width=0.75mm] (12 + 0.0, 3.93) to[] (12 + 6.0, 3.93) ;

\draw[fill=black!75!white] (v0x0.center) circle (0.2);
\draw[fill=white] (v0x1.center) circle (0.2);
\draw[fill=black!75!white] (v0x2.center) circle (0.2);
\draw[fill=white] (v0x3.center) circle (0.2);
\draw[fill=white] (v1x0.center) circle (0.2);
\draw[fill=black!75!white] (v1x1.center) circle (0.2);
\draw[fill=white] (v1x2.center) circle (0.2);
\draw[fill=black!75!white] (v1x3.center) circle (0.2);

\draw[color=black, line width=0.75mm] (8.00 + 1.56, 3.00 + -0.22) to[] (8.00 + 1.56, 3.00 + 0.22) ;
\draw[color=black, line width=0.75mm] (8.00 + 2.44, 3.00 + 0.0) to[] (8.00 + 1.56, 3.00 + 0.00) ;
\draw[color=black, line width=0.75mm] (8.00 + 2.44, 3.00 + 0.0) to[] (8.00 + 2.22, 3.00 + 0.22) ;
\draw[color=black, line width=0.75mm] (8.00 + 2.44, 3.00 + 0.0) to[] (8.00 + 2.22, 3.00 + -0.22) ;

\end{tikzpicture}}
\caption{\label{fig:BCBCexceptional} The folding of exceptional type $D_5\boxtimes A_7$ to $B_4\boxtimes C_4$. The vertices $v_i$ and $v_i'$ form the $f$-orbit $v_i''$ for each $i\in \{1, 2, 3, 4\}$. This folds $D_5$ to $B_4$ and $A_7$ to $C_4$.}
\end{figure}

In addition, with a valid bicolored automorphism that folds Dynkin diagram $\Gamma$ to Dynkin diagram $\Lambda$, one can fold the twist $\Gamma\times \Gamma$ into another binding.
We use the notation $\Lambda \ltimes \Gamma$ and $\Gamma\rtimes \Lambda$ to denote a binding of $\Lambda$ and $\Gamma$ obtained by folding one side of a twist, and $\Lambda_1\bowtie \Lambda_2$ to indicate a binding of $\Lambda_1$ and $\Lambda_2$ obtained by folding both sides of the twist $\Gamma\times\Gamma$.
An example of a sequence of folds that lead to a binding of the form $\Lambda_1\bowtie \Lambda_2$ is depicted in Figure \ref{fig:folding}.
All bindings obtained by folding twists are depicted in Figures \ref{fig:BDCD}, \ref{fig:BGCG}, \ref{fig:BC}, and \ref{fig:GD}.

\begin{remark}
    In general, $C_n$ is only defined for $n\geq 3$.
    However in this paper, we sometimes use $C_2$ to denote the opposite orientation of $B_2$.
    This notation is useful in distinguishing between the bindings $B_n\ltimes D_{n+1}$ and $C_n\ltimes D_{n+1}$ when $n = 2$ (see Figure \ref{fig:BDCD}).
\end{remark}

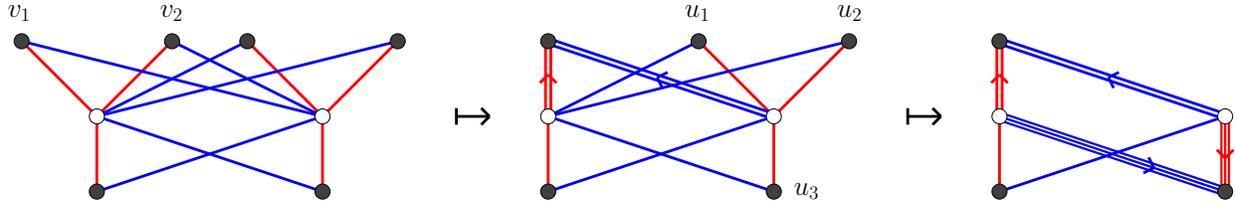
\begin{figure}[h]
\scalebox{0.5}{
\begin{tikzpicture}
\draw[color=black, line width=0.75mm] (9.00 + 0.56, 2.00 + -0.22) to[] (9.00 + 0.56, 2.00 + 0.22) ;
\draw[color=black, line width=0.75mm] (9.00 + 1.44, 2.00 + 0.0) to[] (9.00 + 0.56, 2.00 + 0.00) ;
\draw[color=black, line width=0.75mm] (9.00 + 1.44, 2.00 + 0.0) to[] (9.00 + 1.22, 2.00 + 0.22) ;
\draw[color=black, line width=0.75mm] (9.00 + 1.44, 2.00 + 0.0) to[] (9.00 + 1.22, 2.00 + -0.22) ;
\draw[color=black, line width=0.75mm] (21.00 + 0.56, 2.00 + -0.22) to[] (21.00 + 0.56, 2.00 + 0.22) ;
\draw[color=black, line width=0.75mm] (21.00 + 1.44, 2.00 + 0.0) to[] (21.00 + 0.56, 2.00 + 0.00) ;
\draw[color=black, line width=0.75mm] (21.00 + 1.44, 2.00 + 0.0) to[] (21.00 + 1.22, 2.00 + 0.22) ;
\draw[color=black, line width=0.75mm] (21.00 + 1.44, 2.00 + 0.0) to[] (21.00 + 1.22, 2.00 + -0.22) ;
\foreach \i in {0, 1}
{
    \coordinate (v\i x0) at (6 * \i + 0.00,0.00);
    \coordinate (v\i x1) at (6 * \i + 0.00,2.00);
    \coordinate (v\i x2) at (6 * \i + -2.00,4.00);
    \coordinate (v\i x3) at (6 * \i + 2.00,4.00);
    \draw[color=red,line width=0.75mm] (v\i x1) to[] (v\i x0);
    \draw[color=red,line width=0.75mm] (v\i x2) to[] (v\i x1);
    \draw[color=red,line width=0.75mm] (v\i x3) to[] (v\i x1);
}
\draw[color=blue,line width=0.75mm] (v0x1) to[] (v1x0);
\draw[color=blue,line width=0.75mm] (v0x0) to[] (v1x1);
\draw[color=blue,line width=0.75mm] (v0x1) to[] (v1x2);
\draw[color=blue,line width=0.75mm] (v0x1) to[] (v1x3);
\draw[color=blue,line width=0.75mm] (v0x2) to[] (v1x1);
\draw[color=blue,line width=0.75mm] (v0x3) to[] (v1x1);
\foreach \i in {0, 1}
{
    \draw[fill=black!75!white] (v\i x0.center) circle (0.2);
    \draw[fill=white] (v\i x1.center) circle (0.2);
}
\draw[fill=black!75!white] (v0x2.center) circle (0.2);
\draw (-2.40, 5.00) node [anchor=north west][inner sep=0.75pt] {\LARGE{$v_1$}};
\draw[fill=black!75!white] (v0x3.center) circle (0.2);
\draw (1.65, 5.00) node [anchor=north west][inner sep=0.75pt] {\LARGE{$v_2$}};
\draw[fill=black!75!white] (v1x2.center) circle (0.2);
\draw[fill=black!75!white] (v1x3.center) circle (0.2);

\foreach \i in {1, 2}
{
    \draw[color=red, line width=0.75mm] (12 * \i + 0.0, 3.11) to[] (12 * \i + -0.22, 2.89) ;
    \draw[color=red, line width=0.75mm] (12 * \i + 0.0, 3.11) to[] (12 * \i + 0.22, 2.89) ;
    \draw[color=blue, line width=0.75mm] (12 * \i + 2.8956448372144434, 3.034785054261852) to[] (12 * \i + 3.034785054261852, 2.7565046201670347) ;
    \draw[color=blue, line width=0.75mm] (12 * \i + 2.8956448372144434, 3.034785054261852) to[] (12 * \i + 3.173925271309261, 3.173925271309261) ;
}
\foreach \i in {1}
{
    \coordinate (v\i x0) at (12 * \i,0.00);
    \coordinate (v\i x1) at (12 * \i,2.00);
    \coordinate (v\i x2) at (12 * \i,4.00);
    \coordinate (v\i x3) at (12 * \i + 6.00,0.00);
    \coordinate (v\i x4) at (12 * \i + 6.00,2.00);
    \coordinate (v\i x5) at (12 * \i + 4.00,4.00);
    \coordinate (v\i x6) at (12 * \i + 8.00,4.00);
    \draw[color=red,line width=0.75mm] (v\i x0) to[] (v\i x1);
    \draw[color=red,line width=0.75mm] (v\i x3) to[] (v\i x4);
    \draw[color=red,line width=0.75mm] (v\i x4) to[] (v\i x5);
    \draw[color=red,line width=0.75mm] (v\i x4) to[] (v\i x6);
    \draw[color=blue,line width=0.75mm] (v\i x0) to[] (v\i x4);
    \draw[color=blue,line width=0.75mm] (v\i x1) to[] (v\i x3);
    \draw[color=blue,line width=0.75mm] (v\i x1) to[] (v\i x5);
    \draw[color=blue,line width=0.75mm] (v\i x1) to[] (v\i x6);
    \draw[color=red, line width=0.75mm] (12 * \i + -0.07, 2.0) to[] (12 * \i + -0.07, 4.0) ;
    \draw[color=red, line width=0.75mm] (12 * \i + 0.07, 2.0) to[] (12 * \i + 0.07, 4.0) ;
    \draw[color=blue, line width=0.75mm] (12 * \i + 5.977864056378821, 1.933592169136464) to[] (12 * \i + -0.02213594362117866, 3.933592169136464) ;
    \draw[color=blue, line width=0.75mm] (12 * \i + 6.022135943621179, 2.066407830863536) to[] (12 * \i + 0.02213594362117866, 4.066407830863536) ;
    \draw[fill=black!75!white] (v\i x0.center) circle (0.2);
    \draw[fill=white] (v\i x1.center) circle (0.2);
    \draw[fill=black!75!white] (v\i x2.center) circle (0.2);
    \draw[fill=black!75!white] (v\i x3.center) circle (0.2);
    \draw (15.60, 5.00) node [anchor=north west][inner sep=0.75pt] {\LARGE{$u_1$}};
    \draw[fill=white] (v\i x4.center) circle (0.2);
    \draw[fill=black!75!white] (v\i x5.center) circle (0.2);
    \draw (19.65, 5.00) node [anchor=north west][inner sep=0.75pt] {\LARGE{$u_2$}};
    \draw[fill=black!75!white] (v\i x6.center) circle (0.2);
    \draw (18.5, 0.25) node [anchor=north west][inner sep=0.75pt] {\LARGE{$u_3$}};
}

\foreach \i in {2}
{
    \coordinate (v\i x0) at (12 * \i,0.00);
    \coordinate (v\i x1) at (12 * \i,2.00);
    \coordinate (v\i x2) at (12 * \i,4.00);
    \coordinate (v\i x3) at (12 * \i + 6.00,0.00);
    \coordinate (v\i x4) at (12 * \i + 6.00,2.00);
    \draw[color=red,line width=0.75mm] (v\i x0) to[] (v\i x1);
    \draw[color=blue,line width=0.75mm] (v\i x0) to[] (v\i x4);
    \draw[color=blue, line width=0.75mm] (12 * \i + 4.104355162785557, 0.6318816124048144) to[] (12 * \i + 3.965214945738148, 0.9101620464996318) ;
    \draw[color=blue, line width=0.75mm] (12 * \i + 4.104355162785557, 0.6318816124048144) to[] (12 * \i + 3.826074728690739, 0.4927413953574058) ;
    \draw[color=blue, line width=0.6mm] (12 * \i, 2.0) to[] (12 * \i + 6.0, 0.0) ;
    \draw[color=blue, line width=0.6mm] (12 * \i + 0.0316227766016838, 2.094868329805051) to[] (12 * \i + 6.031622776601684, 0.09486832980505139) ;
    \draw[color=blue, line width=0.6mm] (12 * \i + -0.0316227766016838, 1.9051316701949486) to[] (12 * \i + 5.968377223398316, -0.09486832980505139) ;
    \draw[color=red, line width=0.75mm] (12 * \i + -0.07, 2.0) to[] (12 * \i + -0.07, 4.0) ;
    \draw[color=red, line width=0.75mm] (12 * \i + 0.07, 2.0) to[] (12 * \i + 0.07, 4.0) ;
    \draw[color=red, line width=0.75mm] (12 * \i + 6.0, 0.89) to[] (12 * \i + 6.22, 1.11) ;
    \draw[color=red, line width=0.75mm] (12 * \i + 6.0, 0.89) to[] (12 * \i + 5.78, 1.11) ;
    \draw[color=red, line width=0.6mm] (12 * \i + 6.0, 2.0) to[] (12 * \i + 6.0, 0.0) ;
    \draw[color=red, line width=0.6mm] (12 * \i + 6.1, 2.0) to[] (12 * \i + 6.1, 0.0) ;
    \draw[color=red, line width=0.6mm] (12 * \i + 5.9, 2.0) to[] (12 * \i + 5.9, 0.0) ;
    \draw[color=blue, line width=0.75mm] (12 * \i + 5.977864056378821, 1.933592169136464) to[] (12 * \i + -0.02213594362117866, 3.933592169136464) ;
    \draw[color=blue, line width=0.75mm] (12 * \i + 6.022135943621179, 2.066407830863536) to[] (12 * \i + 0.02213594362117866, 4.066407830863536) ;
    \draw[fill=black!75!white] (v\i x0.center) circle (0.2);
    \draw[fill=white] (v\i x1.center) circle (0.2);
    \draw[fill=black!75!white] (v\i x2.center) circle (0.2);
    \draw[fill=black!75!white] (v\i x3.center) circle (0.2);
    \draw[fill=white] (v\i x4.center) circle (0.2);
}
\end{tikzpicture}}
\caption{\label{fig:folding} A sequence of folds from $D_4\times D_4$ to $B_3\ltimes D_4$ to $B_3\bowtie_1 G_2$. The vertices $v_1, v_2$ and $u_1, u_2, u_3$ indicate the nontrivial orbits in each bicolored automorphism.}
\end{figure}

\begin{figure}[h]
        \scalebox{0.5}{
        \begin{tikzpicture}
        \draw[color=red, line width=0.75mm] (0.0, 3.11) to[] (-0.22, 2.89) ;
        \draw[color=red, line width=0.75mm] (0.0, 3.11) to[] (0.22, 2.89) ;
        \draw[color=red, line width=0.75mm] (12.0, 2.89) to[] (12.22, 3.11) ;
        \draw[color=red, line width=0.75mm] (12.0, 2.89) to[] (11.78, 3.11) ;
        \draw[color=blue, line width=0.75mm] (2.8956448372144434, 3.034785054261852) to[] (3.034785054261852, 2.7565046201670347) ;
        \draw[color=blue, line width=0.75mm] (2.8956448372144434, 3.034785054261852) to[] (3.173925271309261, 3.173925271309261) ;
        \draw[color=blue, line width=0.75mm] (15.104355162785556, 2.965214945738148) to[] (14.965214945738149, 3.2434953798329653) ;
        \draw[color=blue, line width=0.75mm] (15.104355162785556, 2.965214945738148) to[] (14.82607472869074, 2.826074728690739) ;
        \foreach \i in {0, 1}
        {
            \coordinate (v\i x0) at (12 * \i,0.00);
            \coordinate (v\i x1) at (12 * \i,2.00);
            \coordinate (v\i x2) at (12 * \i,4.00);
            \coordinate (v\i x3) at (12 * \i + 6.00,0.00);
            \coordinate (v\i x4) at (12 * \i + 6.00,2.00);
            \coordinate (v\i x5) at (12 * \i + 4.00,4.00);
            \coordinate (v\i x6) at (12 * \i + 8.00,4.00);
            \draw[color=red,line width=0.75mm] (v\i x0) to[] (v\i x1);
            \draw[color=red,line width=0.75mm] (v\i x3) to[] (v\i x4);
            \draw[color=red,line width=0.75mm] (v\i x4) to[] (v\i x5);
            \draw[color=red,line width=0.75mm] (v\i x4) to[] (v\i x6);
            \draw[color=blue,line width=0.75mm] (v\i x0) to[] (v\i x4);
            \draw[color=blue,line width=0.75mm] (v\i x1) to[] (v\i x3);
            \draw[color=blue,line width=0.75mm] (v\i x1) to[] (v\i x5);
            \draw[color=blue,line width=0.75mm] (v\i x1) to[] (v\i x6);
            \draw[color=red, line width=0.75mm] (12 * \i + -0.07, 2.0) to[] (12 * \i + -0.07, 4.0) ;
            \draw[color=red, line width=0.75mm] (12 * \i + 0.07, 2.0) to[] (12 * \i + 0.07, 4.0) ;
            \draw[color=blue, line width=0.75mm] (12 * \i + 5.977864056378821, 1.933592169136464) to[] (12 * \i + -0.02213594362117866, 3.933592169136464) ;
            \draw[color=blue, line width=0.75mm] (12 * \i + 6.022135943621179, 2.066407830863536) to[] (12 * \i + 0.02213594362117866, 4.066407830863536) ;
            \draw[fill=black!75!white] (v\i x0.center) circle (0.2);
            \draw[fill=white] (v\i x1.center) circle (0.2);
            \draw[fill=black!75!white] (v\i x2.center) circle (0.2);
            \draw[fill=black!75!white] (v\i x3.center) circle (0.2);
            \draw[fill=white] (v\i x4.center) circle (0.2);
            \draw[fill=black!75!white] (v\i x5.center) circle (0.2);
            \draw[fill=black!75!white] (v\i x6.center) circle (0.2);
        }
        \end{tikzpicture}}
        \caption{\label{fig:BDCD} The double bindings $B_3\ltimes D_4$ and $C_3\ltimes D_4$.}
        \end{figure}
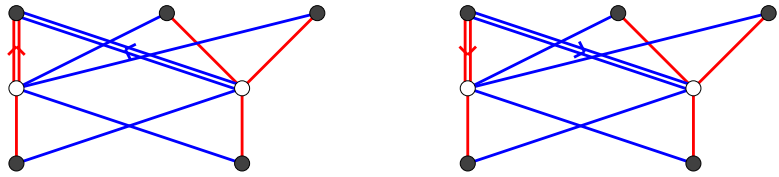

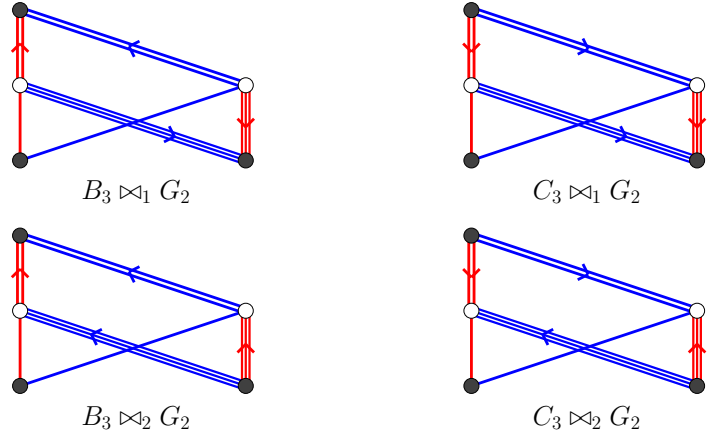
\begin{figure}[h]
        \scalebox{0.5}{
        \begin{tikzpicture}
        \foreach \i in {0, 1}
        {
            \coordinate (v\i x0) at (12 * \i,0.00);
            \coordinate (v\i x1) at (12 * \i,2.00);
            \coordinate (v\i x2) at (12 * \i,4.00);
            \coordinate (v\i x3) at (12 * \i + 6.00,0.00);
            \coordinate (v\i x4) at (12 * \i + 6.00,2.00);
            \draw[color=red,line width=0.75mm] (v\i x0) to[] (v\i x1);
            \draw[color=blue,line width=0.75mm] (v\i x0) to[] (v\i x4);
            \draw[color=blue, line width=0.75mm] (12 * \i + 4.104355162785557, 0.6318816124048144) to[] (12 * \i + 3.965214945738148, 0.9101620464996318) ;
            \draw[color=blue, line width=0.75mm] (12 * \i + 4.104355162785557, 0.6318816124048144) to[] (12 * \i + 3.826074728690739, 0.4927413953574058) ;
            \draw[color=blue, line width=0.6mm] (12 * \i, 2.0) to[] (12 * \i + 6.0, 0.0) ;
            \draw[color=blue, line width=0.6mm] (12 * \i + 0.0316227766016838, 2.094868329805051) to[] (12 * \i + 6.031622776601684, 0.09486832980505139) ;
            \draw[color=blue, line width=0.6mm] (12 * \i + -0.0316227766016838, 1.9051316701949486) to[] (12 * \i + 5.968377223398316, -0.09486832980505139) ;
            \draw[color=red, line width=0.75mm] (12 * \i + -0.07, 2.0) to[] (12 * \i + -0.07, 4.0) ;
            \draw[color=red, line width=0.75mm] (12 * \i + 0.07, 2.0) to[] (12 * \i + 0.07, 4.0) ;
            \draw[color=red, line width=0.75mm] (12 * \i + 6.0, 0.89) to[] (12 * \i + 6.22, 1.11) ;
            \draw[color=red, line width=0.75mm] (12 * \i + 6.0, 0.89) to[] (12 * \i + 5.78, 1.11) ;
            \draw[color=red, line width=0.6mm] (12 * \i + 6.0, 2.0) to[] (12 * \i + 6.0, 0.0) ;
            \draw[color=red, line width=0.6mm] (12 * \i + 6.1, 2.0) to[] (12 * \i + 6.1, 0.0) ;
            \draw[color=red, line width=0.6mm] (12 * \i + 5.9, 2.0) to[] (12 * \i + 5.9, 0.0) ;
            \draw[color=blue, line width=0.75mm] (12 * \i + 5.977864056378821, 1.933592169136464) to[] (12 * \i + -0.02213594362117866, 3.933592169136464) ;
            \draw[color=blue, line width=0.75mm] (12 * \i + 6.022135943621179, 2.066407830863536) to[] (12 * \i + 0.02213594362117866, 4.066407830863536) ;
            \draw[fill=black!75!white] (v\i x0.center) circle (0.2);
            \draw[fill=white] (v\i x1.center) circle (0.2);
            \draw[fill=black!75!white] (v\i x2.center) circle (0.2);
            \draw[fill=black!75!white] (v\i x3.center) circle (0.2);
            \draw[fill=white] (v\i x4.center) circle (0.2);
        }
        \foreach \i in {0, 1}
        {
            \draw[color=red, line width=0.75mm] (0.0, -6.00 * \i + 3.11) to[] (-0.22, -6.00 * \i + 2.89) ;
            \draw[color=red, line width=0.75mm] (0.0, -6.00 * \i + 3.11) to[] (0.22, -6.00 * \i + 2.89) ;
            \draw[color=red, line width=0.75mm] (12.0, -6.00 * \i + 2.89) to[] (12.22, -6.00 * \i + 3.11) ;
            \draw[color=red, line width=0.75mm] (12.0, -6.00 * \i + 2.89) to[] (11.78, -6.00 * \i + 3.11) ;
            \draw[color=blue, line width=0.75mm] (2.8956448372144434, -6.00 * \i + 3.034785054261852) to[] (3.034785054261852, -6.00 * \i + 2.7565046201670347) ;
            \draw[color=blue, line width=0.75mm] (2.8956448372144434, -6.00 * \i + 3.034785054261852) to[] (3.173925271309261, -6.00 * \i + 3.173925271309261) ;
            \draw[color=blue, line width=0.75mm] (15.104355162785556, -6.00 * \i + 2.965214945738148) to[] (14.965214945738149, -6.00 * \i + 3.2434953798329653) ;
            \draw[color=blue, line width=0.75mm] (15.104355162785556, -6.00 * \i + 2.965214945738148) to[] (14.82607472869074, -6.00 * \i + 2.826074728690739) ;
            \coordinate (w\i x0) at (12 * \i,-6.00);
            \coordinate (w\i x1) at (12 * \i,-6.00 + 2.00);
            \coordinate (w\i x2) at (12 * \i,-6.00 + 4.00);
            \coordinate (w\i x3) at (12 * \i + 6.00,-6.00);
            \coordinate (w\i x4) at (12 * \i + 6.00,-6.00 + 2.00);
            \draw[color=red,line width=0.75mm] (w\i x0) to[] (w\i x1);
            \draw[color=blue,line width=0.75mm] (w\i x0) to[] (w\i x4);
            \draw[color=blue, line width=0.75mm] (12 * \i + 1.8956448372144434, -6.00 + 1.3681183875951854) to[] (12 * \i + 2.034785054261852, -6.00 + 1.089837953500368) ;
            \draw[color=blue, line width=0.75mm] (12 * \i + 1.8956448372144434, -6.00 + 1.3681183875951854) to[] (12 * \i + 2.173925271309261, -6.00 + 1.5072586046425942) ;
            \draw[color=blue, line width=0.6mm] (12 * \i, -6.00 + 2.0) to[] (12 * \i + 6.0, -6.00) ;
            \draw[color=blue, line width=0.6mm] (12 * \i + 0.0316227766016838, -6.00 + 2.094868329805051) to[] (12 * \i + 6.031622776601684, -6.00 + 0.09486832980505139) ;
            \draw[color=blue, line width=0.6mm] (12 * \i + -0.0316227766016838, -6.00 + 1.9051316701949486) to[] (12 * \i + 5.968377223398316, -6.00 + -0.09486832980505139) ;
            \draw[color=red, line width=0.75mm] (12 * \i + -0.07, -6.00 + 2.0) to[] (12 * \i + -0.07, -6.00 + 4.0) ;
            \draw[color=red, line width=0.75mm] (12 * \i + 0.07, -6.00 + 2.0) to[] (12 * \i + 0.07, -6.00 + 4.0) ;
            \draw[color=red, line width=0.75mm] (12 * \i + 6.0, -6.00 + 1.11) to[] (12 * \i + 5.78, -6.00 + 0.89) ;
            \draw[color=red, line width=0.75mm] (12 * \i + 6.0, -6.00 + 1.11) to[] (12 * \i + 6.22, -6.00 + 0.89) ;
            \draw[color=red, line width=0.6mm] (12 * \i + 6.0, -6.00 + 2.0) to[] (12 * \i + 6.0, -6.00 + 0.0) ;
            \draw[color=red, line width=0.6mm] (12 * \i + 6.1, -6.00 + 2.0) to[] (12 * \i + 6.1, -6.00 + 0.0) ;
            \draw[color=red, line width=0.6mm] (12 * \i + 5.9, -6.00 + 2.0) to[] (12 * \i + 5.9, -6.00 + 0.0) ;
            \draw[color=blue, line width=0.75mm] (12 * \i + 5.977864056378821, -6.00 + 1.933592169136464) to[] (12 * \i + -0.02213594362117866, -6.00 + 3.933592169136464) ;
            \draw[color=blue, line width=0.75mm] (12 * \i + 6.022135943621179, -6.00 + 2.066407830863536) to[] (12 * \i + 0.02213594362117866, -6.00 + 4.066407830863536) ;
            \draw[fill=black!75!white] (w\i x0.center) circle (0.2);
            \draw[fill=white] (w\i x1.center) circle (0.2);
            \draw[fill=black!75!white] (w\i x2.center) circle (0.2);
            \draw[fill=black!75!white] (w\i x3.center) circle (0.2);
            \draw[fill=white] (w\i x4.center) circle (0.2);
        }
        \draw (1.60, -0.50) node [anchor=north west][inner sep=0.75pt] {\LARGE{$B_3\bowtie_1 G_2$}};
        \draw (1.60, -6.50) node [anchor=north west][inner sep=0.75pt] {\LARGE{$B_3\bowtie_2 G_2$}};
        \draw (13.60, -0.50) node [anchor=north west][inner sep=0.75pt] {\LARGE{$C_3\bowtie_1 G_2$}};
        \draw (13.60, -6.50) node [anchor=north west][inner sep=0.75pt] {\LARGE{$C_3\bowtie_2 G_2$}};
        \end{tikzpicture}}
        \caption{\label{fig:BGCG} The double bindings $B_3\bowtie_{1,2} G_2$ and $C_3\bowtie_{1,2} G_2$.}
    \end{figure}

\begin{figure}[h]
        \scalebox{0.5}{
        \begin{tikzpicture}
        \foreach \i in {0, 1, 2, 3, 4}
        {
            \coordinate (v0x\i) at (2 * \i, 0.00);
            \coordinate (v1x\i) at (2 * \i, 2.00);
        }
        \draw[color=red, line width=0.75mm] (7.11, 0.0) to[] (6.89, 0.22) ;
        \draw[color=red, line width=0.75mm] (7.11, 0.0) to[] (6.89, -0.22) ;
        \draw[color=red, line width=0.75mm] (6.0, 0.07) to[] (8.0, 0.07) ;
        \draw[color=red, line width=0.75mm] (6.0, -0.07) to[] (8.0, -0.07) ;
        \draw[color=red, line width=0.75mm] (6.89, 2.0) to[] (7.11, 1.78) ;
        \draw[color=red, line width=0.75mm] (6.89, 2.0) to[] (7.11, 2.22) ;
        \draw[color=red, line width=0.75mm] (8.0, 1.93) to[] (6.0, 1.93) ;
        \draw[color=red, line width=0.75mm] (8.0, 2.07) to[] (6.0, 2.07) ;
        \draw[color=blue, line width=0.75mm] (7.42221825406948, 1.4222182540694799) to[] (7.733345237791561, 1.4222182540694797) ;
        \draw[color=blue, line width=0.75mm] (7.42221825406948, 1.4222182540694799) to[] (7.422218254069479, 1.7333452377915606) ;
        \draw[color=blue, line width=0.75mm] (8.049497474683058, 1.9505025253169417) to[] (6.049497474683059, -0.049497474683058325) ;
        \draw[color=blue, line width=0.75mm] (7.950502525316941, 2.0494974746830583) to[] (5.950502525316941, 0.049497474683058325) ;
        \draw[color=blue, line width=0.75mm] (7.57778174593052, 0.42221825406947977) to[] (7.577781745930521, 0.7333452377915607) ;
        \draw[color=blue, line width=0.75mm] (7.57778174593052, 0.42221825406947977) to[] (7.266654762208439, 0.42221825406947977) ;
        \draw[color=blue, line width=0.75mm] (6.049497474683059, 2.0494974746830583) to[] (8.049497474683058, 0.049497474683058325) ;
        \draw[color=blue, line width=0.75mm] (5.950502525316941, 1.9505025253169417) to[] (7.950502525316941, -0.049497474683058325) ;
        \foreach \i in {0, 1, 2}
        {
            \pgfmathtruncatemacro{\j}{\i + 1};
            \draw[color=red,line width=0.75mm] (v0x\i) to[] (v0x\j);
            \draw[color=red,line width=0.75mm] (v1x\i) to[] (v1x\j);
            \draw[color=blue,line width=0.75mm] (v0x\i) to[] (v1x\j);
            \draw[color=blue,line width=0.75mm] (v1x\i) to[] (v0x\j);
        }
        
        \foreach \i in {0, 2, 4}
        {
            \draw[fill=black!75!white] (v0x\i.center) circle (0.2);
            \draw[fill=black!75!white] (v1x\i.center) circle (0.2);
        }
        \foreach \i in {1, 3}
        {
            \draw[fill=white] (v1x\i.center) circle (0.2);
            \draw[fill=white] (v0x\i.center) circle (0.2);
        }
        \end{tikzpicture}}
        \caption{\label{fig:BC} The double binding $B_5\bowtie C_5$.}
        \end{figure}
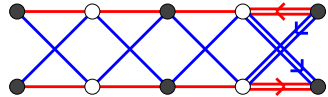
    
    \begin{figure}[h]
        \scalebox{0.5}{
        \begin{tikzpicture}
        \draw[color=red, line width=0.75mm] (0.0, 0.89) to[] (0.22, 1.11) ;
        \draw[color=red, line width=0.75mm] (0.0, 0.89) to[] (-0.22, 1.11) ;
        \draw[color=red, line width=0.75mm] (12.0, 1.11) to[] (11.78, 0.89) ;
        \draw[color=red, line width=0.75mm] (12.0, 1.11) to[] (12.22, 0.89) ;
        \draw[color=blue, line width=0.75mm] (1.8956448372144434, 0.6318816124048144) to[] (2.173925271309261, 0.4927413953574058) ;
        \draw[color=blue, line width=0.75mm] (1.8956448372144434, 0.6318816124048144) to[] (2.034785054261852, 0.9101620464996318) ;
        \draw[color=blue, line width=0.75mm] (16.104355162785557, 1.3681183875951854) to[] (15.826074728690739, 1.5072586046425942) ;
        \draw[color=blue, line width=0.75mm] (16.104355162785557, 1.3681183875951854) to[] (15.965214945738148, 1.089837953500368) ;
        \foreach \i in {0, 1}
        {
            \coordinate (v\i x0) at (12 * \i,0.00);
            \coordinate (v\i x1) at (12 * \i,2.00);
            \coordinate (v\i x3) at (12 * \i + 6.00,0.00);
            \coordinate (v\i x4) at (12 * \i + 6.00,2.00);
            \coordinate (v\i x5) at (12 * \i + 4.00,4.00);
            \coordinate (v\i x6) at (12 * \i + 8.00,4.00);
            \draw[color=red,line width=0.75mm] (v\i x3) to[] (v\i x4);
            \draw[color=red,line width=0.75mm] (v\i x4) to[] (v\i x5);
            \draw[color=red,line width=0.75mm] (v\i x4) to[] (v\i x6);
            \draw[color=blue,line width=0.75mm] (v\i x1) to[] (v\i x3);
            \draw[color=blue,line width=0.75mm] (v\i x1) to[] (v\i x5);
            \draw[color=blue,line width=0.75mm] (v\i x1) to[] (v\i x6);
            \draw[color=red, line width=0.6mm] (12 * \i + 0.0, 2.0) to[] (12 * \i + 0.0, 0.0) ;
            \draw[color=red, line width=0.6mm] (12 * \i + 0.1, 2.0) to[] (12 * \i + 0.1, 0.0) ;
            \draw[color=red, line width=0.6mm] (12 * \i + -0.1, 2.0) to[] (12 * \i + -0.1, 0.0) ;
            \draw[color=blue, line width=0.6mm] (12 * \i + 6.0, 2.0) to[] (12 * \i + 0.0, 0.0) ;
            \draw[color=blue, line width=0.6mm] (12 * \i + 6.031622776601684, 1.9051316701949486) to[] (12 * \i + 0.0316227766016838, -0.09486832980505139) ;
            \draw[color=blue, line width=0.6mm] (12 * \i + 5.968377223398316, 2.094868329805051) to[] (12 * \i + -0.0316227766016838, 0.09486832980505139) ;
            \draw[fill=black!75!white] (v\i x0.center) circle (0.2);
            \draw[fill=white] (v\i x1.center) circle (0.2);
            \draw[fill=black!75!white] (v\i x3.center) circle (0.2);
            \draw[fill=white] (v\i x4.center) circle (0.2);
            \draw[fill=black!75!white] (v\i x5.center) circle (0.2);
            \draw[fill=black!75!white] (v\i x6.center) circle (0.2);
        }
        \end{tikzpicture}}
        \caption{\label{fig:GD} The double bindings $G_2\ltimes_1 D_4$ and $G_2\ltimes_2 D_4$.}
        \end{figure}
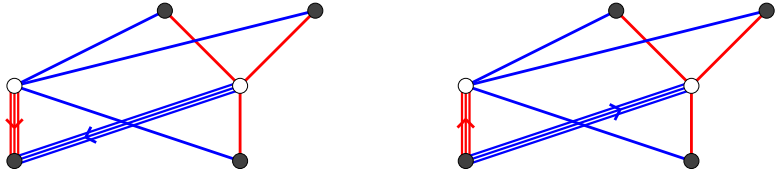

\newpage
\begin{proposition}[\cite{stembridge}]
\label{StembridgeAdmissibleBindings}
    A Dynkin biagram is admissible if and only if all of its bindings are admissible.
\end{proposition}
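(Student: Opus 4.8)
The plan is to reduce admissibility to a collection of commutation conditions indexed by pairs of connected components of $\Gamma$, and then to recognize each such condition as the admissibility of a binding. Recall that $(\Gamma,\Delta)$ is admissible exactly when $\Gamma\Delta=\Delta\Gamma$, equivalently (by the path-counting remark) when for every pair of vertices $i,j$ the number of red--blue paths from $i$ to $j$ equals the number of blue--red paths. Write $\Gamma=\bigsqcup_a\Gamma_a$ as the disjoint union of its connected components, so that $\Gamma$ is block-diagonal and $\Delta$ decomposes into blocks $\Delta_{ab}$ with rows indexed by $\Gamma_a$ and columns by $\Gamma_b$. Because $\Gamma_{ik}\ne 0$ forces $i$ and $k$ into the same red component, the $(a,b)$-block of $\Gamma\Delta$ is $\Gamma_a\Delta_{ab}$ while that of $\Delta\Gamma$ is $\Delta_{ab}\Gamma_b$. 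Hence $\Gamma\Delta=\Delta\Gamma$ if and only if $\Gamma_a\Delta_{ab}=\Delta_{ab}\Gamma_b$ for every ordered pair $(a,b)$ of components. In path-counting language, the point is that any red--blue (or blue--red) path between $i\in\Gamma_a$ and $j\in\Gamma_b$ is confined to $\Gamma_a\cup\Gamma_b$ and to the blue edges among those vertices, so the two counts depend only on the sub-biagram supported there.

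Next I would identify, for each unordered pair $\{a,b\}$ with $a\ne b$, the pair of conditions ``$\Gamma_a\Delta_{ab}=\Delta_{ab}\Gamma_b$ and $\Gamma_b\Delta_{ba}=\Delta_{ba}\Gamma_a$'' with the admissibility of the binding $(\Gamma_a\sqcup\Gamma_b, \text{ blue edges between }\Gamma_a\text{ and }\Gamma_b)$: writing $\Gamma$ and $\Delta$ of this binding in block form shows its commutator is block off-diagonal with blocks $\Gamma_a\Delta_{ab}-\Delta_{ab}\Gamma_b$ and $\Gamma_b\Delta_{ba}-\Delta_{ba}\Gamma_a$. Any coarser binding, obtained by grouping the components into two blocks $P$ and $Q$, has commutator whose nonzero blocks are exactly those for pairs $(a,b)$ with $a\in P$, $b\in Q$, so it is admissible iff all of its constituent two-component bindings are; thus ``all bindings of $(\Gamma,\Delta)$ are admissible'' is the same as ``$\Gamma_a\Delta_{ab}=\Delta_{ab}\Gamma_b$ for all $a\ne b$''. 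Combining this with the block description of $[\Gamma,\Delta]$ from the first paragraph proves both directions at once, once the diagonal blocks $\Gamma_a\Delta_{aa}=\Delta_{aa}\Gamma_a$ are dealt with: these are vacuous unless $\Delta$ carries an edge internal to a red component, and such an edge is in turn exhibited inside a binding by restricting to a vertex subset that disconnects the red path joining its endpoints.

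Concretely, the forward direction is then immediate: if $(\Gamma,\Delta)$ is admissible then $[\Gamma,\Delta]=0$, so every block cut out by a binding vanishes and that binding is admissible --- more conceptually, restricting to a union of connected red components preserves all the path-count equalities, since a red--blue or blue--red path between two vertices of such a subset never leaves it. For the converse, admissibility of every binding makes every off-diagonal block of $[\Gamma,\Delta]$ vanish, the diagonal blocks are handled as above, and hence $[\Gamma,\Delta]=0$, i.e.\ $(\Gamma,\Delta)$ is admissible.

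The crux --- and the only genuinely substantive step --- is the localization in the first paragraph: the observation that red and blue edges cannot ``mix across'' distinct red components within a single length-two path, so that the global commutation relation factors through pairwise bindings. Beyond that, the main points demanding care are the bookkeeping for non-simply-laced components, where $\Gamma$ and $\Delta$ need not be symmetric, so an unordered pair $\{a,b\}$ yields two a priori distinct block equations; and the borderline case of a blue edge internal to a red component, which must be shown either not to occur for admissible biagrams or to be detected by an appropriate binding.
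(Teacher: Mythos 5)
The paper never proves this proposition---it is imported from Stembridge with a citation---so there is no in-paper argument to compare with; judged on its own terms, your block decomposition is the right localization and it does establish the forward direction and the off-diagonal part of the converse. Writing $\Gamma=\bigoplus_a\Gamma_a$ (block diagonal over red components) and $\Delta=(\Delta_{ab})$ in the corresponding blocks, $\Gamma\Delta=\Delta\Gamma$ is equivalent to $\Gamma_a\Delta_{ab}=\Delta_{ab}\Gamma_b$ for all ordered pairs $(a,b)$, and for $a\neq b$ the two conditions attached to $(a,b)$ and $(b,a)$ are exactly the admissibility of the binding of $\Gamma_a$ and $\Gamma_b$; your care about the non-symmetric (non-simply-laced) case is correct.

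The genuine gap is the diagonal blocks. The sentence asserting that a $\Delta$-edge internal to a red component is ``exhibited inside a binding by restricting to a vertex subset that disconnects the red path joining its endpoints'' is not an argument: bindings are dictated by the decomposition of $\Gamma$ into connected components, not by arbitrary vertex restrictions, so no binding between distinct components ever sees the condition $\Gamma_a\Delta_{aa}=\Delta_{aa}\Gamma_a$, and you yourself flag the case as unresolved. Worse, with the paper's literal definition of binding (two red components, every blue edge between them) the converse is false for general Dynkin biagrams: take $\Gamma\cong A_4$ on the path $1-2-3-4$ and let $\Delta$ consist of the single edge $\{1,4\}$, so $\Delta\cong A_2\sqcup A_1\sqcup A_1$. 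This is a legitimate Dynkin biagram (the union is bipartite, no shared edges), it has no bindings at all because $\Gamma$ is connected, yet $(\Gamma\Delta)_{24}=1\neq 0=(\Delta\Gamma)_{24}$. To make both the statement and your proof correct you must either adopt Stembridge's convention that the bindings of a biagram include the self-binding of each $\Gamma$-component with itself, i.e.\ the pair $(\Gamma_a,\Delta_{aa})$, whose admissibility is precisely the diagonal block condition, or restrict to biagrams in which every $\Delta$-edge joins two distinct $\Gamma$-components (as holds for everything the paper builds by gluing bindings along a component graph), in which case $\Delta_{aa}=0$ and the diagonal conditions are vacuous. Either repair is short, but as written the converse direction has a hole.
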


By definition, all parallel bindings are tensor products, which are admissible.
We also know that twists are admissible.
Using Proposition \ref{StembridgeAdmissibleBindings}, we can string together various sequences of parallel bindings and twists to form admissible Dynkin biagrams. For example, 
\begin{align*}
    B_n\ast C_n &:= (B_2\equiv B_2\equiv \cdots \equiv B_2\times B_2)^* \text{ and}\\
    F_4\ast F_4 &:= (B_2\equiv B_2\times B_2\equiv B_2)^*
\end{align*}
are both admissible bindings (see Figure \ref{fig:BCFF}).

We can construct even more bindings by dualizing strings of parallel bindings and folded twists as follows:
\begin{align*}
    A_{2n-1}\ast B_n &:= (D_3\equiv D_3\equiv \cdots \equiv D_3\rtimes B_2)^*,\\
    A_{2n-1}\ast C_n &:= (D_3\equiv D_3\equiv \cdots \equiv D_3\rtimes C_2)^*,\\
    B_n\ast D_{n+1} &:= (C_2\equiv C_2\equiv \cdots \equiv C_2\ltimes D_3)^*,\\
    C_n\ast D_{n+1} &:= (B_2\equiv B_2\equiv \cdots \equiv B_2\ltimes D_3)^*,\\
    E_6\ast_{1} F_4 &:= (B_2\equiv B_2\ltimes D_3\equiv D_3)^*,\\
    E_6\ast_{2} F_4 &:= (C_2\equiv C_2\ltimes D_3\equiv D_3)^*.
\end{align*}
Each of these bindings are depicted in Figures \ref{fig:BDCDBinding}, \ref{fig:ABAC}, and \ref{fig:EF}.

\begin{figure}
\scalebox{0.5}{
\begin{tikzpicture}
\foreach \i in {0, 1, 2, 3, 4}
{
    \coordinate (v0x\i) at (2 * \i, 0.00);
    \coordinate (v1x\i) at (2 * \i, 2.00);
}
\foreach \i in {0, 4}
{
    \draw[color=red, line width=0.75mm] (2 * \i + 7.11, 0.0) to[] (2 * \i + 6.89, 0.22) ;
    \draw[color=red, line width=0.75mm] (2 * \i + 7.11, 0.0) to[] (2 * \i + 6.89, -0.22) ;
    \draw[color=red, line width=0.75mm] (2 * \i + 6.0, 0.07) to[] (2 * \i + 8.0, 0.07) ;
    \draw[color=red, line width=0.75mm] (2 * \i + 6.0, -0.07) to[] (2 * \i + 8.0, -0.07) ;
    \draw[color=red, line width=0.75mm] (2 * \i + 6.89, 2.0) to[] (2 * \i + 7.11, 1.78) ;
    \draw[color=red, line width=0.75mm] (2 * \i + 6.89, 2.0) to[] (2 * \i + 7.11, 2.22) ;
    \draw[color=red, line width=0.75mm] (2 * \i + 8.0, 1.93) to[] (2 * \i + 6.0, 1.93) ;
    \draw[color=red, line width=0.75mm] (2 * \i + 8.0, 2.07) to[] (2 * \i + 6.0, 2.07) ;
}
\foreach \i in {0, 1, 2, 3, 6, 7}
{
    \pgfmathtruncatemacro{\j}{\i + 5};
    \coordinate (v0x\j) at (2 * \i + 12, 0.00);
    \coordinate (v1x\j) at (2 * \i + 12, 2.00);
    \draw[color=blue, line width=0.75mm] (2 * \i, 1.11) to[] (2 * \i + -0.22, 0.89) ;
    \draw[color=blue, line width=0.75mm] (2 * \i, 1.11) to[] (2 * \i + 0.22, 0.89) ;
    \draw[color=blue, line width=0.75mm] (2 * \i + -0.07, 0.0) to[] (2 * \i + -0.07, 2.0) ;
    \draw[color=blue, line width=0.75mm] (2 * \i + 0.07, 0.0) to[] (2 * \i + 0.07, 2.0) ;
}
\foreach \i in {0, 1, 2, 5, 7}
{
    \pgfmathtruncatemacro{\j}{\i + 1};
    \draw[color=red,line width=0.75mm] (v0x\i) to[] (v0x\j);
    \draw[color=red,line width=0.75mm] (v1x\i) to[] (v1x\j);
}
\foreach \i in {4, 8, 9}
{
    \draw[color=blue, line width=0.75mm] (2 * \i, 0.89) to[] (2 * \i + 0.22, 1.11) ;
    \draw[color=blue, line width=0.75mm] (2 * \i, 0.89) to[] (2 * \i + -0.22, 1.11) ;
    \draw[color=blue, line width=0.75mm] (2 * \i + 0.07, 2.0) to[] (2 * \i + 0.07, 0.0) ;
    \draw[color=blue, line width=0.75mm] (2 * \i + -0.07, 2.0) to[] (2 * \i + -0.07, 0.0) ;
}
\foreach \i in {0, 2, 4, 5, 7}
{
    \draw[fill=black!75!white] (v0x\i.center) circle (0.2);
    \draw[fill=white] (v1x\i.center) circle (0.2);
}
\foreach \i in {1, 3, 6, 8}
{
    \draw[fill=black!75!white] (v1x\i.center) circle (0.2);
    \draw[fill=white] (v0x\i.center) circle (0.2);
}
\end{tikzpicture}}
\caption{\label{fig:BCFF} The bindings $B_5\ast C_5$ and $F_4\ast F_4$.}
\end{figure}
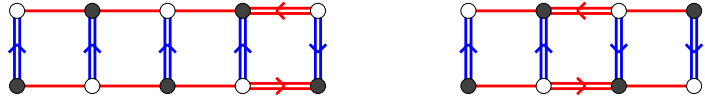

\begin{figure}[h]
\scalebox{0.5}{
\begin{tikzpicture}
\draw[color=red, line width=0.75mm] (0.0, 3.11) to[] (-0.22, 2.89) ;
\draw[color=red, line width=0.75mm] (0.0, 3.11) to[] (0.22, 2.89) ;
\draw[color=red, line width=0.75mm] (14.0, 2.89) to[] (14.22, 3.11) ;
\draw[color=red, line width=0.75mm] (14.0, 2.89) to[] (13.78, 3.11) ;
\draw[color=blue, line width=0.75mm] (2.11, 0.0) to[] (1.89, 0.22) ;
\draw[color=blue, line width=0.75mm] (2.11, 0.0) to[] (1.89, -0.22) ;
\draw[color=blue, line width=0.75mm] (2.11, 2.0) to[] (1.89, 2.22) ;
\draw[color=blue, line width=0.75mm] (2.11, 2.0) to[] (1.89, 1.78) ;
\draw[color=blue, line width=0.75mm] (15.89, 0.0) to[] (16.11, -0.22) ;
\draw[color=blue, line width=0.75mm] (15.89, 0.0) to[] (16.11, 0.22) ;
\draw[color=blue, line width=0.75mm] (15.89, 2.0) to[] (16.11, 1.78) ;
\draw[color=blue, line width=0.75mm] (15.89, 2.0) to[] (16.11, 2.22) ;
\foreach \i in {0, 1}
{
    \coordinate (v\i x0) at (14 * \i,0.00);
    \coordinate (v\i x1) at (14 * \i,2.00);
    \coordinate (v\i x2) at (14 * \i,4.00);
    \coordinate (v\i x3) at (14 * \i + 4.00,0.00);
    \coordinate (v\i x4) at (14 * \i + 4.00,2.00);
    \coordinate (v\i x5) at (14 * \i + 3.00,4.00);
    \coordinate (v\i x6) at (14 * \i + 6.00,3.00);
    \draw[color=red,line width=0.75mm] (v\i x0) to[] (v\i x1);
    \draw[color=red,line width=0.75mm] (v\i x3) to[] (v\i x4);
    \draw[color=red,line width=0.75mm] (v\i x4) to[] (v\i x5);
    \draw[color=red,line width=0.75mm] (v\i x4) to[] (v\i x6);
    \draw[color=blue,line width=0.75mm] (v\i x2) to[] (v\i x5);
    \draw[color=blue,line width=0.75mm] (v\i x2) to[] (v\i x6);
    \draw[color=blue, line width=0.75mm] (14 * \i + 0.0, 0.07) to[] (14 * \i + 4.0, 0.07) ;
    \draw[color=blue, line width=0.75mm] (14 * \i + 0.0, -0.07) to[] (14 * \i + 4.0, -0.07) ;
    \draw[color=blue, line width=0.75mm] (14 * \i + 0.0, 2.07) to[] (14 * \i + 4.0, 2.07) ;
    \draw[color=blue, line width=0.75mm] (14 * \i + 0.0, 1.93) to[] (14 * \i + 4.0, 1.93) ;
    \draw[color=red, line width=0.75mm] (14 * \i + -0.07, 2.0) to[] (14 * \i + -0.07, 4.0) ;
    \draw[color=red, line width=0.75mm] (14 * \i + 0.07, 2.0) to[] (14 * \i + 0.07, 4.0) ;
    \draw[fill=black!75!white] (v\i x0.center) circle (0.2);
    \draw[fill=white] (v\i x1.center) circle (0.2);
    \draw[fill=black!75!white] (v\i x2.center) circle (0.2);
    \draw[fill=white] (v\i x3.center) circle (0.2);
    \draw[fill=black!75!white] (v\i x4.center) circle (0.2);
    \draw[fill=white] (v\i x5.center) circle (0.2);
    \draw[fill=white] (v\i x6.center) circle (0.2);
}
\end{tikzpicture}}
\caption{\label{fig:BDCDBinding} The bindings $B_3\ast D_4$ and $C_3\ast D_4$.}
\end{figure}
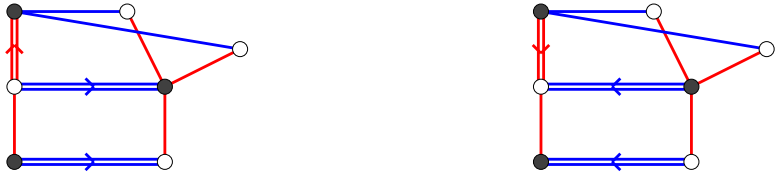

\begin{figure}
\scalebox{0.5}{
\begin{tikzpicture}
\draw[color=red, line width=0.75mm] (8.0, 4.61) to[] (7.78, 4.39) ;
\draw[color=red, line width=0.75mm] (8.0, 4.61) to[] (8.22, 4.39) ;
\draw[color=red, line width=0.75mm] (12 + 8.0, 4.39) to[] (12 + 8.22, 4.61) ;
\draw[color=red, line width=0.75mm] (12 + 8.0, 4.39) to[] (12 + 7.78, 4.61) ;
\draw[color=blue, line width=0.75mm] (5.109620033406936, 5.259135002783911) to[] (4.872109961025241, 5.460105064029962) ;
\draw[color=blue, line width=0.75mm] (5.109620033406936, 5.259135002783911) to[] (4.9086499721608865, 5.021624930402215) ;
\draw[color=blue, line width=0.75mm] (12 + 4.890379966593064, 5.240864997216089) to[] (12 + 5.127890038974759, 5.039894935970038) ;
\draw[color=blue, line width=0.75mm] (12 + 4.890379966593064, 5.240864997216089) to[] (12 + 5.0913500278391135, 5.478375069597785) ;
\foreach \k in {0, 1}
{
\coordinate (\k v0x0) at (12 * \k + 0.00,0.00);
\coordinate (\k v0x1) at (12 * \k + 0.00,2.00);
\coordinate (\k v0x2) at (12 * \k + 0.00,4.00);
\coordinate (\k v0x3) at (12 * \k + 2.00,5.00);
\coordinate (\k v0x4) at (12 * \k + 3.50,3.00);
\coordinate (\k v0x5) at (12 * \k + 3.50,1.00);
\coordinate (\k v0x6) at (12 * \k + 3.50,-1.00);
\coordinate (\k v1x0) at (12 * \k + 8.00,-0.50);
\coordinate (\k v1x1) at (12 * \k + 8.00,1.50);
\coordinate (\k v1x2) at (12 * \k + 8.00,3.50);
\coordinate (\k v1x3) at (12 * \k + 8.00,5.50);
\foreach \i in {0, 1, 2, 3, 4, 5}
{
    \pgfmathtruncatemacro{\j}{\i + 1};
    \draw[color=red,line width=0.75mm] (\k v0x\i) to[] (\k v0x\j);
}
\draw[color=red,line width=0.75mm] (\k v1x0) to[] (\k v1x1);
\draw[color=red,line width=0.75mm] (\k v1x1) to[] (\k v1x2);
\draw[color=red, line width=0.75mm] (12 * \k + 7.93, 3.5) to[] (12 * \k + 7.93, 5.5) ;
\draw[color=red, line width=0.75mm] (12 * \k + 8.07, 3.5) to[] (12 * \k + 8.07, 5.5) ;
\foreach \i in {0, 1, 2}
{
    \pgfmathtruncatemacro{\j}{6 - \i};
    \draw[color=blue,line width=0.75mm] (\k v0x\i) to[] (\k v1x\i);
    \draw[color=blue,line width=0.75mm] (\k v0x\j) to[] (\k v1x\i);
}
\draw[color=blue, line width=0.75mm] (12 * \k + 1.994186816410238, 5.069758203077142) to[] (12 * \k + 7.994186816410238, 5.569758203077142) ;
\draw[color=blue, line width=0.75mm] (12 * \k + 2.005813183589762, 4.930241796922858) to[] (12 * \k + 8.005813183589762, 5.430241796922858) ;
\foreach \i in {0, 2, 4, 6}
{
    \draw[fill=black!75!white] (\k v0x\i.center) circle (0.2);
}
\foreach \i in {1, 3, 5}
{
    \draw[fill=white] (\k v0x\i.center) circle (0.2);
}
\draw[fill=white] (\k v1x0.center) circle (0.2);
\draw[fill=black!75!white] (\k v1x1.center) circle (0.2);
\draw[fill=white] (\k v1x2.center) circle (0.2);
\draw[fill=black!75!white] (\k v1x3.center) circle (0.2);
}
\end{tikzpicture}}
\caption{\label{fig:ABAC} The bindings $A_7\ast B_4$ and $A_7\ast C_4$.}
\end{figure}
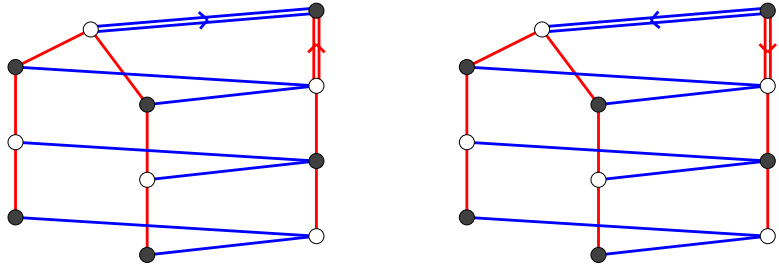

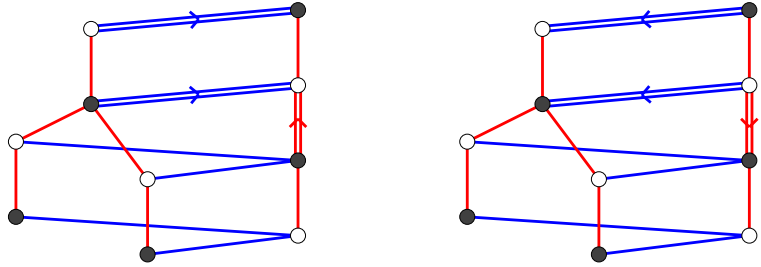
\begin{figure}
\scalebox{0.5}{
\begin{tikzpicture}
\draw[color=red, line width=0.75mm] (7.5, 2.61) to[] (7.28, 2.39) ;
\draw[color=red, line width=0.75mm] (7.5, 2.61) to[] (7.72, 2.39) ;
\draw[color=blue, line width=0.75mm] (4.8595482527114475, 3.259958932064677) to[] (4.620533883159198, 3.459137573358218) ;
\draw[color=blue, line width=0.75mm] (4.8595482527114475, 3.259958932064677) to[] (4.660369611417907, 3.020944562512428) ;
\draw[color=blue, line width=0.75mm] (4.8595482527114475, 5.259958932064677) to[] (4.620533883159198, 5.459137573358218) ;
\draw[color=blue, line width=0.75mm] (4.8595482527114475, 5.259958932064677) to[] (4.660369611417907, 5.020944562512428) ;

\draw[color=red, line width=0.75mm] (12 + 7.5, 2.39) to[] (12 + 7.72, 2.61) ;
\draw[color=red, line width=0.75mm] (12 + 7.5, 2.39) to[] (12 + 7.28, 2.61) ;
\draw[color=blue, line width=0.75mm] (12 + 4.6404517472885525, 3.240041067935323) to[] (12 + 4.879466116840802, 3.040862426641782) ;
\draw[color=blue, line width=0.75mm] (12 + 4.6404517472885525, 3.240041067935323) to[] (12 + 4.839630388582093, 3.479055437487572) ;
\draw[color=blue, line width=0.75mm] (12 + 4.6404517472885525, 5.240041067935323) to[] (12 + 4.879466116840802, 5.040862426641782) ;
\draw[color=blue, line width=0.75mm] (12 + 4.6404517472885525, 5.240041067935323) to[] (12 + 4.839630388582093, 5.479055437487572) ;

\foreach \k in {0, 1}
{
\coordinate (\k v0x0) at (12 * \k + 0.00,0.00);
\coordinate (\k v0x1) at (12 * \k + 0.00,2.00);
\coordinate (\k v0x2) at (12 * \k + 2.00,3.00);
\coordinate (\k v0x3) at (12 * \k + 3.50,1.00);
\coordinate (\k v0x4) at (12 * \k + 3.50,-1.00);
\coordinate (\k v0x5) at (12 * \k + 2.00,5.00);

\coordinate (\k v1x0) at (12 * \k + 7.50,5.50);
\coordinate (\k v1x1) at (12 * \k + 7.50,3.50);
\coordinate (\k v1x2) at (12 * \k + 7.50,1.50);
\coordinate (\k v1x5) at (12 * \k + 7.50,-0.50);

\draw[color=red, line width=0.75mm] (12 * \k + 7.43, 1.5) to[] (12 * \k + 7.43, 3.5) ;
\draw[color=red, line width=0.75mm] (12 * \k + 7.57, 1.5) to[] (12 * \k + 7.57, 3.5) ;
\draw[color=blue, line width=0.75mm] (12 * \k + 1.9936624977770236, 3.0697125244527395) to[] (12 * \k + 7.493662497777024, 3.5697125244527395) ;
\draw[color=blue, line width=0.75mm] (12 * \k + 2.0063375022229764, 2.9302874755472605) to[] (12 * \k + 7.506337502222976, 3.4302874755472605) ;
\draw[color=blue, line width=0.75mm] (12 * \k + 1.9936624977770236, 5.069712524452739) to[] (12 * \k + 7.493662497777024, 5.569712524452739) ;
\draw[color=blue, line width=0.75mm] (12 * \k + 2.0063375022229764, 4.930287475547261) to[] (12 * \k + 7.506337502222976, 5.430287475547261) ;

\draw[color=blue,line width=0.75mm] (\k v0x0) to[] (\k v1x5);
\draw[color=blue,line width=0.75mm] (\k v0x4) to[] (\k v1x5);
\draw[color=blue,line width=0.75mm] (\k v0x1) to[] (\k v1x2);
\draw[color=blue,line width=0.75mm] (\k v0x3) to[] (\k v1x2);
\draw[color=red,line width=0.75mm] (\k v0x1) to[] (\k v0x2);
\draw[color=red,line width=0.75mm] (\k v0x2) to[] (\k v0x3);
\draw[color=red,line width=0.75mm] (\k v0x3) to[] (\k v0x4);
\foreach \i in {0, 1}
{
    \draw[color=red,line width=0.75mm] (\k v\i x0) to[] (\k v\i x1);
    \draw[color=red,line width=0.75mm] (\k v\i x2) to[] (\k v\i x5);
    \draw[fill=black!75!white] (\k v\i x0.center) circle (0.2);
    \draw[fill=white] (\k v\i x1.center) circle (0.2);
    \draw[fill=black!75!white] (\k v\i x2.center) circle (0.2);
    \draw[fill=white] (\k v\i x5.center) circle (0.2);
}
\draw[fill=white] (\k v0x3.center) circle (0.2);
\draw[fill=black!75!white] (\k v0x4.center) circle (0.2);
}
\end{tikzpicture}}
\caption{\label{fig:EF} The bindings $E_6\ast_1 F_4$ and $E_6\ast_2 F_4$.}
\end{figure}

Now, we are able to describe the classification of admissible non-ADE Dynkin biagrams in terms of the bindings defined above.

\newpage

\begin{theorem}
\label{classThm}
    The connected, admissible non-ADE Dynkin biagrams are:
    \begin{itemize}
        \item [(a)]
            tensor products $\Gamma\otimes \Delta$ for $\Gamma$ or $\Delta$ non-ADE, twists $\Gamma\times \Gamma$ for $\Gamma$ non-ADE, and the double bindings $B_n\ltimes D_{n+1}$, $C_n\ltimes D_{n+1}$, $B_n\bowtie C_n$, $G_2\ltimes_{1, 2} D_4$, $B_3\bowtie_{1,2} G_2$, $C_3\bowtie_{1,2} G_2$, and $B_4\boxtimes C_4$,
        \item [(b)]
            bindings $B_n\ast A_{2n-1}$, $C_n\ast A_{2n-1}$, $B_n\ast C_n$, $B_n\ast D_{n+1}$, $C_n\ast D_{n+1}$, $F_4\ast_{1,2} E_6$, $F_4\ast F_4$,
        \item [(c)]
            $B_2\times B_2\equiv \dots \equiv B_2$, \hspace{2mm}$B_2\equiv B_2\times B_2\equiv B_2$,
        \item [(d)]
            $(BA^{m-1})_n\coloneqq (B_n\ast A_{2n-1}\equiv\cdots\equiv A_{2n-1}), $ \hspace{2mm}$(B^{m-1}A)_n\coloneqq (B_n\equiv\cdots\equiv B_n\ast A_{2n-1})$, 
        \item [(e)]
            $B_n\equiv B_n \ast A_{2n-1} \equiv A_{2n-1}$,
        \item [(f)]
            $(CA^{m-1})_n\coloneqq (C_n\ast A_{2n-1}\equiv\cdots\equiv A_{2n-1}), $ \hspace{2mm}$(C^{m-1}A)_n\coloneqq (C_n\equiv\cdots\equiv C_n\ast A_{2n-1})$,
        \item [(g)]
            $C_n\equiv C_n \ast A_{2n-1} \equiv A_{2n-1}$,
        \item [(h)]
            $(BC^{m-1})_n\coloneqq (B_n\ast C_n\equiv\cdots\equiv C_n), $ \hspace{2mm}$(B^{m-1}C)_n\coloneqq (B_n\equiv\cdots\equiv B_n\ast C_n)$,
        \item [(i)]
            $B_n\equiv B_n\ast C_n\equiv C_n$,
        \item [(j)]
            $(BD^{m-1})_n\coloneqq (B_n\ast D_{n+1}\equiv\cdots\equiv D_{n+1}), $ \hspace{2mm}$(B^{m-1}D)_n\coloneqq (B_n\equiv\cdots\equiv B_n\ast D_{n+1})$,
        \item [(k)]
            $B_n\equiv B_n \ast D_{n+1} \equiv D_{n+1}$,
        \item [(l)]
            $(CD^{m-1})_n\coloneqq (C_n\ast D_{n+1}\equiv\cdots\equiv D_{n+1}), $ \hspace{2mm}$(C^{m-1}D)_n\coloneqq (C_n\equiv\cdots\equiv C_n\ast D_{n+1})$,
        \item [(m)]
            $C_n\equiv C_n \ast D_{n+1} \equiv D_{n+1}$,
        \item [(n)]
            $(F_4E_6^{m-1})_{1,2}\coloneqq (F_4\ast_{1, 2} E_6\equiv\cdots\equiv E_6), $ \hspace{2mm}$(F_4^{m-1}E_6)_{1, 2}\coloneqq (F_4\equiv\cdots\equiv F_4\ast_{1, 2} E_6)$,
        \item [(o)]
            $F_4\equiv F_4 \ast_{1} E_6 \equiv E_6$, \hspace{2mm}$F_4\equiv F_4 \ast_{2} E_6 \equiv E_6$,
        \item [(p)]
            $F_4\ast F_4\equiv \dots \equiv F_4$, \hspace{2mm}$F_4\equiv F_4\ast F_4\equiv F_4$.
    \end{itemize}
    In each of the families of the form $(\Lambda_1\Lambda_2^{m-1})$ or $(\Lambda_1^{m-1}\Lambda_2)$, there are $m$ factors.
\end{theorem}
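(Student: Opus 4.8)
The plan is to prove both inclusions of the claimed set equality. \textbf{Soundness:} that every biagram in (a)--(p) is connected and non-ADE is visible from its description (each contains a $B$, $C$, $F_4$, or $G_2$ component), so the content is admissibility, and for this I would invoke Proposition~\ref{StembridgeAdmissibleBindings}: it suffices to check that every binding occurring inside a given family is admissible. Three kinds of binding occur. Parallel bindings $\Lambda\equiv\Lambda'$ (and $\equiv_2,\equiv_3$) are by definition tensor products $\Lambda\otimes A_2$ (resp.\ $\otimes B_2$, $\otimes G_2$), for which $\Gamma,\Delta$ are the commuting Kronecker products $A\otimes I$ and $I\otimes A'$. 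The folded twists $B_n\ltimes D_{n+1}$, $C_n\ltimes D_{n+1}$, $B_n\bowtie C_n$, $G_2\ltimes_{1,2}D_4$, $B_3\bowtie_{1,2}G_2$, $C_3\bowtie_{1,2}G_2$, $B_4\boxtimes C_4$ are admissible because a twist $\Gamma\times\Gamma$ visibly has commuting $\Gamma,\Delta$ from its $2\times2$ block form, and folding by a bicolored automorphism preserves this: using $b_{f(i)f(j)}=b_{ij}$ and $f(I)=I$ for every orbit $I$ one computes $\bigl(f(\Gamma)f(\Delta)\bigr)_{IK}=\sum_{i\in I}(\Gamma\Delta)_{ik}$ and $\bigl(f(\Delta)f(\Gamma)\bigr)_{IK}=\sum_{i\in I}(\Delta\Gamma)_{ik}$ for any representative $k\in K$, and these agree. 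Lastly the bindings $B_n\ast A_{2n-1}$, $C_n\ast A_{2n-1}$, $B_n\ast C_n$, $B_n\ast D_{n+1}$, $C_n\ast D_{n+1}$, $F_4\ast_{1,2}E_6$, $F_4\ast F_4$ and the strings in (c)--(p) are built from bindings of the first two kinds by further parallel bindings and by passing to the dual, and admissibility is unaffected by the dual since it only requires $\Gamma$ and $\Delta$ to commute.

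\textbf{Completeness:} let $(\Gamma,\Delta)$ be a connected admissible non-ADE Dynkin biagram. By Proposition~\ref{StembridgeAdmissibleBindings} every binding of it is admissible, so the problem splits into (1) classifying admissible bindings $(\Gamma_1\sqcup\Gamma_2,\Delta)$ of two Dynkin diagrams, and (2) classifying how such bindings glue along shared components into a connected admissible biagram. For (1) I would pass to the simply-laced cover: every non-simply-laced Dynkin component is the quotient of a simply-laced one by a diagram automorphism, and the relation $\Gamma\Delta=\Delta\Gamma$ forces the $\Delta$-edges meeting a non-simply-laced $\Gamma$-component to be invariant under the covering involution, so that $(\Gamma,\Delta)$ or its dual $(\Delta,\Gamma)$ lifts to an admissible ADE biagram $(\hat\Gamma,\hat\Delta)$ carrying a bicolored automorphism $f$ with $f(\hat\Gamma,\hat\Delta)=(\Gamma,\Delta)$. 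Then Stembridge's classification of admissible ADE bigraphs applies: going through his $6$ families and $11$ exceptionals, enumerating the bicolored automorphisms of each, computing all resulting folds together with their duals, and discarding outputs that are ADE or already listed, one verifies that precisely the families (a)--(p) appear. For (2) one argues as in the ADE case that the ``macro-graph'' whose vertices are the components of $\Gamma$ and of $\Delta$ and whose edges record shared vertices must, by finiteness of Dynkin type and the commutativity constraint, be a path carrying at most one nonparallel binding; this pins down the global shapes of the strings in (c)--(p) and the exceptional bindings in (a),(b).

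\textbf{The main obstacle} is the structural lemma used in step (1): that an admissible non-ADE biagram, up to dual, always \emph{unfolds} to an admissible ADE biagram. One must unfold possibly several non-simply-laced components of $\Gamma$ and of $\Delta$ at once, keeping every component a genuine Dynkin diagram and---the delicate point---keeping the lifted biagram admissible; this last does not follow formally from ``folding preserves commutativity'' and needs either a careful local analysis of the $\Delta$-neighborhood of each non-simply-laced $\Gamma$-component (as Stembridge does in the ADE setting) or an argument using the commuting-Cartan-matrix structure directly. The enumeration over Stembridge's list that follows is long but mechanical. The transpose enters in families such as $B_n\ast A_{2n-1}$ precisely because folding a bicolored automorphism of an ADE biagram can swap the roles of red and blue, so the list must be closed under $(\cdot)^*$; the genuine repetitions within the list (e.g.\ for $m=1$, where a string collapses to its core binding, or under $C_2=B_2$) are harmless, since the assertion is only that (a)--(p) is complete.
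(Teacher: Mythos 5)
Your soundness half is essentially the paper's: admissibility of each listed family follows from Proposition~\ref{StembridgeAdmissibleBindings} once the constituent bindings are known admissible, and your checks for tensor products, twists, folds and duals match Lemma~\ref{foldingLem}, the global flip, and Corollary~\ref{admissibleBindings}. The problem is the completeness half. Your whole argument there rests on the structural claim that any connected admissible non-ADE biagram, up to dual, \emph{unfolds} to an admissible ADE bigraph equipped with a bicolored automorphism, so that Stembridge's list can be folded down and enumerated. You yourself flag this as the main obstacle, and it is a genuine gap, not a routine verification: commutativity of $\Gamma$ and $\Delta$ on the folded object does not produce a lift, and it is not established that the $\Delta$-structure around a non-simply-laced $\Gamma$-component is pulled back from a diagram-automorphism-invariant configuration upstairs, let alone that one can unfold several non-simply-laced components of $\Gamma$ and $\Delta$ simultaneously while keeping every component a Dynkin diagram and keeping the lifted biagram admissible. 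In the paper the statement closest to your lemma, Proposition~\ref{DynkinFromADE}, is deduced \emph{after} the classification, by inspecting the completed list; it is not available as an input to the proof, so your proposal as written is circular at its key step.

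The paper avoids this by classifying the admissible double bindings directly (Theorem~\ref{doubleBindingsThm}): Corollary~\ref{coxeterNum} forces equal Coxeter numbers of the $\Gamma$-components, Lemma~\ref{colorType} constrains color types, Lemma~\ref{non-ADEDoubleBindingComponent} forces at least two non-ADE components, and then the common Perron--Frobenius eigenvector of Lemma~\ref{commonEigVec}, compared color set by color set (including the left/right ``chiral'' eigenvectors of the non-simply-laced types), eliminates all but the listed bindings and pins down the vertex pairings, with a few residual candidates excluded by an explicit admissibility check. Only after that does the paper run the component-graph argument (your step (2), which you sketch correctly in outline: the component graph is a path with at most one nonparallel binding) and close under duals. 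If you want to keep your unfolding strategy, you must actually prove the lifting lemma --- for instance by a local analysis of the $\Delta$-neighborhood of each non-simply-laced component showing the incident edge multiplicities are consistent with an orbit structure, and then an argument that the lift inherits $\hat\Gamma\hat\Delta=\hat\Delta\hat\Gamma$ --- and that work is comparable in length to the eigenvector case analysis it is meant to replace; without it the completeness direction is unproven.
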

\subsection{Operations preserving admissibility}

\begin{lemma}[Folding biagrams]
\label{foldingLem}
    Let $(\Gamma, \Delta)$ be an admissible Dynkin biagram and let $f$ be a bicolored automorphism. Then folding along $f$ produces another admissible Dynkin biagram $(\Gamma', \Delta')$.
\end{lemma}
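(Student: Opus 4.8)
The plan is to show that $(\Gamma',\Delta'):=(f(\Gamma),f(\Delta))$ is again a Dynkin biagram and that $f(\Gamma)$ and $f(\Delta)$ commute. I would build everything around one algebraic identity: $f(M)f(N)=f(MN)$ for any two matrices $M,N$ that are $f$-invariant in the sense of condition (iii), i.e.\ $M_{f(i)f(j)}=M_{ij}$. This single fact takes care of both the Cartan-matrix condition and admissibility.

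First, the bookkeeping. Write $\epsilon_I$ for the common color of the vertices in an orbit $I$ (well defined by (i)). Condition (ii) says that for a fixed orbit $I$ and a fixed column $j$ all the edges $b_{ij}$ with $i\in I$ have the same sign; combined with (i) this forces these edges to lie all in $\tilde\Gamma$ or all in $\tilde\Delta$. Hence the folded sums involve no cancellation, $f(\tilde\Gamma)$ and $f(\tilde\Delta)$ have disjoint supports with $|f(\tilde\Gamma)|=f(\Gamma)$ and $|f(\tilde\Delta)|=f(\Delta)$, and the signs of $f(\tilde\Gamma),f(\tilde\Delta)$ are exactly those dictated by the coloring $\epsilon_I$; in particular $f(B)=f(\tilde\Gamma)+f(\tilde\Delta)$ is the $B$-matrix associated with $(\Gamma',\Delta')$. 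Bipartiteness of $f(B)$ is immediate: if $\epsilon_I=\epsilon_J$ then every $b_{ij}$ with $i\in I$, $j\in J$ vanishes, so $f(B)_{IJ}=0$.

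Next I would prove $f(M)f(N)=f(MN)$ by a sum swap: fix $j\in J$, expand $\big(f(M)f(N)\big)_{IJ}=\sum_K f(M)_{IK}f(N)_{KJ}$, write $f(N)_{KJ}=\sum_{k\in K}N_{kj}$, and use that $f(M)_{IK}=\sum_{i\in I}M_{ik}$ for the particular $k$ occurring in that sum, to get $\big(f(M)f(N)\big)_{IJ}=\sum_{i\in I}(MN)_{ij}$; since $MN$ is again $f$-invariant this is independent of $j\in J$, hence equals $f(MN)_{IJ}$. Applying this with $M=\Gamma$, $N=\Delta$ and invoking $\Gamma\Delta=\Delta\Gamma$ gives $f(\Gamma)f(\Delta)=f(\Gamma\Delta)=f(\Delta\Gamma)=f(\Delta)f(\Gamma)$, which is admissibility. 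Taking instead $M=2I-\Gamma=C_\Gamma$ (or a direct check using (iv), which forces $\Gamma_{ij}=0$ for $i\neq j$ in one orbit) shows $2I-f(\Gamma)=f(C_\Gamma)$, with $2$'s on the diagonal; so it remains to see that $f(C_\Gamma)$ is a Cartan matrix of finite type.

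For that I would show $f$ respects symmetrizability: the symmetrizing diagonal $D=\diag(d_i)$ of $C_\Gamma$ is constant on orbits, and a short computation gives that $f(C_\Gamma)$ is symmetrized by $\diag(d_I/|I|)$, with symmetrized form $\hat S_{IJ}=\tfrac{1}{|I||J|}\sum_{i\in I,\,j\in J}S_{ij}$ where $S=DC_\Gamma$. Since $\hat S$ is the pullback of the positive-definite form $S$ along the injection $x\mapsto(x_{[i]}/|[i]|)_i$, it is positive definite, so $f(C_\Gamma)$ is of finite type (and it is clearly integral). Alternatively, $f$ permutes the irreducible components of $\Gamma$, and on each orbit of components the folding reduces to the classical folding of a single Dynkin diagram by a diagram automorphism whose orbits are independent sets (this is exactly condition (iv)), so one may just cite the standard list $A_{2n-1}\to C_n$, $D_{n+1}\to B_n$, $E_6\to F_4$, $D_4\to G_2$ (condition (iv) being what excludes the non-reduced fold of $A_{2n}$). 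I expect this finite-type step — certifying that the fold cannot produce an affine or indefinite diagram — to be the only place needing genuine care; the rest is formal manipulation of the folding rule against conditions (i)--(iv).
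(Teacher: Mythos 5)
Your proposal is correct, and its core is the same argument as the paper's: the paper proves admissibility of the folded pair by exactly the index swap you perform, fixing $j\in J$, expanding $\sum_K f(\Delta)_{IK}f(\Gamma)_{KJ}$ over orbit representatives, and using $\Gamma\Delta=\Delta\Gamma$ entrywise; your identity $f(M)f(N)=f(MN)$ for $f$-invariant matrices is just a clean packaging of that computation (and your verification that it is well defined, via (i)--(iii) giving $\Gamma_{f(i)f(j)}=\Gamma_{ij}$ and (ii) preventing color mixing in the orbit sums, matches what the paper's definition of folding asserts without proof). Where you go beyond the paper is the finite-type step: the paper's proof checks only that $f(\Gamma)$ and $f(\Delta)$ commute and leaves it implicit that the folded components are again Dynkin diagrams, whereas you verify this either by symmetrizing $f(C_\Gamma)=2I-f(\Gamma)$ with $d_I/|I|$ and noting that the symmetrized form is the restriction of the positive-definite form $DC_\Gamma$ to the orbit-constant subspace (this computation is sound, granted the routine observation that the symmetrizer can be chosen $f$-invariant), or by reducing to the classical foldings $A_{2n-1}\to C_n$, $D_{n+1}\to B_n$, $E_6\to F_4$, $D_4\to G_2$, with (iv) ruling out the non-reduced fold of $A_{2n}$. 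So your write-up is a strictly more complete version of the paper's proof rather than a different one; the only point to keep an eye on is that the orbit-constancy of the symmetrizer across components in a common $f$-orbit needs the one-line argument you gesture at, not just an assertion.
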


\begin{proof}
    For any two vertices $i, j$ in $(\Gamma, \Delta)$, we know that
    \begin{align*}
        (\Delta\Gamma)_{ij} = \sum_k \Delta_{ik}\Gamma_{kj} = \sum_k \Gamma_{ik}\Delta_{kj} = (\Gamma\Delta)_{ij}.
    \end{align*}

    Now, fold along $f$ to produce $(f(\Gamma), f(\Delta))$. Let $I, J$ be disjoint orbits of $f$. 
    It suffices to show that the number of blue-red paths $I\rightarrow J$ is the same as the number of red-blue paths $I\rightarrow J$.
    \begin{align*}
        \sum_K f(\Delta)_{IK}f(\Gamma)_{KJ} &= \sum_K \left(\sum_{i\in I}\Delta_{ik'}\right)\left(\sum_{k\in K}\Gamma_{kj}\right)\quad\text{for arbitrary }k'\in K, j\in J\\
        &= \sum_K \sum_{k\in K} \Gamma_{kj}\left(\sum_{i\in I} \Delta_{ik}\right)
        = \sum_{i\in I} \sum_k \Delta_{ik}\Gamma_{kj}\\
        &= \sum_{i\in I} \sum_k \Gamma_{ik}\Delta_{kj}
        = \sum_K \sum_{k\in K} \Delta_{kj}\left(\sum_{i\in I} \Gamma_{ik}\right)\\
        &= \sum_K \left(\sum_{i\in I}\Gamma_{ik'}\right)\left(\sum_{k\in K}\Delta_{kj}\right)
        = \sum_K f(\Gamma)_{IK}f(\Delta)_{KJ}.
    \end{align*}
    So, $\big(f(\Delta)f(\Gamma)\big)_{IJ} = \big(f(\Gamma)f(\Delta)\big)_{IJ}$.
\end{proof}

The following operation plays an important role in the proof of the bijection between admissible Dynkin biagrams and Zamolodchikov periodic cluster algebras in Section \ref{ZP}.
\begin{definition}[Global Flip]
\label{flippingLem}
    Given a Dynkin biagram $(\Gamma, \Delta)$, we can produce another Dynkin biagram by taking the transpose $(\Gamma^{\top}, \Delta^{\top})$. We call this operation a \textit{global flip} of a Dynkin biagram.
\end{definition}
Notice that if $(\Gamma, \Delta)$ is a pair of commuting Cartan matrices, $(\Gamma^{\top}, \Delta^{\top})$ is also a pair of commuting Cartan matrices. 
In other words, admissibility is preserved under global flips. 
This operation has no effect on ADE types, and changes the direction of any nonsimple edges in non-ADE types. 
Figures \ref{fig:BDCD} and \ref{fig:GD} are examples of global flips.
\begin{remark}
    This operation is equivalent to taking the \textit{Langlands dual} of a group.
    In particular, if $G$ is a reductive algebraic group with Dynkin diagram $\Lambda$, its Langlands dual $^LG$ has Dynkin diagram $\Lambda'$, where $\Lambda'$ is the global flip of $\Lambda$.
\end{remark}

\subsection{Classification of admissible double bindings}
Much of the work of classification was in classifying the non-ADE double bindings by analyzing potential pairings of Dynkin diagrams and dominant eigenvectors of their corresponding Cartan matrices.
\begin{theorem}
\label{doubleBindingsThm}
    The admissible non-ADE double bindings are
    \begin{itemize}
        \item [(a)] 
            parallel bindings $\Gamma\equiv \Gamma$, $\Gamma\equiv_2 \Gamma$, and $\Gamma\equiv_3 \Gamma$ for $\Gamma\in\{A_2, B_2, G_2\},$
        \item [(b)]
            twists $\Gamma\times \Gamma$ for non-ADE $\Gamma$,
        \item [(c)]
            $B_n\ltimes D_{n+1}$, $C_n\ltimes D_{n+1}$, $B_n\bowtie C_n$, $G_2\ltimes_{1, 2} D_4$, $B_3\bowtie_{1,2} G_2$, $C_3\bowtie_{1,2} G_2$, and
        \item [(d)]
            the exceptional binding $B_4\boxtimes C_4$.
    \end{itemize}
\end{theorem}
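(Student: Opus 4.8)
The plan is to prove the two inclusions separately. For both, fix a connected double binding $(\Gamma,\Delta)$ and write $\Gamma=\Gamma_1\sqcup\Gamma_2$, $\Delta=\Delta_1\sqcup\Delta_2$ for the two red and two blue (connected) Dynkin components; the vertex set then splits into the four quadrants $V_{ij}=V(\Gamma_i)\cap V(\Delta_j)$, and since each $\Gamma_i$ (resp.\ $\Delta_j$) is connected and bipartite, $\{V_{i1},V_{i2}\}$ (resp.\ $\{V_{1j},V_{2j}\}$) is its unique $2$-coloring, with red edges crossing the first pair and blue edges the second. For the \emph{forward inclusion}, parallel bindings are tensor products $\Lambda\otimes\Lambda'$ with $\Lambda,\Lambda'$ of rank two, so $\Gamma=A_\Lambda\otimes I$ and $\Delta=I\otimes A_{\Lambda'}$ are Kronecker products and commute; having rank-two factors is exactly what makes both color classes split into two components, and non-ADE-ness holds precisely when $\Lambda$ or $\Lambda'$ lies in $\{B_2,G_2\}$. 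Twists commute by the block form $\widetilde\Gamma=\diag(A,A)$, $\widetilde\Delta=\bigl(\begin{smallmatrix}0&-A\\-A&0\end{smallmatrix}\bigr)$ and are double bindings by construction. Finally, each biagram in (c) is obtained from a twist, and $B_4\boxtimes C_4$ from the admissible ADE double binding $D_5\boxtimes A_7$ of \cite{stembridge}, by folding along a bicolored automorphism acting within the individual $\Gamma_i$; such foldings preserve the $2+2$ component structure and preserve admissibility by Lemma~\ref{foldingLem}.

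\textbf{Completeness.} Let $(\Gamma,\Delta)$ be a connected admissible non-ADE double binding with the quadrant structure above. Ordering vertices by red component, $\Gamma=\diag(A,A')$ with $A,A'$ the Coxeter adjacency matrices of the finite-type diagrams $\Gamma_1,\Gamma_2$, and $\Delta=\bigl(\begin{smallmatrix}0&M\\N&0\end{smallmatrix}\bigr)$ with $M,N\ge 0$ and $\supp(M)=\supp(N^\top)$ encoding the blue edges between $\Gamma_1$ and $\Gamma_2$. The commutation $\Gamma\Delta=\Delta\Gamma$ is equivalent to the intertwining relations $AM=MA'$ and $A'N=NA$. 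Since the biagram is connected, $M\ne 0$; applying $AM=MA'$ to the strictly positive Perron eigenvector of the irreducible nonnegative matrix $A'$, and using that an irreducible nonnegative matrix has (up to scaling) a unique nonnegative eigenvector, gives $\rho(A)=\rho(A')$, i.e.\ $\Gamma_1$ and $\Gamma_2$ have the same Coxeter number; likewise for $\Delta_1,\Delta_2$. This confines $(\Gamma_1,\Gamma_2)$ and $(\Delta_1,\Delta_2)$ to the short list of Coxeter-number-matched pairs of finite-type Dynkin diagrams, e.g.\ $(A_{2n-1},A_{2n-1})$, $(A_{2n-1},D_{n+1})$, $(B_n,B_n)$, $(B_n,C_n)$, $(C_n,C_n)$, $(A_{2n-1},B_n)$, $(A_{2n-1},C_n)$, $(B_n,D_{n+1})$, $(C_n,D_{n+1})$, $(D_n,D_n)$, $(F_4,E_6)$, $(G_2,D_4)$, and the $E$-type pairs. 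For each pair one then solves $AM=MA'$ subject to (i) $\supp(M)\cup\supp(N^\top)$ being the edge set of a disjoint union of two connected Dynkin diagrams $\Delta_1\sqcup\Delta_2$, and (ii) the quadrant sizes $|V_{ij}|$ matching the color-class sizes of both the $\Gamma_i$ and the $\Delta_j$; the explicit eigenvectors of Coxeter adjacency matrices of finite type make this a finite verification. One finds that the pairs with a non-ADE member that survive are, up to duality, exactly $(B_n,B_n)$, $(C_n,C_n)$, $(B_n,C_n)$ (twists and their folded twist), $(B_n,D_{n+1})$, $(C_n,D_{n+1})$, $(G_2,D_4)$, $(B_3,G_2)$, $(C_3,G_2)$, $(B_4,C_4)$, and $(A_2/B_2/G_2,\,A_2/B_2/G_2)$, giving precisely the biagrams of (a)--(d); the two orientations of a nonsimple edge (the global flip of Definition~\ref{flippingLem}) account for the $B$-versus-$C$ labels, while the subscripts $1,2$ record distinct bicolored automorphisms used in the folding.

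\textbf{Main obstacle.} The crux is the finite case analysis in the completeness step: extracting the admissible configurations from the intertwining equations together with the Dynkin constraints. This is the non-simply-laced counterpart of Stembridge's ADE classification \cite{stembridge}, and the real work lies in eliminating the Coxeter-number-matched pairs that admit no compatible blue Dynkin structure (such as large sporadic pairs like $(E_7,D_{10})$, or $(A_{2n-3},D_n)$ for most $n$) and in checking that each surviving pair supports a unique biagram up to duality and folding.
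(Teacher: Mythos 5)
Your forward inclusion is essentially the paper's argument: Kronecker/block forms for parallel bindings and twists, and foldings plus global flips from ADE double bindings (Lemma~\ref{foldingLem}) for the items in (c)--(d), so that half is fine. Your opening move for completeness is also sound and is a slightly more direct packaging of what the paper does: writing $\Gamma=\diag(A,A')$, $\Delta=\bigl(\begin{smallmatrix}0&M\\N&0\end{smallmatrix}\bigr)$ and reading commutation as the intertwining relations $AM=MA'$, $A'N=NA$, then using Perron--Frobenius to force equal spectral radii, recovers the equal-Coxeter-number constraint of Lemma~\ref{commonEigVec} and Corollary~\ref{coxeterNum}.

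The gap is everything after that. The statement ``for each pair one then solves $AM=MA'$ \dots\ the explicit eigenvectors \dots\ make this a finite verification'' asserts precisely the part of the theorem that carries all the content, and it is not a finite check: the candidate pairs $\{B_n(C_n),A_{2n-1}\}$, $\{B_n(C_n),D_{n+1}\}$, $\{B_n,C_n\}$, etc.\ are infinite families in $n$, so one needs arguments uniform in $n$. The paper supplies these via the color-type constraint (Lemma~\ref{colorType}), the observation that an admissible non-ADE double binding must contain at least two non-ADE components (Lemma~\ref{non-ADEDoubleBindingComponent}), and a comparison of the \emph{left and right} dominant eigenvectors projected onto color classes (Table~\ref{table:2}), where the chirality of the $B_n/C_n/F_4/G_2$ eigenvectors is what distinguishes $B_n$ from $C_n$ for $n\neq 2,4$ and pins down the pairings; your sketch never engages with the left/right distinction, which is the mechanism that eliminates, e.g., $\Delta_1\cong\Delta_2\in\{B_n,C_n\}$ and forces $B_n\bowtie C_n$. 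It also omits the sporadic pairs the paper must kill by hand ($\{B_6(C_6),E_6\}$, $\{B_9(C_9),E_7\}$, $\{B_6(C_6),F_4\}$, $\{F_4,D_7\}$, $\{F_4,E_6\}$, $\{G_2,A_5\}$, \dots), including configurations that pass the eigenvector/color-set tests and are only excluded by exhibiting a nonadmissible vertex pair (the $\{B_9(C_9),E_7\}$ case of Figure~\ref{fig:BCE}, the alternate $n=4$ pairings of Figures~\ref{fig:BBCCnonadmissible}, \ref{fig:BCDnonadmissible}, \ref{fig:BCnonadmissible}), as well as the exceptional surviving solution $B_4\boxtimes C_4$ at $n=4$, and the isomorphic-components case ($\Gamma_1\cong\Gamma_2$, $\Delta_1\cong\Delta_2$) that produces only tensor products and twists. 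Flagging this case analysis as the ``main obstacle'' is accurate, but leaving it as an assertion means the classification itself --- the hard direction of the theorem --- is not proved.
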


\begin{lemma}
\label{admissibleDouble}
    All double bindings listed in Theorem \ref{doubleBindingsThm} are admissible.
\end{lemma}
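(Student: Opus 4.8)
The plan is to handle the list in three groups according to how each double binding arises. The parallel bindings $\Gamma\equiv\Gamma$, $\Gamma\equiv_2\Gamma$, $\Gamma\equiv_3\Gamma$ are by definition the tensor products $\Gamma\otimes A_2$, $\Gamma\otimes B_2$, $\Gamma\otimes G_2$, so I would simply invoke the observation already recorded above: for a tensor product $\Lambda\otimes\Lambda'$ the Coxeter adjacency matrices are the Kronecker products $A_\Lambda\otimes I$ and $I\otimes A_{\Lambda'}$, whose product in either order equals $A_\Lambda\otimes A_{\Lambda'}$, so they commute and the biagram is admissible. For the twists $\Gamma\times\Gamma$, writing $A$ for the (unsigned) Coxeter adjacency matrix of the diagram being twisted, the associated biagram has, in the block form indexed by the two copies, $\Gamma=\bigl(\begin{smallmatrix}A&0\\0&A\end{smallmatrix}\bigr)$ and $\Delta=\bigl(\begin{smallmatrix}0&A\\A&0\end{smallmatrix}\bigr)$, and a one-line computation gives $\Gamma\Delta=\bigl(\begin{smallmatrix}0&A^2\\A^2&0\end{smallmatrix}\bigr)=\Delta\Gamma$.

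For the remaining double bindings --- those listed in parts (c) and (d) of Theorem \ref{doubleBindingsThm} --- the strategy is to present each one as the image of an already-admissible biagram under one or a short sequence of folds, and then quote Lemma \ref{foldingLem}. Concretely: $B_n\ltimes D_{n+1}$ and $C_n\ltimes D_{n+1}$ are single folds of the twist $D_{n+1}\times D_{n+1}$, applying the fork-tip swap of $D_{n+1}$ to one of the two copies and the identity to the other (Figure \ref{fig:BDCD}), the two orientations of the resulting multiple edge accounting for $B_n$ versus $C_n$; $B_n\bowtie C_n$ is the fold of $D_{n+1}\times D_{n+1}$ by the diagonal action of that swap on both copies (Figure \ref{fig:BC}); $G_2\ltimes_{1,2}D_4$ is the fold of $D_4\times D_4$ by the triality $3$-cycle of $D_4$ acting on one copy (Figure \ref{fig:GD}); and $B_3\bowtie_{1,2}G_2$, $C_3\bowtie_{1,2}G_2$ arise from $D_4\times D_4$ by two successive folds --- first to $B_3\ltimes D_4$ (resp.\ $C_3\ltimes D_4$), then by the triality on the remaining copy --- as in Figures \ref{fig:folding} and \ref{fig:BGCG}. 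Finally $B_4\boxtimes C_4$ is the fold of $D_5\boxtimes A_7$ along the automorphism of Figure \ref{fig:BCBCexceptional}, which restricts to the standard foldings $D_5\to B_4$ and $A_7\to C_4$. Since twists are admissible by the first paragraph and $D_5\boxtimes A_7$ is an admissible ADE bigraph in Stembridge's classification, Lemma \ref{foldingLem} --- applied once, or twice in the $\bowtie_{1,2}$ cases --- delivers admissibility of all of these, \emph{provided} the maps one folds along are genuinely bicolored automorphisms.

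Verifying that proviso is the step I expect to be the main (if routine) obstacle: it comes down to checking the four axioms of a bicolored automorphism family by family, guided by the figures. The structural reason it works is that in a twist $\Gamma\times\Gamma$ the blue edges run between a vertex $v$ in one copy and a vertex $w$ in the other exactly when $v$ and $w$ are adjacent in $\Gamma$, so any diagram automorphism $\sigma$ of $\Gamma$, applied to one copy or diagonally to both, carries blue edges to blue edges; this yields axiom (iii), while axiom (i) is immediate since $\sigma$ preserves the color of each vertex. Axiom (iv) --- that no edge lies inside an $f$-orbit --- holds because the vertices identified by $\sigma$ (the two, resp.\ three, fork tips of a $D$-diagram, or the reflection pairs of $A_7$) are pairwise nonadjacent and of one color, and the chosen $f$ never puts both endpoints of a blue edge into the same orbit. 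Axiom (ii) --- constancy over an orbit $I$ of the sign of $b_{ij}$ for fixed $j$ --- reduces to reading off from the figures that all red edges out of $I$, and all blue edges out of $I$, point the same way. The one genuinely substantive point is confirming that the folded multiple edges realize the advertised labels ($B$, $C$, $G_2$), which one checks by comparing the folded adjacency entries $\sum_{i\in I}\Gamma_{ij}$ and $\Gamma_{ji}$ (and likewise for $\Delta$) across the relevant orbit $I$ and a neighboring singleton. Once this per-family inspection is complete, Lemma \ref{foldingLem} finishes the proof.
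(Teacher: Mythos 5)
Your handling of the parallel bindings and twists is exactly the paper's (Kronecker products commute; the block computation for twists), and your fold of $D_5\boxtimes A_7$ to $B_4\boxtimes C_4$ also matches. The gap is in the claim that all the remaining double bindings are reachable by folds alone. The folding operation of the paper sums rows over an orbit, $f(B)_{IJ}=\sum_{i\in I}b_{ij}$, so collapsing the two fork tips of $D_{n+1}$ (or the three outer vertices of $D_4$) always produces a multiple edge of weight $|I|$ \emph{from} the orbit to its neighbour and weight $1$ back. Hence a single fold of $D_{n+1}\times D_{n+1}$ yields exactly one of $B_n\ltimes D_{n+1}$, $C_n\ltimes D_{n+1}$ --- folding the other copy gives an isomorphic biagram, not the reversed orientation --- and likewise only one of $G_2\ltimes_1 D_4$, $G_2\ltimes_2 D_4$. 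Your assertion that ``the two orientations of the resulting multiple edge account for $B_n$ versus $C_n$'' is therefore false as stated; the reversed member of each pair is obtained in the paper by the global flip $(\Gamma,\Delta)\mapsto(\Gamma^{\top},\Delta^{\top})$ of Definition \ref{flippingLem}, an operation your argument never invokes.

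The same omission breaks your $B_n\bowtie C_n$ step: folding $D_{n+1}\times D_{n+1}$ by the diagonal action of the tip swap orients \emph{both} new double edges the same way and in fact returns the twist $B_n\times B_n$ (folding commutes with the twist construction, as noted in the proof of Proposition \ref{DynkinFromADE}), whereas $B_n\bowtie C_n$ has its two double edges oppositely oriented. The paper reaches it by a fold--flip--fold chain: $D_{n+1}\times D_{n+1}\mapsto B_n\ltimes D_{n+1}$, transpose to $C_n\ltimes D_{n+1}$, then fold the remaining $D_{n+1}$; the families $C_3\bowtie_{1,2}G_2$, $G_2\ltimes_2 D_4$ and $C_n\ltimes D_{n+1}$ likewise need a transpose somewhere. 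So as written your proof certifies the parallel bindings, the twists, one member of each $\ltimes/\bowtie$ pair, and $B_4\boxtimes C_4$, but not the transposed variants. The fix is cheap and is what the paper does: observe that admissibility is preserved under the global flip (since $\Gamma^{\top}\Delta^{\top}=(\Delta\Gamma)^{\top}=(\Gamma\Delta)^{\top}=\Delta^{\top}\Gamma^{\top}$), and interleave flips with your folds; your verification of the bicolored-automorphism axioms then carries over unchanged.
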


\begin{proof}
    We have already discussed how parallel bindings and twists are admissible.
    
    All of the remaining double bindings listed in Theorem \ref{doubleBindingsThm} can be obtained from an admissible ADE bigraph, classified in \cite{stembridge}, through a series of global flips and foldings along bicolored automorphisms as follows:
    \begin{align*}
        D_{n+1}\times D_{n+1} &\mapsto B_n\ltimes D_{n+1},\\
        \left(B_n\ltimes D_{n+1}\right)^{\top} &= C_n\ltimes D_{n+1},\\
        C_n\ltimes D_{n+1} &\mapsto C_n\bowtie B_n,\\
        D_4\times D_4 &\mapsto G_2\ltimes_1 D_4,\\
        \left(G_2\ltimes_1 D_4\right)^{\top} &= G_2\ltimes_2 D_4,\\
        G_2\ltimes_1 D_4 &\mapsto G_2\bowtie_1 B_3,\\ 
        G_2\ltimes_2 D_4 &\mapsto G_2\bowtie_2 B_3,\\
        D_4\rtimes C_3 &\mapsto G_2\bowtie_1 C_3,\\ 
        \left(G_2\bowtie_1 B_3\right)^{\top} &= G_2\bowtie_2 C_3,\\
        D_5\boxtimes A_7 &\mapsto B_4\boxtimes C_4.
    \end{align*}
    By Lemma \ref{foldingLem} and Lemma \ref{flippingLem}, these are all admissible bindings.
\end{proof}

\begin{corollary}
\label{admissibleBindings}
    Each of the bindings $A_{2n-1}\ast B_n$, $A_{2n-1}\ast C_n$, $B_n\ast C_n$, $B_n\ast D_{n+1}$, $C_n\ast D_{n+1}$, $E_6\ast_{1,2} F_4$, $F_4\ast F_4$ are admissible.
\end{corollary}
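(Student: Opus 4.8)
The plan is to exhibit each of the listed bindings as the dual of a Dynkin biagram built out of pieces whose admissibility we have already established, and then appeal to Stembridge's criterion, Proposition~\ref{StembridgeAdmissibleBindings}.

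First I would unwind the definitions given just before Figure~\ref{fig:BCFF}: each of $A_{2n-1}\ast B_n$, $A_{2n-1}\ast C_n$, $B_n\ast C_n$, $B_n\ast D_{n+1}$, $C_n\ast D_{n+1}$, $E_6\ast_{1,2} F_4$, $F_4\ast F_4$ is, by construction, the dual $(\Gamma,\Delta)^{\ast}$ of a Dynkin biagram $(\Gamma,\Delta)$ that is a \emph{chain} of parallel bindings $\Lambda\equiv\Lambda$ into which exactly one twist or folded twist has been inserted --- for instance $A_{2n-1}\ast B_n=(D_3\equiv\cdots\equiv D_3\rtimes B_2)^{\ast}$, $B_n\ast C_n=(B_2\equiv\cdots\equiv B_2\times B_2)^{\ast}$, $E_6\ast_1 F_4=(B_2\equiv B_2\ltimes D_3\equiv D_3)^{\ast}$, and analogously for the remaining families. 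Since $\Gamma$ and $\Delta$ commute if and only if $\Delta$ and $\Gamma$ do, a Dynkin biagram is admissible exactly when its dual is; hence it suffices to prove that each of these chain biagrams is admissible.

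Next I would apply Proposition~\ref{StembridgeAdmissibleBindings}, which reduces the admissibility of a Dynkin biagram to the admissibility of each of its bindings, i.e.\ of each sub-biagram spanned by two adjacent red components together with the blue edges between them. For each of the seven chains, every such binding is one of three types: a parallel binding $\Lambda\equiv\Lambda=\Lambda\otimes A_2$, which is a tensor product and so admissible; a twist $B_2\times B_2$ (this link occurs only in $B_n\ast C_n$ and $F_4\ast F_4$), whose signed Coxeter adjacency matrices $\tilde\Gamma,\tilde\Delta$ manifestly commute by the explicit block form of a twist; or a folded twist $D_3\rtimes B_2$, $D_3\rtimes C_2$, $B_2\ltimes D_3$, or $C_2\ltimes D_3$, each of which is $B_n\ltimes D_{n+1}$ or $C_n\ltimes D_{n+1}$ with $n=2$ (under the $C_2$/$B_2$ orientation convention recorded in the Remark preceding Figure~\ref{fig:BDCD}) and hence admissible by Lemma~\ref{admissibleDouble}. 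Running through the seven defining expressions and matching each binding piece to one of these cases finishes the proof.

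The one place that requires care is this final matching: for each defining expression one must verify that the \emph{unique} non--tensor-product link of the chain is precisely the twist or folded-twist double binding it appears to be, with the correct component labels and edge orientations, so that Lemma~\ref{admissibleDouble} applies verbatim. Once that dictionary between the $\equiv$, $\times$, $\ltimes$, $\rtimes$ notation and the classified double bindings of Theorem~\ref{doubleBindingsThm} is pinned down, nothing remains to compute: the corollary is an immediate consequence of Proposition~\ref{StembridgeAdmissibleBindings} and Lemma~\ref{admissibleDouble}.
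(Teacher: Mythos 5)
Your proposal is correct and matches the paper's (implicit) argument: the paper also obtains these bindings as duals of chains of parallel bindings, the twist $B_2\times B_2$, and the folded twists $B_2\ltimes D_3$, $C_2\ltimes D_3$ (equivalently $D_3\rtimes B_2$, $D_3\rtimes C_2$), whose admissibility is covered by the tensor-product observation and Lemma~\ref{admissibleDouble}, and then invokes Proposition~\ref{StembridgeAdmissibleBindings} together with the symmetry of the commutation condition under dualizing. No gap to report.
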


The following proposition lists the dominant eigenvectors for each Dynkin diagram's Coxeter adjacency matrix. A calculation for each ADE type is in \cite[Appendix A]{stembridge}, and the non-ADE types can be obtained via a standard folding argument or a direct computation.
\begin{proposition}
\label{eigenvectors}
    The following are the dominant eigenvectors for each Dynkin diagram's Coxeter adjacency matrix:
    \begin{itemize}
        \item[(i)]
            If the vertices of $A_n$ are numbered $1-2-\cdots-n$, then
            $$\vv{v}_{A} = \left[\sin\left(\frac{\pi}{n+1}\right), \sin\left(\frac{2\pi}{n+1}\right), \dots, \sin\left(\frac{n\pi}{n+1}\right)\right].$$
        \item[(ii)]
            If the vertices of $B_n$ are numbered 
            \[1\hspace{-1mm}=\hspace{-1mm}\Leftarrow\hspace{-1mm} 2-3-\cdots -n,\]
            then the left and right dominant eigenvectors are
            \begin{align*}
                \vv{v}_{B}^{L} &= \left[\frac{1}{2}, \cos\left(\frac{\pi}{2n}\right), \cos\left(\frac{2\pi}{2n}\right), \dots, \cos \left(\frac{(n-1)\pi}{2n}\right)\right] \text{ and}\\
                \vv{v}_{B}^{R} &= \left[1, \cos\left(\frac{\pi}{2n}\right), \cos\left(\frac{2\pi}{2n}\right), \dots, \cos \left(\frac{(n-1)\pi}{2n}\right)\right].
            \end{align*}
        \item[(iii)]
            If the vertices of $C_n$ are numbered 
            \[1\hspace{-1mm}\Rightarrow\hspace{-1mm}=\hspace{-1mm} 2-3-\cdots -n,\]
            then the left  and right dominant eigenvectors are
            \begin{align*}
                \vv{v}_{C}^{L} &= \left[1, \cos\left(\frac{\pi}{2n}\right), \cos\left(\frac{2\pi}{2n}\right), \dots, \cos \left(\frac{(n-1)\pi}{2n}\right)\right] \text{ and}\\
                \vv{v}_{C}^{R} &= \left[\frac{1}{2}, \cos\left(\frac{\pi}{2n}\right), \cos\left(\frac{2\pi}{2n}\right), \dots, \cos \left(\frac{(n-1)\pi}{2n}\right)\right].
            \end{align*}
        \item[(iv)]
            If the vertices of $D_n$ are numbered
            \begin{align*}
                &2\\[-2mm]
                &\hspace{1mm}\rule{0.7pt}{1.6ex}\\[-2mm]
                1\hspace{1mm}-\hspace{1mm}&3-4-\cdots -n,
            \end{align*}
            then
            $$\vv{v}_{D} = \left[\frac{1}{2}, \frac{1}{2}, \cos\left(\frac{\pi}{2n - 2}\right), \cos\left(\frac{2\pi}{2n - 2}\right), \dots, \cos \left(\frac{(n-2)\pi}{2n - 2}\right)\right].$$
        \item[(v)]
            If the vertices of $E_n$ $(n = 6, 7, 8)$ are numbered
            \begin{align*}
                &2\\[-2mm]
                &\hspace{1mm}\rule{0.7pt}{1.6ex}\\[-2mm]
                1-3\hspace{1mm}-\hspace{1mm}&4-5-\cdots -n,
            \end{align*}
            and $\theta = \frac{\pi}{h}$, where $h = 12, 18, 30$ is the Coxeter number, then
            $$\vv{v}_{E} = \left[\frac{\sin{\theta}}{\sin{3\theta}}, \frac{\sin{\theta}}{\sin{2\theta}}, \frac{\sin{2\theta}}{\sin{3\theta}}, 1, \frac{\sin{(n-4)\theta}}{\sin{(n-3)\theta}}, \dots, \frac{\sin{\theta}}{\sin{(n-3)\theta}}\right].$$
        \item[(vi)]
            If the vertices of $F_4$ are numbered $1-2\hspace{-1mm}\Rightarrow\hspace{-1mm}=\hspace{-1mm} 3-4$, then the left and right dominant eigenvectors are
            $$\vv{v}_{F}^{L} = \left[1, \frac{2}{\sqrt{6} - \sqrt{2}}, \frac{1}{\sqrt{3} - 1}, \frac{\sqrt{2}}{2}\right] \quad \text{and} \quad \vv{v}_{F}^{R} = \left[1, \frac{2}{\sqrt{6} - \sqrt{2}}, \frac{2}{\sqrt{3} - 1}, \sqrt{2}\right].$$
        \item[(vii)]
            If the vertices of $G_2$ are numbered $1\hspace{-1mm}\equiv\hspace{-1mm}\Lleftarrow\hspace{-1mm} 2$, then the left and right dominant eigenvectors are
            $$\vv{v}_{G}^{L} = \left[1, \sqrt{3}\right] \quad \text{and}\quad \vv{v}_{G}^{R} = \left[1, \frac{\sqrt{3}}{3}\right].$$
    \end{itemize}
    
\end{proposition}

We now present three lemmas that will be useful for identifying double bindings in the proof of Theorem \ref{doubleBindingsThm}.

\begin{lemma}
\label{commonEigVec}
    In a connected admissible Dynkin biagram $(\Gamma, \Delta)$, the Coxeter adjacency matrices $\Gamma = 2I - C_{\Gamma}$ and $\Delta = 2I - C_{\Delta}$ have a common dominant eigenvector $\vv{v}\in \mathbb{R}^n_{>0}$. 
\end{lemma}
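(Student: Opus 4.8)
The plan is to exhibit the common dominant eigenvector as the Perron eigenvector of the unsigned sum $M \coloneqq \Gamma + \Delta$, and then argue separately that it is a dominant eigenvector of $\Gamma$ and of $\Delta$.

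First I would record three elementary properties of $\Gamma$ and $\Delta$. Each is entrywise nonnegative with zero diagonal, being a direct sum of Coxeter adjacency matrices of finite-type Dynkin diagrams. Each has a symmetric zero pattern, i.e. $\Gamma_{ij} > 0 \iff \Gamma_{ji} > 0$ and similarly for $\Delta$, inherited from the symmetric zero pattern of a Cartan matrix. And by the definition of a Dynkin biagram they share no nonzero entry. Consequently $M_{ij} > 0$ precisely when $i$ and $j$ are joined by a red or a blue edge, with no cancellation, so the digraph of $M$ has the same (strongly) connected components as the underlying graph of the biagram. Connectedness of the biagram therefore says exactly that $M$ is irreducible, and the Perron--Frobenius theorem gives that $\rho \coloneqq \rho(M)$ is a simple eigenvalue of $M$ with a strictly positive eigenvector $\vv{v}\in\mathbb{R}^n_{>0}$, and that the $\rho$-eigenspace of $M$ is the line $\mathbb{R}\vv{v}$.

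The admissibility hypothesis is used next, and only here: it means $\Gamma\Delta = \Delta\Gamma$, hence $\Gamma M = \Gamma^2 + \Gamma\Delta = \Gamma^2 + \Delta\Gamma = M\Gamma$, and likewise $\Delta M = M\Delta$. Thus $\Gamma$ preserves the $\rho$-eigenspace of $M$, so $\Gamma\vv{v} = \alpha\vv{v}$ and $\Delta\vv{v} = \beta\vv{v}$ for scalars $\alpha,\beta$ with $\alpha + \beta = \rho$, and $\alpha,\beta \ge 0$ since $\vv{v} > 0$ and $\Gamma,\Delta \ge 0$. At this point $\vv{v}$ is already a common positive eigenvector of $\Gamma$ and $\Delta$, and the remaining task is to check that $\alpha = \rho(\Gamma)$ and $\beta = \rho(\Delta)$, so that $\vv{v}$ is \emph{dominant} for each. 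For this I would invoke the standard fact that a nonnegative matrix $A$ admitting a strictly positive eigenvector has the corresponding eigenvalue equal to $\rho(A)$: given any $Aw = \mu w$ one has $|\mu|\,|w| \le A|w|$ entrywise, and evaluating this at the index $i_0$ maximizing $|w_i|/v_i$ and using $|w| \le t\vv{v}$ with $t = |w_{i_0}|/v_{i_0}$ forces $|\mu| \le \alpha$; hence $\rho(\Gamma) = \alpha$, and symmetrically $\rho(\Delta) = \beta$.

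I do not expect a genuine obstacle here; the proof is essentially Perron--Frobenius for the irreducible matrix $M$ plus the one-line commutator computation. The only point deserving a careful sentence is the equivalence ``biagram connected $\iff$ $M$ irreducible,'' which relies on the no-shared-nonzero-entries condition (ruling out cancellation in $\Gamma + \Delta$) together with the symmetric zero patterns of $\Gamma$ and $\Delta$ (so that strong connectivity of the digraph of $M$ coincides with connectivity of the biagram).
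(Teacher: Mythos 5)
Your proof is correct and follows essentially the same route as the paper: apply Perron--Frobenius to the irreducible matrix $\Gamma+\Delta$, use commutativity to show its dominant eigenvector is a common positive eigenvector of $\Gamma$ and $\Delta$, and then upgrade to dominance. The only (harmless) difference is the last step, where the paper decomposes $\vv{v}$ over the irreducible components of $\Gamma$ and invokes Perron--Frobenius componentwise, while you use the equivalent general fact that a nonnegative matrix with a strictly positive eigenvector has that eigenvalue equal to its spectral radius.
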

\begin{proof}
    Recall that if $B$ is the adjacency matrix of a strongly connected directed graph, then the Perron--Frobenius Theorem implies that $B$ has a dominant eigenvalue with left and right eigenspaces of dimension 1, the corresponding eigenvectors have positive coordinates, and these are the only positive eigenvectors of $B$.

    Notice that every connected Dynkin biagram corresponds to a pair of nonnegative adjacency matrices $\Gamma, \Delta$ that together describe a strongly connected directed graph. 
    Let $B = \Gamma + \Delta$ be the resulting adjacency matrix.
    Then,
    \begin{align*}
        B\Gamma &= \Gamma^2 + \Delta\Gamma = \Gamma^2 + \Gamma\Delta = \Gamma B.
    \end{align*}
    If $\vv{v}$ is a dominant eigenvector of $B$, we have
    $$B\Gamma\vv{v} = \Gamma B\vv{v} = \lambda_{B}\Gamma\vv{v}.$$
    So, $\Gamma\vv{v}$ is in the $\lambda_B$-eigenspace of $B$. By Perron--Frobenius, this eigenspace is of dimension 1, hence $\Gamma\vv{v} = c\cdot \vv{v}$ is some scalar multiple of $\vv{v}$. Thus $\vv{v}$ is an eigenvector of $\Gamma$. By a similar argument, $\vv{v}$ is also an eigenvector of $\Delta$.

    Since $\vv{v}$ is a dominant eigenvector of $B$, by the Perron--Frobenius Theorem, $\vv{v}$ has all positive values. 
    Let $\Gamma$ have $k$ components: $\Gamma = \Gamma_1\oplus \dots \oplus \Gamma_k$.
    $\vv{v}$ can be decomposed into a sum of positive eigenvectors supported on each $\Gamma$-component: $\vv{v} = \vv{v}_1 + \dots + \vv{v}_k$.
    By Perron--Frobenius, each $\vv{v}_i$ is a dominant eigenvector for $\Gamma_i$.
    So, $\vv{v}$ is a sum of dominant eigenvectors for $\Gamma_1, \dots, \Gamma_k$, making it a dominant eigenvector for $\Gamma$. 
    By an analogous argument, $\vv{v}$ is a dominant eigenvector for $\Delta$.
\end{proof}

\begin{corollary}
\label{coxeterNum}
    In a connected admissible Dynkin biagram $(\Gamma, \Delta)$, all irreducible components of $\Gamma$ (resp., $\Delta$) have the same Coxeter number $h_{\Gamma}$ (resp., $h_{\Delta}$).
\end{corollary}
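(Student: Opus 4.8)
The plan is to combine Lemma \ref{commonEigVec} with the classical description of the Perron eigenvalue of a Dynkin diagram. Recall that for an irreducible Dynkin diagram $\Lambda$ with Coxeter number $h_\Lambda$, the Coxeter adjacency matrix $A_\Lambda = 2I - C_\Lambda$ has Perron--Frobenius eigenvalue $2\cos(\pi/h_\Lambda)$. Since $h\mapsto 2\cos(\pi/h)$ is strictly increasing on integers $h\ge 2$, the Coxeter number of an irreducible component is recovered from the dominant eigenvalue of its Coxeter adjacency matrix, so it suffices to show that all irreducible components of $\Gamma$ (and, symmetrically, of $\Delta$) share the same dominant eigenvalue.

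To see this, I would take the common positive dominant eigenvector $\vv{v}$ of $\Gamma$ and $\Delta$ provided by Lemma \ref{commonEigVec}, say with $\Gamma\vv{v} = \lambda\vv{v}$, and decompose $\Gamma = \Gamma_1\oplus\cdots\oplus\Gamma_k$ into irreducible components. Restricting $\vv{v}$ to the vertex set of each component gives a strictly positive vector $\vv{v}_i$, and since $\Gamma$ is block diagonal with respect to this decomposition, the eigenvalue equation splits as $\Gamma_i\vv{v}_i = \lambda\vv{v}_i$ for every $i$. As $\Gamma_i$ is nonnegative and irreducible and $\vv{v}_i$ is strictly positive, the Perron--Frobenius Theorem forces $\lambda$ to be the dominant eigenvalue of $\Gamma_i$; hence every component of $\Gamma$ has dominant eigenvalue $\lambda$, and therefore all share the same Coxeter number. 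Applying the same reasoning to $\Delta$ completes the argument. This is essentially already contained in the final paragraph of the proof of Lemma \ref{commonEigVec}, where the $\vv{v}_i$ are identified as dominant eigenvectors of the $\Gamma_i$ for a single common eigenvalue.

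The only ingredient beyond Lemma \ref{commonEigVec} is the identity relating the Perron eigenvalue of $A_\Lambda$ to the Coxeter number, which holds for all Dynkin types: in the non-simply-laced cases $A_\Lambda$ is still nonnegative and irreducible, though no longer symmetric, so Perron--Frobenius applies and the eigenvalue can be checked directly to equal $2\cos(\pi/h_\Lambda)$ (e.g. $\sqrt2$ for $B_2$, $\sqrt3$ for $G_2$, with $h=4,6$). Everything else is routine Perron--Frobenius bookkeeping, so I do not anticipate a genuine obstacle.
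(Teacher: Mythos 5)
Your proposal is correct and follows essentially the same route as the paper: both invoke Lemma \ref{commonEigVec} to get the common positive dominant eigenvector, restrict it to the irreducible components of $\Gamma$ (resp.\ $\Delta$) to conclude via Perron--Frobenius that all components share one dominant eigenvalue, and then translate that eigenvalue into the Coxeter number via $\lambda = 2\cos(\pi/h)$. The only difference is cosmetic — you spell out the Perron--Frobenius step that the paper delegates to the last paragraph of the proof of Lemma \ref{commonEigVec}.
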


\begin{proof}
    Let $\Lambda$ be a Dynkin diagram. From Lemma 2 of \cite{eigenvalues}, 
    $$\lambda = 2\cos\left(\frac{\pi}{h}\right),$$
    where $\lambda$ is the dominant eigenvalue of the Coxeter adjacency matrix $A_{D}$ and $h$ is the Coxeter number of $\Lambda$. 
    
    Given a connected admissible Dynkin biagram $(\Gamma, \Delta)$, let $\vv{v}$ be the common dominant eigenvector of $\Gamma, \Delta$ made up of all positive coordinates. Recall from the proof of Lemma \ref{commonEigVec} that $\vv{v} = \vv{v}_1 + \dots + \vv{v}_k$ is a sum of dominant eigenvectors supported on each component $\Gamma_1, \dots, \Gamma_k$. Thus, 
    $$\Gamma\vv{v} = \Gamma_1\vv{v}_1 + \dots + \Gamma_k \vv{v}_k = \lambda\vv{v}_1 + \dots + \lambda \vv{v}_k,$$
    so each component $\Gamma_i$ has the same dominant eigenvalue. From \cite{eigenvalues}, this means each $\Gamma$-component has the same Coxeter number $h_{\Gamma}$.
    Similarly, each component of $\Delta$ has the same Coxeter number $h_{\Delta}$.
\end{proof}

\begin{corollary}
\label{uniqueEigenvector}
    Let $\Lambda_1, \Lambda_2$ be two Dynkin diagrams, and let $\vv{v}_1, \vv{v}_2$ be the corresponding dominant eigenvectors of their Coxeter adjacency matrices. If $\vv{v}_1 = c\cdot \vv{v}_2$ for some scalar $c\in \mathbb{R}_{>0}$, then $\Lambda_1 \cong \Lambda_2$.
\end{corollary}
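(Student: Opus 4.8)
My plan is to recover the Dynkin type of $D_i$ from the ray $\R_{>0}\vv{v}_i$ using the explicit eigenvectors of Proposition~\ref{eigenvectors} (I take $\vv{v}_i$ to be the right dominant eigenvector, as in Lemma~\ref{commonEigVec}; the left-eigenvector statement follows by transposing everything). Since $\vv{v}_1=c\vv{v}_2$ forces the two vectors to have the same number of coordinates, $D_1$ and $D_2$ have the same number of vertices $n$, and after rescaling they share the same normalized entry multiset $M=\{u_1,\dots,u_n\}$ with $\max_k u_k=1$. The cases $n\le 2$ are immediate ($A_1$ for $n=1$; and for $n=2$ the four vectors of $A_2,B_2,C_2,G_2$ from Proposition~\ref{eigenvectors}, noting $B_2\cong C_2$), so it suffices to prove: \emph{for $n\ge 3$, the multiset $M$ determines the Dynkin diagram up to isomorphism.}

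The organizing invariant is the Coxeter number $h$; by Lemma~2 of \cite{eigenvalues} the dominant eigenvalue is $\lambda=2\cos(\pi/h)$. I recover $h$ from $M$ as follows. The subfield $K\coloneqq\Q(u_j/u_k : 1\le j,k\le n)$ depends only on $M$, and from the formulas of Proposition~\ref{eigenvectors} — writing each ratio $u_j/u_k$ as a quotient of Chebyshev polynomials in $\cos(\pi/h)$, and checking $F_4,G_2$ directly — one obtains $K=\Q(\cos(\pi/h))$ for every type. Since $h\mapsto\Q(\cos(\pi/h))=\Q(\zeta_{2h})^+$ is injective on $\{h\ge 4\}$ (for instance by conductors: the conductor is $2h$ for even $h$ and $h$ for odd $h\ge 5$, which never coincide), and every Dynkin diagram on $n\ge 3$ vertices has $h\ge 4$, the field $K$ pins down $h$. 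The Coxeter numbers realized on $n\ge 3$ vertices are $n+1$ (type $A$), $2n-2$ (type $D$, which needs $n\ge 4$), $2n$ (types $B$ and $C$), together with $12$ ($F_4$ at $n=4$, $E_6$ at $n=6$), $18$ ($E_7$), $30$ ($E_8$); inspecting this list, the pair $(n,h)$ determines the type except for the genuine coincidences $\{B_n,C_n\}$ (always, with $h=2n$) and, at $n=6$, the triple $\{B_6,C_6,E_6\}$ (all with $h=12$).

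It remains to separate these from $M$. For $n\ge 3$ the quotient (largest entry)/(smallest entry) of the dominant eigenvector equals $\csc(\pi/2n)$ for $B_n$, and equals $2\cos(\pi/2n)$ for $C_n$ if $n\le 3$ and $\cot(\pi/2n)$ if $n\ge 4$; as $\csc(\pi/2n)=2\cos(\pi/2n)$ only for $n=2$ and $\csc x>\cot x$ on $(0,\pi)$, these differ, so $B_n\not\cong C_n$ are distinguished (at $n=2$ the vectors are proportional, but there $B_2\cong C_2$, so the statement still holds). For $\{B_6,C_6,E_6\}$ the six entries of $M$ are pairwise distinct for $B_6$, have exactly one repeated value for $C_6$, and have two repeated values for $E_6$ (these coming from the order-two automorphism of the $E_6$ diagram), so the three multisets are pairwise distinct. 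This exhausts the cases.

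The main obstacle is the finite but fiddly verification that underpins the two field-theoretic claims — that $K=\Q(\cos(\pi/h))$ for each of the eigenvectors in Proposition~\ref{eigenvectors}, and that $h\mapsto\Q(\cos(\pi/h))$ is injective on the range in play — and the small hand-checks for $n\le 2$ and for the two collision families; the rest is formal. A more hands-on alternative, reading $\lambda$ directly off the eigenvalue relation $\lambda v_\ell=v_m$ at a leaf $\ell$ with neighbor $m$, would first require identifying which entries of $M$ correspond to a leaf and its neighbor, which is no easier, so I prefer extracting $h$ via $K$.
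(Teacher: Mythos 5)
Your argument is correct, and it takes a genuinely different route from the paper. The paper offers no written proof: the corollary is meant to be read off the explicit formulas of Proposition \ref{eigenvectors}, in the same case-by-case spirit as the color-set comparisons that dominate Section \ref{classification}. You instead establish the stronger, uniform claim that for $n\ge 3$ the normalized entry multiset of the dominant eigenvector determines the diagram: you recover the Coxeter number $h$ from the field $\Q(u_j/u_k)=\Q(\cos(\pi/h))$ generated by entry ratios (using injectivity of $h\mapsto\Q(\cos(\pi/h))$ via conductors of real cyclotomic fields), and then you resolve the only $(n,h)$-collisions, $\{B_n,C_n\}$ and $\{B_6,C_6,E_6\}$, by scale-invariant statistics (the max/min ratio $\csc(\pi/2n)$ versus $2\cos(\pi/2n)$ or $\cot(\pi/2n)$, and the repetition pattern of entries); I checked these computations and the $n\le 2$ base cases, and they hold, as does your reduction of the left-eigenvector case by transposition since $D\mapsto D^{\top}$ is an involution on Dynkin diagrams. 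Two comments on the comparison. First, your approach correctly avoids a trap that a quick reading of the paper might fall into: the hypothesis gives only proportional eigenvectors, not equal dominant eigenvalues, so one cannot invoke ``same eigenvalue $2\cos(\pi/h)$, hence same Coxeter number'' as in Corollary \ref{coxeterNum}; extracting $h$ from the entries themselves is exactly the needed fix, and your field-theoretic device does this cleanly and uniformly in $n$. Second, the cost is machinery (cyclotomic conductors) that the intended direct inspection never uses; a direct comparison of the explicit vectors, organized by vertex count and, say, your max/min and repetition invariants alone, would suffice and be closer to the paper's style, but nothing in your argument is flawed.
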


Define the \textit{color type} of a Dynkin diagram $\Lambda$ to be the multiset $\{a, b\}$, where $a$ and $b$ denote the number of vertices of each color in the bipartite coloring of $\Lambda$. Table \ref{table:1} lists the color types of each of the Dynkin diagrams.

\begin{lemma}[\cite{stembridge}]
\label{colorType}
    If $(\Gamma, \Delta)$ is an admissible double binding and the two components of $\Gamma$ have color types $\{a, b\}$ and $\{c, d\}$, then the two components of $\Delta$ have color types $\{a, d\}$ and $\{c, b\}$ or $\{a, c\}$ and $\{b, d\}$.
\end{lemma}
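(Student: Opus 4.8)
The plan is to prove this by a direct combinatorial analysis of how the bipartite coloring $\epsilon$ meets the two binding structures; I do not expect to need admissibility here (the common dominant eigenvector of Lemma~\ref{commonEigVec} plays no role). Write $\Gamma = \Gamma_1\sqcup\Gamma_2$ and $\Delta = \Delta_1\sqcup\Delta_2$ for the decompositions into connected components, so that, $(\Gamma,\Delta)$ being a double binding, every $\Delta$-edge joins $V(\Gamma_1)$ to $V(\Gamma_2)$ and every $\Gamma$-edge joins $V(\Delta_1)$ to $V(\Delta_2)$. First I would introduce the four cells $V_{ij}\coloneqq V(\Gamma_i)\cap V(\Delta_j)$ for $i,j\in\{1,2\}$, which satisfy $V(\Gamma_i)=V_{i1}\sqcup V_{i2}$ and $V(\Delta_j)=V_{1j}\sqcup V_{2j}$.

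The heart of the argument is the claim that each $V_{ij}$ is $\epsilon$-monochromatic and that the four cells form a checkerboard: $V_{11},V_{22}$ carry one color and $V_{12},V_{21}$ the other. For monochromaticity, note that inside $V(\Gamma_1)$ the only edges are $\Gamma$-edges (the $\Delta$-edges all leave $V(\Gamma_1)$), so the induced subgraph there is the connected Dynkin diagram $\Gamma_1$; since every $\Gamma$-edge crosses the partition $V(\Delta_1)\mid V(\Delta_2)$, the pair $(V_{11},V_{12})$ is a proper $2$-coloring of $\Gamma_1$, hence — $\Gamma_1$ being connected and bipartite — it agrees up to swap with the unique bipartition of $\Gamma_1$, which is the restriction of $\epsilon$. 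Thus $V_{11}$ and $V_{12}$ are precisely the two $\epsilon$-color classes of $\Gamma_1$; the same holds for $\Gamma_2$, and, by the symmetric argument using that $\Gamma$-edges leave each $V(\Delta_j)$, for $\Delta_1$ and $\Delta_2$. In particular $V_{11}$ has color opposite to $V_{12}$ and opposite to $V_{21}$, which then forces the color of $V_{22}$ and yields the checkerboard. (Cells of size $0$ occur only when a component is $A_1$ and cause no trouble.)

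Finally, after relabeling $\epsilon$ so that $V_{11},V_{22}$ are white and $V_{12},V_{21}$ black, I would read off the color types directly: the components of $\Gamma$ have color types $\{|V_{11}|,|V_{12}|\}=\{a,b\}$ and $\{|V_{22}|,|V_{21}|\}=\{c,d\}$, while the components of $\Delta$ have color types $\{|V_{11}|,|V_{21}|\}$ and $\{|V_{12}|,|V_{22}|\}$. Running through the two ways to identify $(|V_{11}|,|V_{12}|)$ with an ordering of $\{a,b\}$ and the two ways to identify $(|V_{21}|,|V_{22}|)$ with an ordering of $\{c,d\}$, the four combinations produce color types $\{a,d\},\{b,c\}$ (in two cases) or $\{a,c\},\{b,d\}$ (in the other two), which is exactly the claimed dichotomy.

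The conceptual substance is the checkerboard claim — that each of the four connected components $\Gamma_1,\Gamma_2,\Delta_1,\Delta_2$ forces its own vertex set to split along $\epsilon$ in exactly one way — and the only point requiring care is the small case-check in the last step together with the harmless treatment of empty cells.
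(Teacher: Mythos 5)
Your proposal is correct and follows essentially the same route as the paper's proof: both arguments observe that, since every $\Gamma$-edge crosses the $\Delta_1\mid\Delta_2$ partition and each component is connected and bipartite, the four intersections $\Gamma_i\cap\Delta_j$ are exactly the $\epsilon$-color classes of the components, forcing the checkerboard pattern in which $(\Gamma_1\cap\Delta_1)\cup(\Gamma_2\cap\Delta_2)$ and $(\Gamma_1\cap\Delta_2)\cup(\Gamma_2\cap\Delta_1)$ are the two global color classes, from which the stated dichotomy of color types is read off. Your observation that admissibility is not actually used is consistent with the paper, whose proof likewise relies only on the double-binding and bipartite structure.
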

\begin{proof}
    As $(\Gamma, \Delta)$ is bipartite, since every edge in $\Gamma$ connects $\Delta_1$ to $\Delta_2$ and $\Gamma_1$ is connected, $(\Gamma_1\cap \Delta_1)\sqcup (\Gamma_1\cap \Delta_2)$ is the bipartite partition of the vertices of $\Gamma_1$.
    Similarly, $(\Gamma_2\cap \Delta_1)\sqcup (\Gamma_2\cap \Delta_2)$ is the bipartite partition of the vertices of $\Gamma_2$, and $(\Gamma_1\cap \Delta_i)\sqcup (\Gamma_2\cap \Delta_i)$ is the bipartite partition of the vertices of $\Delta_i$, for $i = 1, 2$.

    Thus, the bipartite partition of $(\Gamma, \Delta)$ is 
    $$\big((\Gamma_1\cap \Delta_1)\cup (\Gamma_2\cap \Delta_2)\big) \sqcup \big((\Gamma_1\cap\Delta_2)\cup(\Gamma_2\cap\Delta_1)\big).$$
    The two components of $\Delta$ have color types $\{a, d\}$ and $\{c, b\}$ or $\{a, c\}$ and $\{b, d\}$.
\end{proof}


\begin{lemma}
\label{non-ADEDoubleBindingComponent}
    If $(\Delta, \Gamma)$ is an admissible non-ADE double binding, then at least two components of $\{\Gamma_1, \Gamma_2, \Delta_1, \Delta_2\}$ must be non-ADE.
\end{lemma}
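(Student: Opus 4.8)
The plan is to argue by contradiction, using a comparison between a common \emph{right} dominant eigenvector and a common \emph{left} one. Assume the double binding is connected (as required for Lemma~\ref{commonEigVec}) and that at most one of the four components is non-ADE; since both orderings of a double binding are bindings, up to swapping the roles of $\Gamma$ and $\Delta$ and relabelling I may take $\Gamma_1$ non-ADE and $\Gamma_2,\Delta_1,\Delta_2$ all ADE. First I would record the elementary structural fact, exactly as in the proof of Lemma~\ref{colorType}: since every edge of $\Gamma$ joins $\Delta_1$ to $\Delta_2$ and $\Gamma_1$ is connected (and has an edge, being non-ADE), the pair $(\Gamma_1\cap\Delta_1,\ \Gamma_1\cap\Delta_2)$ is the unique bipartition of $\Gamma_1$, with both parts nonempty; symmetrically $(\Gamma_1\cap\Delta_1,\ \Gamma_2\cap\Delta_1)$ is the bipartition of $\Delta_1$, and likewise for $\Delta_2$.

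Next I would bring in the eigenvectors. By Lemma~\ref{commonEigVec}, $\Gamma$ and $\Delta$ share a dominant eigenvector $\vv{v}\in\mathbb{R}^n_{>0}$. Since admissibility is preserved under global flips (Definition~\ref{flippingLem}), $(\Gamma^{\top},\Delta^{\top})$ is again a connected admissible Dynkin biagram, so Lemma~\ref{commonEigVec} also gives a common dominant eigenvector $\vv{w}\in\mathbb{R}^n_{>0}$ of $\Gamma^{\top},\Delta^{\top}$, i.e.\ a common \emph{left} dominant eigenvector of $\Gamma,\Delta$. On any ADE component the Coxeter adjacency matrix is symmetric, so its left and right dominant eigenvectors coincide; hence $\vv{v}$ and $\vv{w}$ are proportional on $\Gamma_2$, on $\Delta_1$, and on $\Delta_2$. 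As $\Gamma_1\subseteq\Delta_1\cup\Delta_2$, it follows that $\vv{v}$ and $\vv{w}$ are proportional on $\Gamma_1\cap\Delta_1$ and on $\Gamma_1\cap\Delta_2$ (with possibly different constants, inherited from $\Delta_1$ and $\Delta_2$ respectively). On the other hand, restricting the global eigenvectors to the $\Gamma$-component $\Gamma_1$ gives $\vv{v}|_{\Gamma_1}\propto\vv{v}^{R}_{\Gamma_1}$ and $\vv{w}|_{\Gamma_1}\propto\vv{v}^{L}_{\Gamma_1}$ (the right and left dominant eigenvectors of $\Gamma_1$, at the common dominant eigenvalue from Corollary~\ref{coxeterNum}). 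Combining, $\vv{v}^{R}_{\Gamma_1}$ and $\vv{v}^{L}_{\Gamma_1}$ are proportional on each of the two color classes $\Gamma_1\cap\Delta_1,\ \Gamma_1\cap\Delta_2$ of $\Gamma_1$.

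Then I would check this against Proposition~\ref{eigenvectors}. For $B_n$ and $C_n$ the left and right dominant eigenvectors differ only at the end-vertex of the double bond, by a factor of $2$; for $n\ge 3$ that vertex lies in a color class containing also the third vertex of the path, where the two eigenvectors agree with common nonzero value $\cos(\pi/n)$, so they are not proportional on that class. For $F_4$ they differ (again by a factor $2$) at the two short-root-side vertices, and each color class $\{1,3\},\{2,4\}$ contains one such vertex together with a vertex where they agree, so proportionality fails again. Hence $\Gamma_1$ must be one of $B_2,\ C_2,\ G_2$ — the only non-ADE diagrams all of whose color classes are singletons, for which the criterion above is vacuous. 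These I would dispose of by a Coxeter-number count: here $|\Gamma_1|=2$ with vertices $u\in\Delta_1$, $v\in\Delta_2$, so $\Delta_1=\{u\}\cup(\Gamma_2\cap\Delta_1)$ and $\Delta_2=\{v\}\cup(\Gamma_2\cap\Delta_2)$, where $\Gamma_2\cap\Delta_1,\Gamma_2\cap\Delta_2$ are the two color classes of $\Gamma_2$; by Corollary~\ref{coxeterNum}, $h_{\Delta_1}=h_{\Delta_2}$ and $h_{\Gamma_2}=h_{\Gamma_1}$. If $\Gamma_1=G_2$ then $\Gamma_2\in\{A_5,D_4\}$, giving $\{|\Delta_1|,|\Delta_2|\}\in\{\{4,3\},\{4,2\}\}$, and no two connected ADE diagrams of those sizes share a Coxeter number ($A_3$ has $h=4$, while the ADE diagrams on $4$ vertices are $A_4,D_4$ with $h=5,6$, and $A_2$ has $h=3$); if $\Gamma_1\in\{B_2,C_2\}$ then $\Gamma_2=A_3$, giving $\{|\Delta_1|,|\Delta_2|\}=\{3,2\}$, and $A_3,A_2$ have $h=4\ne 3$. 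Every case is contradictory, so at least two of $\Gamma_1,\Gamma_2,\Delta_1,\Delta_2$ are non-ADE.

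I expect the crux to be the idea of comparing the common \emph{left} eigenvector $\vv{w}$ with the common \emph{right} eigenvector $\vv{v}$; once that is in place, the rest is bookkeeping with Proposition~\ref{eigenvectors} and a short numerical check. The remaining care goes into confirming that the Perron--Frobenius machinery of Lemma~\ref{commonEigVec} applies verbatim to the global flip $(\Gamma^{\top},\Delta^{\top})$, and into handling the degenerate cases $B_2,C_2,G_2$, whose singleton color classes make the eigenvector obstruction vacuous and so genuinely require the separate finite Coxeter-number argument.
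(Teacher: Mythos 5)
Your argument is correct, but it takes a genuinely different route from the paper. The paper's proof is purely local: since only one component is non-ADE, there is a unique nonsimple edge $(i,j)$ in $\Gamma_1$; picking a $\Delta$-neighbour $k$ of $j$ and counting red-blue versus blue-red paths between $i$ and $k$ (all other contributing paths use only simple edges, hence pair up under reversal, while the single path through the nonsimple edge contributes $\Gamma_{ij}\Delta_{jk}\neq\Delta_{kj}\Gamma_{ji}$) contradicts admissibility in a few lines, with no spectral input and no connectivity hypothesis. You instead compare the common \emph{right} Perron eigenvector of $(\Gamma,\Delta)$ with the common \emph{left} one obtained by applying Lemma~\ref{commonEigVec} to the global flip $(\Gamma^{\top},\Delta^{\top})$, observe that on ADE components these coincide, and conclude that $\vv{v}^{R}_{\Gamma_1}$ and $\vv{v}^{L}_{\Gamma_1}$ must be proportional on each colour class of $\Gamma_1$ — which fails for $B_n,C_n$ ($n\ge 3$) and $F_4$ by Proposition~\ref{eigenvectors}, and the residual rank-two cases $B_2,C_2,G_2$ are killed by the Coxeter-number count via Corollary~\ref{coxeterNum}; I checked the numerics and the counting and they are right. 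What your approach buys is a preview of the chiral-eigenvector technique the paper uses later in the double-binding classification, and it localises exactly which diagrams ($B_2,C_2,G_2$) evade the spectral obstruction; what it costs is the heavier machinery, the extra case analysis, and the connectivity assumption — though the latter is harmless, since a double binding containing a nonsimple edge is automatically connected (a disconnected double binding with two components on each side degenerates to isolated $A_1$'s), and the paper only invokes the lemma for connected bindings; it would still be worth one sentence in your write-up to discharge it.
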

\begin{proof}
    Suppose there is only one non-ADE component. Without loss of generality, let it be $\Gamma_1$. Let $i, j$ be two vertices in $\Gamma_1$ connected by  a nonsimple $\Gamma$-edge, and let $k$ be a vertex adjacent in $\Delta$ to $j$. Then, $\Gamma_{ij} \neq \Gamma_{ji}$. Notice that no Dynkin diagram has a vertex adjacent to two nonsimple edges, so $(i,j)$ is the only nonsimple $\Gamma$-edge adjacent to $i$.

    Now, we count $(\Gamma\Delta)_{ik}$. 
    All such paths will necessarily be made up of simple edges except for ones that trace $(i, j), (j, k)$.
    Tracing these simple paths backwards to produce blue-red paths $k\rightarrow i$ will yield the same number of paths.
    However, the number of paths $(i, j), (j, k)$ is $$\Gamma_{ij}\Delta_{jk} \neq \Delta_{kj}\Gamma_{ji},$$
    so the total number of red-blue paths $(\Gamma\Delta)_{ik}$ will not be the same as the total number of blue-red paths $(\Delta\Gamma)_{ki}$. 
    
    On the other hand, the number of blue-red paths $i\rightarrow k$ will be the same as the number of red-blue paths $k\rightarrow i$, as these paths are all made up of simple edges.
    Admissibility gives
    $$(\Gamma\Delta)_{ik} = (\Delta\Gamma)_{ik} = (\Gamma\Delta)_{ki} = (\Delta\Gamma)_{ki},$$
    which is a contradiction.
\end{proof}

\begin{table}
\tiny
    \begin{tabular}{|c|c|c|}
        \hline
        Dynkin Diagram & Coxeter Number $h$ & Color Type\\
        \hline
        $A_{2n}$ & $2n + 1$ & $\{n, n\}$\\
        $A_{2n + 1}$ & $2n + 2$ & $\{n + 1, n\}$\\
        $B_{2n}$ & $4n$ & $\{n, n\}$\\
        $B_{2n + 1}$ & $4n + 2$ & $\{n + 1, n\}$\\
        $C_{2n}$ & $4n$ & $\{n, n\}$\\
        $C_{2n + 1}$ & $4n + 2$ & $\{n + 1, n\}$\\
        $D_{2n}$ & $4n - 2$ & $\{n + 1, n - 1\}$\\
        $D_{2n + 1}$ & $4n$ & $\{n + 1, n\}$\\
        $E_6$ & 12 & $\{3, 3\}$\\
        $E_7$ & 18 & $\{4, 3\}$\\
        $E_8$ & 30 & $\{4, 4\}$\\
        $F_4$ & 12 & $\{2, 2\}$\\
        $G_2$ & 6 & $\{1, 1\}$\\
        \hline
    \end{tabular}
    \caption{Coxeter numbers and color types of each Dynkin diagram.}
    \label{table:1}
\end{table}

We are now ready to present the proof of Theorem \ref{doubleBindingsThm}.
We break up the proof into two parts.
This proof structure was adapted from \cite{stembridge}. 
First, we examine admissible double bindings where $\Gamma_1\not\cong \Gamma_2$.
Then, we classify all admissible double bindings with isomorphic components. 

\begin{lemma}
\label{nonisom}
    If $(\Gamma, \Delta)$ is an admissible double binding such that the two components of $\Gamma$ are not isomorphic, then $(\Gamma, \Delta)$ is either $B_n\ltimes D_{n+1}$, $C_n\ltimes D_{n+1}$, $B_n\bowtie C_n$, $G_2\ltimes_{1, 2} D_4$, $B_3\bowtie_{1,2} G_2$, $C_3\bowtie_{1,2} G_2$, or $B_4\boxtimes C_4$.
\end{lemma}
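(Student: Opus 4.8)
The plan is to exploit the block structure of a double binding together with the constraints already established --- the common Coxeter number within $\Gamma$ and within $\Delta$ (Corollary~\ref{coxeterNum}), the color types of the four components (Lemma~\ref{colorType}), the proportionality of dominant eigenvectors along shared color classes (Lemma~\ref{proportionalProj}, with the explicit eigenvectors of Proposition~\ref{eigenvectors} and the rigidity of Corollary~\ref{uniqueEigenvector}), and the presence of at least two non-ADE components (Lemma~\ref{non-ADEDoubleBindingComponent}) --- to cut the possibilities down to a finite list, which is then matched against Lemma~\ref{admissibleDouble}. Concretely, write $\Gamma=\Gamma_1\sqcup\Gamma_2$ with every $\Delta$-edge running between $\Gamma_1$ and $\Gamma_2$, and $\Delta=\Delta_1\sqcup\Delta_2$ with every $\Gamma$-edge running between $\Delta_1$ and $\Delta_2$. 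Exactly as in the proof of Lemma~\ref{colorType}, the sets $V_{ij}:=\Gamma_i\cap\Delta_j$ partition the vertex set, $V_{i1}\sqcup V_{i2}$ is the bipartition of $\Gamma_i$, and $V_{1j}\sqcup V_{2j}$ is the bipartition of $\Delta_j$. Writing $p,q,r,s$ for $|V_{11}|,|V_{12}|,|V_{21}|,|V_{22}|$, the color types are $\Gamma_1=\{p,q\}$, $\Gamma_2=\{r,s\}$, $\Delta_1=\{p,r\}$, $\Delta_2=\{q,s\}$, the two cases of Lemma~\ref{colorType} being the relabelings inside $\{p,q\}$ and inside $\{r,s\}$.

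By Corollary~\ref{coxeterNum}, $\Gamma_1,\Gamma_2$ share a single Coxeter number $h_\Gamma$ and $\Delta_1,\Delta_2$ a single Coxeter number $h_\Delta$. From Table~\ref{table:1} the Dynkin diagrams of Coxeter number $h$ are $A_{h-1}$, together with $B_{h/2},C_{h/2},D_{h/2+1}$ when $h$ is even and large enough, plus the sporadic coincidences $G_2$ ($h=6$), $F_4$ and $E_6$ ($h=12$), $E_7$ ($h=18$) and $E_8$ ($h=30$); moreover every color type has the form $\{n,n\}$, $\{n+1,n\}$ or $\{n+1,n-1\}$, so $|p-q|,|r-s|\le 2$ and, after the correct relabeling, $|p-r|,|q-s|\le 2$. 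Since we are inside the proof of Theorem~\ref{doubleBindingsThm} the double binding is non-ADE, so Lemma~\ref{non-ADEDoubleBindingComponent} forces at least two of $\Gamma_1,\Gamma_2,\Delta_1,\Delta_2$ to be non-ADE. Because $\Gamma_1\not\cong\Gamma_2$ must share $h_\Gamma$, the unordered pair $\{\Gamma_1,\Gamma_2\}$ is drawn, for a generic even $h_\Gamma$, from $\{A_{h_\Gamma-1},B_{h_\Gamma/2},C_{h_\Gamma/2},D_{h_\Gamma/2+1}\}$, and for $h_\Gamma\in\{6,12\}$ from the finitely many extra pairs involving $G_2,F_4,E_6$. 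The values $h_\Gamma\in\{18,30\}$ are eliminated at once: there the only non-$A$ option is $E_7$ (resp.\ $E_8$), so both $\Gamma_i$ would be ADE and hence both $\Delta_j$ non-ADE with a color type of difference at least $3$, which no Dynkin diagram realizes. The same color-type and Coxeter-number bookkeeping, together with the non-ADE hypothesis, rules out every remaining candidate in which both $\Gamma_1$ and $\Gamma_2$ are ADE, so at least one of them is non-ADE.

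For each surviving $\{\Gamma_1,\Gamma_2\}$, I would pin down $\Delta$ using Lemma~\ref{proportionalProj}: on each block $V_{ij}$ the restriction of $\vv{v}_{\Gamma_i}$ must be proportional, after a relabeling of $V_{ij}$, to the restriction of $\vv{v}_{\Delta_j}$. Reading off the coordinate strings of Proposition~\ref{eigenvectors} --- the string $\sin(k\pi/(n+1))$ for type $A$, the string $\tfrac12$ or $1$ followed by $\cos(k\pi/2n)$ for types $B$ and $C$, and $\tfrac12,\tfrac12$ followed by $\cos(k\pi/(2n-2))$ for type $D$ --- determines which vertices of $\Gamma_i$ occupy which block and, via Corollary~\ref{uniqueEigenvector}, forces $h_\Delta=h_\Gamma$ together with a definite type for each $\Delta_j$. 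This collapses the a priori two-parameter family down to exactly $B_n\ltimes D_{n+1}$, $C_n\ltimes D_{n+1}$ and $B_n\bowtie C_n$ in the infinite case, and $G_2\ltimes_{1,2} D_4$, $B_3\bowtie_{1,2} G_2$, $C_3\bowtie_{1,2} G_2$ (at $h_\Gamma=6$) together with $B_4\boxtimes C_4$ (at $h_\Gamma=12$); the two possible orientations of the nonsimple edge in each block distinguish the subscript-$1$ from the subscript-$2$ bindings and the $B\leftrightarrow C$ pairs. All of these were shown admissible in Lemma~\ref{admissibleDouble}, so the list is complete and exact.

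I expect the main obstacle to be the simultaneous coordinate matching of the third step for the infinite families: one must glue the three truncated-cosine strings of $B_{h/2}$, $C_{h/2}$, $D_{h/2+1}$ (and occasionally the sine string of $A_{h-1}$) proportionally along their shared color classes across all four blocks at once, ruling out the ``misaligned staircase'' gluings, and the combinatorial heart of this is that proportionality of a sine string against a cosine string, or of two cosine strings coming from different diagrams, forces equality of their lengths and parameters --- which is what Corollary~\ref{uniqueEigenvector} packages and what removes the spurious second parameter. A secondary and more mechanical chore is the by-hand verification at $h_\Gamma\in\{6,12\}$, where $G_2$, $F_4$ and $E_6$ enter and one must track the orientations produced by the relevant foldings carefully enough to distinguish, for instance, $B_3\bowtie_1 G_2$ from $B_3\bowtie_2 G_2$ and to recognize the fold of $D_5\boxtimes A_7$ as the exceptional binding $B_4\boxtimes C_4$.
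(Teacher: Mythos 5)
Your overall strategy (common Coxeter number, color types, at least two non-ADE components, then eigenvector comparison block-by-block) is the same skeleton as the paper's proof, but the step where you claim the eigenvector data ``collapses'' everything to the final list has two genuine gaps. First, you never use the fact that non-simply-laced diagrams have \emph{distinct left and right} dominant eigenvectors. The paper's elimination and pinning-down arguments hinge on requiring a single vertex assignment to be consistent with \emph{both} the left and the right eigenvector simultaneously (the ``distinguishable chiral color sets'' of Table~\ref{table:2}): this is what rules out, e.g., $\Delta_1\cong\Delta_2$ when $\{\Gamma_1,\Gamma_2\}=\{B_n,C_n\}$, what kills $\{B_n(C_n),A_n\}$ for even $n>2$, and what distinguishes $B_n$ from $C_n$ so that the pairing in $\{B_n(C_n),D_{n+1}\}$ is forced. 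Moreover, Corollary~\ref{uniqueEigenvector} does not do the work you assign to it: Lemma~\ref{proportionalProj} only gives proportionality of \emph{projections} onto shared color classes, not of full eigenvectors, so it does not by itself force $h_\Delta=h_\Gamma$ or the type of each $\Delta_j$; candidates with $h_\Delta\neq h_\Gamma$ (e.g.\ $\Delta$ of type $\{B_5(C_5),D_6\}$ against $\Gamma$ of type $\{B_4(C_4),A_7\}$ or $\{F_4,D_7\}$) genuinely arise and are eliminated in the paper only by comparing the explicit color-set multisets entry by entry.

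Second, even after all eigenvector bookkeeping, several vertex pairings consistent with the eigenvector data are \emph{not} admissible and must be excluded by a direct path-counting check: the forced pairing for $\{B_9(C_9),E_7\}$ (Figure~\ref{fig:BCE}), the $\Delta_1\cong\Delta_2\cong B_4$ or $C_4$ assignments against $\{B_4,C_4\}$ (Figure~\ref{fig:BBCCnonadmissible}), and the alternate assignment in the $\{B_4(C_4),D_5\}$ case (Figure~\ref{fig:BCDnonadmissible}). Your proposal has no mechanism for this final verification step, so as written it would wrongly admit these configurations. Relatedly, the exceptional binding $B_4\boxtimes C_4$ does not live at $h_\Gamma=12$: its components have Coxeter number $8$, and it appears precisely because at $n=4$ the chiral color sets of $B_n$ and $C_n$ fail to be distinguishable, producing an alternate admissible pairing inside the $\{B_n,C_n\}$ analysis --- which is exactly the phenomenon your outline does not track. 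Matching the surviving list against Lemma~\ref{admissibleDouble} at the end is fine, but the heart of the lemma is the chiral consistency argument plus the explicit nonadmissibility checks, and both are missing.
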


\begin{proof}
    Using Corollary \ref{coxeterNum}, we may examine all possible non-isomorphic pairings $\{\Gamma_1, \Gamma_2\}$ with the same Coxeter number. As all admissible ADE double bindings were classified by Stembridge \cite{stembridge}, we assume $\Gamma_1$ must be non-ADE. This gives us possibilities\\

    \begin{tabular}{l l l l l}
        $\{B_n(C_n), A_{2n-1}\}$, & $\{B_n(C_n), D_{n+1}\}$, & $\{B_6(C_6), E_6\}$, & $\{B_9(C_9), E_7\}$, & $\{B_{15}(C_{15}), E_8\}$,\\
        $\{B_6(C_6), F_4\}$, & $\{B_3(C_3), G_2\}$, & $\{B_n, C_n\}$, & $\{F_4, A_{11}\}$, & $\{F_4, D_7\}$,\\
        $\{F_4, E_6\}$, & $\{G_2, A_5\}$,& $\{G_2, D_4\}$. & &
    \end{tabular}\\

    Observe from Table \ref{table:1} that $\{a, b\}$ is a color type of a Dynkin diagram if and only if $|a-b| \leq 2$. Using this and Lemma \ref{colorType}, valid color types for $\Delta$ reduce the above list to\\

    \begin{tabular}{l m{4cm} l l}
        $\{B_{2n}, C_{2n}\}$ & $\big\{\{n, n\}, \{n, n\}\big\}$, & $\{B_{2n+1}, C_{2n+1}\}$ & $\big\{\{n+1, n\}, \{n+1, n\}\big\}$,\\
        $\{B_{2n}, D_{2n+1}\}$ & $\big\{\{n, n\}, \{n+1, n\}\big\}$, & $\{C_{2n}, D_{2n+1}\}$ & $\big\{\{n, n\}, \{n+1, n\}\big\}$,\\
        $\{B_3(C_3), A_5\}$ & $\big\{\{2, 1\}, \{3, 2\}\big\}$, & $\{B_{2n+1}(C_{2n+1}), D_{2n+2}\}$ & $\big\{\{n+1, n\}, \{n+2, n\}\big\}$,\\
        $\{B_4(C_4), A_7\}$ & $\big\{\{2, 2\}, \{4, 3\}\big\}$, & $\{B_5(C_5), A_9\}$ & $\big\{\{3, 2\}, \{5, 4\}\big\}$,\\
        $\{B_6(C_6), E_6\}$ & $\big\{\{3, 3\}, \{3, 3\}\big\}$, & $\{B_9(C_9), E_7\}$ & $\big\{\{5, 4\}, \{4, 3\}\big\}$,\\
        $\{B_6(C_6), F_4\}$ & $\big\{\{3, 3\}, \{2, 2\}\big\}$, & $\{B_3(C_3), G_2\}$ & $\big\{\{2, 1\}, \{1, 1\}\big\}$,\\
        $\{F_4, D_7\}$ & $\big\{\{2, 2\}, \{4, 3\}\big\}$, & $\{F_4, E_6\}$ & $\big\{\{2, 2\}, \{3, 3\}\big\}$,\\
        $\{G_2, A_5\}$ & $\big\{\{1, 1\}, \{3, 2\}\big\}$, & $\{G_2, D_4\}$ & $\big\{\{1, 1\}, \{3, 1\}\big\}.$
    \end{tabular}\\
    
    Notice Lemma \ref{colorType} asserts that if $\{a, b\}, \{c, d\}$ is a listed color type, then either $\{a, c\}, \{b, d\}$ or $\{a, d\}, \{b, c\}$ must be a valid color type.
    This combined with Lemma \ref{non-ADEDoubleBindingComponent} eliminates $\{B_5(C_5), A_9\}$ from the list.
    
    We first check that the following color types not listed in Lemma \ref{admissibleDouble} are incompatible double bindings.\\

    \begin{tabular}{l m{4cm} l l}
        $\{B_3(C_3), A_5\}$ & $\big\{\{2, 1\}, \{3, 2\}\big\},$ & $\{B_4(C_4), A_7\}$ & $\big\{\{2, 2\}, \{4, 3\}\big\},$\\
        $\{B_6(C_6), E_6\}$ & $\big\{\{3, 3\}, \{3, 3\}\big\},$ & $\{B_9(C_9), E_7\}$ & $\big\{\{5, 4\}, \{4, 3\}\big\},$\\
        $\{B_6(C_6), F_4\}$ & $\big\{\{3, 3\}, \{2, 2\}\big\},$ & $\{F_4, D_7\}$ & $\big\{\{2, 2\}, \{4, 3\}\big\},$\\
        $\{F_4, E_6\}$ & $\big\{\{2, 2\}, \{3, 3\}\big\},$ & $\{G_2, A_5\}$ & $\big\{\{1, 1\}, \{3, 2\}\big\}.$
    \end{tabular}
    
    Throughout this portion of the proof, we heavily use Lemma \ref{commonEigVec} to compare dominant eigenvectors of components in the double bindings. The color set projections of all dominant eigenvectors relevant to this proof are listed in Table \ref{table:2}.
    By Lemma \ref{commonEigVec}, we know that $\Gamma$ and $\Delta$ share a common (left/right) eigenvector $\vv{v}$ that is some linear combination of the dominant eigenvectors of $\Gamma_1, \Gamma_2$ and that of $\Delta_1, \Delta_2$:
    \begin{align*}
        \vv{v} &= c_1\vv{v}_{\Gamma_1}\oplus c_2\vv{v}_{\Gamma_2} = d_1\vv{v}_{\Delta_1} \oplus d_2\vv{v}_{\Delta_2}.
    \end{align*}
    So if we write 
    \begin{align*}
        A = \begin{pmatrix}
            \vv{v}_{\Gamma_1} & \vv{v}_{\Gamma_2}
        \end{pmatrix} = 
        \begin{pmatrix}
            \vv{v}_{\Gamma_1\cap \Delta_1} & \vv{v}_{\Gamma_2\cap \Delta_1}\\
            \vv{v}_{\Gamma_1\cap \Delta_2} & \vv{v}_{\Gamma_2\cap \Delta_2}
        \end{pmatrix},
    \end{align*}
    there is some scaling of the columns of $A$ which result in the rows being dominant eigenvectors for $\Delta_1, \Delta_2$. 
    Moreover, note that non-ADE Dynkin diagrams have distinct left and right dominant eigenvectors, which we call \textit{chiral} eigenvectors. 
    Each chiral eigenvector has at least one \textit{chiral} vertex, an index whose value changes between the left and right eigenvector.
    So, there must exist a valid scaling for the left common dominant eigenvectors and the right common dominant eigenvectors. Recall from the proof of Lemma \ref{colorType} that this 2x2 partition gives the bipartite coloring of $(\Gamma, \Delta)$.
    In addition, as observed in Proposition \ref{eigenvectors}, the right dominant eigenvector for $C_n$ is the left dominant eigenvector of $B_n$ and vice versa, so we may investigate them both in the same case.

    \begin{table}
    \tiny
    \begin{tabular}{|c|l|l|}
        \hline
        Type & Color Set $\circ$ & Color Set $\bullet$\\
        \hline
        $A_3$ & \{1.000, 1.000\}& \{1.000\}\\
        $A_4$ & $\{1.000, 1.618\}$ & $\{1.000, 1.618\}$\\
        $A_5$ & $\{1.000, 1.000, 2.000\}$ & \{1.000, 1.000\}\\
        $A_7$ & \{1.000, 1.000, 2.414, 2.414\} & \{1.000, 1.000, 1.414\}\\
        $B_2$ & \{1.000\} & \{\textbf{1.000}\}\\
        & \{1.000\} & \{\textbf{1.000}\}\\
        $B_3$ & \{1.000, \textbf{1.000}\} & \{1.000\}\\
        & \{1.000, \textbf{2.000}\} & \{1.000\}\\
        $B_4$ & \{1.000, 2.414\} & \{\textbf{1.000}, 1.414\}\\
        & \{1.000, 2.414\} & \{1.000, \textbf{1.414}\}\\
        $B_5$ & \{1.000, \textbf{1.618}, 2.618\} & \{1.000, 1.618\}\\
        & \{1.000, 2.618, \textbf{3.236}\} & \{1.000, 1.618\}\\
        $B_6$ & \{1.000, 2.732, 3.732\} & \{1.000, \textbf{1.000}, 1.732\}\\
        & \{1.000, 2.732, 3.732\} & \{1.000, 1.732, \textbf{2.000}\}\\
        $B_7$ & \{1.000, \textbf{2.247}, 2.802, 4.049\} & \{1.000, 1.802, 2.247\}\\
        & \{1.000, 2.802, 4.049, \textbf{4.494}\} & \{1.000, 1.802, 2.247\}\\
        $B_8$ & \{1.000, 2.848, 4.262, 5.027\} & \{1.000, \textbf{1.307}, 1.848, 2.414\}\\
        & \{1.000, 2.848, 4.262, 5.027\} & \{1.000, 1.848, 2.414, \textbf{2.613}\}\\
        $B_9$ & \{1.000, 2.879, \textbf{2.879}, 4.411, 5.411\} & \{1.000, 1.879, 2.532, 2.879\}\\
        & \{1.000, 2.879, 4.411, 5.411, \textbf{5.759}\} & \{1.000, 1.879, 2.532, 2.879\}\\
        $C_3$ & \{1.000, \textbf{2.000}\} & \{1.000\}\\
        & \{1.000, \textbf{1.000}\} & \{1.000\}\\
        $C_4$ & \{1.000, 2.414\} & \{1.000, \textbf{1.414}\}\\
        & \{1.000, 2.414\} & \{\textbf{1.000}, 1.414\}\\
        $C_5$ & \{1.000, 2.618, \textbf{3.236}\} & \{1.000, 1.618\}\\
        & \{1.000, \textbf{1.618}, 2.618\} & \{1.000, 1.618\}\\
        $C_6$ & \{1.000, 2.732, 3.732\} & \{1.000, 1.732, \textbf{2.000}\}\\
        & \{1.000, 2.732, 3.732\} & \{1.000, \textbf{1.000}, 1.732\}\\
        $C_7$ & \{1.000, 2.802, 4.049, \textbf{4.494}\} & \{1.000, 1.802, 2.247\}\\
        & \{1.000, \textbf{2.247}, 2.802, 4.049\} & \{1.000, 1.802, 2.247\}\\
        $C_8$ & \{1.000, 2.848, 4.262, 5.027\} & \{1.000, 1.848, 2.414, \textbf{2.613}\}\\
        & \{1.000, 2.848, 4.262, 5.027\} & \{1.000, \textbf{1.307}, 1.848, 2.414\}\\
        $C_9$ & \{1.000, 2.879, 4.411, 5.411, \textbf{5.759}\} & \{1.000, 1.879, 2.532, 2.879\}\\
        & \{1.000, 2.879, \textbf{2.879}, 4.411, 5.411\} & \{1.000, 1.879, 2.532, 2.879\}\\
        $D_4$ & \{1.000, 1.000, 1.000\} & \{1.000\}\\
        $D_5$ & \{1.000, 1.000, 1.414\} & \{1.000, 2.414\}\\
        $D_6$ & \{1.000, 1.618, 1.618, 2.618\} & \{1.000, 1.618\}\\
        $D_7$ & \{1.000, 1.000, 1.000, 1.732\} & \{1.000, 2.732, 3.732\}\\
        $E_6$ & \{1.000, 1.000, 2.732\} & \{1.000, 1.366, 1.366\}\\
        $E_7$ & \{1.000, 1.879, 2.532, 2.879\} & \{1.000, 1.532, 2.879\}\\
        $E_8$ & \{1.000, 1.618, 2.956, 4.783\} & \{1.000, 1.209, 1.618, 1.956\}\\
        $F_4$ & \{1.000, \textbf{1.366}\} & \{\textbf{1.000}, 2.732\}\\
        & \{1.000, \textbf{2.732}\} & \{\textbf{1.000}, 1.366\}\\
        $G_2$ & \{1.000\} & \{\textbf{1.000}\}\\
        & \{1.000\} & \{\textbf{1.000}\}\\
        \hline

    \end{tabular}
    \caption{Color set projections for dominant eigenvectors of Dynkin diagrams. For non-ADE types, the first row is the left eigenvector and the second row is the right eigenvector. In each non-ADE type, the boldface numbers are chiral vertices.}
    \label{table:2}
    \end{table}

    \noindent
    \textbf{Case 1: $\{B_3(C_3), A_5\}$}
    \begin{adjustwidth}{2.5em}{0pt}
        The following are the matrices of color sets for the left and right eigenvectors:
        \begin{align*}
            \begin{bmatrix}
                \{1, 1\} & \{1, 1, 2\}\\
                \{1\} & \{1, 1\}
            \end{bmatrix},\hspace{1cm}\begin{bmatrix}
                \{1, 2\} & \{1, 1, 2\}\\
                \{1\} & \{1, 1\}
            \end{bmatrix}.
        \end{align*}
        By Lemma \ref{non-ADEDoubleBindingComponent}, $\Delta_1, \Delta_2$ must contain a non-ADE component and therefore its color type must be in our list above.
        The only possibility is for $\Delta_1, \Delta_2$ to also be of type $\{B_3(C_3), A_5\}$.

        In the right matrix, the cardinality of color sets forces the pairing $\{1, 2\}$ and $\{1, 1, 2\}$ for $A_5$.
        However, $\{1, 2\}$ is not a color set for $A_5$, so these types are incompatible.
    \end{adjustwidth}
    \newpage
    \noindent
    \textbf{Case 2: $\{B_4(C_4), A_7\}$}
    \begin{adjustwidth}{2.5em}{0pt}
    Below is the matrix of color sets for both the left and right eigenvectors:
        \begin{align*}
            \begin{bmatrix}
                \{1, 2.414\} & \{1, 1, 2.414, 2.414\}\\
                \{1, 1.414\} & \{1, 1, 1.414\}
            \end{bmatrix}.
        \end{align*} 
        By Lemma \ref{non-ADEDoubleBindingComponent}, $\Delta_1, \Delta_2$ must contain a non-ADE component and therefore its color type must be in our list above.
        By color type, $\Delta_1, \Delta_2$ must be of type $\{B_5(C_5), D_6\}$.
        However, in order to match the cardinality of the color type for $D_6$, the set $\{1, 1, 2.414, 2.414\}$ above must be a color set for $D_6$, which it is not.
    \end{adjustwidth}
    \noindent
    \textbf{Case 3: $\{B_6(C_6), E_6\}$}
    \begin{adjustwidth}{2.5em}{0pt}
        Below are the matrices of color sets for the left and right eigenvectors:
        \begin{align*}
            \begin{bmatrix}
                \{1, 2.732, 3.732\} & \{1, 1, 2.732\}\\
                \{1, 1, 1.732\} & \{1, 1.366, 1.366\}
            \end{bmatrix},\hspace{1cm}\begin{bmatrix}
                \{1, 2.732, 3.732\} & \{1, 1, 2.732\}\\
                \{1, 1.732, 2\} & \{1, 1.366, 1.366\}
            \end{bmatrix}.
        \end{align*}
        By Lemma \ref{non-ADEDoubleBindingComponent}, $\Delta_1, \Delta_2$ cannot both be ADE.
        By color type, if $\Delta_1\not\cong\Delta_2$, then $\Delta_1, \Delta_2$ must be of type $\{B_6(C_6), E_6\}$ or $\{B_6, C_6\}$. But neither $\{1, 1, 2.732\}$ nor $\{1, 1.366, 1.366\}$ is a color set for $B_6(C_6)$, so both cases are incompatible.
        
        If $\Delta_1\cong \Delta_2$, then $\Delta_1\cong \Delta_2\in \{B_6, C_6\}$.
        By the same argument above, these are both incompatible types.
    \end{adjustwidth}
    \noindent
    \textbf{Case 4: $\{B_9(C_9), E_7\}$}
    \begin{adjustwidth}{2.5em}{0pt}
        Consider the right eigenvector color sets below:
        \begin{align*}
            \begin{bmatrix}
                \{1, 2.879, 4.411, 5.411, 5.759\} & \{1, 1.879, 2.532, 2.879\}\\
                \{1, 1.879, 2.532, 2.879\} & \{1, 1.532, 2.879\}
            \end{bmatrix}.
        \end{align*}
        One of $\Delta_1, \Delta_2$ must be non-ADE, so by color type, $\Delta_1, \Delta_2$ must be of type $\{B_9(C_9), E_7\}$. 
        The color set cardinalities force an exact pairing, yielding the biagram depicted in Figure \ref{fig:BCE}. However, this is not admissible.
    \end{adjustwidth}
\begin{figure}
\scalebox{0.5}{
\begin{tikzpicture}
\coordinate (v0x0) at (0.00, 0.00);
\coordinate (v0x1) at (0.00, 2.00);
\coordinate (v0x2) at (0.00, 4.00);
\coordinate (v0x3) at (0.00, 6.00);
\coordinate (v0x4) at (2.00, 7.00);
\coordinate (v0x5) at (3.50, 5.00);
\coordinate (v0x6) at (3.50, 3.00);
\coordinate (v0x7) at (3.50, 1.00);
\coordinate (v0x8) at (3.50, -1.00);
\coordinate (v1x0) at (6.25, 0.50 + 2.00);
\coordinate (v1x1) at (8.00, 0.50 + 7.00);
\coordinate (v1x2) at (6.25, 0.50 + 4.00);
\coordinate (v1x3) at (8.00, 0.50 + 5.00);
\coordinate (v1x4) at (9.50, 0.50 + 3.00);
\coordinate (v1x5) at (9.50, 0.50 + 1.00);
\coordinate (v1x6) at (9.50, 0.50 + -1.00);
\foreach \i in {0, 1, 2, 3, 4, 5, 6}
{
    \pgfmathtruncatemacro{\j}{\i + 1};
    \draw[color=red, line width=0.75mm] (v0x\i) to[] (v0x\j);
}
\draw[color=red, line width=0.75mm] (3.5, -0.11) to[] (3.72, 0.11) ;
\draw[color=red, line width=0.75mm] (3.5, -0.11) to[] (3.28, 0.11) ;
\draw[color=red, line width=0.75mm] (3.57, 1.0) to[] (3.57, -1.0) ;
\draw[color=red, line width=0.75mm] (3.43, 1.0) to[] (3.43, -1.0) ;
\draw[color=red, line width=0.75mm] (v1x0) to[] (v1x2);
\draw[color=red, line width=0.75mm] (v1x2) to[] (v1x3);
\draw[color=red, line width=0.75mm] (v1x3) to[] (v1x4);
\draw[color=red, line width=0.75mm] (v1x4) to[] (v1x5);
\draw[color=red, line width=0.75mm] (v1x5) to[] (v1x6);
\draw[color=red, line width=0.75mm] (v1x1) to[] (v1x3);
\draw[color=blue, line width=0.75mm] (v0x0) to[] (v1x6);
\draw[color=blue, line width=0.75mm] (v0x1) to[] (v1x5);
\draw[color=blue, line width=0.75mm] (v0x2) to[] (v1x6);
\draw[color=blue, line width=0.75mm] (v0x2) to[] (v1x1);
\draw[color=blue, line width=0.75mm] (v0x3) to[] (v1x3);
\draw[color=blue, line width=0.75mm] (v0x4) to[] (v1x1);
\draw[color=blue, line width=0.75mm] (v0x4) to[] (v1x2);
\draw[color=blue, line width=0.75mm] (v0x5) to[] (v1x3);
\draw[color=blue, line width=0.75mm] (v0x5) to[] (v1x0);
\draw[color=blue, line width=0.75mm] (v0x6) to[] (v1x2);
\draw[color=blue, line width=0.75mm] (v0x6) to[] (v1x4);
\draw[color=blue, line width=0.75mm] (v0x7) to[] (v1x3);
\draw[color=blue, line width=0.75mm] (v0x7) to[] (v1x5);
\draw[color=blue, line width=0.75mm] (5.412, 0.434) to[] (5.72, 0.39000000000000007) ;
\draw[color=blue, line width=0.75mm] (5.412, 0.434) to[] (5.456, 0.742) ;
\draw[color=blue, line width=0.75mm] (9.542, 3.444) to[] (3.542, -1.056) ;
\draw[color=blue, line width=0.75mm] (9.458, 3.556) to[] (3.458, -0.944) ;
\foreach \i in {0, 2, 4, 6, 8}
{
    \draw[fill=white] (v0x\i.center) circle (0.2);
}

\foreach \i in {1, 5, 7}
{
    \draw[fill=black!75!white] (v0x\i.center) circle (0.2);
}
\foreach \i in {1, 2, 4}
{
    \draw[fill=black!75!white] (v1x\i.center) circle (0.2);
}
\draw[fill=white] (v1x0.center) circle (0.2);
\draw[fill=white] (v1x3.center) circle (0.2);
\draw[fill=white] (v1x5.center) circle (0.2);
\draw[fill=black!75!white] (v0x3.center) circle (0.2);
\draw (-1.25, 6.25) node [anchor=north west][inner sep=0.75pt] {\LARGE{$v_1$}};
\draw[fill=black!75!white] (v1x6.center) circle (0.2);
\draw (10.00, -0.25) node [anchor=north west][inner sep=0.75pt] {\LARGE{$v_2$}};
\end{tikzpicture}}
\caption{\label{fig:BCE} The only possible pairing of vertices for the case $\{B_9(C_9), E_7\}$. The vertices $v_1, v_2$ form a nonadmissible pair.}
\end{figure}
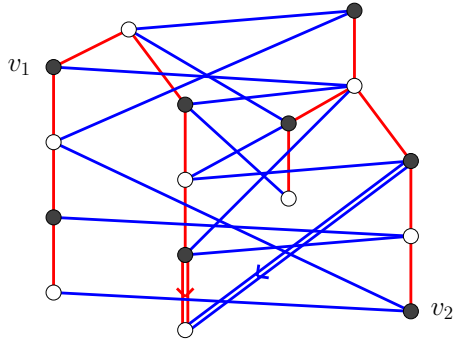
    \noindent
    \textbf{Case 5: $\{B_6(C_6), F_4\}$}
    \begin{adjustwidth}{2.5em}{0pt}
        The following are the matrices of color sets for the left and right eigenvectors:
        \begin{align*}
            \begin{bmatrix}
                \{1, 2.732, 3.732\} & \{1, 1.366\}\\
                \{1, 1, 1.732\} & \{1, 2.732\}
            \end{bmatrix}, \hspace{1cm}\begin{bmatrix}
                \{1, 2.732, 3.732\} & \{1, 1.366\}\\
                \{1, 1.732, 2\} & \{1, 2.732\}
            \end{bmatrix}.
        \end{align*}
        First we consider when $\Delta_1\not\cong\Delta_2$.
        There are no compatible color types if both $\Delta_1, \Delta_2$ are ADE (proof of \cite[Lemma~4.7]{stembridge}).
        If $\Delta_1, \Delta_2$ are not both ADE, then by color type they must be types $\{B_5, C_5\}$.
        However, $B_5(C_5)$ includes the color set $\{1, 2.618, 3.236\}$, which is not one of the color sets above.

        Now suppose $\Delta_1\cong\Delta_2$.
        Then $\Delta_1\cong\Delta_2\in \{A_5, B_5, C_5, D_5\}$. By the argument above, they cannot be $B_5$ or $C_5$. The eigenvector for $A_5$ includes $\{1, 1, 2\}$ as one of its color sets, which is not one of the sets above. Similarly, the eigenvector for $D_5$ includes the color set $\{1, 1, 1.414\}$, which is not one of the sets above. Thus, this case is incompatible.
    \end{adjustwidth}
    \noindent
    \textbf{Case 6: $\{F_4, D_7\}$}
    \begin{adjustwidth}{2.5em}{0pt}
        The following is the matrix of color sets for both the left and right eigenvectors:
        \begin{align*}
            \begin{bmatrix}
                \{1, 1.366\} & \{1, 1, 1, 1.732\}\\
                \{1, 2.732\} & \{1, 2.732, 3.732\}
            \end{bmatrix}.
        \end{align*}
        One of $\Delta_1, \Delta_2$ must be non-ADE, so by color type, $\Delta_1, \Delta_2$ must be of type $\{B_5(C_5), D_6\}$. However $\{1, 1, 1, 1.732\}$ is not a color set for $D_6$, so this is incompatible.
    \end{adjustwidth}
    \noindent
    \textbf{Case 7: $\{F_4, E_6\}$}
    \begin{adjustwidth}{2.5em}{0pt}
        The following is the matrix of color sets for both the left and right eigenvectors:
        \begin{align*}
            \begin{bmatrix}
                \{1, 1.366\} & \{1, 1, 2.732\}\\
                \{1, 2.732\} & \{1, 1.366, 1.366\}
            \end{bmatrix}.
        \end{align*}
        By Lemma \ref{non-ADEDoubleBindingComponent}, $\Delta_1, \Delta_2$ cannot both be ADE.
        By color type, if $\Delta_1\not\cong\Delta_2$, then $\Delta_1, \Delta_2$ must be of type $\{B_5, C_5\}$. However, \{1, 1.366, 1.366\} is not a color set for $B_5$ or $C_5$, so this is incompatible.

        If $\Delta_1\cong\Delta_2$, then $\Delta_1\cong\Delta_2\in \{B_5, C_5\}$. By the same argument above, this is not possible.
    \end{adjustwidth}

    \noindent
    \textbf{Case 8: $\{G_2, A_5\}$}
    \begin{adjustwidth}{2.5em}{0pt}
        The following is the matrix of color sets for both the left and right eigenvectors:
        \begin{align*}
            \begin{bmatrix}
                \{1\} & \{1, 1, 2\}\\
                \{1\} & \{1, 1\}
            \end{bmatrix}.
        \end{align*}
        One of $\Delta_1, \Delta_2$ must be non-ADE, so by color type, $\Delta_1, \Delta_2$ must be of type $\{B_3(C_3), D_4\}$. However, $D_4$ has color set $\{1, 1, 1\}$, which is not one of the sets above.
    \end{adjustwidth}
    So, we have successfully eliminated all potential type pairings not listed in Theorem \ref{doubleBindingsThm}.
    We now show that the remaining type pairings $\{B_n, C_n\}, \{B_n(C_n), D_{n+1}\}, \{B_3(C_3), G_2\}$, and $\{G_2, D_4\}$ result in the Dynkin biagrams listed in Theorem \ref{doubleBindingsThm}.

    Recall that every non-ADE Dynkin diagram has a chiral eigenvector. 
    We say a color set is \textit{chiral} if it changes values between the left and right eigenvectors.
    For this next section, we use a stronger assumption of the pairing of color sets that takes chiral eigenvectors into account. 
    Because of this, we use the notation $v_{\ell}(v_r)$ to indicate the values for the left and right eigenvector, so that one can more easily identify the chiral color sets.
    We call a chiral color set \textit{distinguishable} if its left and right counterparts are not scalar multiples of each other under any permutation of the entries.

    Notice that a double binding is not just a pairing of color sets for $\Gamma$ and $\Delta$; it is a pairing that must be consistent with both the left and right eigenvectors.
    Because of this, if $\Delta$ has a distinguishable chiral color set, it cannot be dealt a pairing of color sets that are both nondistinguishable chiral color sets.

    For example, $B_n$ and $C_n$ have a distinguishable chiral color set for all $n\neq 2, 4$, and as a result $B_n$ and $C_n$ are distinguishable from each other by their color sets in these cases.
    To see this is true, consider the color sets of $C_n$ and $B_n$ from Proposition \ref{eigenvectors},
    \begin{align*}
        \begin{bmatrix}
            \{v_1, \dots, v_k(v_k / 2)\} & \{v_1, \dots, v_k / 2 (v_k)\}\\
            \{r_1, \dots, r_s\} & \{r_1, \dots, r_s\}
        \end{bmatrix},
    \end{align*}
    where $v_1 < v_2 < \dots < v_k$.
    The chiral color sets $\{v_1, \dots, v_k(v_k / 2)\}$ are distinguishable whenever $\frac{v_k}{2} \geq v_1$, as changing sides preserves the minimum element of the set.
    
    When $n$ is odd, $v_1 = \cos\left(\frac{(n-1)\pi}{2n}\right)$ and $v_k = 1$ are the minimum and maximum elements of the chiral color set.
    Then,
    \[\frac{1}{2} \geq \cos\left(\frac{(n-1)\pi}{2n}\right) \iff \frac{(n-1)\pi}{2n} \geq \frac{\pi}{3} \iff n \geq 3,\]
    which is always true.
    
    When $n$ is even, $v_1 = \cos\left(\frac{(n-2)\pi}{2n}\right)$ and $v_k = 1$ are the minimum and maximum elements of the chiral color set.
    Then,
    \[\frac{1}{2} \geq \cos\left(\frac{(n-2)\pi}{2n}\right) \iff \frac{(n-2)\pi}{2n} \geq \frac{\pi}{3} \iff n \geq 6,\]
    which leaves only the case $n = 2, 4$.

    Now we are ready to show the remaining type pairings are those listed in Theorem \ref{doubleBindingsThm}.
    
    \noindent
    \textbf{Case 1: $\{B_n, C_n\}$, $n \geq 3$}
    \begin{adjustwidth}{2.5em}{0pt}
        First consider when $\Delta_1\cong\Delta_2$, $n\neq 4$.
        Then, $\Delta_1\cong\Delta_2\in \{A_n, B_n, C_n, D_{2k+1}, E_6, E_7, E_8\}$.

        \noindent
        Since $B_n$ and $C_n$ have distinguishable chiral color sets, $\Delta$ must also have distinguishable chiral color sets, so $\Delta_1\cong\Delta_2\in \{B_n, C_n\}$.
        So we have the following pairing:
        \begin{itemize}
            \item 
                $\Delta_1: \{v_1, \dots, v_k(v_k / 2)\}\sqcup \{r_1, \dots, r_s\}$,
            \item
                $\Delta_2: \{v_1, \dots, v_k / 2 (v_k)\}\sqcup \{r_1, \dots, r_s\}$.
        \end{itemize}
        However, $B_n$ and $C_n$ have distinct chiral color sets, so it is impossible for $\Delta_1\cong \Delta_2$.

        When $n = 4$, we have the possibilities $\Delta_1\cong\Delta_2\in \{A_4, B_4, C_4, F_4\}$.
        $A_4$ has the color set $\{1, 1.618\}$ and $F_4$ has the color set $\{1, 1.366\}$, neither of which is a color set of $B_4$ or $C_4$.
        Assigning $\Delta_1\cong\Delta_2\cong B_4$ or $\Delta_1\cong\Delta_2\cong C_4$ yields two Dynkin biagrams which are not admissible (See Figure \ref{fig:BBCCnonadmissible}).
        So we must have $\Delta_1\not\cong\Delta_2$.
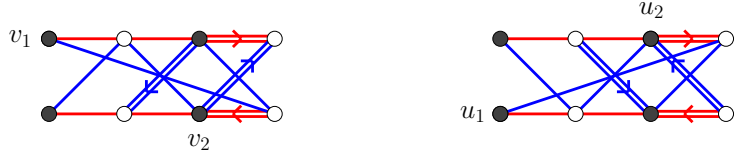
\begin{figure}
\scalebox{0.5}{
\begin{tikzpicture}
\foreach \k in {0, 1}
{
    \coordinate (v\k x0) at (12 * \k + 0.00, 0.00);
    \coordinate (v\k x1) at (12 * \k + 2.00, 0.00);
    \coordinate (v\k x2) at (12 * \k + 4.00, 0.00);
    \coordinate (v\k x3) at (12 * \k + 6.00, 0.00);
    \coordinate (v\k x4) at (12 * \k + 0.00, 2.00);
    \coordinate (v\k x5) at (12 * \k + 2.00, 2.00);
    \coordinate (v\k x6) at (12 * \k + 4.00, 2.00);
    \coordinate (v\k x7) at (12 * \k + 6.00, 2.00);
    \draw[color=red, line width=0.75mm] (v\k x0) to[] (v\k x1);
    \draw[color=red, line width=0.75mm] (v\k x1) to[] (v\k x2);
    \draw[color=red, line width=0.75mm] (v\k x4) to[] (v\k x5);
    \draw[color=red, line width=0.75mm] (v\k x5) to[] (v\k x6);
    \draw[color=red, line width=0.75mm] (12 * \k +  5.11, 2.0) to[] (12 * \k +  4.89, 2.22) ;
    \draw[color=red, line width=0.75mm] (12 * \k +  5.11, 2.0) to[] (12 * \k +  4.89, 1.78) ;
    \draw[color=red, line width=0.75mm] (12 * \k +  4.0, 2.07) to[] (12 * \k +  6.0, 2.07) ;
    \draw[color=red, line width=0.75mm] (12 * \k +  4.0, 1.93) to[] (12 * \k +  6.0, 1.93) ;
    \draw[color=red, line width=0.75mm] (12 * \k +  4.89, 0.0) to[] (12 * \k +  5.11, -0.22) ;
    \draw[color=red, line width=0.75mm] (12 * \k +  4.89, 0.0) to[] (12 * \k +  5.11, 0.22) ;
    \draw[color=red, line width=0.75mm] (12 * \k +  6.0, -0.07) to[] (12 * \k +  4.0, -0.07) ;
    \draw[color=red, line width=0.75mm] (12 * \k +  6.0, 0.07) to[] (12 * \k +  4.0, 0.07) ;
}
\draw[color=blue, line width=0.75mm] (v0x0) to[] (v0x5);
\draw[color=blue, line width=0.75mm] (v0x2) to[] (v0x5);
\draw[color=blue, line width=0.75mm] (v0x3) to[] (v0x4);
\draw[color=blue, line width=0.75mm] (v0x3) to[] (v0x6);
\draw[color=blue, line width=0.75mm] (5.411115079263853, 1.4111150792638534) to[] (5.0999880955417725, 1.4111150792638536) ;
\draw[color=blue, line width=0.75mm] (5.411115079263853, 1.4111150792638534) to[] (5.411115079263854, 1.0999880955417727) ;
\draw[color=blue, line width=0.75mm] (3.9505025253169417, 0.049497474683058325) to[] (5.950502525316941, 2.0494974746830583) ;
\draw[color=blue, line width=0.75mm] (4.049497474683059, -0.049497474683058325) to[] (6.049497474683059, 1.9505025253169417) ;
\draw[color=blue, line width=0.75mm] (2.588884920736146, 0.5888849207361464) to[] (2.900011904458227, 0.5888849207361464) ;
\draw[color=blue, line width=0.75mm] (2.588884920736146, 0.5888849207361464) to[] (2.5888849207361466, 0.9000119044582273) ;
\draw[color=blue, line width=0.75mm] (4.049497474683059, 1.9505025253169417) to[] (2.0494974746830583, -0.049497474683058325) ;
\draw[color=blue, line width=0.75mm] (3.9505025253169417, 2.0494974746830583) to[] (1.9505025253169417, 0.049497474683058325) ;

\draw[color=blue, line width=0.75mm] (v1x0) to[] (v1x7);
\draw[color=blue, line width=0.75mm] (v1x2) to[] (v1x7);
\draw[color=blue, line width=0.75mm] (v1x1) to[] (v1x4);
\draw[color=blue, line width=0.75mm] (v1x1) to[] (v1x6);
\draw[color=blue, line width=0.75mm] (15.411115079263855, 0.5888849207361464) to[] (15.411115079263853, 0.9000119044582273) ;
\draw[color=blue, line width=0.75mm] (15.411115079263855, 0.5888849207361464) to[] (15.099988095541773, 0.5888849207361464) ;
\draw[color=blue, line width=0.75mm] (14.049497474683058, 2.0494974746830583) to[] (16.049497474683058, 0.049497474683058325) ;
\draw[color=blue, line width=0.75mm] (13.950502525316942, 1.9505025253169417) to[] (15.950502525316942, -0.049497474683058325) ;
\draw[color=blue, line width=0.75mm] (16.58888492073615, 1.4111150792638534) to[] (16.588884920736145, 1.0999880955417727) ;
\draw[color=blue, line width=0.75mm] (16.58888492073615, 1.4111150792638534) to[] (16.90001190445823, 1.4111150792638536) ;
\draw[color=blue, line width=0.75mm] (17.950502525316942, -0.049497474683058325) to[] (15.950502525316942, 1.9505025253169417) ;
\draw[color=blue, line width=0.75mm] (18.049497474683058, 0.049497474683058325) to[] (16.049497474683058, 2.0494974746830583) ;

\foreach \i in {0,1}
{
    \foreach \j in {0, 2, 4, 6}
    {
        \pgfmathtruncatemacro{\k}{\j + 1};
        \draw[fill=black!75!white] (v\i x\j.center) circle (0.2);
        \draw[fill=white] (v\i x\k.center) circle (0.2);
    }
}
\draw[fill=black!75!white] (v0x4.center) circle (0.2);
\draw (-1.10, 2.25) node [anchor=north west][inner sep=0.75pt] {\LARGE{$v_1$}};
\draw[fill=black!75!white] (v0x2.center) circle (0.2);
\draw (3.65, -0.50) node [anchor=north west][inner sep=0.75pt] {\LARGE{$v_2$}};
\draw[fill=black!75!white] (v1x0.center) circle (0.2);
\draw (10.90, 0.25) node [anchor=north west][inner sep=0.75pt] {\LARGE{$u_1$}};
\draw[fill=black!75!white] (v1x6.center) circle (0.2);
\draw (15.65, 3.00) node [anchor=north west][inner sep=0.75pt] {\LARGE{$u_2$}};
\end{tikzpicture}}
\caption{\label{fig:BBCCnonadmissible} The potential vertex pairings when $\Gamma_1\cong B_4, \Gamma_2\cong C_4$, and $\Delta_1,\Delta_2\cong B_4$ (left) or $\Delta_1,\Delta_2\cong C_4$ (right). The vertices $v_1, v_2$ and $u_1, u_2$ form nonadmissible pairs.}
\end{figure}
        
        If $\Delta_1\not\cong\Delta_2$, we first note there are no potential non-isomorphic ADE color types that match (proof of \cite[Lemma~4.7]{stembridge}), so $\Delta_1$ and $\Delta_2$ cannot both be ADE.
        Thus they must be in the list above as $\{B_6(C_6), E_6\}$, $\{F_4, E_6\}$, $\{B_6(C_6), F_4\}$ or $\{B_n, C_n\}$, of which only $\{F_4, E_6\}$ and $\{B_n, C_n\}$ have not yet been eliminated as incompatible types.
        $E_6$ has the color set $\{1, 1, 2.732\}$, which is not a color set of $B_6(C_6)$, and $F_4$ has the color set $\{1, 1.366\}$, which is not a color set of $B_5(C_5)$.
        Thus the only possibility for $\Delta$ is $\{B_n, C_n\}$.
        Since each eigenvector of $B_n, C_n$ has a distinguished chiral color set, this forces the same pairing above, resulting in the admissible Dynkin biagram $B_n\bowtie C_n$, depicted in Figure \ref{fig:BC}. 
        When $n = 4$, there is an alternate pairing, yielding the exceptional double binding $B_4\boxtimes C_4$ (see Figure \ref{fig:BCBCexceptional}).
    \end{adjustwidth}
        
    \noindent
    \textbf{Case 2: $\{B_n(C_n), D_{n+1}\}$}
    \begin{adjustwidth}{2.5em}{0pt}
        By Proposition \ref{eigenvectors}, the eigenvectors of $C_n$ and $D_{n+1}$ have the following color sets:
        \begin{align*}
            \begin{bmatrix}
                \{v_1, \dots, v_k(v_k / 2)\} & \{v_1, \dots, v_{k-1}, v_k / 2, v_k / 2\}\\
                \{r_1, \dots, r_s\} & \{r_1, \dots, r_s\}
            \end{bmatrix},
        \end{align*}
        where $v_1 < v_2 < \dots < v_k$.
        By Lemma \ref{non-ADEDoubleBindingComponent}, $\Delta_1$ and $\Delta_2$ must be a listed non-ADE pair.
        The only possibilities for compatible types are $\{B_n(C_n), D_{n+1}\}$ and $\{B_3(C_3), G_2\}$.
        If $\Delta_1, \Delta_2$ are $\{B_3(C_3), G_2\}$, then $\Gamma_1, \Gamma_2$ must be $\{B_2, A_3\}$.
        However, $A_3$ has the color set $\{1, 1\}$, which is not a left color set of $C_3$.
        So $\Delta_1, \Delta_2$ must be $\{B_n(C_n), D_{n+1}\}$.

        Consider first when $n\neq 4$.
        Since $D_{n+1}$ does not have a distinguishable chiral color set, we have the following pairing of color sets:
        \begin{itemize}
            \item 
                $D_{n+1}: \{v_1, \dots, v_{k-1}, v_k / 2, v_k / 2\}\sqcup \{r_1, \dots, r_s\}$,
            \item
                $B_n(C_n): \{v_1, \dots, v_k / 2 (v_k)\}\sqcup \{r_1, \dots, r_s\}$.
        \end{itemize}
        As $B_n$ and $C_n$ have distinct chiral color sets for $n\neq 4$, $\Delta_1, \Delta_2$ must be $\{C_n, D_{n+1}\}$.
        This uniquely determines the biagram to be $C_n\ltimes D_{n+1}$. Similarly, when we start with $\{B_n, D_{n+1}\}$, we get $B_n\ltimes D_{n+1}$. Both of these biagrams are depicted in Figure \ref{fig:BDCD}.

        When $n = 4$, the values of the color sets determine the same pairing as above.
        However, there is an alternate assignment of vertices for $\Gamma_1\cong C_n, \Delta_1\cong B_n$, which results a nonadmissible Dynkin biagram.
        Similarly, when we have $\Gamma_1\cong B_n, \Delta_1\cong C_n$, we get a nonadmissible Dynkin biagram depicted in Figure \ref{fig:BCDnonadmissible}.
    \end{adjustwidth}

        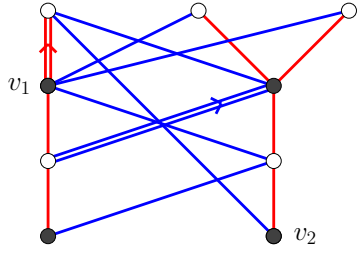
\begin{figure}
        \scalebox{0.5}{
        \begin{tikzpicture}
            \coordinate (v0x3) at (0.00, 6.00);
            \coordinate (v1x3) at (4.00, 6.00);
            \coordinate (v1x4) at (8.00, 6.00);
            \foreach \i in {0, 1}
            {
            \coordinate (v\i x0) at (6 * \i, 0.00);
            \coordinate (v\i x1) at (6 * \i, 2.00);
            \coordinate (v\i x2) at (6 * \i, 4.00);
            \draw[color=red,line width=0.75mm] (v\i x0) to[] (v\i x1);
            \draw[color=red,line width=0.75mm] (v\i x1) to[] (v\i x2);
            }
            \draw[color=red, line width=0.75mm] (0.0, 5.11) to[] (-0.22, 4.89) ;
            \draw[color=red, line width=0.75mm] (0.0, 5.11) to[] (0.22, 4.89) ;
            \draw[color=red, line width=0.75mm] (-0.07, 4.0) to[] (-0.07, 6.0) ;
            \draw[color=red, line width=0.75mm] (0.07, 4.0) to[] (0.07, 6.0) ;
            \draw[color=red,line width=0.75mm] (v1x2) to[] (v1x3);
            \draw[color=red,line width=0.75mm] (v1x2) to[] (v1x4);
            \draw[color=blue,line width=0.75mm] (v0x0) to[] (v1x1);
            \draw[color=blue,line width=0.75mm] (v0x2) to[] (v1x1);
            \draw[color=blue,line width=0.75mm] (v0x2) to[] (v1x3);
            \draw[color=blue,line width=0.75mm] (v0x2) to[] (v1x4);
            \draw[color=blue,line width=0.75mm] (v0x3) to[] (v1x2);
            \draw[color=blue,line width=0.75mm] (v0x3) to[] (v1x0);
            \draw[color=blue, line width=0.75mm] (4.604355162785557, 3.534785054261852) to[] (4.326074728690739, 3.673925271309261) ;
            \draw[color=blue, line width=0.75mm] (4.604355162785557, 3.534785054261852) to[] (4.465214945738148, 3.2565046201670347) ;
            \draw[color=blue, line width=0.75mm] (-0.02213594362117866, 2.066407830863536) to[] (5.977864056378821, 4.066407830863536) ;
            \draw[color=blue, line width=0.75mm] (0.02213594362117866, 1.933592169136464) to[] (6.022135943621179, 3.933592169136464) ;
            \foreach \i in {0, 1}
            {
                \draw[fill=black!75!white] (v\i x0.center) circle (0.2);
                \draw[fill=white] (v\i x1.center) circle (0.2);
                \draw[fill=black!75!white] (v\i x2.center) circle (0.2);
                \draw[fill=white] (v\i x3.center) circle (0.2);
            }
            \draw[fill=white] (v1x4.center) circle (0.2);
            \draw[fill=black!75!white] (v0x2.center) circle (0.2);
            \draw (-1.1, 4.25) node [anchor=north west][inner sep=0.75pt] {\LARGE{$v_1$}};
            \draw[fill=black!75!white] (v1x0.center) circle (0.2);
            \draw (6.50, 0.25) node [anchor=north west][inner sep=0.75pt] {\LARGE{$v_2$}};
        \end{tikzpicture}}
        \caption{\label{fig:BCDnonadmissible} The potential pairing of vertices for $\Gamma_1\cong B_4, \Gamma_2\cong D_5, \Delta_1\cong C_4, \Delta_2\cong D_5$. The vertices $v_1, v_2$ form a nonadmissible pair.}
        \end{figure}
        
    \noindent
    \textbf{Case 3: $\{B_3(C_3), G_2\}$}
    \begin{adjustwidth}{2.5em}{0pt}        
        The eigenvectors of $C_3$ and $G_2$ have the following color sets:
        \begin{align*}
            \begin{bmatrix}
                \{1, 2(1)\} & \{1\}\\
                \{1\} & \{1\}
            \end{bmatrix}.
        \end{align*}
        We first note there are no potential non-isomorphic ADE color types that match (proof of \cite[Lemma~4.7]{stembridge}), so $\Delta_1$ and $\Delta_2$ cannot both be ADE.
        Thus they must be in the list above as either $\{B_3(C_3), G_2\}$ or $\{B_2, A_3\}$.
        Notice that $A_3$ has the color set \{1, 1\}, which is not a left color set of $C_3$.
        So $\Delta_1, \Delta_2$ must be of type $\{B_3(C_3), G_2\}$.
        Since $B_3$ and $C_3$ have distinct chiral color sets, $\Delta_1, \Delta_2$ must be of type $\{C_3, G_2\}$.
        There are two choices for the pairings of sets, so with Lemma \ref{flippingLem}, this yields two different biagrams for $\{C_3, G_2\}$, $C_3\bowtie_{1, 2} G_2$.
        Similarly, when we start with $\{B_3, G_2\}$, we get $B_3\bowtie_{1, 2} G_2$ (see Figure \ref{fig:BGCG}).
    \end{adjustwidth}
        
    \noindent
    \textbf{Case 4: $\{G_2, D_4\}$}
    \begin{adjustwidth}{2.5em}{0pt}
        The following are the color sets for both left and right eigenvectors:
        \begin{align*}
            \begin{bmatrix}
                \{1\} & \{1, 1, 1\}\\
                \{1\} & \{1\}
            \end{bmatrix}.
        \end{align*}
        This cardinality of color sets is unique to $D_4$, so $\Delta_1, \Delta_2$ must also be $\{G_2, D_4\}$.
        There are two different pairings, producing $G_2\ltimes_1 D_4$ and $G_2\ltimes_2 D_4$ (see Figure \ref{fig:GD}).
    \end{adjustwidth}

\end{proof}

The following lemma, combined with Lemma \ref{nonisom}, completes the proof of Theorem \ref{doubleBindingsThm}.
\begin{lemma}
    If $(\Gamma, \Delta)$ is an admissible double binding such that the two components of $\Gamma$ are isomorphic, then $(\Gamma, \Delta)$ is either a parallel binding or a twist.
\end{lemma}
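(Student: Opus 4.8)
The plan is to reduce, via duality, to the situation in which both $\Gamma_1\cong\Gamma_2\cong\Gamma_0$ and $\Delta_1\cong\Delta_2\cong\Delta_0$, and then argue according to $|\Gamma_0|$. If the biagram is ADE the statement is part of Stembridge's classification \cite{stembridge}, so assume $\Gamma_0$ or $\Delta_0$ is non-ADE. Since $(\Delta,\Gamma)$ is again an admissible double binding, if $\Delta_1\not\cong\Delta_2$ then $(\Delta,\Gamma)$ is not a tensor product or a twist (both of those have isomorphic components on each side), so by the preceding lemma it is one of the finitely many exceptional double bindings classified there; but every entry of that list has non-isomorphic components on \emph{both} sides, which would contradict $\Gamma_1\cong\Gamma_2$. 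Hence $\Delta_1\cong\Delta_2\cong\Delta_0$, and since the $2|\Gamma_0|$ vertices are divided evenly between $\Delta_1$ and $\Delta_2$ we also get $|\Delta_0|=|\Gamma_0|$. If $|\Gamma_0|=2$, all four quadrants $\Gamma_i\cap\Delta_j$ are singletons, so the biagram is a $4$-cycle with red and blue edges alternating (it has no odd cycle, being bipartite); expanding $\Gamma\Delta-\Delta\Gamma$ on this $4$-cycle one checks directly that admissibility forces the two red edges to share a type, the two blue edges to share a type, and the orientations to be compatible, so the biagram is either the tensor product $\Gamma_0\otimes\Delta_0$ with $\Gamma_0,\Delta_0\in\{A_2,B_2,G_2\}$ or the twist $\Gamma_0\times\Gamma_0$ — in both cases a biagram listed in Theorem \ref{doubleBindingsThm}.

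Now assume $|\Gamma_0|\ge 3$. A tensor product $\Lambda\otimes\Lambda'$ with exactly two (connected) red components must have $|\Lambda'|=2$, hence $|\Lambda|=|\Gamma_0|\ge 3$ blue components; so $(\Gamma,\Delta)$ is not a tensor product, and it remains to show it is the twist $\Gamma_0\times\Gamma_0$. Relabel the vertices of $\Gamma_2$ so that $\Gamma_1$ and $\Gamma_2$ have the same Coxeter adjacency matrix $A=2I-C_{\Gamma_0}$, so that $\Gamma=\diag(A,A)$; then $\Delta$ has zero diagonal blocks and off-diagonal blocks $B,C$, where $\Gamma\Delta=\Delta\Gamma$ gives $AB=BA$ and $AC=CA$, and symmetrizability of $C_\Delta=2I-\Delta$ gives $D_1B=C^{\top}D_2$ for positive diagonal matrices $D_1,D_2$. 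Thus $B$ and $C$ commute with $A$ and describe a bipartite graph between the two copies of $\Gamma_0$ whose edge set is the disjoint union of the two trees $\Delta_1$ and $\Delta_2$. By Lemma \ref{commonEigVec} the common dominant eigenvector $\vv{v}$ of $\Gamma$ and $\Delta$ restricts on each copy of $\Gamma_0$ to a positive multiple of $\vv{v}_{\Gamma_0}$; combining this with Lemma \ref{colorType}, Lemma \ref{proportionalProj} and the uniqueness statement of Corollary \ref{uniqueEigenvector} should force $\Delta_1\cong\Delta_2\cong\Gamma_0$, with $\Delta_1$ and $\Delta_2$ meeting the two copies of $\Gamma_0$ in complementary colour classes. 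That determines $B$ (hence $C$) up to a relabeling internal to the eigenspaces of $A$ and pins it down to $B=C=A$, which is exactly the twist form; matching the bipartite colouring and the signs of the resulting $B$-matrix against the definition of $\Gamma_0\times\Gamma_0$ then finishes the proof.

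The hard part is this last rigidity step. The eigenvector identities only yield proportionality of the quadrant-projections of $\vv{v}_{\Gamma_0}$ and $\vv{v}_{\Delta_0}$, and upgrading this to an exact reconstruction of $\Delta$ requires showing that $A=2I-C_{\Gamma_0}$ carries no nonnegative integer matrix in its commutant, other than $A$ itself, whose support is a disjoint union of two Dynkin diagrams meeting the two copies of $\Gamma_0$ as required. When $\Gamma_0=D_{2k}$ the matrix $A$ has a repeated eigenvalue, so its commutant strictly contains $\R[A]$; the extra degrees of freedom must then be ruled out directly, using that each $\Delta_i$ is connected and $B,C$ remain nonnegative and integral. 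This is the non-ADE counterpart of the corresponding step in \cite{stembridge}, and it carries the bulk of the argument.
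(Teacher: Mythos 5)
Your reduction via duality to $\Delta_1\cong\Delta_2$ is fine (and is exactly the paper's first move), and the rank-$2$ case is a harmless finite check on a $4$-cycle. The genuine gap is the case $|\Gamma_0|\ge 3$, which you yourself flag with ``should force'' and ``carries the bulk of the argument'': that deferred step is precisely the content of the lemma, and it does not follow from the lemmas you cite. Lemma \ref{proportionalProj} only gives proportionality of the \emph{quadrant} projections of the dominant eigenvectors, while Corollary \ref{uniqueEigenvector} needs proportionality of the full vectors, so nothing in your outline actually forces $\Delta_1\cong\Delta_2\cong\Gamma_0$; you still have to eliminate, pair by pair, all the non-isomorphic candidates with matching color types ($\{B_n,A_n\}$, $\{B_n,C_n\}$, $\{B_{2n+1},D_{2n+1}\}$, $\{B_6(C_6),E_6\}$ etc., $\{F_4,A_4\}$, $\{F_4,B_4(C_4)\}$), which the paper does by comparing left \emph{and} right (chiral) color sets of the non-simply-laced eigenvectors -- a feature your sketch never uses.

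Worse, even granting $\Delta_0\cong\Gamma_0$, eigenvector data alone cannot pin down $B=C=A$: the configuration with $\Gamma_1\cong\Gamma_2\cong B_4$ and $\Delta_1\cong\Delta_2\cong C_4$ in Figure \ref{fig:BCnonadmissible} is consistent with all the color-set and proportionality constraints and is only excluded by a direct admissibility (path-counting) check; the paper needs such explicit checks at $n=4$ exactly because the eigenvector argument degenerates there. Your alternative mechanism -- that $A=2I-C_{\Gamma_0}$ admits no other nonnegative integral matrix in its commutant with the required Dynkin support -- is a plausible route but is nowhere proved, and as you note it is delicate when $A$ has repeated eigenvalues. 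So the proposal is a correct frame with the central argument missing; to complete it you would either have to carry out the commutant-rigidity claim in full, or fall back on the case analysis with chiral color sets and explicit non-admissibility checks that constitutes the paper's proof.
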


\begin{proof}
    We are given $\Gamma_1\cong \Gamma_2$, and also only consider $\Delta_1\cong \Delta_2$, otherwise the biagram would be dual to one already classified in Lemma \ref{nonisom}.
    Let the color type of $\Gamma_1, \Gamma_2$ be $\{a, b\}$. Then the color type of $\Delta_1\cong \Delta_2$ must also be $\{a, b\}$ by Lemma \ref{colorType}.

    First consider when $\Gamma \not\cong \Delta$.
    The following is a list of all possible distinct non-ADE pairings $\{\Gamma, \Delta\}$ with the same color type:

    \begin{tabular}{m{4cm} m{4cm} m{4cm} m{4cm}}
        $\{B_n, A_n\},$ & $\{C_n, A_n\},$ & $\{B_n, C_n\},$ & $\{B_{2n+1}, D_{2n+1}\},$\\
        $\{C_{2n+1}, D_{2n+1}\},$ & $\{B_6(C_6), E_6\},$ & $\{B_7(C_7), E_7\},$ & $\{B_8(C_8), E_8\},$\\
        $\{F_4, A_4\},$ & $\{F_4, B_4(C_4)\},$ & $\{G_2, A_2\},$ & $\{G_2, B_2\}.$
    \end{tabular}\\

    We first consider the infinite cases.
    When $\Gamma_1\cong \Gamma_2\cong B_n (C_n)$, there is an eigenvector with strictly increasing values. Since $D_n$ and $A_{2k + 1}$ each have a color set with repeated values, both $\{B_{2k+1}(C_{2k+1}), A_{2k+1}\}$ and $\{B_{2k+1}(C_{2k+1}), D_{2k+1}\}$ are incompatible types. 

    When $n = 2k$, from Proposition \ref{eigenvectors} the two color sets of $A_n$ are identical and have all distinct values, so there exists an ordering of the vertices of each color set that is strictly increasing. However, even though $B_n$ and $C_n$ have increasing color sets, when $n > 2$, the relative ordering of vertices changes between sides, so there is no assignment of values to vertices that produces increasing sets in both the left and right dominant eigenvectors. So when $n > 2$, $\{B_n(C_n), A_n\}$ are incompatible types.
    When $n = 2$, $\{B_2, A_2\}$ has two potential pairings, one of which is admissible, $B_2\otimes A_2$.

    For the case of $\{B_n, C_n\}$, $n\geq 3$, we know the chiral color set for $B_n$ is distinct from the chiral color set for $C_n$ for all $n\neq 4$, so this is an incompatible pairing.
    When $n = 4$, the assignment of color set values to vertices results in the nonadmissible Dynkin biagram depicted in Figure \ref{fig:BCnonadmissible}.

\begin{figure}
\scalebox{0.5}{
\begin{tikzpicture}
\foreach \i in {0, 1}
{
\coordinate (v\i x0) at (6 * \i, 0.00);
\coordinate (v\i x1) at (6 * \i, 2.00);
\coordinate (v\i x2) at (6 * \i, 4.00);
\coordinate (v\i x3) at (6 * \i, 6.00);
\draw[color=red,line width=0.75mm] (v\i x0) to[] (v\i x1);
\draw[color=red,line width=0.75mm] (v\i x1) to[] (v\i x2);
\draw[color=red, line width=0.75mm] (6 * \i + 0.0, 5.11) to[] (6 * \i + -0.22, 4.89) ;
\draw[color=red, line width=0.75mm] (6 * \i + 0.0, 5.11) to[] (6 * \i + 0.22, 4.89) ;
\draw[color=red, line width=0.75mm] (6 * \i + -0.07, 4.0) to[] (6 * \i + -0.07, 6.0) ;
\draw[color=red, line width=0.75mm] (6 * \i + 0.07, 4.0) to[] (6 * \i + 0.07, 6.0) ;
}
\draw[color=blue,line width=0.75mm] (v0x0) to[] (v1x3);
\draw[color=blue,line width=0.75mm] (v0x2) to[] (v1x3);
\draw[color=blue,line width=0.75mm] (v0x3) to[] (v1x2);
\draw[color=blue,line width=0.75mm] (v0x3) to[] (v1x0);
\draw[color=blue, line width=0.75mm] (4.604355162785557, 3.534785054261852) to[] (4.326074728690739, 3.673925271309261) ;
\draw[color=blue, line width=0.75mm] (4.604355162785557, 3.534785054261852) to[] (4.465214945738148, 3.2565046201670347) ;
\draw[color=blue, line width=0.75mm] (-0.02213594362117866, 2.066407830863536) to[] (5.977864056378821, 4.066407830863536) ;
\draw[color=blue, line width=0.75mm] (0.02213594362117866, 1.933592169136464) to[] (6.022135943621179, 3.933592169136464) ;
\draw[color=blue, line width=0.75mm] (1.3956448372144434, 3.534785054261852) to[] (1.5347850542618522, 3.2565046201670347) ;
\draw[color=blue, line width=0.75mm] (1.3956448372144434, 3.534785054261852) to[] (1.673925271309261, 3.673925271309261) ;
\draw[color=blue, line width=0.75mm] (5.977864056378821, 1.933592169136464) to[] (-0.02213594362117866, 3.933592169136464) ;
\draw[color=blue, line width=0.75mm] (6.022135943621179, 2.066407830863536) to[] (0.02213594362117866, 4.066407830863536) ;
\foreach \i in {0, 1}
{
    \draw[fill=black!75!white] (v\i x0.center) circle (0.2);
    \draw[fill=white] (v\i x1.center) circle (0.2);
    \draw[fill=black!75!white] (v\i x2.center) circle (0.2);
    \draw[fill=white] (v\i x3.center) circle (0.2);
}
\draw[fill=black!75!white] (v0x0.center) circle (0.2);
\draw (-1.10, 0.25) node [anchor=north west][inner sep=0.75pt] {\LARGE{$v_1$}};
\draw[fill=black!75!white] (v1x2.center) circle (0.2);
\draw (6.5, 4.25) node [anchor=north west][inner sep=0.75pt] {\LARGE{$v_2$}};
\end{tikzpicture}}
\caption{\label{fig:BCnonadmissible} A pairing of vertices for $\Gamma_1\cong \Gamma_2\cong B_4$ and $\Delta_1\cong\Delta_2\cong C_4$. The vertices $v_1, v_2$ form a nonadmissible pair.}
\end{figure}
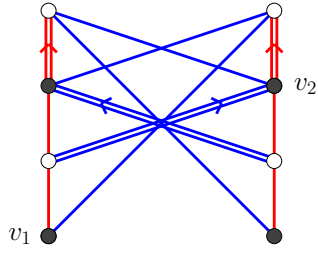

    In each of the pairings $\{B_n(C_n), E_n\}$ for $n \in \{6, 7, 8\},$ there is a color set of $E_n$ that is not a color set of $B_n(C_n)$.

    In the pairings $\{F_4, A_4\}$ and $\{F_4, B_4(C_4)\}$, $A_4$ has the color set $\{1, 1.618\}$ and $B_4(C_4)$ have the color set $\{1, 2.414\}$, neither of which is a color set of $F_4$. 
    
    For each of $\{G_2, A_2\}$ and $\{G_2, B_2\}$, there are two possible pairings to create a biagram, one of which is admissible: $G_2\otimes A_2$ and $G_2\otimes B_2$.
    \vspace{3mm}\\
    Now suppose $\Gamma \cong \Delta$. We show that this produces only twists or tensor products.
    From Stembridge's paper, we know this is true for ADE $\Gamma$ and $\Delta$.

    When $\Gamma, \Delta\cong B_2$, both assignments are admissible: $B_2\times B_2$ and $B_2\otimes B_2$.
    
    When $\Gamma, \Delta\cong B_n$ or $C_n$ for $n > 2$, the color set pairings must give one distinguishable chiral set to each $\Delta$-component, yielding $B_n\times B_n$ and $C_n\times C_n$.

    When $\Gamma, \Delta\cong F_4$, the color set values are distinct, so the color set pairings and vertex assignments are uniquely determined to produce $F_4\times F_4$.

    When $\Gamma, \Delta\cong G_2$, both assignments are admissible: $G_2\times G_2$ and $G_2\otimes G_2$.
\end{proof}

\begin{proposition}
\label{selfBindings}
    The only other unit bindings besides the double bindings are $A_2\otimes A_1\cong A_2, B_2\otimes A_1\cong B_2$, and $G_2\otimes A_1\cong G_2$.
\end{proposition}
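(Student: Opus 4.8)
The plan is to reuse the admissibility bookkeeping from the double binding classification, but to exploit the extra rigidity of the unit hypothesis, which I expect to collapse everything down to a single bond. First I would argue that a unit binding which is not a double binding must contain a vertex $v$ with no incident edge of $\Gamma$ --- for a unit binding this is just the vertex of the trivial side, and more generally such a $v$ is exactly what obstructs the biagram from being a double binding --- and then read off the admissibility relation $\Gamma\Delta=\Delta\Gamma$ at that one vertex.

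Writing $(\Gamma,\Delta)$ for such a binding of $\Gamma_1$ and $\Gamma_2$ with $\Gamma_{vk}=0$ for all $k$, I would note that $(\Gamma\Delta)_{vj}=\sum_k\Gamma_{vk}\Delta_{kj}=0$ for every $j$, so admissibility forces $(\Delta\Gamma)_{vj}=\sum_k\Delta_{vk}\Gamma_{kj}=0$ for every $j$. Since $\Gamma$ and $\Delta$ have nonnegative entries, each summand vanishes, so every $\Delta$-neighbour $w$ of $v$ satisfies $\Gamma_{wj}=0$ for all $j$, i.e.\ is isolated in $\Gamma$. The binding condition puts all these $w$ in the $\Gamma$-component on the far side of $v$, and an irreducible Dynkin diagram with more than one vertex has no isolated vertices; hence that component is a single vertex $w$. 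So $\Gamma=0$, the vertex $v$ is $\Delta$-adjacent only to $w$, and $\Delta$ is a single (possibly multiple) bond between $v$ and $w$. A finite-type bond on two vertices is $A_2$, $B_2\cong C_2$, or $G_2$, which gives exactly $A_2\otimes A_1\cong A_2$, $B_2\otimes A_1\cong B_2$, or $G_2\otimes A_1\cong G_2$ (the $B_2$/$C_2$ ambiguity being absorbed by a global flip). Conversely each of these is admissible since $\Gamma=0$ commutes with everything, and together with the classification of double bindings (Theorem \ref{doubleBindingsThm}) this settles all unit bindings.

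The hard part will be the very first step --- making the hypothesis actually produce the $\Gamma$-isolated vertex $v$; once that vertex is in hand the argument is a short, computation-free propagation and needs none of the eigenvector or color-type input used for the double bindings. If the relevant notion is instead phrased through the connecting diagram $\Delta$ being connected, I would first take a leaf of $\Delta$ and combine the admissibility relation with the irreducibility of $\Gamma_1$ and $\Gamma_2$ to locate such a $v$, reaching the same conclusion.
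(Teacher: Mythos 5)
Your propagation step is fine as far as it goes: if some vertex $v$ has no incident $\Gamma$-edge, then $(\Gamma\Delta)_{v\,\cdot}=0$, commutativity and nonnegativity force every $\Delta$-neighbour of $v$ to be $\Gamma$-isolated as well, and connectedness of the opposite $\Gamma$-component collapses everything to a two-vertex biagram carrying a single $A_2$, $B_2$ or $G_2$ bond. The genuine gap is your opening claim that a unit binding which is not a double binding \emph{must} contain such a $\Gamma$-isolated vertex ("exactly what obstructs the biagram from being a double binding"). That is not true, and it is precisely the case you skip that carries the real content of the proposition: a unit binding can have $\Gamma$ connected and $\Delta$ consisting of a single connected component on the same vertex set (a self-binding), in which case no vertex is isolated in either colour and your argument never gets started. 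The paper spends most of its proof on exactly this configuration: by Lemma \ref{commonEigVec} the connected biagram forces $\Gamma$ and $\Delta$ to share a common Perron--Frobenius eigenvector, Corollary \ref{uniqueEigenvector} then forces $\Gamma\cong\Delta$, and a type-by-type check (distinct eigenvector entries for $B_n$, $C_n$, $F_4$, $G_2$, $E_7$, $E_8$; the eigenvector-preserving diagram automorphisms for $A_n$, $D_n$, $E_6$) shows any such overlay would make $\Gamma$ and $\Delta$ share an edge, a contradiction. Your fallback ("take a leaf of $\Delta$ and locate such a $v$") does not repair this: a leaf of a connected $\Delta$ need not be, and in this scenario cannot be, $\Gamma$-isolated, so no local propagation can substitute for the global eigenvector rigidity.

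There is also a smaller mismatch in the other case. The paper's remaining case is $\Gamma$ connected with $\Delta$ having exactly two components, handled via Stembridge's observation that every vertex of $\Delta_1$ is $\Gamma$-adjacent to $\Delta_2$, so that $\Delta_1\sqcup\Delta_2$ is the bipartition and each $\Delta_i\cong A_1$; your isolated-vertex propagation is an acceptable dual substitute for that step, so this half of your argument could be kept. But without an argument excluding self-bindings with both diagrams connected, the proof of the proposition is incomplete.
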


\begin{proof}
    Any smaller unit binding can be written as $(\Gamma, \Delta)$, where $\Gamma$ is connected and $\Delta$ has at most two components. 
    If $\Delta = \Delta_1\sqcup \Delta_2$, then every edge of $\Gamma$ connects $\Delta_1$ and $\Delta_2$. 
    Moreover, every vertex of $\Delta_1$ is adjacent along a $\Gamma$-edge to a vertex in $\Delta_2$ (\cite[Lemma~2.5]{stembridge}), so $\Delta_1\sqcup \Delta_2$ is the bipartite coloring of the diagram.
    However, this means $\Delta_1, \Delta_2$ are each all one color, so we have $\Delta_1, \Delta_2\cong A_1$.

    If $\Delta$ has only one component, then by Lemma \ref{commonEigVec}, $\Gamma$ and $\Delta$ share a common dominant eigenvector. 
    By Corollary \ref{uniqueEigenvector}, $\Gamma\cong \Delta$.

    In each of the cases $\Gamma \in \{B_n$, $C_n$, $F_4$, $G_2$, $E_7$, $E_8$\}, $\Gamma$ has an eigenvector with distinct values, which uniquely determines the assignment of vertices so that $\Delta = \Gamma$. 
    However, this means that $\Delta$ shares edges with $\Gamma$, a contradiction. 

    All of the ADE types $A_n$, $D_n$ and $E_6$ have nontrivial bigraph automorphisms that fix the dominant eigenvector, but each one results in a diagram that shares edges with the original. 
    Thus there are no other unit self-bindings. 
\end{proof}

\subsection{Proof of the classification}

The idea of the proof of Theorem \ref{classThm} uses Proposition \ref{StembridgeAdmissibleBindings} and is based on a basic argument adapted from \cite{stembridge}.
Let $\Gamma$ have $k$ components, $\Gamma_1, \dots, \Gamma_{k}$. 
We define $\mathcal{C}$ to be the component graph on the vertex set $[k]$ that has an edge $\{i, j\}$ if there exists an edge of $\Delta$ that connects vertices of $\Gamma_i$ and $\Gamma_j$.
Such an edge then denotes the binding of $\Gamma_i$ and $\Gamma_j$.

Since every admissible Dynkin biagram can be formed through a series of taking duals and gluing together bindings, the class of connected admissible Dynkin biagrams is the smallest class $\mathcal{G}$ that contains
\begin{itemize}
    \item [(i)] 
        $A_1\otimes A_1$, $A_1\otimes A_2$, $A_1\otimes B_2$, $A_1\otimes G_2$ and all admissible Dynkin double bindings,
    \item [(ii)]
        all admissible Dynkin biagrams obtained by gluing together bindings in $\mathcal{G}$ along some connected component graph $\mathcal{C}$, and
    \item [(iii)]
        the dual of every biagram $(\Gamma, \Delta)$ in $\mathcal{G}$ such that $\Delta$ has at most 2 components.
\end{itemize}
The list in Theorem \ref{classThm} clearly satisfies (i). It remains to prove (ii) and (iii).

\begin{proof}[Proof of (ii)]
    Let $(\Gamma, \Delta)$ be a connected admissible Dynkin biagram whose bindings are listed in Theorem \ref{classThm} or Stembridge's classification:
    \begin{enumerate}
        \item the parallel and twisted bindings,
        \item $A_{2n-1}\ast B_n$, $A_{2n-1}\ast C_n$, $B_n\ast C_n$, $B_n\ast D_{n+1}$, $C_n\ast D_{n+1}$, $E_6\ast_{1,2} F_4$, $F_4\ast F_4$,
        \item $A_{2n-1}\ast D_{n+1}$, $E_6\ast E_6$, $D_6\ast D_6$, $E_8\ast E_8$,
        \item the double bindings $B_n\ltimes D_{n+1}$, $C_n\ltimes D_{n+1}$, $B_n\bowtie C_n$, $G_2\ltimes_{1, 2} D_4$, $B_3\bowtie_{1,2} G_2$, and $C_3\bowtie_{1,2} G_2$,
        \item the exceptional bindings $B_4\boxtimes C_4$, $D_5\boxtimes A_7,$ and $E_7\boxtimes D_{10}$.
    \end{enumerate}
    Since Stembridge has already classified the connected admissible ADE bigraphs, we may assume there is at least one non-ADE binding in $(\Gamma, \Delta)$. We only need to show that $(\Gamma, \Delta)$ is listed in Theorem \ref{classThm}.
    
    If all bindings in $\mathcal{C}$ are parallel, $(\Gamma, \Delta)$ is necessarily a tensor product. So, assume there is some nonparallel binding.
    We first note that $\mathcal{C}$ must be acyclic, as all Dynkin diagrams are acyclic (\cite[Lemma~2.5(b)]{stembridge}). So, $\mathcal{C}$ is a tree. We show $\mathcal{C}$ is necessarily a path. 
    
    Suppose $\mathcal{C}$ has a node of degree at least 3.
    Then, every vertex in the corresponding component $\Gamma_0$ has $\deg_{\Delta} \geq 3$ (\cite[Lemma~2.5(a)]{stembridge}).
    If one of the nonparallel bindings is ADE, then each $\Gamma$-component has a vertex with $\deg_{\Delta}\geq 2$. 
    So, the vertex in the nonparallel binding has two different paths to $\Gamma_0$, forming a $\Delta$-component containing at least two points of $\Gamma_0$.
    However, there is no Dynkin diagram with more than one vertex of degree 3.
    If one of the nonparallel bindings is not ADE, then there exists a vertex adjacent to a nonsimple $\Delta$-edge.
    Then, this yields a non-ADE $\Delta$-component connected to $\Gamma_0$ which therefore contains a degree 3 vertex.
    However, no non-ADE Dynkin diagrams have any degree $\geq 3$ vertices. So there must be no nodes of degree 3, and $\mathcal{C}$ is just a path.

    Now suppose there is more than one nonparallel binding in $\mathcal{C}$. Since subdiagrams of Dynkin diagrams are Dynkin diagrams, we can prune our path to be a component graph $\mathcal{C}$ with exactly two nonparallel bindings at the ends.
    Since $\Delta$ is acyclic, a maximal $\Delta$-path through distinct vertices necessarily terminates at vertices $v$ such that $\deg_{\Delta}(v) = 1$.
    These end vertices must occur in the components of $\Gamma$ at the ends of $\mathcal{C}$, since the vertices in all other components necessarily have $\deg_{\Delta}(v)\geq 2$.
    In every nonparallel binding listed above, every $\Delta$-component either contains at least two edges or is a nonsimple edge.
    So, the vertices one step away from the endpoints of a maximal $\Delta$-path either have $\deg_{\Delta}(v)\geq 3$ or are adjacent to nonsimple edges.
    Dynkin diagrams have at most one vertex of degree $\geq 3$, and non-ADE Dynkin diagrams have at most one nonsimple edge and no vertices of degree $\geq 3$.
    So if at least one nonparallel binding is non-ADE, there is at most one binding in $\mathcal{C}$, contradicting our assumption of two nonparallel bindings.
    If both nonparallel bindings are ADE, then all maximal $\Delta$-paths are of length $\leq 2$ and hence all $\Delta$-components must be isomorphic to either $A_3, B_3, C_3, D_4,$ or $G_2$.
    This is a contradiction, since every $\Delta$-component is a gluing of $\Delta$-components from the two nonparallel bindings, and therefore has at least four edges.
    
    Thus, $\mathcal{C}$ is a path with exactly one nonparallel binding.

    If our nonparallel binding is ADE, then every $\Gamma$-component contains a vertex with $\deg_{\Delta} \geq 2$. So, extending the path with bindings on either side will produce a vertex of degree 3. As no non-ADE Dynkin diagrams have vertices of degree 3, $\mathcal{C}$ cannot have any nonsimple parallel bindings.
    Therefore in this case, $\mathcal{C}$ is an ADE bigraph, which has been classified by Stembridge in \cite{stembridge}.

    Now suppose our nonparallel binding is non-ADE. 
    If in this nonparallel binding, $\Gamma_0$ contains one of the following, then $\Gamma_0$ must be an end node of $\mathcal{C}$.
    \begin{itemize}
        \item 
            a vertex $v$ with $\deg_{\Delta}(v) = 3$. Extending this path with a binding would create a degree 4 vertex.
        \item 
            a vertex $v$ such that $\deg_{\Delta} = 2$ and $v$ is adjacent to a nonsimple $\Delta$-edge. Extending this path with a binding would create a non-ADE Dynkin diagram with a degree 3 vertex.
        \item 
            a vertex $v$ adjacent to a $G_2$ $\Delta$-component. $G_2$ is not a subdiagram of any other Dynkin diagram.
    \end{itemize}
    Each of the non-ADE double bindings and twists besides $B_2\times B_2$ satisfy one of these criterion in both $\Gamma$ components. So if the nonparallel binding is a double binding or twist, then $\mathcal{C}$ consists of only one binding.

    We can explicitly give all component graphs of other non-ADE bindings by observing two things.
    First, no Dynkin diagrams have more than one nonsimple edge, so all parallel bindings in $\mathcal{C}$ must be simple parallel bindings.
    Second, in a Dynkin diagram, a nonsimple edge either occurs at the end of a path ($B_n, C_n$) or in the middle of a length 3 path ($F_4$). So, starting from a non-ADE nonparallel binding, we may either extend with simple parallel bindings arbitrarily in one direction or once in each direction.

    Below are the potential component paths for each of the remaining nonparallel non-ADE bindings:
    \begin{align*}
        B_2\times B_2 &\implies B_2\times B_2\equiv \cdots \equiv B_2, \hspace{2mm} B_2\equiv B_2\times B_2\equiv B_2,\\
        B_n\ast A_{2n-1}&\implies (BA^{m-1})_n,\hspace{2mm} (B^{m-1}A)_n,\hspace{2mm} B_n\equiv B_n \ast A_{2n-1} \equiv A_{2n-1},\\
        C_n \ast A_{2n-1}&\implies (CA^{m-1})_n, \hspace{2mm} (C^{m-1}A)_n, \hspace{2mm} C_n\equiv C_n \ast A_{2n-1} \equiv A_{2n-1},\\
        B_n\ast C_n&\implies (BC^{m-1})_n, \hspace{2mm} (B^{m-1}C)_n, \hspace{2mm} B_n\equiv B_n\ast C_n\equiv C_n,\\
        B_n\ast D_{n+1}&\implies (BD^{m-1})_n, \hspace{2mm} (B^{m-1}D)_n, \hspace{2mm} B_n\equiv B_n \ast D_{n+1} \equiv D_{n+1},\\
        C_n \ast D_{n + 1}&\implies (CD^{m-1})_n, \hspace{2mm} (C^{m-1}D)_n, \hspace{2mm} C_n\equiv C_n \ast D_{n+1} \equiv D_{n+1},\\
        F_4 \ast_1 E_6&\implies (F_4E_6^{m-1})_{1}, \hspace{2mm} (F_4^{m-1}E_6)_{1}, \hspace{2mm} 
        F_4\equiv F_4 \ast_{1} E_6 \equiv E_6,\\
        F_4 \ast_2 E_6&\implies (F_4E_6^{m-1})_{2}, \hspace{2mm} (F_4^{m-1}E_6)_{2}, \hspace{2mm} 
        F_4\equiv F_4 \ast_{2} E_6 \equiv E_6,\\
        F_4\ast F_4&\implies F_4\ast F_4\equiv \dots \equiv F_4, \hspace{2mm} F_4\equiv F_4\ast F_4\equiv F_4.
    \end{align*}
    These are all listed in Theorem \ref{classThm}.
\end{proof}

\begin{proof}[Proof of (iii)]
    Let $(\Gamma, \Delta)$ be a biagram listed in Theorem \ref{classThm} such that $\Delta$ has at most two components. 
    Since tensor products are dual closed, and all non-ADE double bindings are self-dual, we can assume $(\Gamma, \Delta)$ is one of the following biagrams:
    \begin{align*}
        B_2\times B_2 \equiv \cdots \equiv B_2 &\longleftrightarrow B_n\ast C_n,\\
        B_2\equiv B_2\times B_2\equiv B_2 &\longleftrightarrow F_4\ast F_4,\\
        B_2\ast A_3 &\longleftrightarrow B_2\ast A_3,\\
        (BA^{m-1})_2 &\longleftrightarrow B_m\ast A_{2m-1},\\
        (B^{m-1}A)_2 &\longleftrightarrow C_m\ast D_{m+1},
    \end{align*}
    \begin{align*}
        B_2\equiv B_2\ast A_3\equiv A_3 &\longleftrightarrow F_4\ast_1 E_6,\\
        B_2\ast D_3 &\longleftrightarrow B_2\ast D_3,\\
        (BD^{m-1})_2 &\longleftrightarrow C_m\ast A_{2m-1},\\
        (B^{m-1}D)_2 &\longleftrightarrow B_m\ast D_{m+1},\\
        B_2\equiv B_2\ast D_3\equiv D_3 &\longleftrightarrow F_4\ast_2 E_6.
    \end{align*}
    The right hand side lists the dual for each Dynkin biagram.
    These are all listed in Theorem \ref{classThm}.
\end{proof}

To finish the proof of the classification, it remains to check that all biagrams listed in Theorem \ref{classThm} are admissible Dynkin biagrams. 
\begin{proof}
    It is clear that all components are Dynkin diagrams. By Theorem \ref{doubleBindingsThm} and Corollary \ref{admissibleBindings}, all bindings are admissible. Every family listed in Theorem \ref{classThm} is made up of these bindings, so by Proposition \ref{StembridgeAdmissibleBindings}, all of the biagrams listed in Theorem \ref{classThm} are admissible. This concludes the proof of Theorem \ref{classThm}.
\end{proof}

\subsection{A key observation}

\begin{proposition}
\label{DynkinFromADE}
    Any admissible Dynkin biagram can be obtained from an ADE bigraph through a sequence of global flips and folds along bicolored automorphisms.
\end{proposition}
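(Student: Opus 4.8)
The plan is to go through the classification of Theorem~\ref{classThm} one family at a time: the disconnected case reduces to the connected case componentwise, and a genuine ADE bigraph is obtained from itself by the empty sequence of operations, so it suffices to exhibit each connected non-ADE family (a)--(p) as the image of an ADE bigraph under global flips and foldings along bicolored automorphisms. Two reductions shorten this considerably. \textbf{First}, the non-ADE double bindings listed in (a) are already done: Lemma~\ref{admissibleDouble}, via Lemma~\ref{foldingLem} and the global flip of Definition~\ref{flippingLem}, writes each of $B_n\ltimes D_{n+1}$, $C_n\ltimes D_{n+1}$, $B_n\bowtie C_n$, $G_2\ltimes_{1,2}D_4$, $B_3\bowtie_{1,2}G_2$, $C_3\bowtie_{1,2}G_2$ and $B_4\boxtimes C_4$ as such an image, starting from an admissible ADE bigraph. \textbf{Second}, both operations commute with the duality $(\Gamma,\Delta)\mapsto(\Delta,\Gamma)$: transpose obviously does, and one checks that the $B$-matrix of the dual biagram is $-B$ while the axioms (i)--(iv) defining a bicolored automorphism are unchanged when $B$ is negated, so a bicolored automorphism of $(\Gamma,\Delta)$ is one of $(\Delta,\Gamma)$ and $f\bigl((\Gamma,\Delta)^{*}\bigr)=\bigl(f(\Gamma,\Delta)\bigr)^{*}$. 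Since the dual of an ADE bigraph is again an ADE bigraph, the class of biagrams obtainable from an ADE bigraph by flips and folds is closed under duality; hence it suffices to treat one representative of each dual pair among the remaining families. The same principle shows the $C_\bullet$-versions are obtained from the $B_\bullet$-versions by one global flip (for instance $C_n\ast A_{2n-1}=(B_n\ast A_{2n-1})^{\top}$, because $C_n=B_n^{\top}$ while the $A$-, $D$- and $E$-type components are fixed by transpose).

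Next I would dispose of the tensor products and twists in~(a). Given a tensor product $\Gamma\otimes\Delta$ with $\Gamma$ non-ADE, pass to the simply-laced covers $\widehat\Gamma,\widehat\Delta$ (replacing $B_n$ or $C_n$ by $A_{2n-1}$, $F_4$ by $E_6$, $G_2$ by $D_4$, and leaving ADE diagrams unchanged), and observe that the standard order-two folding automorphism $\sigma$ of the cover, acting as $\sigma\otimes\mathrm{id}$ on the first factor and symmetrically on the second, is a bicolored automorphism of $\widehat\Gamma\otimes\widehat\Delta$: it preserves the product bipartition, preserves edge colours componentwise, and never identifies adjacent vertices. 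Folding along it returns $\Gamma\otimes\widehat\Delta$, a second fold returns $\Gamma\otimes\Delta$ with $B_\bullet$-factors, and a terminal global flip installs any desired $C_\bullet$-factors. For a twist $\Gamma\times\Gamma$ with $\Gamma$ non-ADE, start from the ADE twist on the cover, apply $\sigma$ to both red copies simultaneously — still bicolored, by the block form of a twist — and fold; again a final global flip handles $C_n\times C_n$.

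Next I would handle the bindings in~(b) and the chain families~(c)--(p), which are the heart of the matter. The facts needed are: folding $A_{2n-1}$ along its reflection and folding $D_{n+1}$ along the swap of its two fork leaves yield $B_n$ and $C_n$ in \emph{opposite} orientations (the entries of the folded Coxeter adjacency matrices of Proposition~\ref{eigenvectors} show the induced double edge pointing oppositely in the two cases — this is why a single $A$-fold produces $B$ and a single $D$-fold produces $C$), and folding $E_6$ along its order-two automorphism yields $F_4$. Consequently, from the ADE bigraph $A_{2n-1}\ast D_{n+1}$ one gets $C_n\ast A_{2n-1}$ by folding only the $D_{n+1}$, $B_n\ast D_{n+1}$ by folding only the $A_{2n-1}$, and $B_n\ast C_n$ by folding both (the two component automorphisms have disjoint support and are jointly compatible with the blue binding edges); from $E_6\ast E_6$ one gets $F_4\ast_1 E_6$ by folding one copy and $F_4\ast F_4$ by folding both, with $F_4\ast_2 E_6=(F_4\ast_1 E_6)^{\top}$. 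The chain families then arise by gluing the corresponding parallel bindings (the $B_2$-parallel binding is the fold of the $A_3=D_3$-parallel binding) onto these seeds: every family in (c)--(m) is the fold of an ADE bigraph of the form $A_{2n-1}\ast D_{n+1}$, $(A^{m-1}D)_n$, $(AD^{m-1})_n$, or a dual of one of these, along the product of fork-leaf swaps on the $D_{n+1}$-copies and reflections on the $A_{2n-1}$-copies; every family in (n)--(p) is the fold of $E_6\ast E_6$ or of a Stembridge chain built from it, along the product of the order-two $E_6$-automorphisms; and in each case a closing global flip produces the $C_\bullet$-variant. Since each component automorphism fixes the vertices through which its component is parallel-bound to its neighbours, the products are genuine bicolored automorphisms, and folding along them produces precisely the biagrams named in Theorem~\ref{classThm}.

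The step I expect to be the main obstacle is exactly that last verification: checking, family by family, that the proposed automorphisms of the ADE (chain-)bigraphs really satisfy axioms (i)--(iv) — in particular that no parallel binding ever forces two vertices of a nontrivial orbit to become adjacent and that edge colours are preserved across orbits — and that the resulting fold is literally the biagram of Theorem~\ref{classThm} rather than merely some admissible biagram with the same components. This is routine but voluminous; it extends, over the infinite families and now with the extra ``global flip'' bookkeeping, the hands-on analysis already carried out for the double bindings in Lemma~\ref{admissibleDouble}, and after the duality reduction only on the order of a dozen essentially different checks remain. A further small point to confirm is that the $B_\bullet$-to-$C_\bullet$ transitions really are realized by the \emph{global} flip, which reverses all nonsimple edges at once: this holds because within each family the $B$- and $C$-versions differ exactly by transposing every component together with every nonsimple binding edge.
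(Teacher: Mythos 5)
Your proposal is correct in substance and follows essentially the same route as the paper's proof: the double bindings are delegated to Lemma \ref{admissibleDouble}, tensor products and twists are folded factor-wise (the paper disposes of these in one sentence), the remaining bindings are produced from $A_{2n-1}\ast D_{n+1}$ and $E_6\ast E_6$ by folds and global flips, and the chain families (c)--(p) are handled by extending the seed fold across the parallel bindings --- the paper packages this last step as an explicit ``run the fold/flip sequence backwards along the whole chain'' algorithm rather than your product-of-component-automorphisms description, but the content is the same.

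Two bookkeeping slips are worth flagging, neither fatal. First, with the paper's folding convention ($f(B)_{IJ}=\sum_{i\in I}b_{ij}$) it is the $D_{n+1}$-fold that produces $B_n$ and the $A_{2n-1}$-fold that produces $C_n$ (see the caption of Figure \ref{fig:BCBCexceptional} and the explicit list in the paper's proof), i.e.\ the opposite of your parenthetical; since the global flip interchanges the two orientations and both variants appear in Theorem \ref{classThm}, every target is still reached, but your intermediate identifications (e.g.\ ``folding only the $D_{n+1}$ gives $C_n\ast A_{2n-1}$'') should be swapped. Second, for mixed tensor products such as $B_n\otimes C_m$ with $n,m\ge 3$ your recipe ``pass to the $A_{2k-1}$-covers, fold, then a terminal global flip installs any desired $C_\bullet$-factors'' does not literally work: a global flip transposes all factors simultaneously, so it cannot set the orientations of two factors independently. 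The repair is the device you already use for the bindings: choose the simply-laced cover factor by factor ($D_{k+1}$ for a $B_k$-factor, $A_{2k-1}$ for a $C_k$-factor) and fold both, so no flip is needed; with that adjustment the argument goes through and matches the paper's.
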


\begin{proof}
    Notice that if $\Gamma$ folds to $\Gamma'$, the tensor product $\Gamma\otimes \Delta$ folds to $\Gamma'\otimes \Delta$.
    Similarly, the twist $\Gamma\times \Gamma$ folds to $\Gamma'\times \Gamma'$.

    We have already shown in the proof of Lemma \ref{admissibleDouble} that each of the double bindings can be obtained through a series of global flips and folds along bicolored automorphisms.

    Each of the remaining bindings can also be obtained from ADE bindings through sequences of global flips and folds as follows:
    \begin{align*}
        D_{n+1}\ast A_{2n-1} &\mapsto B_n\ast A_{2n - 1},\\
        (B_n\ast A_{2n-1})^{\top} &= C_n\ast A_{2n - 1},\\
        A_{2n-1} \ast D_{n+1} &\mapsto C_n\ast D_{n + 1},\\
        (C_n\ast D_{n + 1})^{\top} &= B_n\ast D_{n + 1},\\
        C_n\ast D_{n + 1} &\mapsto B_n\ast C_n,\\
        E_6\ast E_6 &\mapsto F_4\ast_1 E_6,\\
        (F_4\ast_1 E_6)^{\top} &= F_4\ast_2 E_6,\\
        F_4\ast_1 E_6 &\mapsto F_4\ast F_4.
    \end{align*}

    From part (ii) of the proof of Theorem \ref{classThm}, if the non-ADE Dynkin biagram is not a tensor product, twist, or double binding, then it can be written as a path of parallel bindings with one nonparallel binding:
    \begin{align*}
        \Gamma_1\equiv\Gamma_1\equiv \dots \equiv \Gamma_1 \ast \Gamma_2 \equiv \Gamma_2\equiv\dots\equiv\Gamma_2.
    \end{align*}
    For the rest of the proof, we notate the above Dynkin biagram as $\Gamma_1^n \ast\Gamma_2^m.$
    There is some ADE binding that yields $\Gamma_1\ast \Gamma_2$ through some sequence $\gamma = \gamma_1\gamma_2\dots\gamma_{\ell}$ of global flips and folds. We can obtain an admissible ADE bigraph that will yield the Dynkin biagram $\Gamma_1^n \ast\Gamma_2^m$ through a sequence of global flips and folds by globally taking the inverse of this sequence $\gamma$ as follows:

    \begin{itemize}
        \item [(1)]
            Let $B_{\ell}$ be the Dynkin biagram $\Gamma_1^n \ast\Gamma_2^m$. Set $k = \ell$.
        \item [(2)]
            Given $B_k = \Lambda_1^n\ast \Lambda_2^m$,
            \begin{itemize}
                \item [(i)]
                    if $\gamma_k$ is a global flip, let $B_{k-1} = (B_k)^{\top}.$
                \item [(ii)]
                    if $\gamma_k$ is a folding from $\Lambda_1'\ast \Lambda_2'$ to $\Lambda_1\ast \Lambda_2$, then let $B_{k - 1} = (\Lambda_1')^n\ast(\Lambda_2')^m.$
            \end{itemize}
        \item [(3)]
            If $k = 1$, we are done, and $B_0$ is the desired ADE Dynkin biagram. Otherwise, let $k = k - 1$ and repeat step (2).\qedhere
    \end{itemize}
\end{proof}

\section{Classification of Zamolodchikov periodic cluster algebras}
\label{ZP}
In this section, we prove that Zamolodchikov periodic $B$-matrices are in bijection with admissible Dynkin biagrams.
To do so, we introduce the following two notions:
\subsection{Strictly subadditive labeling}

Let $B = \tilde{\Gamma} + \tilde{\Delta}$ be a bipartite recurrent $B$-matrix, and let $\Gamma = \lvert \tilde{\Gamma} \rvert, \Delta = \lvert \tilde{\Delta} \rvert$. 
We say a labeling $\rho\in\mathbb{R}_{>0}^n$ of the indices $[n]$ is \textit{strictly subadditive} if for all $k\in [n]$,
$$2\rho_k > \sum_i \Gamma_{ik} \rho_i,\qquad 2\rho_k > \sum_j \Delta_{jk} \rho_j.$$
If this inequality is not strict, this is called a subadditive labeling.
An example of a strictly subadditive labeling of $D_4\otimes A_2$ is given in Figure \ref{fig:subadd}.

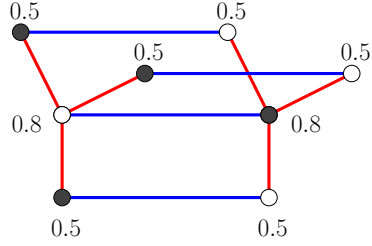
\begin{figure}
\scalebox{0.55}{
\begin{tikzpicture}
\draw (-1.25, 2) node [anchor=north west][inner sep=0.75pt]   [align=left] {\Large{$0.8$}};
\draw (5.50, 2) node [anchor=north west][inner sep=0.75pt]   [align=left] {\Large{$0.8$}};
\foreach \i in {0, 1}
{
    \coordinate (v\i x0) at (5 * \i + 0.00, 0.00);
    \coordinate (v\i x1) at (5 * \i + 0.00, 2.00);
    \coordinate (v\i x2) at (5 * \i + -1.00, 4.00);
    \coordinate (v\i x3) at (5 * \i + 2, 3.00);
    \draw[color=red, line width=0.75mm] (v\i x0) to[] (v\i x1);
    \draw[color=red, line width=0.75mm] (v\i x1) to[] (v\i x2);
    \draw[color=red, line width=0.75mm] (v\i x1) to[] (v\i x3);
    \draw (5 * \i + -0.3, -0.50) node [anchor=north west][inner sep=0.75pt]   [align=left] {\Large{$0.5$}};
    \draw (5 * \i + -1.30, 4.75) node [anchor=north west][inner sep=0.75pt]   [align=left] {\Large{$0.5$}};
    \draw (5 * \i + 1.70, 3.75) node [anchor=north west][inner sep=0.75pt]   [align=left] {\Large{$0.5$}};
}
\foreach \i in {0, 1, 2, 3}
{
    \draw[color=blue, line width=0.75mm] (v0x\i) to[] (v1x\i);
    \draw[fill=black!75!white] (v0x\i.center) circle (0.2);
    \draw[fill=white] (v1x\i.center) circle (0.2);
}
\draw[fill=black!75!white] (v1x1.center) circle (0.2);
\draw[fill=white] (v0x1.center) circle (0.2);
\end{tikzpicture}}
\caption{\label{fig:subadd} A strictly subadditive labeling for $D_4\otimes A_2$.}
\end{figure}

\subsection{Fixed point labeling}
Let $B = \tilde{\Gamma} + \tilde{\Delta}$ be a bipartite recurrent $B$-matrix, and let $\Gamma = \lvert \tilde{\Gamma} \rvert, \Delta = \lvert \tilde{\Delta} \rvert$. 
We say a labeling $\rho\in\mathbb{R}_{>1}^n$ of the indices $[n]$ is a \textit{fixed point labeling} of $B$ if for all $k\in [n]$,
$$\rho_k^2 = \prod_i \rho_i^{\Gamma_{ik}} + \prod_j \rho_j^{\Delta_{jk}}.$$
An example of a fixed point labeling of $A_3\times A_2$ is given in Figure \ref{fig:fixedpt}.

\begin{figure}
\scalebox{0.55}{
\begin{tikzpicture}
\draw (-1.25, -0.50) node [anchor=north west][inner sep=0.75pt] {\Large{$2\sqrt{2}$}};
\draw (-1.00, 2.20) node [anchor=north west][inner sep=0.75pt] {\Large{$4$}};
\draw (-1.25, 4.95) node [anchor=north west][inner sep=0.75pt] {\Large{$2\sqrt{2}$}};
\draw (5.25 + -1.00, -0.50) node [anchor=north west][inner sep=0.75pt] {\Large{$2\sqrt{2}$}};
\draw (5.30 + -0.80, 2.20) node [anchor=north west][inner sep=0.75pt] {\Large{$4$}};
\draw (5.25 + -1.00, 4.95) node [anchor=north west][inner sep=0.75pt] {\Large{$2\sqrt{2}$}};
\foreach \i in {0, 1}
{
    \coordinate (v\i x0) at (4 * \i + 0.00, 0.00);
    \coordinate (v\i x1) at (4 * \i + 0.00, 2.00);
    \coordinate (v\i x2) at (4 * \i + 0.00, 4.00);
    \draw[color=red, line width=0.75mm] (v\i x0) to[] (v\i x1);
    \draw[color=red, line width=0.75mm] (v\i x1) to[] (v\i x2);    
}
\draw[color=blue, line width=0.75mm] (v0x0) to[] (v1x1);
\draw[color=blue, line width=0.75mm] (v0x1) to[] (v1x0);
\draw[color=blue, line width=0.75mm] (v0x1) to[] (v1x2);
\draw[color=blue, line width=0.75mm] (v0x2) to[] (v1x1);
\foreach \i in {0, 1}
{
    \draw[fill=black!75!white] (v\i x0.center) circle (0.2);
    \draw[fill=white] (v\i x1.center) circle (0.2);
    \draw[fill=black!75!white] (v\i x2.center) circle (0.2);
}
\end{tikzpicture}}
\caption{\label{fig:fixedpt} A fixed point labeling for $A_3\times A_2$.}
\end{figure}
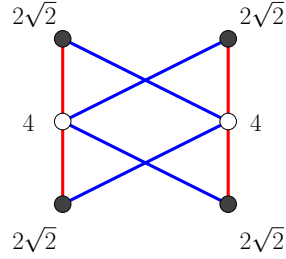

\newpage
\begin{theorem}
\label{5-way}
    Let $B = \tilde{\Gamma} + \tilde{\Delta}$ be a bipartite recurrent $B$-matrix. The following are equivalent:
    \begin{itemize}
        \item [(1)]
            $(\Gamma, \Delta)$ is an admissible Dynkin biagram.
        \item [(2)]
            $B$ has a strictly subadditive labeling.
        \item [(3)]
            $B$ has a fixed point labeling.
        \item [(4)]
            The tropical $T$-system associated with $B$ is periodic for any initial labeling $\lambda\in\mathbb{R}^n$.
        \item [(5)]
            The $T$-system associated with $B$ is periodic.
    \end{itemize}
\end{theorem}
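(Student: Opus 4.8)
The plan is to prove the five statements equivalent as a single loop: $(1)\Leftrightarrow(2)$, then $(2)\Rightarrow(3)\Rightarrow(4)\Rightarrow(5)$, and finally $(5)\Rightarrow(2)$, following the outline of \cite{pasha} but feeding in the general‑type spectral input developed in Section \ref{classification} wherever \cite{pasha} used simply‑laced facts. The mechanism is the same in both settings: a strictly subadditive (equivalently, a fixed point) labeling is a supersolution that traps the tropical dynamics; periodicity of the tropical $T$-system controls the Newton polytopes of the $T$-variables; and positivity (the Laurent phenomenon) upgrades periodicity of the polytopes to honest periodicity of the $T_k(t)$. Conversely, the non‑existence of such a labeling is precisely the failure of finite type, which forces those Newton polytopes to blow up.

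For $(1)\Leftrightarrow(2)$: recurrence of $B$ already forces $\Gamma\Delta=\Delta\Gamma$ (a short computation with $\mu_\circ$ on a bipartite matrix shows that the black--black entries of $\mu_\circ(B)+B$ equal, up to a factor $2$ and a sign, the entries of $\Delta\Gamma-\Gamma\Delta$, and symmetrically for $\mu_\bullet$ on white--white entries), so $(1)$ reduces to: every component of $\Gamma$ and of $\Delta$ is a finite-type Dynkin diagram. If this holds, applying Lemma \ref{commonEigVec} to the (also admissible) global flip $(\Gamma^{\top},\Delta^{\top})$ produces a common positive eigenvector $\rho\in\mathbb R_{>0}^n$ of $\Gamma^{\top}$ and $\Delta^{\top}$ with eigenvalues $2\cos(\pi/h_\Gamma)$ and $2\cos(\pi/h_\Delta)$; since both eigenvalues are $<2$ and $\rho>0$, the inequalities $\sum_i\Gamma_{ik}\rho_i<2\rho_k$ and $\sum_j\Delta_{jk}\rho_j<2\rho_k$ hold, i.e.\ $\rho$ is strictly subadditive. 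Conversely, a strictly subadditive $\rho$ gives $\Gamma^{\top}\rho\le(2-\varepsilon)\rho$ for some $\varepsilon>0$, so the Perron--Frobenius eigenvalue of each component of $\Gamma$ is $<2$; by the finite/affine/indefinite trichotomy for symmetrizable generalized Cartan matrices this is equivalent to finite type, and likewise for $\Delta$, so $(\Gamma,\Delta)$ is an admissible Dynkin biagram.

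For $(2)\Rightarrow(3)$: the operator $F\colon\mathbb R_{\ge1}^n\to\mathbb R_{>1}^n$ with $F(\rho)_k=\bigl(\prod_i\rho_i^{\Gamma_{ik}}+\prod_j\rho_j^{\Delta_{jk}}\bigr)^{1/2}$ is order-preserving, and a fixed point of $F$ is precisely a fixed point labeling. Since $F(\rho)\to\sqrt2\cdot\mathbf 1$ as $\rho\to\mathbf 1$, the vector $(1+\delta)\mathbf 1$ is a subsolution for small $\delta>0$, while if $\sigma$ is a strictly subadditive labeling then $e^{t\sigma}$ is a supersolution for $t$ large, because $e^{2t\sigma_k}$ dominates $e^{t(\Gamma^{\top}\sigma)_k}+e^{t(\Delta^{\top}\sigma)_k}$. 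Iterating $F$ from the subsolution yields an increasing sequence bounded above by the supersolution, whose limit is a fixed point labeling. Then for $(3)\Rightarrow(4)$, taking logarithms of a fixed point labeling $\rho$ gives $\nu\in\mathbb R_{>0}^n$ with $2\nu_k>\max\bigl(\sum_i\Gamma_{ik}\nu_i,\ \sum_j\Delta_{jk}\nu_j\bigr)$, a strict tropical supersolution; a comparison (maximum‑principle) argument with $\pm C\nu$ bounds $\boldsymbol t^\lambda_k(t)$ uniformly in $t$ for every $\lambda$. Because the tropical $T$-system is piecewise-linear with finitely many linearity cells, boundedness of the orbit at each rational $\lambda$ forces it into a lattice, hence eventually periodic with uniformly bounded pre-period and period, and this propagates by continuity and cell‑finiteness to genuine periodicity for all $\lambda$.

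For $(4)\Rightarrow(5)$ and the converse $(5)\Rightarrow(2)$: by the Laurent phenomenon each $T_k(t)$ is a Laurent polynomial in $\boldsymbol x$ with nonnegative coefficients, and running the tropical $T$-system from $\lambda$ computes the support function of the Newton polytope of $T_k(t)$; hence $(4)$ means all these Newton polytopes remain in one bounded region and are eventually periodic in $t$. Only finitely many $T_k(t)$ then occur, and using subtraction-freeness the recursion together with the periodic sequence of Newton polytopes pins the $T_k(t)$ down exactly, so the sequence of $T_k(t)$ is periodic. Conversely, if there is no strictly subadditive labeling then some component of $\Gamma$ or $\Delta$ has Perron eigenvalue $\ge2$; choosing $\lambda$ along that Perron eigenvector makes the tropical orbit unbounded, so the Newton polytopes of the $T_k(t)$ are unbounded, the $T_k(t)$ pairwise distinct, and periodicity fails; with $(1)\Leftrightarrow(2)$ this closes the loop. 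I expect the real obstacle to be the arithmetic passage $(4)\Rightarrow(5)$ — identifying the tropical $T$-system with the support functions of the Newton polytopes and showing that their periodicity forces periodicity of the rational functions themselves — and this is exactly where one must recheck that the combinatorics of \cite{pasha} survives when $\Gamma_{ik},\Delta_{jk}$ are arbitrary nonnegative integers rather than $0$ or $1$; the spectral steps are comparatively routine given Proposition \ref{eigenvectors} and Lemma \ref{commonEigVec}.
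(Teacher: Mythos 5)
The crux of the theorem is the implication from a finite-type/subadditive hypothesis to periodicity of the tropical $T$-system, and this is exactly where your argument has a genuine gap. You claim that a strict tropical supersolution $\nu$ (obtained from the fixed point labeling) "bounds $\boldsymbol t^\lambda_k(t)$ uniformly in $t$ by a comparison (maximum-principle) argument," and that boundedness plus piecewise-linearity forces periodicity. Neither step is routine. The recurrence $\boldsymbol t_k(t+1)=\max\bigl(\sum_i\Gamma_{ik}\boldsymbol t_i(t),\sum_j\Delta_{jk}\boldsymbol t_j(t)\bigr)-\boldsymbol t_k(t-1)$ is not monotone in the initial data because of the subtracted term, so no comparison principle with $\pm C\nu$ applies as stated; already in the linearized model $u(t+1)=Au(t)-u(t-1)$ with $\rho(A)<2$, boundedness comes from a conserved quadratic form, and here the active linear map mixes rows of $\Gamma$ and $\Delta$ vertex-by-vertex, so there is no common invariant form. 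Moreover, even granting boundedness, "bounded $\Rightarrow$ periodic" is false for piecewise-linear dynamics (bounded orbits can be quasi-periodic; in the linear model periodicity holds only because the eigenvalues are $2\cos(\pi e_j/h)$, i.e.\ the arithmetic of exponents and Coxeter numbers enters). Your lattice argument does give genuine periodicity for each fixed rational $\lambda$ if boundedness were known, but the period may grow with the denominator, and without a uniform period you cannot pass to all real $\lambda$ "by continuity." This step is precisely the hard content of \cite{pasha}, where it is proved for admissible ADE bigraphs using the classification; the paper does not re-prove it but instead reduces the general case to the ADE case via Proposition \ref{DynkinFromADE} (every admissible Dynkin biagram is a fold/transpose of an admissible ADE bigraph) together with Propositions \ref{flipsCommute} and \ref{foldingCommutes}, which show that global flips and folding along bicolored automorphisms commute with tropical mutation and hence preserve periodicity. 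Your proposal makes no use of this reduction, and without it (or a new proof of tropical periodicity) the loop does not close.

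Two secondary points. First, in $(4)\Rightarrow(5)$ you say the periodic Newton polytopes "pin the $T_k(t)$ down exactly"; what one actually gets from tropical periodicity is $T_j(t+2N)=c_jT_j(t)$ for positive scalars $c_j$, and one must then kill the scalars. The paper does this by tracking both $\deg_{\max}$ and $\deg_{\min}$ via Lemma \ref{minmax} and then evaluating at a fixed point labeling (so its implication is really $(3)+(4)\Rightarrow(5)$); if you want a standalone $(4)\Rightarrow(5)$ you must supply an explicit argument (e.g.\ that consistency with the exchange relation forces $2\log c_k=\sum_i\Gamma_{ik}\log c_i=\sum_j\Delta_{jk}\log c_j$, which has only the trivial solution in finite type). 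Second, in $(5)\Rightarrow(2)$ the claim that a Perron eigenvalue $\ge 2$ makes the tropical orbit unbounded is plausible (pairing with a left Perron eigenvector and using $\max(a,b)\ge a$ gives $w(t+1)\ge\mu w(t)-w(t-1)$), but it is asserted rather than proved, and the paper's route $(4)\Rightarrow(2)$ — summing $\deg_{\max}(i,T_j(t))$ over one period to manufacture a strictly subadditive labeling — avoids this growth analysis entirely. Your treatment of $(1)\Leftrightarrow(2)$ and $(2)\Leftrightarrow(3)$ is essentially the paper's (Vinberg plus Lemma \ref{commonEigVec}; monotone iteration in place of Brouwer) and is fine.
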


\tikzset{every picture/.style={line width=0.75pt}} 
\begin{figure}
\begin{tikzpicture}[x=0.75pt,y=0.75pt,yscale=-1,xscale=1]

\draw  [line width=0.75]  (212.33,27) -- (374.67,27) -- (374.67,52.33) -- (212.33,52.33) -- cycle ;
\draw   (325,100) -- (483.67,100) -- (483.67,125.33) -- (325,125.33) -- cycle ;
\draw   (298.33,167) -- (508.67,167) -- (508.67,192.33) -- (298.33,192.33) -- cycle ;
\draw  [color={rgb, 255:red, 0; green, 0; blue, 0 }  ,draw opacity=1 ][line width=0.75]  (28.33,167) -- (238.67,167) -- (238.67,192.33) -- (28.33,192.33) -- cycle ;
\draw [color={rgb, 255:red, 0; green, 0; blue, 0 }  ,draw opacity=1 ][line width=0.75]    (135.33,134.33) -- (135.33,160.33) ;
\draw [color={rgb, 255:red, 0; green, 0; blue, 0 }  ,draw opacity=1 ][line width=0.75]    (138.33,134.33) -- (138.33,160.33) ;
\draw  [color={rgb, 255:red, 0; green, 0; blue, 0 }  ,draw opacity=1 ][line width=0.75]  (132.79,135.38) .. controls (135.11,133.72) and (136.49,132.05) .. (136.96,130.37) .. controls (137.41,132.06) and (138.77,133.75) .. (141.05,135.45) ;
\draw    (412.67,134.33) -- (412.67,160.33) ;
\draw    (415.67,134.33) -- (415.67,160.33) ;
\draw   (410.13,135.38) .. controls (412.44,133.72) and (413.82,132.04) .. (414.29,130.37) .. controls (414.74,132.05) and (416.11,133.75) .. (418.38,135.45) ;
\draw   (418.33,159.25) .. controls (416.08,160.99) and (414.76,162.72) .. (414.35,164.42) .. controls (413.84,162.75) and (412.41,161.1) .. (410.08,159.49) ;
\draw [color={rgb, 255:red, 0; green, 0; blue, 0 }  ,draw opacity=1 ][line width=0.75]    (288.38,177.65) -- (247,178) ;
\draw [color={rgb, 255:red, 0; green, 0; blue, 0 }  ,draw opacity=1 ][line width=0.75]    (288.44,180.65) -- (247.07,181) ;
\draw   (288.51,175.12) .. controls (290.21,177.41) and (291.9,178.77) .. (293.59,179.22) .. controls (291.9,179.68) and (290.23,181.08) .. (288.56,183.38) ;
\draw  [color={rgb, 255:red, 0; green, 0; blue, 0 }  ,draw opacity=1 ][line width=0.75]  (247.1,183.65) .. controls (245.31,181.43) and (243.58,180.13) .. (241.87,179.74) .. controls (243.54,179.22) and (245.16,177.77) .. (246.75,175.4) ;
\draw   (31.33,99) -- (249.67,99) -- (249.67,124.33) -- (31.33,124.33) -- cycle ;
\draw    (201.12,45.07) -- (133.87,86.31) ;
\draw    (202.78,47.57) -- (135.54,88.81) ;
\draw   (138.14,90.33) .. controls (134.94,89.67) and (132.27,89.77) .. (130.12,90.63) .. controls (131.75,89.02) and (132.86,86.65) .. (133.45,83.52) ;
\draw    (282.67,60.6) -- (282.67,82.83) ;
\draw    (285.67,60.6) -- (285.67,82.83) ;
\draw   (280.13,61.12) .. controls (282.44,59.46) and (283.83,57.78) .. (284.29,56.1) .. controls (284.74,57.78) and (286.1,59.48) .. (288.38,61.18) ;
\draw    (282.51,82.46) -- (253.16,108.64) ;
\draw    (284.51,84.7) -- (255.15,110.87) ;
\draw    (315.9,107.61) -- (285.69,82.42) ;
\draw    (313.98,109.92) -- (283.77,84.72) ;
\draw    (343.64,158.98) -- (207.74,126.75) ;
\draw    (342.95,161.9) -- (207.05,129.67) ;
\draw   (343.2,156.26) .. controls (344.29,158.89) and (345.59,160.63) .. (347.13,161.48) .. controls (345.38,161.52) and (343.42,162.46) .. (341.24,164.28) ;
\draw (214.33,30) node [anchor=north west][inner sep=0.75pt]   [align=left] {{\fontfamily{ptm}\selectfont (5) $T$-system is periodic}};
\draw (33.67,102.67) node [anchor=north west][inner sep=0.75pt]   [align=left] {{\fontfamily{ptm}\selectfont (4) Tropical $T$-system is periodic}};
\draw (327,103) node [anchor=north west][inner sep=0.75pt]   [align=left] {{\fontfamily{ptm}\selectfont (3) Fixed point labeling}};
\draw (300.33,170) node [anchor=north west][inner sep=0.75pt]   [align=left] {{\fontfamily{ptm}\selectfont (2) Strictly subadditive labeling}};
\draw (30.33,170) node [anchor=north west][inner sep=0.75pt]   [align=left] {{\fontfamily{ptm}\selectfont (1) Admissible Dynkin biagram}};

\end{tikzpicture}

\caption{The plan for the proof of Theorem \ref{5-way}.}
\label{figure:1}
\end{figure}
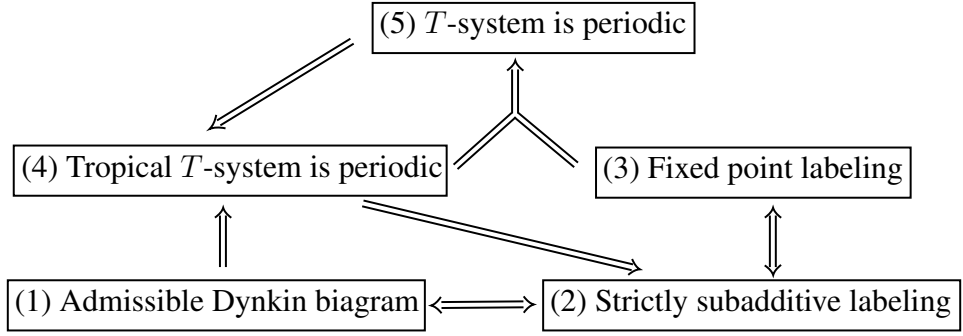

\subsection{Connection between periodicity and tropical periodicity}
\label{polytopes}
This section covers (5)$\implies$(4) as well as (3) + (4)$\implies (5)$.
For a linear function $\lambda: \mathbb{R}^d\rightarrow \mathbb{R}$ and a polytope $P\subset \mathbb{R}^d$, define 
\[\sup(\lambda, P) \coloneqq \sup\{\lambda(z) | z\in P\}.\]
For any Laurent polynomial $p\in \mathbb{Z}[\boldsymbol{x}^{\pm 1}]$, say,
\[p = \sum_{\alpha \in \mathbb{Z}^d} c_{\alpha} \boldsymbol{x}^{\alpha},\]
define its \textit{Newton polytope} $\newton(p)\subset \mathbb{R}^d$ to be $\newton(p) \coloneqq \text{Conv}\{\alpha\in\mathbb{Z}^d~|~c_{\alpha}\neq 0\}$.
Moreover, a Laurent polynomial $p$ is called \textit{Newton-positive} if for every $\alpha\in \mathbb{Z}^d$ that is a vertex of $\newton(p)$, we have $c_{\alpha} > 0$.
Observe that mutation preserves Newton-positivity, so all Laurent polynomials in the birational $T$-system $T(t)$ are Newton-positive.
\begin{lemma}[\cite{pasha}]
\label{newton}
    For a map $\lambda: [n]\rightarrow \mathbb{R}$, the tropical $T$-system $\boldsymbol{t}^{\lambda}$ is obtained from the birational $T$-system via the following transformation:
    \[\boldsymbol{t}_v^{\lambda}(t) = \sup(\lambda, \newton(T_v(t))).\]
\end{lemma}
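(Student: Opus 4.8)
The plan is to prove the identity by induction on $|t|$, tropicalizing the exchange relation one factor at a time. The two facts about Newton polytopes that drive the argument are: (a) for Laurent polynomials $p,q$ one has $\newton(pq) = \newton(p) + \newton(q)$ (Minkowski sum), hence $\sup(\lambda,\newton(pq)) = \sup(\lambda,\newton(p)) + \sup(\lambda,\newton(q))$ for any linear $\lambda$; and (b) if $p$ and $q$ are Newton-positive, then $\newton(p+q) = \Conv(\newton(p)\cup\newton(q))$, and therefore $\sup(\lambda,\newton(p+q)) = \max(\sup(\lambda,\newton(p)),\sup(\lambda,\newton(q)))$. Fact (b) requires checking there is no cancellation at the vertices of $\Conv(\newton(p)\cup\newton(q))$: any such vertex $\alpha$ is already a vertex of $\newton(p)$ or of $\newton(q)$, and if it lies in both polytopes it is a vertex of both; by Newton-positivity the corresponding coefficient of $p$ (resp.\ $q$) is strictly positive, so the coefficient of $\boldsymbol{x}^\alpha$ in $p+q$, being a sum of a positive number and a nonnegative number, is nonzero. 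All the Newton polytopes here are bounded, so every $\sup$ is attained.

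For the base case, $T_v(0) = T_v(1) = x_v$, so $\newton(T_v(0)) = \newton(T_v(1)) = \{e_v\}$ and $\sup(\lambda,\{e_v\}) = \lambda_v$, which matches the initial conditions $\boldsymbol{t}_v^\lambda(0) = \boldsymbol{t}_v^\lambda(1) = \lambda_v$. For the inductive step, pass from times $\le t$ to time $t+1$ (the downward direction is identical). Fix $k$ satisfying the relevant parity condition and set $P := \prod_i T_i(t)^{\Gamma_{ik}} + \prod_j T_j(t)^{\Delta_{jk}}$, so the exchange relation reads $T_k(t+1)\,T_k(t-1) = P$. Each $T_i(t)$ is Newton-positive (mutation preserves Newton-positivity, as observed before the lemma), hence so are the products $P_1 := \prod_i T_i(t)^{\Gamma_{ik}}$ and $P_2 := \prod_j T_j(t)^{\Delta_{jk}}$; by fact (a) and the inductive hypothesis,
\[
\sup(\lambda,\newton(P_1)) = \sum_i \Gamma_{ik}\,\boldsymbol{t}_i^\lambda(t), \qquad \sup(\lambda,\newton(P_2)) = \sum_j \Delta_{jk}\,\boldsymbol{t}_j^\lambda(t),
\]
and fact (b) then gives $\sup(\lambda,\newton(P)) = \max\!\left(\sum_i \Gamma_{ik}\,\boldsymbol{t}_i^\lambda(t),\ \sum_j \Delta_{jk}\,\boldsymbol{t}_j^\lambda(t)\right)$.

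Finally, applying fact (a) to $T_k(t+1)\,T_k(t-1) = P$ yields $\newton(T_k(t+1)) + \newton(T_k(t-1)) = \newton(P)$, whence
\[
\sup(\lambda,\newton(T_k(t+1))) + \sup(\lambda,\newton(T_k(t-1))) = \sup(\lambda,\newton(P)).
\]
Using $\sup(\lambda,\newton(T_k(t-1))) = \boldsymbol{t}_k^\lambda(t-1)$ from the inductive hypothesis, we obtain
\[
\sup(\lambda,\newton(T_k(t+1))) = \max\!\left(\sum_i \Gamma_{ik}\,\boldsymbol{t}_i^\lambda(t),\ \sum_j \Delta_{jk}\,\boldsymbol{t}_j^\lambda(t)\right) - \boldsymbol{t}_k^\lambda(t-1),
\]
which is exactly the defining recurrence for $\boldsymbol{t}_k^\lambda(t+1)$, closing the induction. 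The only genuinely delicate point is the no-cancellation claim in fact (b) — that Newton-positivity of the summands is precisely what prevents the tropical $\max$ from collapsing — together with the implicit use of the Laurent phenomenon to ensure each $T_k(t)$ is an honest Laurent polynomial so that $\newton(T_k(t))$ is defined; both are standard, the first as sketched above and the second from Fomin--Zelevinsky.
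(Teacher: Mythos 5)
Your proof is correct. The paper does not reproduce an argument for this lemma (it is imported from \cite{pasha}), and your induction — Minkowski-sum multiplicativity of Newton polytopes for the product side, the no-cancellation-at-vertices argument from Newton-positivity for the sum side, plus the Laurent phenomenon to make $\newton(T_v(t))$ well defined and the two-sided induction in $t$ — is exactly the standard tropicalization argument underlying the cited result, so there is nothing to add.
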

Notice that if the $T$-system is periodic, it immediately follows that the tropical $T$-system $\boldsymbol{t}^{\lambda}(t)$ is also periodic.

For $p\in\mathbb{Z}[\boldsymbol{x}^{\pm1}]$ and $i\in [n]$, define $\deg_{\max}(i, p)\in \mathbb{Z}$ and $\deg_{\min}(i, p)\in \mathbb{Z}$ to be the maximal and minimal degree of $x_i$ in $p$ viewed as an element of $\mathbb{Z}[x_i^{\pm 1}]$ with all other indeterminates $\{x_j\}_{j\neq i}$ regarded as constants.

For $i\in [n]$, define $\delta_i: [n]\rightarrow \mathbb{R}$ to be
\[
\delta_i(j) \coloneqq \begin{cases}
    1 & \text{if }i = j,\\
    0 & \text{otherwise.}
\end{cases}
\]

Throughout this section, we use the following notation for every $j\in [n]$.
$$\eta_j \coloneqq \begin{cases}
    0 & \text{if }\epsilon_j = \circ,\\
    1 & \text{if }\epsilon_j = \bullet.
\end{cases}$$
Then, notice that the tropical and birational $T$-systems $\boldsymbol{t}_j^{\lambda}$ and $T_j$ are defined only for $t\in\mathbb{Z}$ such that $t + \eta_j$ is even.

Observe that the tropical $T$-system with initial condition $\delta_i$ tracks the maximal degrees of $x_i$ in Laurent polynomials appearing as entries of the birational $T$-system. 
Precisely, we have $\boldsymbol{t}_j^{\delta_i} = \deg_{\max}(i, T_j(t))$ for all $i, j\in [n]$ and all $t\in \mathbb{Z}$ with $t + \eta_j$ even.
The following lemma is an extension of Proposition 6.6 in \cite{pasha}, and shows that $\boldsymbol{t}_j^{\delta_i}$ tracks the minimal degrees of $x_i$ as well:

\begin{lemma}
\label{minmax}
    For all $t\in \mathbb{Z}$ and $i,j\in [n]$,
    \begin{align*}
        -\deg_{\min}(i, T_j(t)) &= \deg_{\max}(i, T_j(t + 2)) \quad  \text{for } \epsilon_i = \circ, \text{ and}\\
        -\deg_{\min}(i, T_j(t)) &= \deg_{\max}(i, T_j(t - 2)) \quad  \text{for }\epsilon_i = \bullet.
    \end{align*}
\end{lemma}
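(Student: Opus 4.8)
The plan is to reduce the statement to a fact about the piecewise-linear tropical $T$-system, where it becomes a short direct computation, and to import the non-cancellation (Newton-positivity) arguments already used for Proposition~6.6 of \cite{pasha}. Recall that Proposition~6.6, quoted just above the lemma, gives $\deg_{\max}(i,T_j(t))=\boldsymbol t_j^{\delta_i}(t)$ for all $i,j$ and all $t$ with $t+\eta_j$ even. I would establish the lemma in three relations and chain them: first, that $-\deg_{\min}(i,T_j(t))=\boldsymbol t_j^{-\delta_i}(t)$, where $\boldsymbol t^{-\delta_i}$ is the tropical $T$-system with initial labeling $\lambda=-\delta_i$; second, the purely combinatorial identity
\[
\boldsymbol t_j^{-\delta_i}(t)=\boldsymbol t_j^{\delta_i}(t+2)\ \ (\epsilon_i=\circ),\qquad \boldsymbol t_j^{-\delta_i}(t)=\boldsymbol t_j^{\delta_i}(t-2)\ \ (\epsilon_i=\bullet);
\]
third, Proposition~6.6 again to rewrite $\boldsymbol t_j^{\delta_i}(t\pm2)=\deg_{\max}(i,T_j(t\pm2))$.

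For the first relation, set $s_k(t):=-\deg_{\min}(i,T_k(t))$. Applying $\deg_{\min}(i,\cdot)$ to the $T$-system relation $T_k(t+1)T_k(t-1)=A+B$, with $A=\prod_l T_l(t)^{\Gamma_{lk}}$ and $B=\prod_l T_l(t)^{\Delta_{lk}}$, and using additivity of $\deg_{\min}(i,\cdot)$ on products, I get $-s_k(t+1)-s_k(t-1)=\deg_{\min}(i,A+B)$. The content is that $\deg_{\min}(i,A+B)=\min\!\bigl(\deg_{\min}(i,A),\deg_{\min}(i,B)\bigr)$: since each $T_l(t)$ is Newton-positive, so are $A$ and $B$ (Minkowski-additivity of Newton polytopes under products), hence a generic linear functional close to $-\delta_i$ is maximized on $\newton(A)$ (resp.\ $\newton(B)$) at a single vertex with positive coefficient, and that extreme term cannot be killed in $A+B$; this is exactly the device behind Proposition~6.6. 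Consequently $s$ satisfies the tropical $T$-system recurrence, and since $s_k(0)=-\deg_{\min}(i,x_k)$ on the white layer and $s_k(1)=-\deg_{\min}(i,x_k)$ on the black layer, both equal to $-\delta_i(k)$, determinism gives $s_k(t)=\boldsymbol t_k^{-\delta_i}(t)$ for all $t$.

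For the second relation I would use that the tropical recurrence is second-order in $t$ and reversible, so a tropical $T$-system is determined forward and backward by its values on any two consecutive layers. Take $\epsilon_i=\circ$. In the recurrence for $\boldsymbol t^{\delta_i}$, the right-hand side at time $1$ is $\max\!\bigl(\sum_l\Gamma_{lk}\delta_i(l),\sum_l\Delta_{lk}\delta_i(l)\bigr)$, a sum over $l$ of the color opposite to $k$; for white $k$ these $l$ are black, so every $\delta_i(l)=0$ and the right-hand side is $0$, forcing $\boldsymbol t_k^{\delta_i}(2)=-\delta_i(k)$. Feeding this in, the right-hand side at time $2$ (for black $k$) is $\max(-\Gamma_{ik},-\Delta_{ik})=-\min(\Gamma_{ik},\Delta_{ik})=0$, using that $\Gamma$ and $\Delta$ share no nonzero entry; hence $\boldsymbol t_k^{\delta_i}(3)=0$. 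Thus $\boldsymbol t^{\delta_i}$ on layers $(2,3)$ carries exactly the initial data $-\delta_i$ of $\boldsymbol t^{-\delta_i}$ on layers $(0,1)$, so $\boldsymbol t_j^{\delta_i}(t+2)=\boldsymbol t_j^{-\delta_i}(t)$. The case $\epsilon_i=\bullet$ is the same computation run two steps backward ($\boldsymbol t_k^{\delta_i}(-1)=-\delta_i(k)$, then $\boldsymbol t_k^{\delta_i}(-2)=0$), giving the shift by $-2$.

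Chaining the three relations yields $-\deg_{\min}(i,T_j(t))=\boldsymbol t_j^{-\delta_i}(t)=\boldsymbol t_j^{\delta_i}(t\pm2)=\deg_{\max}(i,T_j(t\pm2))$, with the sign dictated by the color of $i$, which is the claim. I expect the only real obstacle to be making the non-cancellation step of the second paragraph airtight using Newton-positivity alone rather than full Laurent positivity; the rest is a mechanical unwinding of the two recurrences together with Proposition~6.6.
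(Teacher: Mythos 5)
Your proposal is correct and follows essentially the same route as the paper: both arguments show that the negated minimal degrees obey the same tropical recurrence as the maximal degrees and then pin down the two-step shift by an explicit computation of two time steps, using bipartiteness and the fact that $\Gamma$ and $\Delta$ share no nonzero entries. The one step you flag as a potential obstacle—justifying the non-cancellation for $\deg_{\min}$ from Newton-positivity—is already covered by the quoted Lemma~\ref{newton} applied with $\lambda=-\delta_i$, which gives $-\deg_{\min}(i,T_j(t))=\boldsymbol{t}_j^{-\delta_i}(t)$ directly, so no extra work is needed there.
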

\begin{proof}
    The values $-\deg_{\min}(i, T_j(t))$ clearly satisfy the same recurrence as the values $\deg_{\max}(i, T_j(t))$.
    Therefore it suffices to check the initial case $t = \eta_j$. 
    Initially, we have
    \begin{align*}
        \deg_{\max}(i, T_j(\eta_j)) &= \delta_i(j)\quad  \text{and} \quad  -\deg_{\min}(i, T_j(\eta_j)) = -\delta_i(j).
    \end{align*}
    Assume for instance that $\epsilon_i = \circ$ and $\eta_i = 0$.
    Let us look at $t = 2$, when $T_j(t)$ is defined only for $\epsilon_j = \circ$.
    Since $\epsilon_i = \circ$, $b_{ij} = 0$ and
    \[\deg_{\max}(i, T_j(2)) = \max(0, 0) - \deg_{\max}(i, T_j(0)) = -\delta_i(j).\]
    For $t = 3$, we must have $\epsilon_j = \bullet$.
    \[\deg_{\max}(i, T_j(3)) = \max(-|b_{ij}|, 0) - \deg_{\max}(i, T_j(1)) = -\delta_i(j) = 0.\]
    To summarize, for all $j\in [n]$,
    \[\deg_{\max}(i, T_j(2 + \eta_j)) = -\delta_i(j) = -\deg_{\min}(i, T_j(\eta_j)).\]
    The case of $\epsilon_i = \bullet$ is treated similarly.
    When $t = 2$, we have $\epsilon_j = \circ$, giving us
    \[-\deg_{\min}(i, T_j(2)) = \max(-|b_{ij}|, 0) + \deg_{\min}(i, T_j(0)) = \delta_i(j) = 0.\]
    When $t = 3$, we have $\epsilon_j = \bullet$, so $b_{ij} = 0$ and
    \[-\deg_{\min}(i, T_j(3)) = \max(0, 0) + \deg_{\min}(i, T_j(1)) = \delta_i(j).\]
    So when $\epsilon_i = \bullet,$
    \[-\deg_{\min}(i, T_j(2 + \eta_j)) = \delta_i(j) = \deg_{\max}(\eta_j).\]
\end{proof}

\begin{proposition}
    Given an integer $N$ and a bipartite recurrent $B$-matrix, the following are equivalent:
    \begin{itemize}
        \item[(1)]
            $\boldsymbol{t}_j^{\delta_i}(t + 2N) = \boldsymbol{t}_j^{\delta_i}(t)$ for all $i, j\in [n], t\in \mathbb{Z}$.
        \item[(2)]
            $\boldsymbol{t}_j^{\lambda}(t + 2N) = \boldsymbol{t}_j^{\lambda}(t)$ for all $\lambda\in \mathbb{R}^n$ and $j\in [n], t\in\mathbb{Z}$.
        \item[(3)]
            There exists a labeling $c\in\mathbb{R}_{>0}^n$ such that $T_j(t + 2N) = c_jT_j(t)$ for all $j\in [n], t\in\mathbb{Z}$.
    \end{itemize}
    In any of these cases, if $B$ has a fixed point labeling, then $c_j = 1$ for all $j\in [n]$.
\end{proposition}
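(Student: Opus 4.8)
Here is the plan.

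\medskip

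The approach is to run the cycle $(2)\Rightarrow(1)\Rightarrow(3)\Rightarrow(2)$ and then establish the refinement separately. The implication $(2)\Rightarrow(1)$ is just the specialization $\lambda=\delta_i$. For $(3)\Rightarrow(2)$: multiplying a Laurent polynomial by a nonzero scalar does not change its Newton polytope, so $(3)$ gives $\newton(T_j(t+2N))=\newton(T_j(t))$ for all $j,t$; by Lemma~\ref{newton} the value $\boldsymbol{t}_j^{\lambda}(t)=\sup(\lambda,\newton(T_j(t)))$ depends only on this polytope, and $(2)$ follows at once.

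The content is in $(1)\Rightarrow(3)$. First I would translate the hypothesis: since $\boldsymbol{t}_j^{\delta_i}(t)=\deg_{\max}(i,T_j(t))$ and, by Lemma~\ref{minmax}, the numbers $\deg_{\min}(i,T_j(t))$ are recovered from the systems $\boldsymbol{t}^{\delta_i}$ up to a shift of $t$ by $\pm2$, condition $(1)$ is equivalent to: every function $t\mapsto\deg_{\max}(i,T_j(t))$ and $t\mapsto\deg_{\min}(i,T_j(t))$ is $2N$-periodic (in particular bounded). Base case: $T_j(\eta_j)=x_j$, so $\deg_{\max}(i,T_j(\eta_j))=\deg_{\min}(i,T_j(\eta_j))=\delta_i(j)$, and periodicity pins every monomial of $T_j(\eta_j+2N)$ to exponent vector $e_j$, forcing $T_j(\eta_j+2N)=c_jx_j$ for some $c_j\in\mathbb{R}_{>0}$ (a positive integer, by Laurent positivity).

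Next I would reformulate the propagation. The map advancing the $T$-system by $2N$ steps is a composition of the bipartite mutations $\mu_\circ,\mu_\bullet$, and the base case says it carries the initial cluster $(x_1,\dots,x_n)$ to $(c_1x_1,\dots,c_nx_n)$; hence, as a birational self-map of $\mathbb{A}^n$, advance-by-$2N$ equals the diagonal scaling $D_c\colon p\mapsto(c_kp_k)_k$. Since advance-by-$(t+2N)$ factors as advance-by-$t$ composed with advance-by-$2N$, the desired identity $T_j(t+2N)=c_jT_j(t)$ for all $t$ is equivalent to $D_c$ commuting with every advance map, equivalently with $\mu_\circ$ and $\mu_\bullet$; writing out the exchange relations and using that $\prod_iT_i(t)^{\Gamma_{ik}}$ and $\prod_iT_i(t)^{\Delta_{ik}}$ are not proportional over $\mathbb{C}(\boldsymbol{x})$ (for non-isolated $k$; the isolated case is immediate), this in turn is equivalent to the multiplicative consistency relations
\[
\prod_i c_i^{\Gamma_{ik}}=\prod_i c_i^{\Delta_{ik}}=c_k^2\qquad(k\in[n]),
\]
i.e.\ $(\log c_i)_i\in\ker(\Gamma^{\top}-2I)\cap\ker(\Delta^{\top}-2I)$. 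The whole of $(1)\Rightarrow(3)$ is thus concentrated in deriving these relations from the periodicity of the degree data, and this is the step I expect to be the main obstacle: it requires promoting the purely tropical/combinatorial periodicity of $(1)$ to honest birational periodicity up to scalars. The plan for it is to track, over one full period, the distinguished lowest-degree monomial $\boldsymbol{x}^{g_j(t)}$ of $T_j(t)$ together with its coefficient, and use periodicity of $\deg_{\min}$ to see that this monomial returns and that the scalar accumulated along the period must satisfy the eigenvalue-$2$ conditions above.

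Finally, for the refinement: if $B$ has a fixed point labeling $\rho\in\mathbb{R}_{>1}^n$, substitute $x_k=\rho_k$. By the initial conditions $T_k(\eta_k)|_{\boldsymbol{x}=\rho}=\rho_k$, and if $T_i(t)|_{\boldsymbol{x}=\rho}=\rho_i$ for the relevant indices then the recurrence together with the fixed point equation gives $T_k(t+1)|_{\boldsymbol{x}=\rho}=\rho_k^2/\rho_k=\rho_k$ (no division by zero since $\rho_k>1$); by induction $T_k(t)|_{\boldsymbol{x}=\rho}=\rho_k$ for every admissible $(k,t)$. Evaluating $T_k(t+2N)=c_kT_k(t)$ at $\boldsymbol{x}=\rho$ yields $\rho_k=c_k\rho_k$, hence $c_k=1$.
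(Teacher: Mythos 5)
Your cycle $(2)\Rightarrow(1)\Rightarrow(3)\Rightarrow(2)$ matches the paper's, and your base case for $(1)\Rightarrow(3)$ (using Lemma \ref{minmax} to pin both $\deg_{\max}$ and $\deg_{\min}$ of every $x_i$ in $T_j(\eta_j+2N)$ to $\delta_i(j)$, hence $T_j(\eta_j+2N)=c_jx_j$ with $c_j>0$ by Newton-positivity) and your treatment of the fixed-point refinement are exactly the paper's arguments. The problem is the propagation from $t=\eta_j$ to arbitrary $t$, which you declare to be the main obstacle and leave unproven: you only offer a plan (tracking a lowest-degree monomial and deriving the relations $\prod_i c_i^{\Gamma_{ik}}=\prod_i c_i^{\Delta_{ik}}=c_k^2$ from tropical data), so as written the proof of $(1)\Rightarrow(3)$ is incomplete.

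Moreover, the obstacle is self-inflicted. Write $A_s$ for the birational map advancing the slice $\bigl(T_j(t+\eta_j)\bigr)_j$ by $s$ steps; since the $T$-system recurrence does not depend on $t$, the flow is autonomous, so $A_{t+2N}=A_{2N}\circ A_t$ and the base case says $A_{2N}$ is the diagonal scaling by $c$. Applying $A_{2N}$ to the slice at time $t$ immediately gives $T_j(t+2N+\eta_j)=c_jT_j(t+\eta_j)$ for all $t$ (forwards and backwards), with no commutation needed. This is precisely the paper's one-line step ``substitute $x_j:=T_j(t+\eta_j)$''. You instead factor $A_{t+2N}=A_t\circ A_{2N}$, which forces you to prove that the scaling commutes with all advance maps, i.e.\ the multiplicative consistency relations above; those relations are in fact automatic consequences of $A_{2N}=D_c$ and autonomy, not something that needs to be extracted anew from the tropical periodicity. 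Replace your commutation step by the substitution/autonomy observation and the argument closes; as it stands, the key implication rests on an unexecuted plan.
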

\begin{proof}
    Notice that (2)$\implies$(1) is trivial.
    Also, (3)$\implies$(2) follows directly from Lemma \ref{newton}, as $\newton(c_jT_j(t)) = \newton(T_j(t))$.

    To show (1)$\implies$(3), it is enough to consider $t = \eta_j$, because if $T_j(\eta_j + 2N) = c_jT_j(\eta_j)$ for all $j\in [n]$, then we can get the case of arbitrary $t$ via the substitution $x_j \coloneqq T_j(t + \eta_j)$ for all $j\in [n]$.
    By Lemma \ref{minmax}, (1) implies that for every $i, j\in [n]$,
    \begin{align*}
        \deg_{\max}(i, T_j(\eta_j + 2N)) &= \deg_{\max}(i, T_j(\eta_j)) \quad \text{and}\\
        \deg_{\min}(i, T_j(\eta_j + 2N)) &= \deg_{\min}(i, T_j(\eta_j)).
    \end{align*}
    Since $T_j(\eta_j) = x_j$, $\deg_{\max}(i, T_j(\eta_j)) = \deg_{\min}(i, T_j(\eta_j)) = \delta_j(i)$ and therefore $T_j(\eta_j + 2N)$ differs from $T_j(\eta_j)$ by a scalar multiple $c_j$. 
    Since they are both Newton-positive, $c_j$ is necessarily positive, so (3) follows.

    Finally if $\rho\in\mathbb{R}_{>1}^n$ is a fixed point labeling, then substituting $x_j \coloneqq \rho_j$ for all $j\in [n]$ into $T_j(t + 2N) = c_jT_j(t)$ yields $\rho_j = c_j\rho_j$ and thus $c_j = 1$.
\end{proof}
\begin{remark}
    This proof is identical to the proof of Proposition 6.8 in \cite{pasha}, but uses Lemma \ref{minmax} instead of Proposition 6.6.
\end{remark}

\subsection{Admissible Dynkin biagrams have periodic tropical $T$-systems}\hspace{1mm}\\
\noindent
Galashin--Pylyavskyy proved in \cite{pasha} that admissible ADE bigraphs have periodic tropical $T$-systems. 
Recall from Proposition \ref{DynkinFromADE} that every admissible Dynkin biagram can be obtained from an admissible ADE bigraph through a sequence of global flips and folds along bicolored automorphisms. 
In this section, we prove that both of these operations work well with tropical mutations in order to show that all admissible Dynkin biagrams inherit periodic tropical $T$-systems.

\begin{lemma}
\label{transposeMutation}
    Let $B = \tilde{\Gamma} + \tilde{\Delta}$ be a bipartite recurrent $B$-matrix. 
    Let $C$ be a diagonal matrix with positive diagonal entries $c\in\mathbb{R}_{>0}^n$ such that $CB$ is a skew-symmetric matrix.
    Let $\rho\in \mathbb{R}^n$ be a labeling of $B$.
    Define $\lambda\in \mathbb{R}^n$ as $\lambda_i := \frac{1}{c_i}\rho_i$, for all $i\in [n]$.
    Let
    \begin{align*}
        \rho_k' + \rho_k &= \max\left(\sum_i \Gamma_{ik}\rho_i, \sum_j \Delta_{jk} \rho_j\right) \quad \text{and} \quad \lambda_k' + \lambda_k = \max\left(\sum_i \Gamma_{ki}\lambda_i, \sum_j \Delta_{kj}\lambda_j\right)
    \end{align*}
    for $k\in [n]$.
    Then, $\lambda_k' = \frac{1}{c_k}\rho_k'$.
\end{lemma}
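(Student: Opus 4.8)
The plan is to reduce the claim to a single linear identity between the two recurrences and then verify that identity from the defining property of the skew-symmetrizing vector $c$. Since $\lambda_k = \tfrac1{c_k}\rho_k$ by definition, and the two recurrences can be rewritten as
\[
\rho_k' = \max\left(\sum_i\Gamma_{ik}\rho_i,\ \sum_j\Delta_{jk}\rho_j\right) - \rho_k,
\qquad
\lambda_k' = \max\left(\sum_i\Gamma_{ki}\lambda_i,\ \sum_j\Delta_{kj}\lambda_j\right) - \lambda_k,
\]
it suffices to establish the two ``incoming-sum'' identities
\[
\sum_i\Gamma_{ki}\lambda_i = \frac1{c_k}\sum_i\Gamma_{ik}\rho_i
\qquad\text{and}\qquad
\sum_j\Delta_{kj}\lambda_j = \frac1{c_k}\sum_j\Delta_{jk}\rho_j .
\]
Granting these, because $\tfrac1{c_k}>0$ commutes with $\max$, the maximum appearing in the $\lambda$-recurrence equals $\tfrac1{c_k}$ times the one in the $\rho$-recurrence, and subtracting $\lambda_k = \tfrac1{c_k}\rho_k$ yields $\lambda_k' = \tfrac1{c_k}\rho_k'$.

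The key intermediate step is that the very same vector $c$ which skew-symmetrizes $B$ also ``symmetrizes'' $\Gamma$ and $\Delta$ individually, i.e.\ $c_i\Gamma_{ij}=c_j\Gamma_{ji}$ and $c_i\Delta_{ij}=c_j\Delta_{ji}$ for all $i,j$ (equivalently $C\Gamma=\Gamma^{\top}C$ and $C\Delta=\Delta^{\top}C$). To prove this I would start from $CB$ skew-symmetric, which says $c_ib_{ij}=-c_jb_{ji}$ for all $i,j$; taking absolute values gives $c_i|b_{ij}|=c_j|b_{ji}|$. Now use the decomposition $B=\tilde\Gamma+\tilde\Delta$: the supports of $\tilde\Gamma$ and $\tilde\Delta$ are disjoint, the underlying adjacency relation of each Dynkin diagram is symmetric, and $\Gamma=|\tilde\Gamma|$, $\Delta=|\tilde\Delta|$. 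Hence for an ordered pair $(i,j)$ that is a $\Gamma$-edge we have $\Gamma_{ij}=|b_{ij}|$ and $\Gamma_{ji}=|b_{ji}|$ (and $(j,i)$ is also a $\Gamma$-edge, while $\Delta_{ij}=\Delta_{ji}=0$), so $c_i|b_{ij}|=c_j|b_{ji}|$ becomes $c_i\Gamma_{ij}=c_j\Gamma_{ji}$; and if $(i,j)$ is not a $\Gamma$-edge then $\Gamma_{ij}=\Gamma_{ji}=0$ and the relation is vacuous. The identical argument handles $\Delta$.

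With the symmetrizing relation available, the conclusion follows by a one-line substitution. Setting $j=k$ in $c_i\Gamma_{ik}=c_k\Gamma_{ki}$ gives $\Gamma_{ki}/c_i=\Gamma_{ik}/c_k$, so
\[
\sum_i\Gamma_{ki}\lambda_i=\sum_i\frac{\Gamma_{ki}}{c_i}\rho_i=\sum_i\frac{\Gamma_{ik}}{c_k}\rho_i=\frac1{c_k}\sum_i\Gamma_{ik}\rho_i ,
\]
and likewise $\sum_j\Delta_{kj}\lambda_j=\tfrac1{c_k}\sum_j\Delta_{jk}\rho_j$, which are exactly the two identities needed in the first paragraph.

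I do not expect a serious obstacle here: the computation is entirely elementary once the symmetrizability of $\Gamma$ and $\Delta$ is in hand. The only place demanding care is the sign-and-support bookkeeping in the middle paragraph that extracts $c_i\Gamma_{ij}=c_j\Gamma_{ji}$ from the skew-symmetrizability of $B$, and, relatedly, noting that the two recurrences use transposed index patterns ($\Gamma_{ik}$ summed over $i$ versus $\Gamma_{ki}$ summed over $i$) — this transposition is precisely why the symmetrizing vector is the right tool and why one must take absolute values rather than work with $\tilde\Gamma$ directly.
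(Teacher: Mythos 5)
Your proposal is correct and follows essentially the same route as the paper: both proofs rest on the relation $c_i|b_{ij}|=c_j|b_{ji}|$ (equivalently $c_i\Gamma_{ik}=c_k\Gamma_{ki}$ and $c_i\Delta_{jk}$-analogue) obtained from the skew-symmetrizing vector, and then pull the positive scalar $\tfrac1{c_k}$ out of the maximum before subtracting $\lambda_k=\tfrac1{c_k}\rho_k$. Your extra care in checking that the disjoint, symmetric supports of $\tilde\Gamma$ and $\tilde\Delta$ let one pass from $|b_{ij}|$ to $\Gamma_{ij}$ and $\Delta_{ij}$ is a detail the paper leaves implicit, but it is the same argument.
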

\begin{proof}
    Since $c$ scales the rows of $B$ to be skew-symmetric, $\forall i, j\in [n]$ $c_i|b_{ij}| = c_j|b_{ji}|$.
    Then we see that
    \begin{align*}
        \max\left(\sum_i \Gamma_{ik}\rho_i, \sum_j \Delta_{jk}\rho_j\right) &= \max\left(\sum_i c_i\Gamma_{ik}\lambda_i, \sum_j c_j\Delta_{jk}\lambda_j\right)\\
        &= \max\left(\sum_i c_k\Gamma_{ki}\lambda_i, \sum_j c_k \Delta_{kj}\lambda_j\right)\\
        &= c_k\cdot \max\left(\sum_i \Gamma_{ki}\lambda_i, \sum_j \Delta_{kj}\lambda_j\right).
    \end{align*}
    Since $\rho_k = c_k\lambda_k$, we have $\rho_k' = c_k\lambda_k'$ too.
\end{proof}

\begin{proposition}
\label{flipsCommute}
    The tropical $T$-system $\boldsymbol{t}^{\lambda}$ associated with $(\Gamma, \Delta)$ is periodic for all $\lambda\in \mathbb{R}^n$ with period $N$ if and only if the tropical $T$-system $\boldsymbol{t}'^{\lambda}$ associated with $(\Gamma^{\top}, \Delta^{\top})$ is periodic for all $\lambda\in\mathbb{R}^n$ with period $N$.
\end{proposition}
\begin{proof}
    Let $\boldsymbol{t}_i^{\rho}(t)$ be the tropical $T$-system for $B$, and let $\boldsymbol{t}_i'^{\lambda}(t)$ be the tropical $T$-system for $-B^{\top}$.
    Suppose $-B^{\top}$ is periodic with period $N$. Then for any labeling $\lambda\in \mathbb{R}^n$, 
    \begin{align*}
        \boldsymbol{t}_j'^{\lambda}(t + 2N) &= \boldsymbol{t}_j'^{\lambda}(t) \quad \text{for all } j\in [n], t\in \mathbb{Z}.
    \end{align*}
    Let $\rho\in\mathbb{R}^n$ be a labeling for $B$. Let $\lambda\in\mathbb{R}^n$ be defined as $\lambda_i := \frac{1}{c_i}\rho_i$, for all $i\in [n]$. Then by Lemma \ref{transposeMutation},
    \begin{align*}
        \boldsymbol{t}_k^{\rho}(t + 2N) = c_k\cdot \boldsymbol{t}_k'^{\lambda}(t + 2N) = c_k\cdot \boldsymbol{t}_k'^{\lambda}(t) = \boldsymbol{t}_k^{\rho}(t).
    \end{align*}
    Thus $B$ is tropical periodic with period at most $N$. Since $B = -(-B^{\top})^{\top}$, the periods of $B$ and $-B^{\top}$ are the same.
\end{proof}

\begin{definition}
    Let $f: [n] \rightarrow [n]$ be a bicolored automorphism of $B$. $\rho\in \mathbb{R}^n$ is an \textit{$f$-symmetric labeling} of $B$ if $\forall i\in [n], \rho_{f(i)} = \rho_i.$
\end{definition}

\begin{definition}
    Let $f$ be a bicolored automorphism of $B$, and let $\rho$ be an $f$-symmetric labeling. Then, define $f(\rho)\in \mathbb{R}^{|f(B)|}$ to be a labeling on the orbits of $f$ such that 
    $$f(\rho)_I \coloneqq \rho_i,$$
    for every $f$-orbit $I$, and any $i\in I$.
\end{definition}

\begin{remark}
    In \cite{fomin4}, Fomin--Williams--Zelevinsky define a quiver $Q$ to be \textit{globally foldable} under a group $G$ if mutating at all vertices in a $G$-orbit results in a quiver that is still foldable under $G$. Due to our restriction to bipartite dynamics, in this paper all bipartite recurrent quivers are globally foldable, as the quiver (and $B$-matrix) is fixed up to sign by $\mu_{\circ}$ and $\mu_{\bullet}$.
\end{remark}

\begin{lemma}
\label{foldingMutation}
    Let $(B, f, \rho)$ be a $B$-matrix, a bicolored automorphism, and an $f$-symmetric labeling. 
    Let $K$ be an orbit of $f$, and let $\mu_k$ denote tropical mutation at index $k$.
    Then, $\mu_K\circ f(\rho) = f\circ\prod_{k\in K}\mu_i(\rho)$.
\end{lemma}

\begin{proof}
    We can reformulate the statement as the following. 
    Let $\lambda\in \mathbb{R}^n$, $\lambda^*\in \mathbb{R}^{|f(B)|}$ such that
    \begin{itemize}
        \item[(i)] $\lambda_j = \rho_j$ $\forall j\not\in K$,
        \item[(ii)] $\lambda_J^* = \rho_J$ for all orbits $J\neq K$ of $f$,
        \item[(iii)] 
            $\lambda_k + \rho_k = \max\left(\sum_i \Gamma_{ik}\rho_i, \sum_j \Delta_{jk}\rho_j\right)$ $\forall k\in K$, and
        \item[(iv)]
            $\lambda_K^* + f(\rho)_K = \max \Big(\sum_If(\Gamma)_{IK}f(\rho)_I, \sum_Jf(\Delta)_{JK}f(\rho)_J \Big)$.
    \end{itemize}
    Then we want to show $\lambda^*_K = \lambda_k$ for any $k\in K$, and thus $(B, f, \lambda)$ folds to $(f(B), \lambda^*).$ 
    
    \noindent
    Notice that
    \begin{align*}
        \max\left(\sum_I f(\Gamma)_{IK}f(\rho)_I, \sum_J f(\Delta)_{JK} f(\rho)_J\right) &= \max\left(\sum_I f(\Gamma)_{IK}\rho_i, \sum_J f(\Delta)_{JK} \rho_j\right)\\
        &= \max\left(\sum_I \sum_{i\in I}\Gamma_{ik}, \sum_J \sum_{j\in J}\Delta_{jk}\rho_j\right)\\
        &= \max\left(\sum_i\Gamma_{ik}\rho_i, \sum_j\Delta_{jk}\rho_j\right).
    \end{align*}
    Since we already know $f(\rho)_K = \rho_k$ for $k\in K$, we have $\lambda^*_K = \lambda_k$ for any $k\in K$.

\end{proof}
\begin{proposition}
\label{foldingCommutes}
    If the tropical $T$-system $\boldsymbol{t}^{\lambda}$ associated to $(\Gamma, \Delta)$ is periodic for all $\lambda\in \mathbb{R}^n$ with period $N$, and $f$ is a bicolored automorphism, then the tropical $T$-system $\boldsymbol{t}'^{\lambda}$ associated to $(f(\Gamma), f(\Delta))$ is also periodic for all $\lambda\in \mathbb{R}^n$, with period dividing $N$.
\end{proposition}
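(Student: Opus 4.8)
The plan is to prove the stated implication directly: start from an arbitrary initial labeling for the folded biagram, lift it to an $f$-symmetric labeling upstairs, run the (periodic by hypothesis) tropical $T$-system for $(\Gamma,\Delta)$ there, and show that the folded tropical $T$-system is nothing but the orbit-wise record of the upstairs one, so it inherits the same period.

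First I would fix an initial labeling $\lambda^* \in \mathbb{R}^{|f(B)|}$ on the $f$-orbits and define its \emph{lift} $\rho \in \mathbb{R}^n$ by $\rho_i := \lambda^*_I$ whenever $i$ lies in the orbit $I$. By construction $\rho$ is an $f$-symmetric labeling and $f(\rho) = \lambda^*$. Write $\boldsymbol{t}^{\rho}$ for the tropical $T$-system of $(\Gamma,\Delta)$ with initial condition $\rho$ and $\boldsymbol{t}'^{\lambda^*}$ for the tropical $T$-system of $(f(\Gamma), f(\Delta))$ with initial condition $\lambda^*$.

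The core step is the claim that for every admissible $t$ and every $i \in [n]$ lying in an orbit $I$,
\[
\boldsymbol{t}^{\rho}_{f(i)}(t) = \boldsymbol{t}^{\rho}_i(t) = \boldsymbol{t}'^{\lambda^*}_I(t),
\]
proved by induction on $t$. The base cases $t \in \{0,1\}$ (restricted to the appropriate colors) are exactly the definitions of $\rho$ and $\lambda^*$. For the inductive step, one time step of the tropical $T$-system advances all vertices of a single color simultaneously, and since $f$ preserves the bipartite coloring every orbit is monochromatic. Because the update at a vertex $k$ reads only the frozen time-$t$ values at the opposite color together with $\boldsymbol{t}^{\rho}_k(t-1)$, the advances at distinct orbits of that color do not interfere, so I may process them one orbit at a time. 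Applying Lemma~\ref{foldingMutation} to each such orbit $K$ — with the role of its ``$\rho$'' played by the current ($f$-symmetric, by the induction hypothesis) state and its ``$\lambda$'' by the updated state — yields simultaneously that the new values remain $f$-symmetric (from conditions (i) and (iii) of a bicolored automorphism, using $b_{f(i)f(j)} = b_{ij}$ and $\rho_{f(i)} = \rho_i$) and that $\boldsymbol{t}'^{\lambda^*}_K$ equals the common value $\boldsymbol{t}^{\rho}_k$ for $k \in K$ (the conclusion $\lambda^*_K = \lambda_k$ of the lemma). This closes the induction.

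Finally, by hypothesis $\boldsymbol{t}^{\rho}$ is periodic, say $\boldsymbol{t}^{\rho}_i(t+2N) = \boldsymbol{t}^{\rho}_i(t)$ for all $i$ and $t$; feeding this through the displayed identity gives $\boldsymbol{t}'^{\lambda^*}_I(t+2N) = \boldsymbol{t}'^{\lambda^*}_I(t)$ for every orbit $I$ and every $t$, so the folded tropical $T$-system is periodic with the same period $N$, and since $\lambda^*$ was arbitrary this holds for every initial labeling. The main obstacle is purely bookkeeping: making precise that a full bipartite tropical mutation decomposes into independent single-orbit mutations so that Lemma~\ref{foldingMutation} applies iteratively, and checking that the bicolored-automorphism axioms are exactly what keeps the evolution $f$-symmetric at each stage — no idea beyond Lemma~\ref{foldingMutation} should be needed.
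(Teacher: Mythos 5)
Your proposal is correct and follows essentially the same route as the paper: lift an arbitrary orbit labeling to an $f$-symmetric labeling upstairs, use Lemma~\ref{foldingMutation} to identify the folded tropical $T$-system with the orbit-wise values of the unfolded one, and transfer periodicity. The only difference is that you spell out the induction on $t$ and the preservation of $f$-symmetry (via $b_{f(i)f(j)}=b_{ij}$), which the paper's proof leaves implicit.
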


\begin{proof}

    Let $\lambda\in\mathbb{R}^{|f(B)|}$ be any labeling of $(f(\Gamma), f(\Delta))$.
    Define $\rho\in \mathbb{R}^n$ as follows: 
    $$\rho_i := \lambda_I \quad \forall i\in I, \text{for every $f$-orbit }I.$$
    $\rho$ is clearly an $f$-symmetric labeling, and for every $f$-orbit $I$, $f(\rho)_I = \rho_i = \lambda_I.$
    So, $f$ folds $(B, \rho)$ into $(f(B), \lambda)$.
    Let $\boldsymbol{t}_i^{\rho}(t)$ be the tropical $T$-system for $(\Gamma, \Delta)$, and let $\boldsymbol{t}_I'^{\lambda}(t)$ be the tropical $T$-system for $(f(\Gamma), f(\Delta))$.
    Since $\boldsymbol{t}_i^{\rho}(t)$ is periodic, by Proposition \ref{foldingMutation},
    \begin{align*}
        \boldsymbol{t}_I'^{\lambda}(t + 2N) = \boldsymbol{t}_i^{\rho}(t + 2N) = \boldsymbol{t}_i^{\rho}(t) = \boldsymbol{t}_I'^{\lambda}(t)
    \end{align*}
    for every $f$-orbit $I$, $\forall t\in \mathbb{Z}$. 
    Thus the tropical $T$-system for $(f(\Gamma), f(\Delta))$ is periodic with period dividing $N$.
\end{proof}

\begin{corollary}
    The tropical $T$-system associated to an admissible Dynkin biagram $(\Gamma, \Delta)$ is periodic, with period dividing $h_{\Gamma} + h_{\Delta}$.
\end{corollary}

\begin{proof}
    From Proposition \ref{DynkinFromADE}, every admissible Dynkin biagram can be obtained from an admissible ADE bigraph through a series of folding and transpose operations. Admissible ADE bigraphs are shown in \cite{pasha} to have periodic tropical $T$-systems $\boldsymbol{t}^{\lambda}$ for all $\lambda\in \mathbb{R}^n$, with period dividing $h_{\Gamma} + h_{\Delta}$.
    By Proposition \ref{foldingCommutes} and Proposition \ref{flipsCommute}, each application of these operations produces another $B$-matrix whose tropical $T$-system is periodic for every $\lambda\in\mathbb{R}^n$.
\end{proof}

\begin{remark}
    It is worth considering the size of the period for each of the Zamolodchikov periodic families. 
    In \cite{keller}, Keller proves the period for tensor products $\Gamma\otimes \Delta$ divides $h_{\Gamma} + h_{\Delta}$.
    In \cite{pasha}, Galashin--Pylyavskyy show that the period for twists $\Gamma\times \Gamma$ is $h_{\Gamma}$ when $h_{\Gamma}$ is even. 
    In the remaining ADE families, they derive periodicity from that of tensor products, preserving the period to be a divisor of $h_{\Gamma} + h_{\Delta}$.
    In our construction, we derive all admissible Dynkin biagrams by folding ADE bigraphs and taking transposes in a way that preserves periodicity.
    As a result, all of the admissible Dynkin biagrams are Zamolodchikov periodic with period dividing $h_{\Gamma} + h_{\Delta}$.
    However, the actual period of each family is still unknown.
\end{remark}

\subsection{Strictly subadditive labeling $\iff$ admissible Dynkin biagram}

The following characterization of finite type Cartan matrices is due to Vinberg:
\begin{theorem}[\cite{vinberg}]
    Let $A$ be an $n\times n$ Cartan matrix. Then $A$ is of finite type if and only if there exists a vector $\alpha\in\mathbb{R}_{>0}^n$ such that $A\alpha\in \mathbb{R}_{>0}^n$.
\end{theorem}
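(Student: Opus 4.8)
This is the classical criterion of Vinberg, so the plan is to recall its short proof rather than develop anything new. First I would reduce to the indecomposable case: writing $A = A_1 \oplus \cdots \oplus A_r$ as a direct sum of indecomposable Cartan matrices, $A$ is of finite type exactly when each $A_i$ is, and a vector $\alpha \in \mathbb{R}^n_{>0}$ with $A\alpha \in \mathbb{R}^n_{>0}$ splits into blockwise vectors $\alpha^{(i)} \in \mathbb{R}^{n_i}_{>0}$ with $A_i\alpha^{(i)} \in \mathbb{R}^{n_i}_{>0}$, while conversely concatenating such vectors produces the desired $\alpha$. So I would assume $A$ is indecomposable and write $A = 2I - M$, where $M$ is a nonnegative integer matrix, irreducible precisely because $A$ is indecomposable.

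For the forward implication, suppose $A$ is of finite type. Then $A$ is symmetrizable with positive definite symmetrization, hence conjugate (via a positive diagonal matrix) to a symmetric positive definite matrix; in particular $A$ is invertible and all of its eigenvalues are positive reals, so every eigenvalue of $M = 2I - A$ is a real number strictly less than $2$. Since $M \geq 0$ is irreducible, the Perron--Frobenius theorem identifies $\rho(M)$ with the largest eigenvalue of $M$, whence $\rho(M) < 2$; therefore the Neumann series $A^{-1} = (2I - M)^{-1} = \tfrac12 \sum_{k \geq 0} (M/2)^k$ converges, and irreducibility of $M$ forces every entry of $A^{-1}$ to be strictly positive. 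Taking $\alpha := A^{-1}\mathbf 1$ with $\mathbf 1 = (1,\dots,1)^{\top}$ gives $\alpha \in \mathbb{R}^n_{>0}$ and $A\alpha = \mathbf 1 \in \mathbb{R}^n_{>0}$, as required.

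For the converse I would invoke the trichotomy for indecomposable generalized Cartan matrices (Vinberg; see also Kac's \emph{Infinite-Dimensional Lie Algebras}, Theorem~4.3): exactly one of the following holds --- (Fin) there is $u > 0$ with $Au > 0$; (Aff) $\operatorname{rank} A = n - 1$, there is $u > 0$ with $Au = 0$, and $v \geq 0$, $Av \geq 0$ imply $Av = 0$; (Ind) there is $u > 0$ with $Au < 0$, and $v \geq 0$, $Av \geq 0$ imply $v = 0$ --- and (Fin) is equivalent to $A$ being of finite type. Given $\alpha \in \mathbb{R}^n_{>0}$ with $A\alpha \in \mathbb{R}^n_{>0}$, type (Aff) is impossible since $A\alpha \neq 0$, and type (Ind) is impossible since $\alpha \neq 0$, so $A$ must be of finite type.

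The main obstacle is the trichotomy itself, which is the only nonroutine ingredient. Its proof rests on the elementary fact that if $A$ is indecomposable, $u \geq 0$, $u \neq 0$, and $Au \geq 0$, then in fact $u > 0$: if $J := \{i : u_i = 0\}$ were a nonempty proper subset of $[n]$, then for $i \in J$ we would have $0 \leq (Au)_i = \sum_{j \notin J} A_{ij} u_j \leq 0$, forcing $A_{ij} = 0$ for all $i \in J$, $j \notin J$, contradicting indecomposability. Running this observation together with its transpose counterpart over the sign pattern of $x \mapsto Ax$ on $\mathbb{R}^n_{\geq 0}$ produces the three cases; for the purposes of this paper it is cleanest simply to cite \cite{vinberg}. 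I note that no separate discussion of symmetrizability is needed, since the Cartan matrices we care about are $C_\Gamma = 2I - \Gamma$ for $\Gamma$ a Coxeter adjacency matrix of a Dynkin diagram and are therefore automatically symmetrizable.
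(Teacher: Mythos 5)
The paper does not actually prove this statement: it is imported wholesale as Vinberg's characterization, with a bare citation to \cite{vinberg}, so there is no internal proof to measure you against. Your argument is correct and supplies what the paper omits. The reduction to the indecomposable case is fine, and your forward direction (finite type $\Rightarrow$ eigenvalues of $A$ positive $\Rightarrow$ $\rho(2I-A)<2$ $\Rightarrow$ $A^{-1}=\tfrac12\sum_{k\ge 0}((2I-A)/2)^k$ is entrywise positive, take $\alpha=A^{-1}\mathbf{1}$) is sound and self-contained; a slightly quicker variant, more in the spirit of how the paper actually uses the result later, is to take $\alpha$ to be the Perron--Frobenius eigenvector of $M=2I-A$, since then $A\alpha=(2-\rho(M))\alpha\in\mathbb{R}^n_{>0}$ directly --- this is exactly the mechanism in the paper's proof that an admissible Dynkin biagram admits a strictly subadditive labeling, where $\rho(M)=2\cos(\pi/h)<2$. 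For the converse you invoke the Vinberg--Kac trichotomy; be aware that this trichotomy essentially \emph{is} the cited theorem (the content of \cite{vinberg} and of Kac's Theorem~4.3), so as a matter of logical economy your proof of the converse is a citation rather than a new argument, though your sketch of the key support lemma (indecomposable, $u\ge 0$, $u\ne 0$, $Au\ge 0$ imply $u>0$) is accurate and correctly identifies where the work lies. In short: correct, and strictly more informative than the paper, which simply defers to the literature exactly where you do.
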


The following proposition and its proof generalize Proposition 5.1 in \cite{pasha}.
\begin{proposition}
    Let $B = \tilde{\Gamma} + \tilde{\Delta}$ be a bipartite recurrent $B$-matrix. 
    $B$ has a strictly subadditive labeling if and only if $(\Gamma, \Delta)$ is an admissible Dynkin biagram.
\end{proposition}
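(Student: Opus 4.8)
The plan is to recast strict subadditivity as a \emph{simultaneous} version of Vinberg's criterion, and then to prove one direction by Vinberg's theorem and the other by the Perron--Frobenius structure of commuting Cartan matrices. First I would record the reformulation: writing $\rho$ as a column vector, $\sum_i\Gamma_{ik}\rho_i=(\Gamma^{\top}\rho)_k$, so a labeling $\rho\in\mathbb{R}_{>0}^n$ is strictly subadditive for $B$ precisely when
\begin{equation*}
    C_{\Gamma}^{\top}\rho\in\mathbb{R}_{>0}^n \qquad\text{and}\qquad C_{\Delta}^{\top}\rho\in\mathbb{R}_{>0}^n,
\end{equation*}
where $C_{\Gamma}=2I-\Gamma$ and $C_{\Delta}=2I-\Delta$. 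I would also note once and for all that bipartite recurrence of $B$ forces $\Gamma$ and $\Delta$ to be nonnegative integer matrices with zero diagonal and symmetric off-diagonal support (the latter from skew-symmetrizability of $B$), so $C_{\Gamma}$ and $C_{\Delta}$ are generalized Cartan matrices, and it forces $\Gamma\Delta=\Delta\Gamma$. Hence $(\Gamma,\Delta)$ is an admissible Dynkin biagram if and only if $C_{\Gamma}$ and $C_{\Delta}$ are both of finite type, and this is the statement I would prove.

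For the forward implication I would argue as follows: if $\rho$ is strictly subadditive then $C_{\Gamma}^{\top}\rho\in\mathbb{R}_{>0}^n$, so Vinberg's theorem applied to the Cartan matrix $C_{\Gamma}^{\top}$ shows $C_{\Gamma}^{\top}$ is of finite type; since finite type is transpose-invariant (it is equivalent to positivity of all principal minors, which a matrix and its transpose share), $C_{\Gamma}$ is of finite type as well. The same argument applied to $C_{\Delta}$ completes this direction.

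For the reverse implication I would first reduce to the connected case: splitting along the connected components of the graph of $\Gamma+\Delta$ — each $\Gamma$- or $\Delta$-edge lies inside one such component, so $\Gamma$ and $\Delta$ are block diagonal along it and each sub-biagram is again admissible — a strictly subadditive labeling of the whole biagram is obtained by concatenating strictly subadditive labelings of the connected pieces. So assume $(\Gamma,\Delta)$ connected. I would then apply Lemma~\ref{commonEigVec} to the global flip $(\Gamma^{\top},\Delta^{\top})$, which is again a connected admissible Dynkin biagram since admissibility is preserved by global flips (see the discussion after Definition~\ref{flippingLem}), to get a common dominant eigenvector $\vv{w}\in\mathbb{R}_{>0}^n$ of $\Gamma^{\top}$ and $\Delta^{\top}$, with eigenvalues $\lambda_{\Gamma},\lambda_{\Delta}$ equal to the dominant eigenvalues of $\Gamma,\Delta$. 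Because $\Gamma$ (resp. $\Delta$) is the Coxeter adjacency matrix of a finite-type Dynkin diagram, $\lambda_{\Gamma}=2\cos(\pi/h_{\Gamma})<2$ (resp. $\lambda_{\Delta}<2$), exactly as in the proof of Corollary~\ref{coxeterNum}. Hence
\begin{equation*}
    C_{\Gamma}^{\top}\vv{w} = (2-\lambda_{\Gamma})\vv{w}\in\mathbb{R}_{>0}^n, \qquad C_{\Delta}^{\top}\vv{w} = (2-\lambda_{\Delta})\vv{w}\in\mathbb{R}_{>0}^n,
\end{equation*}
so $\rho:=\vv{w}$ is the desired strictly subadditive labeling.

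The main obstacle is the reverse implication, and specifically the word \emph{common}: Vinberg's criterion applied to $\Gamma$ and $\Delta$ separately only produces two a priori different positive labelings, whereas strict subadditivity demands one labeling witnessing finiteness for both. Overcoming this is precisely where commutativity of $\Gamma$ and $\Delta$ (equivalently, recurrence of $B$) is used — it is what makes Lemma~\ref{commonEigVec} produce a shared Perron--Frobenius eigenvector — and where the finite-type hypothesis is used — it forces the relevant eigenvalue to be strictly below $2$, so the shared eigenvector is strictly, not merely weakly, subadditive. Everything else (the reformulation in terms of $C_{\Gamma}^{\top}$ and $C_{\Delta}^{\top}$, the reduction to connected biagrams, and transpose-invariance of finite type) is routine.
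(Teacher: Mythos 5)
Your proposal is correct and follows essentially the same route as the paper: the forward direction is Vinberg's criterion applied to $C_{\Gamma}^{\top}$ and $C_{\Delta}^{\top}$, and the converse uses the common Perron--Frobenius eigenvector of Lemma~\ref{commonEigVec} together with the dominant eigenvalue $2\cos(\pi/h)<2$. The only differences are minor: you are a bit more careful than the paper about left versus right eigenvectors (via the global flip) and about reducing to connected components, while you assert without argument the fact, which the paper verifies by a short computation with the mutation rule, that bipartite recurrence forces $\Gamma\Delta=\Delta\Gamma$.
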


\begin{proof}
    First, we claim that for a bipartite matrix $B$, $B$ is recurrent if and only if $\Gamma, \Delta$ commute.
    We can see this is true by observing the action of $\mu_{\circ}, \mu_{\bullet}$ on a bipartite matrix $B$.
    If $i, j$ are two indices of different color, then $B_{ij} = \mu_{\circ}(B)_{ji}$.
    If $i, j$ are both black, then $\mu_{\circ}(B)_{ij}$ is the number of directed length 2 paths from $i$ to $j$, which is $(\Delta\Gamma)_{ij} - (\Gamma\Delta)_{ij}$.
    Similarly if $i, j$ are both white, then $\mu_{\bullet}(B)_{ij}$ is the number of directed length 2 paths from $i$ to $j$.
    Then it is clear that $B$ is recurrent if and only if $\mu_{\circ}(B), \mu_{\bullet}(B)$ remain bipartite, which is exactly when $(\Delta\Gamma)_{ij} = (\Gamma\Delta)_{ij}$.
    
    It remains to show that $B$ has a strictly subadditive labeling if and only if the connected components of $\Gamma, \Delta$ are Dynkin diagrams.

    One direction follows directly from Vinberg's characterization.
    Recall that the Cartan matrices and Coxeter adjacency matrices are closely related:
    $$C_{\Gamma} = 2I - \Gamma, \qquad C_{\Delta} = 2I - \Delta.$$
    Given a strictly subadditive labeling of $B$, restricting to one irreducible component of $\Gamma$ or $\Delta$ remains strictly subadditive.
    $$2\rho_k > \sum_i \Gamma_{ik} \rho_i,\qquad 2\rho_k > \sum_j \Delta_{jk} \rho_j.$$
    Rearranging, we get
    $$(C_{\Gamma}^{\top}\rho)_k = 2\rho_k - \sum_i \Gamma_{ik} \rho_i > 0,\qquad (C_{\Delta}^{\top}\rho)_k = 2\rho_k - \sum_j \Delta_{jk} \rho_j> 0,$$
    and it is clear that by Vinberg's characterization, each of the components of $C_{\Gamma}^{\top}$ and $C_{\Delta}^{\top}$ are of finite type.
    Therefore, each of the components of $C_{\Gamma}$ and $C_{\Delta}$ are also of finite type and are Dynkin diagrams.

    For the other direction, let $(\Gamma, \Delta)$ be an admissible Dynkin biagram.
    Recall from Lemma \ref{commonEigVec} that $\Gamma, \Delta$ have a common dominant eigenvector $\vv{v}\in\mathbb{R}^n_{>0}$. Recall also that the dominant eigenvalues for $\Gamma$ and $\Delta$ are $2\cos(\pi/h_{\Gamma})$ and $2\cos(\pi/h_{\Delta})$, respectively. So we have
    \begin{align*}
        2v_k > 2\cos(\pi/h_{\Gamma})v_k &= (\vv{v}\Gamma)_k = \sum_i \Gamma_{ik}v_i \quad \text{and}\\
        2v_k > 2\cos(\pi/h_{\Delta})v_k &= (\vv{v}\Delta)_k = \sum_j \Delta_{jk}v_j
    \end{align*}
    for all $k\in [n]$. This is a strictly subadditive labeling of $B$.
\end{proof}

\subsection{Fixed point labeling $\iff$ strictly subadditive labeling}
\begin{proposition}
    Let $B$ be a bipartite recurrent $B$-matrix.
    There exists a fixed point labeling for $B$ if and only if there exists a strictly subadditive labeling for $B$.
\end{proposition}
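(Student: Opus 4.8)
The plan is to prove the two implications separately; the first is an elementary logarithmic change of variables, and the second is a monotone self-map (or Brouwer) argument. Suppose first that $\rho\in\mathbb{R}_{>1}^n$ is a fixed point labeling, so $\rho_k^2=\prod_i\rho_i^{\Gamma_{ik}}+\prod_j\rho_j^{\Delta_{jk}}$ for every $k$. Since each $\rho_i>0$ and $\Gamma_{ik},\Delta_{jk}\ge 0$, both summands on the right are positive, hence $\rho_k^2>\prod_i\rho_i^{\Gamma_{ik}}$ and $\rho_k^2>\prod_j\rho_j^{\Delta_{jk}}$. Setting $\sigma_k\coloneqq\log\rho_k$ we get $\sigma\in\mathbb{R}_{>0}^n$ (because $\rho_k>1$), and taking logarithms of these two strict inequalities yields $2\sigma_k>\sum_i\Gamma_{ik}\sigma_i$ and $2\sigma_k>\sum_j\Delta_{jk}\sigma_j$, so $\sigma$ is a strictly subadditive labeling.

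For the converse, suppose $\sigma\in\mathbb{R}_{>0}^n$ is strictly subadditive and consider the map $G\colon\mathbb{R}_{>0}^n\to\mathbb{R}_{>0}^n$ given by $G(\rho)_k\coloneqq\bigl(\prod_i\rho_i^{\Gamma_{ik}}+\prod_j\rho_j^{\Delta_{jk}}\bigr)^{1/2}$, whose fixed points lying in $\mathbb{R}_{>1}^n$ are exactly the fixed point labelings of $B$. Because $\Gamma,\Delta$ have nonnegative entries, $G$ is continuous and coordinatewise nondecreasing. I would then build a super-solution by exponentiating $\sigma$: set $\epsilon\coloneqq\min_k\bigl(2\sigma_k-\max\bigl(\sum_i\Gamma_{ik}\sigma_i,\ \sum_j\Delta_{jk}\sigma_j\bigr)\bigr)$, which is strictly positive precisely by strict subadditivity, put $t\coloneqq 2^{1/\epsilon}>1$, and let $\bar\rho_k\coloneqq t^{\sigma_k}$. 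Then $\prod_i\bar\rho_i^{\Gamma_{ik}}+\prod_j\bar\rho_j^{\Delta_{jk}}=t^{\sum_i\Gamma_{ik}\sigma_i}+t^{\sum_j\Delta_{jk}\sigma_j}\le 2t^{2\sigma_k-\epsilon}=t^{2\sigma_k}=\bar\rho_k^2$, so $G(\bar\rho)\le\bar\rho$; writing $\underline 1=(1,\dots,1)$ we also have $\underline 1\le\bar\rho$ and $G(\underline 1)_k=\sqrt 2\ge 1$, so $\underline 1\le G(\underline 1)$. Monotonicity then shows that $G$ maps the box $[\underline 1,\bar\rho]$ into itself and that the iterates $G^{m}(\underline 1)$ increase coordinatewise and stay below $\bar\rho$; their limit $\rho^\star$ satisfies $G(\rho^\star)=\rho^\star$ by continuity, and $\rho^\star_k\ge G(\underline 1)_k=\sqrt 2>1$, so $\rho^\star\in\mathbb{R}_{>1}^n$ is a fixed point labeling. (Alternatively, Brouwer's fixed point theorem applied to the compact convex box $[\underline 1,\bar\rho]$ produces $\rho^\star$ directly, and the same estimate forces $\rho^\star>1$.)

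The only step that requires an idea is the construction of the super-solution $\bar\rho$; everything else is formal. The point is that a strictly subadditive labeling naturally lives in the logarithmic (``tropical'') picture, so exponentiating it with a sufficiently large base $t$ converts the strict linear inequalities $2\sigma_k>\sum_i\Gamma_{ik}\sigma_i$ into the multiplicative domination $\bar\rho_k^2\ge\prod_i\bar\rho_i^{\Gamma_{ik}}+\prod_j\bar\rho_j^{\Delta_{jk}}$, with the slack $\epsilon$ chosen so that $t^{\epsilon}\ge 2$ can absorb the extra summand. This recovers Proposition~5.1 of \cite{pasha} in the skew-symmetric case; one could instead route the converse through the preceding proposition (strict subadditivity $\Leftrightarrow$ admissibility) together with Lemma~\ref{commonEigVec}, using the common dominant eigenvector to rescale into a fixed point labeling, but the self-map argument above is entirely self-contained.
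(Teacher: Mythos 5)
Your proposal is correct and follows essentially the same route as the paper: the fixed-point-to-subadditive direction is the same logarithmic computation, and the converse exponentiates the subadditive labeling with a base large enough that $t^{\epsilon}\ge 2$ absorbs the second summand (the paper's $\alpha^{q}>2$), then applies a fixed-point argument on the order interval $[\underline 1,\bar\rho]$, exactly as the paper does via Brouwer. Your monotone-iteration variant is a harmless substitute for the Brouwer step and introduces no gap.
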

The proof is identical to that of Proposition 4.1 in \cite{pasha}, but with matrix notation instead of graph notation to account for skew-symmetrizable matrices.

\begin{proof}
    For any labeling $\rho\in \mathbb{R}_{\geq 1}^n$, define another labeling $Z\rho\in \mathbb{R}_{\geq 1}^n$ by
    \begin{align*}
        (Z\rho)_k &= \sqrt{\prod_i \rho_i^{\Gamma_{ik}} + \prod_{j}\rho_j^{\Delta_{jk}}}.
    \end{align*}
    We may view $Z$ as a map $Z: \mathbb{R}_{\geq 1}^n\rightarrow \mathbb{R}_{\geq 1}^n$ whose fixed points in $\mathbb{R}_{>1}^n$ are exactly the fixed point labelings for $B$.

    For two labelings $\rho', \rho''\in \mathbb{R}_{\geq 1}^n$, we write $\rho' > \rho''$ (resp., $\rho' \geq \rho'')$ if and only if $\rho'-\rho''\in \mathbb{R}_{>0}^n$ (resp., $\rho'-\rho''\in \mathbb{R}_{\geq 0}^n$).
    Notice that $Z\vv{\boldsymbol{1}} \geq \vv{\boldsymbol{\sqrt{2}}} > \vv{\boldsymbol{1}}$.
    We introduce an intermediate condition and show the following are equivalent:
    \begin{itemize}
        \item[(i)]
            There exists a strictly subadditive labeling of $B$.
        \item[(ii)]
            There exists a labeling $\rho^0\in\mathbb{R}_{>1}^n$ satisfying $\vv{\boldsymbol{1}} < Z\rho^0 < \rho^0$.
        \item[(iii)]
            There exists a fixed point labeling for $B$.
    \end{itemize}

    We start with (ii) $\implies$ (iii).
    It is easy to see that if $\rho \geq \rho'$ then $Z\rho \geq Z\rho'$.
    Already, we have $Z\vv{\boldsymbol{1}} > \vv{\boldsymbol{1}}$ and $Z\rho^0 < \rho^0$.
    Therefore the convex compact set
    \begin{align*}
        [\vv{\boldsymbol{1}}, \rho^0] \coloneqq \{\rho\in \mathbb{R}_{\geq 1}^n | \vv{\boldsymbol{1}}\leq \rho \leq \rho^0\}
    \end{align*}
    is mapped by $Z$ to itself, so (iii) follows immediately from the Brouwer fixed point theorem.
    Since $Z\vv{\boldsymbol{1}} > \vv{\boldsymbol{1}}$, none of the coordinates of this fixed point can be equal to 1 and so our fixed point is in $\mathbb{R}_{> 1}^n$.

    Now we show (iii) $\implies$ (i).
    Assume $\rho\in \mathbb{R}_{> 1}^n$ is a fixed point of $Z$.
    Define $\nu\in\mathbb{R}_{>0}^n$ by $\nu_k \coloneqq \log{\rho_k}$ for all $k\in [n]$ and then we have
    \begin{align*}
        e^{\nu_k} &= \rho_k = (Z\rho)_k = \sqrt{\prod_i \rho_i^{\Gamma_{ik}} + \prod_{j}\rho_j^{\Delta_{jk}}} > \max\left\{\sqrt{\prod_i \rho_i^{\Gamma_{ik}}}, \sqrt{\prod_{j}\rho_j^{\Delta_{jk}}}\right\},
    \end{align*}
    so after taking the logarithm of both sides we see that $\nu$ is a strictly subadditive labeling of $B$.

    It remains to show (i) $\implies$ (ii).
    Assume $\nu\in\mathbb{R}_{>0}^n$ is a strictly subadditive labeling of $B$.
    Since $n$ is finite, there exists a number $q > 0$ such that
    \begin{align*}
        2\nu_k - q > \sum_{i}\Gamma_{ik}\nu_i, \hspace{5mm} 2\nu_k - q > \sum_j \Delta_{jk}\nu_j
    \end{align*}
    for all $k\in [n].$
    Let $\alpha > 1$ be a large enough real number so that $\alpha^q > 2$.
    Define $\rho_k \coloneqq \alpha^{\nu_k}$ for all $k\in [n]$.
    Then we have
    \begin{align*}
        (Z\rho)_k &= \sqrt{\prod_i \rho_i^{\Gamma_{ik}} + \prod_{j}\rho_j^{\Delta_{jk}}} = \sqrt{\alpha^{\sum_i \Gamma_{ik}\nu_i} + \alpha^{\sum_j \Delta_{jk}\nu_j}}\\
        &< \sqrt{2\alpha^{2\nu_k - q}} < \alpha^{\nu_k} = \rho_k.
    \end{align*}
    Clearly, we have $\vv{\boldsymbol{1}} < \rho$ and $Z\vv{\boldsymbol{1}} < Z\rho$, so
    \begin{align*}
        \vv{\boldsymbol{1}} < Z\vv{\boldsymbol{1}} < Z\rho < \rho,
    \end{align*}
    which concludes the proof.
\end{proof}

\subsection{Tropical periodicity $\implies$ strictly subadditive labeling}
\begin{proposition}
\label{4->2}
    Let $B$ be a bipartite recurrent $B$-matrix and assume the tropical $T$-system $\boldsymbol{t}^{\lambda}$ associated with $B$ is periodic for all $\lambda\in \mathbb{R}^n$.
    Then there exists a strictly subadditive labeling for $B$.
\end{proposition}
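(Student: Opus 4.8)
The plan is to build a subadditive labeling by summing one period of the tropical $T$-system (in the spirit of \cite{pasha}), and then to force strictness by excluding affine-type components. First, for each vertex $v\in[n]$ the tropical $T$-system $\boldsymbol{t}^{\delta_v}$ is periodic by hypothesis, with some period $2N_v$; set $\rho^{(v)}_k:=\sum_t\boldsymbol{t}^{\delta_v}_k(t)$, the sum over the $N_v$ integers $t\in[0,2N_v)$ lying in the domain of $k$. Summing the tropical recurrence over one period, the left-hand side telescopes by periodicity to $2\rho^{(v)}_k$, while the right-hand side equals $\sum_t\max\bigl(\sum_i\Gamma_{ik}\boldsymbol{t}^{\delta_v}_i(t),\,\sum_j\Delta_{jk}\boldsymbol{t}^{\delta_v}_j(t)\bigr)$, which is at least $\sum_i\Gamma_{ik}\rho^{(v)}_i$ and at least $\sum_j\Delta_{jk}\rho^{(v)}_j$ since $\max(a,b)\ge a,b$. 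Hence $2\rho^{(v)}_k\ge\sum_i\Gamma_{ik}\rho^{(v)}_i$ and $2\rho^{(v)}_k\ge\sum_j\Delta_{jk}\rho^{(v)}_j$ for every $k$, and the same inequalities then hold for $\rho:=\sum_{v\in[n]}\rho^{(v)}$.

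Next I would check that $\rho\in\mathbb{R}^n_{>0}$. Here Lemma~\ref{minmax} is the essential input: identifying $\boldsymbol{t}^{\delta_v}_k(t)=\deg_{\max}(v,T_k(t))$, the lemma gives $\deg_{\min}(v,T_k(t))=-\deg_{\max}(v,T_k(t\pm 2))$, so reindexing over a period yields $\sum_t\deg_{\min}(v,T_k(t))=-\rho^{(v)}_k$; therefore $2\rho^{(v)}_k=\sum_t\bigl(\deg_{\max}(v,T_k(t))-\deg_{\min}(v,T_k(t))\bigr)\ge 0$, with strict positivity as soon as some $T_k(t)$ in the period genuinely involves $x_v$. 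Within a connected component of $\Gamma\cup\Delta$ the $T$-system propagates the dependence on $x_v$ to every vertex, so $\rho^{(v)}_k>0$ whenever $k$ lies in the component of $v$; summing over $v$ gives $\rho\in\mathbb{R}^n_{>0}$. Thus $\rho$ is a (not necessarily strict) subadditive labeling in the sense defined above, and rearranging the inequalities gives $C_\Gamma^\top\rho\ge 0$ and $C_\Delta^\top\rho\ge 0$.

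It remains — and this is the main obstacle — to produce a \emph{strictly} subadditive labeling; equivalently, to show that every connected component of $\Gamma$ and of $\Delta$ is a Dynkin diagram, after which the characterization proved above (a bipartite recurrent $B$-matrix has a strictly subadditive labeling iff $(\Gamma,\Delta)$ is an admissible Dynkin biagram) supplies one directly, namely the common dominant eigenvector of Lemma~\ref{commonEigVec}. From $\rho\in\mathbb{R}^n_{>0}$ with $C_\Gamma^\top\rho\ge 0$ and $C_\Delta^\top\rho\ge 0$, Vinberg's classification (its standard finite/affine/indefinite trichotomy) rules out indefinite components and forces every component of $\Gamma$ and $\Delta$ to be of finite or affine type; on an affine component, $\rho$ restricted there would have to be a positive multiple of the null vector of that Cartan matrix, so the associated Coxeter adjacency matrix would have spectral radius exactly $2$. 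The delicate point is then to show this is incompatible with periodicity of the tropical $T$-system: such a component carries a borderline linear sub-recurrence along its null eigenvector whose solutions grow linearly in $t$, whereas a periodic orbit is bounded — here one uses $\Gamma\Delta=\Delta\Gamma$ to decouple the offending piece of the dynamics. Once affine components are excluded, all components of $\Gamma,\Delta$ are Dynkin diagrams and we are done. I expect this affine-exclusion step to require the most care; an alternative route is to argue that for a generic initial labeling $\lambda$ the quantity $\sum_i\Gamma_{ik}\boldsymbol{t}^\lambda_i(t)-\sum_j\Delta_{jk}\boldsymbol{t}^\lambda_j(t)$ changes sign over a period at every $k$ (which makes $\rho(\lambda)$ itself strictly subadditive), a statement that again fails exactly on affine-like configurations.
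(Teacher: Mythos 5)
Your first two paragraphs track the paper's argument: summing the tropical recurrence over one period to get weak subadditivity, and using Lemma \ref{minmax} to show the period sums are nonnegative (the paper, like you, gets strict positivity of $\nu_j=\sum_i a^i_j$ from the neighbors $i$ with $b_{ij}\neq 0$). But the heart of the proposition is \emph{strict} subadditivity, and there your proposal has a genuine gap. You concede that the period sum $\rho$ is only weakly subadditive and then try to upgrade it by excluding affine components: Vinberg's trichotomy does rule out indefinite components, and on an affine component $\rho$ would have to be the null vector, but the claim that an affine component is ``incompatible with periodicity'' because a ``borderline linear sub-recurrence along its null eigenvector grows linearly'' is exactly the nontrivial statement you would need to prove, and it is not established by commutativity alone. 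The tropical recurrence at a vertex takes a $\max$ of the $\Gamma$-side and the $\Delta$-side, so the dynamics on an affine $\Gamma$-component does not decouple into a linear recurrence: the max can be achieved by the $\Delta$-term at various times, and no monotone or conserved quantity is exhibited that forces unbounded growth. Your alternative suggestion (that for generic $\lambda$ the sign of $\sum_i\Gamma_{ik}\boldsymbol{t}^{\lambda}_i(t)-\sum_j\Delta_{jk}\boldsymbol{t}^{\lambda}_j(t)$ changes over a period) is likewise asserted, not proved. So as written the proof is incomplete at its decisive step.

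The paper closes this gap without any affine-exclusion argument, by extracting strictness directly from the convexity step you already used. Writing $2a^i_j$ as the period sum of $\max\bigl(\sum_\ell\Gamma_{\ell j}\boldsymbol{t}^{\delta_i}_\ell,\ \sum_\ell\Delta_{\ell j}\boldsymbol{t}^{\delta_i}_\ell\bigr)$ and comparing with $\max\bigl(\sum_\ell\Gamma_{\ell j}a^i_\ell,\ \sum_\ell\Delta_{\ell j}a^i_\ell\bigr)$, equality can hold only if one of the two arguments dominates the other at \emph{every} time step of the period. But (as in the proof of Lemma \ref{minmax}) there are two times $k_1,k_2$ in the period at which $\boldsymbol{t}^{\delta_i}_\ell(2k+1+\eta_j)$ equals $\delta_i(\ell)$ and $-\delta_i(\ell)$ respectively; when $b_{ij}\neq 0$ these produce nonzero contributions of opposite sign on the two sides, so neither domination can persist throughout the period, and hence $2a^i_j>\max\bigl(\sum_\ell\Gamma_{\ell j}a^i_\ell,\ \sum_\ell\Delta_{\ell j}a^i_\ell\bigr)$ whenever $b_{ij}\neq 0$. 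Summing over $i$ then gives a labeling that is strictly subadditive at every vertex, since every vertex has a neighbor. If you want to salvage your outline, you should either prove the linear-growth/affine statement rigorously (which is substantially harder) or replace that step with this direct strictness argument.
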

The proof is identical to that of Proposition 7.1 in \cite{pasha}, but with matrix notation instead of graph notation to account for skew-symmetrizable matrices\footnote{Proposition \ref{4->2} is also a special case of \cite[Theorem~5.5]{mizuno}.}.
\begin{proof}
    Let $N\in \mathbb{N}$ be the period of the tropical $T$-system so that for all $i, j\in [n]$, $\boldsymbol{t}_j^{\delta_i}(t + 2N) = \boldsymbol{t}_j^{\delta_i}(t)$.
    We wish to find a strictly subadditive labeling $\nu\in\mathbb{R}_{>0}^n$.
    To do so, we construct intermediate labelings $a^i\in\mathbb{R}^n$ for every $i\in [n]$.
    Define $a_j^i$ as the sum of $\deg_{\text{max}}(i, T_j(t))$ over one period:
    \begin{align*}
        a_j^i \coloneqq \sum_{k = 0}^{N-1} \boldsymbol{t}_j^{\delta_i}(2k + \eta_j), \quad  \text{where }\eta_j  = \begin{cases}
            0 & \text{if }\epsilon_j = \circ,\\
            1 & \text{if }\epsilon_j = \bullet.
        \end{cases}
    \end{align*}

    We first show that $a^i\geq 0$ for all $i\in [n]$. 
    By Lemma \ref{minmax}, the average of $\deg_{\max}(i, T_j(t))$ over one period equals the average of $-\deg_{\min}(i, T_j(t))$.
    Since $\deg_{\max}(i, T_j(t))\geq \deg_{\min}(i, T_j(t))$ for every $t\in \mathbb{Z}$, we get $a_j^i \geq -a_j^i$.
    This implies $a_j^i\geq 0$ for all $i, j\in [n]$.

    In addition, if $b_{ij}\neq 0$ and $\epsilon_i = \bullet, \epsilon_j = \circ$, then $\deg_{\max}(i, T_j(2)) = |b_{ij}| > 0 = \deg_{\min}(i, T_j(2))$ and in this case, $a_j^i > -a_j^i$.
    Similarly if $\epsilon_i = \circ, \epsilon_j = \bullet$, then $\deg_{\max}(i, T_j(-1)) = |b_{ij}| > 0 = \deg_{\min}(i, T_j(-1))$.
    Hence $a_j^i > 0$ when $b_{ij}\neq 0$.

    Next, we claim the labeling $a^i$ is (not necessarily strictly) subadditive:
    \begin{equation}
    \label{eq:1}
        2a_j^i \geq \max\left(\sum_{\ell} \Gamma_{\ell j}a_{\ell}^i, \sum_{\ell} \Delta_{\ell j}a_{\ell}^i\right).
    \end{equation}
    Indeed, using the periodicity of $\boldsymbol{t}_j^{\delta_i}$, we can write
    \begin{align*}
        2a_j^i &= \sum_{k = 0}^{N-1}\boldsymbol{t}_j^{\delta_i}(2k + \eta_j) + \sum_{k = 1}^{N}\boldsymbol{t}_j^{\delta_i}(2k + \eta_j)\\
        &= \sum_{k = 0}^{N-1}\big(\boldsymbol{t}_j^{\delta_i}(2k + \eta_j) + \boldsymbol{t}_j^{\delta_i}(2k + 2 + \eta_j)\big).
    \end{align*}
    This is the form for tropical mutation, and is equal to
    \begin{align*}
        &= \sum_{k = 0}^{N-1}\max\left(\sum_{\ell} \Gamma_{\ell j} \boldsymbol{t}_{\ell}^{\delta_i}(2k + 1 + \eta_j), \sum_{\ell} \Delta_{\ell j} \boldsymbol{t}_{\ell}^{\delta_i}(2k + 1 + \eta_j)\right)\\
        &\geq \max\left(\sum_{k = 0}^{N-1}\sum_{\ell} \Gamma_{\ell j} \boldsymbol{t}_{\ell}^{\delta_i}(2k + 1 + \eta_j), \sum_{k = 0}^{N-1}\sum_{\ell} \Delta_{\ell j} \boldsymbol{t}_{\ell}^{\delta_i}(2k + 1 + \eta_j)\right)\\
        &= \max \left(\sum_{\ell}\Gamma_{\ell j} a_{\ell}^i, \sum_{\ell}\Delta_{\ell j} a_{\ell}^i\right).
    \end{align*}
    Thus $a^i$ is a subadditive labeling.
    The only way (\ref{eq:1}) can be an equality for $j\in [n]$ is when one of the following holds:
    \begin{itemize}
        \item for all $k\in \{0, 1, \dots, N-1\},$
            \begin{equation}
            \label{eq:2}
                \sum_{\ell}\Gamma_{\ell j} \boldsymbol{t}_{\ell}^{\delta_i}(2k + 1 + \eta_j) \geq \sum_{\ell}\Delta_{\ell j} \boldsymbol{t}_{\ell}^{\delta_i}(2k + 1 + \eta_j).
            \end{equation}
        \item for all $k\in \{0, 1, \dots, N-1\},$
            \begin{equation}
            \label{eq:3}
                \sum_{\ell}\Gamma_{\ell j} \boldsymbol{t}_{\ell}^{\delta_i}(2k + 1 + \eta_j) \leq \sum_{\ell}\Delta_{\ell j} \boldsymbol{t}_{\ell}^{\delta_i}(2k + 1 + \eta_j).
            \end{equation}
    \end{itemize}
    As noted in the proof of Lemma \ref{minmax}, there are two integers $k_1$ and $k_2$ such that for all $\ell\in [n]$,
    \begin{align*}
        \boldsymbol{t}_{\ell}^{\delta_i}(2k_1 + 1 + \eta_j) &= \delta_i(\ell) \quad \text{and} \quad \boldsymbol{t}_{\ell}^{\delta_i}(2k_2 + 1 + \eta_j) = -\delta_i(\ell).
    \end{align*}

    Thus if $b_{ij} \neq 0$, then we get nonzero summations in (\ref{eq:2}) and (\ref{eq:3}) for $k = k_1, k_2$ and neither of these inequalities hold for all values of $k$ simultaneously.
    It follows that the inequality (\ref{eq:1}) is strict whenever $b_{ij}\neq 0$. 
    This allows us to define $\nu$ as a sum over all $a^i$:
    \begin{align*}
        \nu_j &\coloneqq \sum_{i = 1}^n a_j^i.
    \end{align*}
    Clearly, $\nu$ inherits subadditivity from $a^i$, and since each vertex $j$ is a neighbor of some vertex $i$, $\nu$ is strictly subadditive.
    By the same reasoning, $\nu_j > 0$ for all $j\in [n]$.
\end{proof}

This concludes the proof of Theorem \ref{5-way} and Theorem \ref{mainThm}.
\section{Connections to $W$-graphs}
\label{$W$-graphs}

In 2010, Stembridge studied admissible ADE bigraphs, as they helped classify the admissible $I_2(p) \times I_2(q)$-cells, which encode the action of the standard generators inherited from $I_2(p)\times I_2(q)$ on the Kazhdan--Lusztig basis of its associated Iwahori--Hecke algebra $\mathcal{H}(W)$. In this section, we explore which $W$-graphs are associated with admissible Dynkin biagrams. We adopt most of our notation from \cite{stembridge2} and \cite{stembridge}.

Let $I$ be a finite index set. An \textit{I-labeled graph} is a triple $\Lambda = (V, m, \tau)$ such that
\begin{itemize}
    \item[(i)]
        $V$ is a vertex set of size $n\in\mathbb{N}$,
    \item[(ii)]
        $m\in \text{Mat}_{n\times n}(\mathbb{Z}[q^{\pm 1/2}])$ is a weighted adjacency matrix, and
    \item[(iii)]
        $\tau: V\rightarrow \{\text{subsets of } I\}$ is a labeling of the vertices, called the \textit{$\tau$-invariant}.
\end{itemize}

Now, let $W$ be a Coxeter group relative to a generating set $S = \{s_i~|~ i\in I\}$.
Let $\{T_i ~|~ i\in I\}$ be the corresponding set of generators of the associated Iwahori--Hecke algebra $\mathcal{H}$ over the ground ring $\mathbb{Z}[q^{\pm 1/2}]$.
Recall that the defining relations for $\mathcal{H}$ are the quadratic relations $(T_i - q)(T_i + 1) = 0$ for all $i\in I$, and the braid relations inherited from $(W, S)$.

\begin{definition}[$W$-graph]
    An $I$-labeled graph $\Lambda$ is a \textit{$W$-graph} if the $\mathbb{Z}[q^{\pm 1/2}]$-module $M_{\Lambda}$ freely generated by vertex set $V$ may be given the $\mathcal{H}$-module structure such that for all $u\in V$,
    \begin{align*}
        T_i(u) = \begin{cases}
            qu & \text{if } i\not\in \tau(u),\\
            -u + q^{1/2}\sum_{v : i\not\in \tau(v)} m_{uv}v & \text{if } i\in \tau(u).
        \end{cases}
    \end{align*}
\end{definition}

In particular, these operators always satisfy the quadratic relations $(T_i - q)(T_i + 1) = 0$, so the only nontrivial condition to satisfy is maintaining the braid relations.

In addition, if $m_{uv} = 0$ whenever $\tau(u)\subseteq \tau(v)$, then we say the I-labeled graph is \textit{reduced}. 
Notice that when $\tau(u)\subseteq \tau(v)$, the value of $m_{uv}$ never plays a role in the operators defined above. 
So without loss of generality, we assume our $W$-graphs are always reduced.
A $W$-graph is a \textit{$W$-cell} if its matrix of edge weights $m$ is strongly connected.
Since the vertex set of any directed graph can be uniquely partitioned into strongly connected components, it can be shown that a $W$-graph $\Lambda$ can be broken into $W$-cells.
If $\Lambda$ is a reduced $W$-graph, then a vertex $v$ with $\tau(v) = \emptyset$ or $\tau(v) = [n]$ necessarily has outdegree 0 or indegree 0.
So in this case, $\{v\}$ forms its own cell of $\Lambda$.
Such cells are called \textit{trivial}.

Given a Coxeter group $W$, one can construct a $W$-graph $(V, m, \tau)$ by taking the matrices representing the action of the standard generators $T_i$ on the Kazhdan--Lusztig basis of $\mathcal{H}$.
In particular, the matrix $m$ of edge weights is exactly a function of the coefficients in the Kazhdan--Lusztig polynomial $P_{u, v}(q)$.
For more details on the construction, we refer the reader to \cite{kazhdan}.

\begin{remark}
    In this section, we focus on classifying $W$-cells that are bipartite with nonnegative edge weights.
    This is a generalization of \textit{admissible} $W$-cells as defined in \cite{stembridge2}, which captures several combinatorial features of Kazhdan--Lusztig $W$-graphs.
    Indeed, since $P_{u, v}(q)$ only has integral powers of q, it follows that $m$ is bipartite.
    Additionally, for a finite Coxeter group $W$, its associated Kazhdan--Lusztig polynomial $P_{u,v}(q)$ has nonnegative integer coefficients, so these $W$-graphs have nonnegative edge weights.
\end{remark}

If $\Gamma = (U, m, \tau_1)$ is a $W_1$-graph and $\Delta = (V, m', \tau_2)$ is a $W_2$-graph, the \textit{outer product} $\Gamma\otimes \Delta$ is the $W_1\times W_2$-graph with vertex set $U\times V$ and
\begin{align*}
    \tau(uv) &\coloneq \tau_1(u)\cup \tau_2(v),\\
    m''_{uv, u'v'} &\coloneq m_{u, u'}\delta_{v, v'} + m'_{v, v'}\delta_{u, u'},
\end{align*}
for all $u,u'\in U$ and $v,v'\in V$.
Notice that the outer product $\Gamma\otimes \Delta$ gives the same matrix as the tensor product $\Gamma\otimes \Delta$, so this notation is consistent.

The following two lemmas were proven in \cite{stembridge2}.
\begin{lemma}
\label{stembridgeProp}
    Let $\Lambda$ be a reduced $\{1, 2\}$-labeled graph such that every vertex has $\tau$-invariant $\{1\}$ or $\{2\}$. If $m$ is the matrix of edge weights of $\Lambda$, then $\Lambda$ is an $I_2(p)$-graph if and only if $\phi_p(m) = 0$, where $\phi_p(t)$ is the degree $p-1$ polynomial defined by the recurrence
    \begin{align*}
        \phi_{r+1}(t) = t\phi_r(t) - \phi_{r-1}(t)
    \end{align*}
    with initial conditions $\phi_0(t) = 0$, $\phi_1(t) = 1$.
\end{lemma}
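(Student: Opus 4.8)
Since the quadratic relations $(T_i-q)(T_i+1)=0$ hold automatically for the operators attached to any $\{1,2\}$-labeled graph, $\Lambda$ is an $I_2(p)$-graph exactly when $T_1$ and $T_2$ satisfy the length-$p$ braid relation on $M_\Lambda$. So the plan is to render that braid relation completely explicit. First I would fix the bipartition $V=V_1\sqcup V_2$ with $V_i=\tau^{-1}(\{i\})$; because $\Lambda$ is reduced and all $\tau$-invariants are singletons, $m$ is block anti-diagonal for this splitting, and reading off the $W$-graph formula writes $T_1,T_2$ as explicit block matrices over $\mathbb{Z}[q^{\pm1/2}]$ — each acts as the scalar $q$ on the block opposite its label, and as $-1$ plus a $q^{1/2}$-scaled shift along $m$ on the block it labels.

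The engine of the proof is a single structural identity. Writing $T_1T_2=qR$ and $T_2T_1=qS$, a direct block computation shows that $R$ and $S$ each satisfy
\[
R^2=(m^2-2)R-I,\qquad S^2=(m^2-2)S-I,
\]
and that both commute with the block-diagonal operator $m^2-2$, whose two diagonal blocks are the two halves of $m^2$. Since $m^2-2$ is central for this computation, the recurrence $\phi_{r+1}(t)=t\phi_r(t)-\phi_{r-1}(t)$ gives by immediate induction
\[
R^n=\phi_n(m^2-2)\,R-\phi_{n-1}(m^2-2),\qquad S^n=\phi_n(m^2-2)\,S-\phi_{n-1}(m^2-2).
\]
In parallel I would record the two elementary polynomial identities $\phi_{2\ell}(t)=t\,\phi_\ell(t^2-2)$ and $\phi_{2\ell+1}(t)=(t^2-1)\phi_\ell(t^2-2)-\phi_{\ell-1}(t^2-2)$, each a short induction from the defining recurrence (or a one-line check from $\phi_r(2\cos\theta)=\sin r\theta/\sin\theta$).

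Now I would substitute into the braid relation, handling $p=2\ell$ through $(T_1T_2)^\ell=(T_2T_1)^\ell$ and $p=2\ell+1$ through $(T_1T_2)^\ell T_1=(T_2T_1)^\ell T_2$. The even case reduces to $\phi_\ell(m^2-2)(R-S)=0$ and the odd case to $\phi_\ell(m^2-2)(RT_1-ST_2)=\phi_{\ell-1}(m^2-2)(T_1-T_2)$. A further block computation shows that $R-S$, $RT_1-ST_2$, and $T_1-T_2$ all factor as a sign-involution times $q^{1/2}\bigl(m-(q^{1/2}+q^{-1/2})I\bigr)$, whose determinant is a nonzero polynomial in $q^{1/2}$ and hence invertible over $\mathbb{Q}(q^{1/2})$. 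Cancelling this common factor turns each braid relation, via the two $\phi$-identities above, into precisely $\phi_p(m)=0$; and since $\phi_p(m)$ has integer entries, vanishing over $\mathbb{Q}(q^{1/2})$ is the same as vanishing over $\mathbb{Z}[q^{\pm1/2}]$. This is also consistent with the statement's degree count, $\deg\phi_p=p-1$.

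I expect the bookkeeping in this last step to be the main obstacle: checking the central quadratic $R^2=(m^2-2)R-I$ is routine, but extracting the common invertible factor $q^{1/2}(m-(q^{1/2}+q^{-1/2})I)$ cleanly from all three of $R-S$, $RT_1-ST_2$, and $T_1-T_2$ — so that the braid relation collapses to $\phi_p(m)=0$ itself rather than to that identity multiplied by something ambiguous — requires careful handling of the block signs. The even/odd split is only a cosmetic nuisance and can be unified by phrasing everything in terms of the length-$p$ alternating word together with its two half-rotations $R$ and $S$. (Alternatively one could deduce the lemma from the classification of the generic irreducible representations of $\mathcal{H}(I_2(p))$ together with Tits deformation, but the direct computation above is what ties the condition to the specific polynomial $\phi_p$, and this result is in any case established in \cite{stembridge2}.)
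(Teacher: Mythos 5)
The paper gives no proof of this lemma: it is imported verbatim from \cite{stembridge2}, so there is no internal argument to compare against, and your proposal should be judged as a free-standing proof. Its skeleton is sound and does reconstruct the standard computation: with $V=V_1\sqcup V_2$ the operators have exactly the block forms you describe, the products $T_1T_2=qR$, $T_2T_1=qS$ satisfy $R^2=(m^2-2)R-I$ and $S^2=(m^2-2)S-I$ with the block-diagonal $m^2-2$ central for the computation, the induction $R^n=\phi_n(m^2-2)R-\phi_{n-1}(m^2-2)$ is correct, the braid relation does reduce to $\phi_\ell(m^2-2)(R-S)=0$ for $p=2\ell$ and to $\phi_\ell(m^2-2)(RT_1-ST_2)=\phi_{\ell-1}(m^2-2)(T_1-T_2)$ for $p=2\ell+1$, and both halving identities you quote are valid.

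The genuine error is the factorization claim. Writing $c=q^{1/2}+q^{-1/2}$ and letting $\varepsilon$ denote the sign involution that is $-I$ on one color class and $+I$ on the other, a block computation gives
\begin{align*}
T_1-T_2=q^{1/2}\,\varepsilon\,(m+cI),\qquad R-S=\varepsilon\, m\,(m+cI),\qquad RT_1-ST_2=q^{1/2}\,\varepsilon\,(m^2-I)\,(m+cI)
\end{align*}
(the sign in $m\pm cI$ is convention-dependent, the extra factors are not). So only $\varepsilon(m+cI)$ is a common right factor invertible over $\mathbb{Q}(q^{1/2})$; $R-S$ and $RT_1-ST_2$ are \emph{not} sign involutions times $q^{1/2}(m-cI)$ --- they carry the additional block-diagonal factors $m$ and $m^2-I$, which are precisely the $t$ and $t^2-1$ in $\phi_{2\ell}(t)=t\,\phi_\ell(t^2-2)$ and $\phi_{2\ell+1}(t)=(t^2-1)\phi_\ell(t^2-2)-\phi_{\ell-1}(t^2-2)$. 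Taken literally, your claim would let you cancel all of $R-S$ in the even case and reduce the braid relation to $\phi_\ell(m^2-2)=0$, which is strictly stronger than $\phi_{2\ell}(m)=0$: for the $A_3$ path with $p=4$ one has $\phi_4(m)=0$ but $\phi_2(m^2-2)=m^2-2I\neq 0$, so the ``if'' direction of the lemma would fail. With the corrected factorizations (and using that $\phi_\ell(m^2-2)$, being block diagonal, commutes with $\varepsilon$), cancelling $\varepsilon(m+cI)$ leaves $m\,\phi_\ell(m^2-2)=0$, respectively $(m^2-I)\phi_\ell(m^2-2)-\phi_{\ell-1}(m^2-2)=0$, i.e.\ exactly $\phi_p(m)=0$ in both parities; the remaining points you raise (passing to $\mathbb{Q}(q^{1/2})$ and back, legitimate because the module is free and $\phi_p(m)$ has integer entries) are fine.
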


\begin{lemma}
\label{compatibilityRule}
    Let $(W,S)$ be a Coxeter group, and let $\Lambda$ be a $W$-graph.
    If $m_{uv}\neq 0$, then for every $i\in \tau(u) - \tau(v)$ and $j\in \tau(v) - \tau(u)$, $s_i$ is bonded to $s_j$ in the Coxeter diagram of $(W,S)$.
\end{lemma}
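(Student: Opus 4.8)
The plan is to prove the contrapositive: assuming $s_i$ and $s_j$ are \emph{not} bonded in the Coxeter diagram of $(W,S)$, so that $s_is_j=s_js_i$ and hence the braid relation in $\mathcal{H}$ collapses to $T_iT_j=T_jT_i$, I would show that $m_{uv}=0$. First I would unpack what the hypothesis $i\in\tau(u)-\tau(v)$, $j\in\tau(v)-\tau(u)$ provides: $i\in\tau(u)$, $i\notin\tau(v)$, $j\notin\tau(u)$, and $j\in\tau(v)$; in particular $\tau(u)\neq\tau(v)$, so $u\neq v$. The strategy is then to apply both sides of the relation $T_iT_j=T_jT_i$ to the basis vector $u$ of $M_\Lambda$ and compare the coefficient of $v$.

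For the left side, since $j\notin\tau(u)$ we have $T_j(u)=qu$, so $T_iT_j(u)=qT_i(u)=q\bigl(-u+q^{1/2}\sum_{x:\,i\notin\tau(x)}m_{ux}x\bigr)$; as $i\notin\tau(v)$, the vertex $v$ occurs in this sum and the coefficient of $v$ is $q^{3/2}m_{uv}$. For the right side, since $i\in\tau(u)$ we have $T_i(u)=-u+q^{1/2}\sum_{x:\,i\notin\tau(x)}m_{ux}x$, whence $T_jT_i(u)=-qu+q^{1/2}\sum_{x:\,i\notin\tau(x)}m_{ux}T_j(x)$. Here I would carefully track which summands can produce a $v$: the term $-qu$ cannot, since $u\neq v$; a summand with $j\notin\tau(x)$ gives $T_j(x)=qx$, a multiple of $v$ only if $x=v$, which is impossible because $j\in\tau(v)$; and a summand with $j\in\tau(x)$ gives $T_j(x)=-x+q^{1/2}\sum_{y:\,j\notin\tau(y)}m_{xy}y$, whose sum never hits $v$ because $j\in\tau(v)$, so only the $-x$ term survives and only for $x=v$. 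Hence the coefficient of $v$ on the right side is $-q^{1/2}m_{uv}$.

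Equating the two expressions yields $q^{3/2}m_{uv}=-q^{1/2}m_{uv}$, i.e.\ $q^{1/2}(q+1)m_{uv}=0$ in $\mathbb{Z}[q^{\pm1/2}]$, and since $q^{1/2}(q+1)$ is not a zero divisor this forces $m_{uv}=0$, which completes the contrapositive. I expect the only subtle step to be the middle one — verifying that, after applying $T_j$ to $T_i(u)$, no summand indexed by $x\neq v$ contributes to the coefficient of $v$; this is precisely where both hypotheses $i\notin\tau(v)$ and $j\in\tau(v)$ are needed, and everything else is a direct substitution into the defining formula for the $T_i$-action together with the observation that at each stage one of the two Hecke generators acts on its argument by the scalar $q$.
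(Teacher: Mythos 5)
Your proof is correct: the coefficient extraction on both sides of $T_iT_j(u)=T_jT_i(u)$ is carried out accurately, the two hypotheses $i\notin\tau(v)$ and $j\in\tau(v)$ are used exactly where needed, and the conclusion $q^{1/2}(q+1)m_{uv}=0\Rightarrow m_{uv}=0$ is valid since $\mathbb{Z}[q^{\pm1/2}]$ is a domain. The paper does not prove this lemma itself but cites \cite{stembridge2}, and your argument is essentially the standard one given there (comparing the coefficient of $v$ under the commuting generators), so there is nothing to add.
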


In \cite{stembridge2}, the nontrivial admissible $I_2(p)$-cells are shown to be in correspondence with 2-colorings of ADE Dynkin diagrams with Coxeter number $h$ dividing $p$.
The following result is a generalization of this result to Dynkin diagrams.

\begin{theorem}
\label{I_2(p)-cells}
    A bipartite strongly connected multigraph with vertex color labels $\{1\}, \{2\}$ is a nontrivial $I_2(p)$-cell with nonnegative edge weights $m$ if and only if it is a Dynkin diagram of finite type whose Coxeter number divides $p$.
\end{theorem}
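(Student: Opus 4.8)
The plan is to combine Lemma~\ref{stembridgeProp}, which reduces being an $I_2(p)$-graph to the single matrix identity $\phi_p(m)=0$, with a spectral analysis of $\phi_p$ and the classification of finite-type Cartan matrices. First I would record that $\phi_p$ is the normalized Chebyshev polynomial: the recurrence together with the product-to-sum identity gives $\phi_p(2\cos\theta)=\sin(p\theta)/\sin\theta$, so $\phi_p$ is monic of degree $p-1$ with the $p-1$ distinct real roots $2\cos(k\pi/p)$, $k=1,\dots,p-1$, all lying in $(-2,2)$. Hence, for any matrix $m$, $\phi_p(m)=0$ holds if and only if $m$ is diagonalizable with $\operatorname{spec}(m)\subseteq\{2\cos(k\pi/p):1\le k\le p-1\}$; in particular this forces $m$ to have real spectrum and spectral radius strictly less than $2$.

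For the implication ``finite-type Dynkin diagram $\Rightarrow$ $I_2(p)$-cell'': given such a diagram $D$ with Coxeter number $h_D\mid p$, choose the (essentially unique) proper $2$-coloring of $D$, which exists since $D$ is a tree, and set $m=A_D$ with $\tau$ recording the coloring. Since $A_D=2I-C_D$ and $C_D$ is a symmetrizable finite-type Cartan matrix, $A_D$ is diagonalizable, and by the classical description of its spectrum its eigenvalues are $2\cos(e_j\pi/h_D)$ where $e_1,\dots,e_\ell$ are the exponents of $D$, which satisfy $1\le e_j\le h_D-1$. Writing $p=c\,h_D$ we get $2\cos(e_j\pi/h_D)=2\cos(ce_j\pi/p)$ with $1\le ce_j\le c(h_D-1)=p-c\le p-1$, so every eigenvalue of $A_D$ is a root of $\phi_p$, and therefore $\phi_p(A_D)=0$. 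By Lemma~\ref{stembridgeProp}, $(D,A_D,\tau)$ is an $I_2(p)$-graph; its edge weights are nonnegative integers, and since $D$ is connected its arc-weighted digraph is strongly connected, so $(D,A_D,\tau)$ is an $I_2(p)$-cell.

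For the converse, let $\Lambda=(V,m,\tau)$ be a nontrivial $I_2(p)$-cell with $m\ge 0$ and every $\tau$-value equal to $\{1\}$ or $\{2\}$. Reducedness gives $m_{uv}=0$ whenever $\tau(u)=\tau(v)$, so $m$ has zero diagonal and is bipartite for the coloring $\tau$; by definition of a cell $m$ is strongly connected, and by Lemma~\ref{stembridgeProp} together with the first paragraph, $m$ is diagonalizable with real spectrum and $\rho(m)<2$. The crux is to show that the multigraph carried by $m$ is a finite-type Dynkin diagram, equivalently that $C:=2I-m$ is the Cartan matrix of one. All eigenvalues of $C$ equal $2-2\cos(k\pi/p)>0$, so $C$ has strictly positive spectrum. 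One shows first that $m$ has symmetric support, $m_{ij}=0\iff m_{ji}=0$, so that $C$ is an indecomposable generalized Cartan matrix: a ``one-way'' arc would, together with strong connectivity, force an undirected cycle or an unbalanced configuration in the underlying graph, and these are excluded just as in the classical argument that graphs of spectral radius $<2$ contain no affine subdiagram — here the diagonalizability hypothesis is what rules out the exceptional non-symmetrizable nonnegative matrices of small spectral radius (they acquire Jordan blocks or non-real eigenvalues). Once $C$ is an indecomposable generalized Cartan matrix, the Perron--Frobenius eigenvector $v>0$ of $m$ satisfies $mv=\rho(m)v<2v$, i.e.\ $Cv>0$, so by Vinberg's criterion ($\cite{vinberg}$, as quoted above) $C$ is of finite type; thus $m=A_D$ for a finite-type Dynkin diagram $D$. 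Finally, by \cite{eigenvalues} and Corollary~\ref{coxeterNum} the dominant eigenvalue of $A_D$ is $2\cos(\pi/h_D)$; since $\phi_p(m)=0$ this is a root of $\phi_p$, so $2\cos(\pi/h_D)=2\cos(k\pi/p)$ for some $1\le k\le p-1$, and injectivity of $\cos$ on $(0,\pi)$ gives $\pi/h_D=k\pi/p$, i.e.\ $h_D\mid p$.

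I expect the main obstacle to be precisely this classification step in the converse: showing that a nonnegative integer, zero-diagonal, strongly connected, bipartite matrix that is diagonalizable with real spectrum and $\rho(m)<2$ must be the Coxeter adjacency matrix of a finite-type Dynkin diagram — in particular extracting symmetric support (so that $2I-m$ is genuinely a generalized Cartan matrix) from the diagonalizability and real-spectrum hypotheses, and organizing the forbidden-subconfiguration analysis in the not-necessarily-symmetric setting; one may alternatively invoke the classical classification directly. A secondary point is matching the word ``nontrivial'' in the statement with the degenerate case $D=A_1$, $h_D=2$.
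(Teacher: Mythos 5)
Your proposal follows essentially the same route as the paper: Lemma~\ref{stembridgeProp} together with the identity $\phi_p(2\cos\theta)=\sin(p\theta)/\sin\theta$ to pin down the roots $2\cos(k\pi/p)$, then Perron--Frobenius plus Vinberg's criterion applied to $2I-m$ for the cell-to-Dynkin direction, and the eigenvalues $2\cos(e_j\pi/h)$ with diagonalizability (and the exponent $e_1=1$, equivalently the dominant eigenvalue) for the divisibility and the converse. The step you single out as the main obstacle --- showing $m$ has symmetric support so that $2I-m$ is genuinely a generalized Cartan matrix before invoking Vinberg --- is not argued in the paper either, which simply asserts that $2-m$ is a Cartan matrix, so your extra caution there goes beyond rather than diverges from the paper's own proof.
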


The proof of Theorem \ref{I_2(p)-cells} is a modification of the proof in \cite{stembridge2}.

\begin{proof}
    Let $\Lambda$ be an appropriately labeled multigraph, viewed as an admissible $\{1,2\}$-labeled graph with a symmetric edge-weight marix $m$.
    From Lemma \ref{stembridgeProp}, we know $\Lambda$ is an $I_2(p)$-cell if it is strongly connected and satisfies $\phi_p(m) = 0.$
    Notice that
    \begin{align*}
        \sin(r+1)\theta &= 2\cos{\theta}\sin{r\theta} - \sin(r-1)\theta,
    \end{align*}
    which is the same recurrence as $\phi_r(t)$ if we let $t = 2\cos{\theta}$. The only difference is in the initial conditions. Where $\phi_0(t) = 0$ and $\phi_1(t) = 1$, we now have $\sin(0\cdot \theta) = 0$ and $\sin(1\cdot \theta) = \sin(\theta).$ It is straightforward to see that for all $k\in\mathbb{N}$,
    $$\sin{k\theta} = \sin{\theta}\cdot \phi_k(2\cos{\theta}).$$
    It follows that $\phi_p(2\cos{\theta}) = \frac{\sin{p\theta}}{\sin{\theta}}$ and therefore the roots of $\phi_p(t)$ are exactly
    $$2\cos\left(\frac{k\pi}{p}\right), \forall k\in \{1, \dots, p-1\}.$$
    If $\phi_p(m) = 0$, then all eigenvalues of $m$ are roots of $\phi_p$, which are strictly less than 2.
    In particular, $2-m$ is a Cartan matrix whose eigenvalues are strictly positive.

    Since $m$ is strongly connected, it is irreducible.
    By the Perron--Frobenius Theorem for irreducible nonnegative matrices, there exists a positive eigenvalue $\lambda$ with eigenvector $\vv{v}\in\mathbb{R}_{>0}^n$.
    The matrices $2-m$ and $m$ have the same eigenvectors, so $\vv{v}\in\mathbb{R}_{>0}^n$ and $(2-m)\vv{v}\in \mathbb{R}_{>0}^n$ too. 
    By Vinberg's characterization, $2-m$ is a Cartan matrix of finite type, so $m$ is the Coxeter adjacency matrix of a Dynkin diagram.

    Conversely, suppose we have a Dynkin diagram with Coxeter adjacency matrix $A$ and bipartite labeling $\tau$.
    We want to see for which Dynkin diagrams is $\Lambda = (V, A, \tau)$ an $I_2(p)$-cell.
    We already know if $\phi_p(A) = 0$, then all eigenvalues of $A$ are roots of $\phi_p$.
    Recall from \cite{eigenvalues} that the eigenvalues of $A$ are of the form $2\cos\left(\frac{e_j \pi}{h}\right)$, where the $e_j$ are exponents of $A$ and $h$ is the Coxeter number.
    In particular, $A$ has $n$ distinct eigenvalues $\lambda_1, \dots, \lambda_n$, so $A$ is diagonalizable.
    If $\lambda_i$ is a root of $\phi_p$ for every $i$, it follows that $\phi_p(A) = 0$.
    This happens exactly when $h$ divides $p$.
\end{proof}

Given the form of the nonnegative $I_2(p)$-cells, the following theorem presents the connection between nonnegative $I_2(p)\times I_2(q)$-cells and connected admissible Dynkin biagrams. 
This is a generalization of the result in \cite{stembridge} classifying the admissible $I_2(p)\times I_2(q)$-cells.
\begin{theorem}
\label{I_2(p)xI_2(q)-cells}
    The $I_2(p)\times I_2(q)$-cells with nonnegative edge weights are in correspondence with certain $\{1, 2, 3, 4\}$-labeled connected admissible Dynkin biagrams $(\Gamma, \Delta)$ such that $h_{\Gamma}$ divides $p$ and $h_{\Delta}$ divides $q$.
    In particular, each such Dynkin biagram has four distinct labelings that produce an $I_2(p)\times I_2(q)$-cell.
\end{theorem}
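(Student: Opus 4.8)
The plan is to adapt Stembridge's proof of the corresponding statement for ADE bigraphs, substituting our Theorem~\ref{I_2(p)-cells} wherever the ADE classification of $I_2(p)$-cells was used. Fix $W = I_2(p)\times I_2(q)$ with generating set $S = \{s_1,s_2,s_3,s_4\}$, where $\{s_1,s_2\}$ generate the $I_2(p)$ factor and $\{s_3,s_4\}$ generate the $I_2(q)$ factor, so the only bonds in the Coxeter diagram are $s_1\text{--}s_2$ and $s_3\text{--}s_4$. Let $\Lambda = (V,m,\tau)$ be a nontrivial $W$-cell with bipartite, nonnegative edge-weight matrix $m$; by the reduction discussed before the theorem we may assume $\Lambda$ is reduced.

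\emph{Step 1: constraining the $\tau$-invariants.} If $m_{uv}\neq 0$ and $\tau(u)\neq\tau(v)$, then reducedness forces both $\tau(u)\setminus\tau(v)$ and $\tau(v)\setminus\tau(u)$ to be nonempty, while the Compatibility Rule (Lemma~\ref{compatibilityRule}) forces every element of one to be bonded to every element of the other; since the only bonds are $s_1\text{--}s_2$ and $s_3\text{--}s_4$, this means $\tau(u)\triangle\tau(v)$ is exactly $\{1,2\}$ or exactly $\{3,4\}$. Thus adjacent vertices differ by swapping $1\leftrightarrow 2$ or $3\leftrightarrow 4$ while keeping the complementary pair fixed. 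Combined with strong connectivity and the exclusion of $\tau(v)=\emptyset$ and $\tau(v)=\{1,2,3,4\}$, this forces (exactly as in Stembridge's analysis) every $\tau(v)$ to be a transversal pair $\{i,j\}$ with $i\in\{1,2\}$, $j\in\{3,4\}$, apart from a short list of degenerate one-vertex cells (those with $\tau(v)=\{1,2\}$ or $\tau(v)=\{3,4\}$), which I would treat separately. Once the invariants are transversal, reducedness kills all edges between vertices of equal type and the Compatibility Rule kills edges between the $\{1,3\}$- and $\{2,4\}$-types and between the $\{1,4\}$- and $\{2,3\}$-types; hence every edge is either \emph{red} (joining a $\{1,\ast\}$-vertex to a $\{2,\ast\}$-vertex with the same $\{3,4\}$-part) or \emph{blue} (joining a $\{\ast,3\}$- to a $\{\ast,4\}$-vertex with the same $\{1,2\}$-part), so $m = \Gamma + \Delta$ with $\Gamma,\Delta$ of disjoint support, and the bipartition of $m$ agrees with the type partition.

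\emph{Step 2: translating the braid relations.} Writing $\tau_1(v) = \tau(v)\cap\{1,2\}$, one checks that $T_1,T_2$ act on $M_\Lambda$ exactly as they would on $M_{(V,\Gamma,\tau_1)}$ (a $\{1,\ast\}$-vertex behaves like a $\{1\}$-labelled vertex, and $T_1,T_2$ see only red edges), and $(V,\Gamma,\tau_1)$ is reduced. Hence by Lemma~\ref{stembridgeProp} the $s_1\text{--}s_2$ braid relation holds iff $\phi_p(\Gamma)=0$, which by the proof of Theorem~\ref{I_2(p)-cells} holds iff every connected component of the red graph is a Dynkin diagram of finite type whose Coxeter number divides $p$; symmetrically for the blue graph and $q$. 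For the commuting relations, expanding $T_1T_3v = T_3T_1v$ on a $\{1,3\}$-vertex $v$ collapses all terms except the length-two contributions and reduces to $\sum_w\Gamma_{vw}\Delta_{wu}=\sum_x\Delta_{vx}\Gamma_{xu}$ for all $u$, i.e.\ $(\Gamma\Delta)_{vu}=(\Delta\Gamma)_{vu}$; running this over all four commuting relations and all vertices, the commuting relations hold iff $\Gamma$ and $\Delta$ commute, i.e.\ iff $(\Gamma,\Delta)$ is an admissible Dynkin biagram. Strong connectivity of $m$ makes $(\Gamma,\Delta)$ connected, and by Corollary~\ref{coxeterNum} the red components share a common Coxeter number $h_\Gamma\mid p$ and the blue components a common $h_\Delta\mid q$. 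Conversely, given a connected admissible Dynkin biagram $(\Gamma,\Delta)$ with bipartition $\epsilon$ and $h_\Gamma\mid p$, $h_\Delta\mid q$, pick a bijection $\{\circ,\bullet\}\to\{1,2\}$ and a bijection $\{\circ,\bullet\}\to\{3,4\}$ (four choices total), set $\tau(v)$ to be the union of the two images of $\epsilon(v)$, and put the red weights $\Gamma$ on red edges and the blue weights $\Delta$ on blue edges; running Step~2 backwards shows each of the four resulting labelled graphs is an $I_2(p)\times I_2(q)$-cell, the four are pairwise distinct as labelled graphs, and Steps~1--2 show every nonnegative $I_2(p)\times I_2(q)$-cell arises this way, giving the claimed correspondence and the ``four distinct labelings'' assertion.

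I expect the main obstacle to be Step~1: showing that the $\tau$-invariant of every vertex of a nonnegative $I_2(p)\times I_2(q)$-cell is forced to be a transversal pair, and cleanly isolating the handful of degenerate one-vertex cells. This is the only part that genuinely intertwines the Compatibility Rule, reducedness, strong connectivity, and the braid relations, rather than following formally from earlier results; once it is in place, Steps~2 and~3 are Stembridge's argument with Theorem~\ref{I_2(p)-cells} and Corollary~\ref{coxeterNum} substituted for their ADE counterparts, and the passage to non–simply-laced Dynkin diagrams causes no new difficulty (the edge-weight matrix $m$ need not be symmetric, which is exactly the latitude a Coxeter adjacency matrix of $B_n$, $C_n$, $F_4$, or $G_2$ requires).
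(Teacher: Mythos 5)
Your overall route is the same as the paper's: constrain the $\tau$-invariants via reducedness and the Compatibility Rule, read off a decomposition $m=\Gamma+\Delta$ into red and blue parts, invoke Theorem~\ref{I_2(p)-cells} for the two dihedral braid relations, translate the commutation relations into $\Gamma\Delta=\Delta\Gamma$, and reverse the construction to obtain the four labelings. The gap is exactly where you predicted it, in Step~1, and your sketch of that step fails as written. Reducedness only rules out $\tau(u)\subseteq\tau(v)$ when $m_{uv}\neq 0$; it does not make $\tau(v)\setminus\tau(u)$ nonempty, so edges with $\tau(v)\subsetneq\tau(u)$ (necessarily one-directional) are permitted by both reducedness and Lemma~\ref{compatibilityRule}, and your claimed dichotomy ``adjacent vertices differ by swapping $1\leftrightarrow 2$ or $3\leftrightarrow 4$'' omits them. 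The paper's proof keeps this containment case as a third type of edge and only discards it after transversality of all $\tau$-invariants has been established.

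Moreover, the conclusion you draw in Step~1 --- every $\tau$-invariant is a transversal pair except for a short list of one-vertex cells with $\tau=\{1,2\}$ or $\{3,4\}$ --- is false. Any nonnegative $I_2(p)$-cell, with its two color classes labeled $\{1\}$ and $\{2\}$ (or $\{1,3,4\}$ and $\{2,3,4\}$), is a strongly connected nonnegative $I_2(p)\times I_2(q)$-cell of arbitrary size whose invariants are not transversal, and it is consistent with your swap condition, strong connectivity, and the exclusion of $\emptyset$ and $\{1,2,3,4\}$; so your deduction cannot go through. These cells are the outer products with a trivial cell in the other factor; the paper isolates them by showing, via the three-case edge analysis and strong connectivity, that if one vertex misses (or contains) both of $3,4$ then every vertex does. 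They are genuinely part of the stated correspondence --- they account for the biagrams in which one of $\Gamma,\Delta$ is edgeless --- and for them the ``four distinct labelings'' are $\{1\},\{2\},\{1,3,4\},\{2,3,4\}$ (resp.\ $\{3\},\{4\},\{1,2,3\},\{1,2,4\}$), not transversal pairs, so this family must also be carried through the converse. Once the containment edges are restored to the case analysis and this degenerate family is handled, your Step~2 agrees with the paper: the paper gets admissibility in the forward direction slightly more slickly, by observing that $\Gamma$ and $\Delta$ represent the commuting elements $q^{-1/2}(T_1+T_2)-(q^{1/2}-q^{-1/2})$ and $q^{-1/2}(T_3+T_4)-(q^{1/2}-q^{-1/2})$, while your direct coefficient computation is the one it uses for the converse.
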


The following proof is adapted from \cite{stembridge}.
\begin{proof}
    We will show that $I_2(p)\times I_2(q)$-cells with nonnegative edge weights will be either
    \begin{itemize}
        \item[(1)]
            outer products of nonnegative $I_2(p)$-cells with trivial $I_2(q)$-cells (or vice versa), or
        \item[(2)]
            other connected admissible Dynkin biagrams with respective Coxeter numbers $h_{\Delta} | p$ and $h_{\Gamma} | q$ and vertex labels $V(i,j) = \{v | \tau(v) = \{i+1, j+3\}\}$ forming a 2x2 partition.
    \end{itemize}

    Let $\Lambda = (V, m, \tau)$ be a nonnegative $I_2(p)\times I_2(q)$-cell. 
    Let $T_1, T_2$ be the associated generators for $I_2(p)$, and $T_3, T_4$ the generators for $I_2(q)$.
    By Lemma \ref{compatibilityRule}, if $m_{uv}\neq 0$, then for every $i\in \tau(u) - \tau(v)$ and $j\in \tau(v) - \tau(u)$, $s_i$ and $s_j$ do not commute.
    Since $\Lambda$ is reduced, $\tau(u)\not\subseteq \tau(v)$, so one of the following must hold:
    \begin{itemize}
        \item[(i)] 
            $\tau(v)$ is a proper subset of $\tau(u)$.
        \item[(ii)] 
            $\tau(u) - \tau(v) = \{1\}$ and $\tau(v) - \tau(u) = \{2\}$ (or vice versa).
        \item[(iii)] 
            $\tau(u) - \tau(v) = \{3\}$ and $\tau(v) - \tau(u) = \{4\}$ (or vice versa).
    \end{itemize}
    If $3, 4\not\in \tau(u)$, then in each of the cases above the same must be true for $\tau(v)$ and therefore all vertices, since $\Lambda$ is strongly connected.
    So, the restriction of $\Lambda$ to $I_2(q)$ results in a disjoint union of copies of a trivial $I_2(q)$-cell $\Delta$, and the restriction to $I_2(p)$ results in a single nonnegative $I_2(p)$-cell. This is the outer product of the nonnegative $I_2(p)$-cell and the trivial cell $\Delta$.
    The same applies if $1, 2\not\in \tau(u)$, leaving the outer product of a nonnegative $I_2(q)$-cell and a trivial $I_2(p)$-cell.
    
    Similarly, if $3, 4\in \tau(v)$, then the same is true for $\tau(u)$ and all other vertices, as $\Lambda$ is strongly connected.
    This leaves the outer product of a nonnegative $I_2(p)$-cell and a trivial $I_2(q)$-cell.
    The same is true if $1, 2\in \tau(v)$, leaving the outer product of a nonnegative $I_2(q)$-cell and a trivial $I_2(p)$-cell.

    From Theorem \ref{I_2(p)-cells}, each of these trivial outer products are in correspondence with Dynkin diagrams with suitable Coxeter numbers and a bipartite labeling.

    The remaining possibility is when for each vertex $w$ of $\Lambda$, $\tau(w)$ contains one element of $\{1, 2\}$ and one element of $\{3, 4\}$.
    Then there are no edges of type (i), so $\Lambda$ has no one-directional edges ($u, v$ such that $m_{uv} = 0$ and $m_{vu}\neq 0$).
    $\Lambda$ has the structure of a Dynkin biagram $(\Gamma, \Delta)$ if we let $\Gamma$ be edges of type (ii) and $\Delta$ be edges of type (iii).
    Indeed, restriction of $\Lambda$ to $I_2(p)$ or $I_2(q)$ yields a disjoint union of $I_2(p)$-cells or $I_2(q)$-cells, which by Theorem \ref{I_2(p)-cells} are Dynkin diagrams with Coxeter numbers $h_{\Gamma}$ dividing $p$ and $h_{\Delta}$ dividing $q$. 
    Moreover, $\Gamma$ and $\Delta$ share no edges and $\Lambda$ is bipartite (vertices with labeling $\{1, 3\}$ or $\{2, 4\}$ are one color, and vertices with labeling $\{1, 4\}$ or $\{2, 3\}$ are the other color).

    To show that $(\Gamma, \Delta)$ is admissible, notice that $\Gamma$ exactly represents the action of the element $q^{-1/2}(T_1 + T_2) - (q^{1/2} - q^{-1/2})$ in the $\mathcal{H}$-module $M_{\Lambda}$ on the elements $v\in V$ corresponding to the vertices of $\Lambda$.
    Similarly, $\Delta$ represents the action of the element $q^{-1/2}(T_3 + T_4) - (q^{1/2} - q^{-1/2})$ in $M_{\Lambda}$ on the elements $v\in V$.
    Since $T_1, T_2$ commute with $T_3, T_4$ in $M_{\Lambda}$, these two elements also commute, so $\Gamma\Delta = \Delta\Gamma$.
    Thus, $(\Gamma, \Delta)$ is an admissible Dynkin biagram.

    Conversely, suppose that $(\Gamma, \Delta)$ is a connected admissible Dynkin biagram such that the Coxeter numbers $h_{\Gamma}, h_{\Delta}$ divide $p$ and $q$ respectively.
    There are four potential $W$-cell structures $(V, m, \tau)$.
    First, let $V$ be the vertices in $\Gamma, \Delta$ and let $m = \Gamma + \Delta$.
    Choose a vertex $v_0$ in $(\Gamma, \Delta)$.
    If both $\Gamma$ and $\Delta$ are nontrivial, then let $v_0$ be assigned a $\tau$-labeling of $\{1, 3\}, \{2, 3\}, \{1, 4\}$ or $\{2, 4\}$.
    Then, $\tau$ is determined for all other vertices by letting $\Gamma$-edges be of type (ii) and $\Delta$-edges be of type (iii).

    If $\Delta$ is trivial, then let $v_0$ be assigned a $\tau$-labeling of $\{1\}, \{2\}, \{1, 3, 4\}$ or $\{2, 3, 4\}$.
    Then, $\tau$ is determined for all other vertices by letting all edges be of type (ii).
    Similarly if $\Gamma$ is trivial, we let $v_0$ be assigned $\{3\}, \{4\}, \{1, 2, 3\}$ or $\{1, 2, 4\}$ and let all edges be of type (iii).

    By Theorem \ref{I_2(p)-cells}, restrictions of $(\Gamma, \Delta)$ to either $\Gamma$ or $\Delta$ yields admissible $I_2(p)$-graphs or $I_2(q)$-graphs.
    To show this is a nonnegative $I_2(p)\times I_2(q)$-cell, the only remaining relation to check is that $T_1, T_2$ commutes with $T_3, T_4$.

    A straightforward calculation shows that when $\tau(u) = \{1, 3\}$ and $\tau(v) = \{2, 4\}$, the coefficient of $v$ in $q^{-1}(T_1T_3 - T_3T_1)(u)$ is the difference between the number of blue-red paths and red-blue paths from $u$ to $v$. 
    Otherwise, the coefficient vanishes. Since $(\Gamma, \Delta)$ is admissible, the coefficients always vanish, so $T_1T_3 - T_3T_1 = 0$.
    Similar arguments show that $T_1$ commutes with $T_4$ and $T_2$ commutes with $T_3, T_4$.
\end{proof}

\begin{remark}
    The proof of Theorem \ref{I_2(p)xI_2(q)-cells} is identical to Stembridge's proof (Theorem 6.5), with the exception of using Theorem \ref{I_2(p)-cells} instead of the analogous statement for admissible $I_2(p)$-cells.
    As a result, we are left with a Dynkin biagram structure, which can have nonsymmetric edge weights instead of a bigraph structure.
\end{remark}

\section{Conjectures}
\label{conjectures}
Theorem \ref{mainThm} has further implications that hint at a strong connection between root systems and bipartite cluster algebras.
Let $B$ be a Zamolodchikov periodic $B$-matrix, and let $\boldsymbol{t}_i^{\lambda}$ be its corresponding tropical $T$-system.
Recall the form for tropical mutation at index $k\in [n]$:
\begin{align*}
    \boldsymbol{t}_k^{\lambda}(t + 1) + \boldsymbol{t}_k^{\lambda}(t - 1) = \max\left(\sum_i\Gamma_{ik}\boldsymbol{t}_i^{\lambda}(t), \sum_j \Delta_{jk}\boldsymbol{t}_j^{\lambda}(t)\right).
\end{align*}
For a generic initial labeling $\lambda\in\mathbb{R}^n$, we can color this mutation either red or blue, depending on the evaluation of the max function. 
If the maximum is $\sum_i\Gamma_{ik}\boldsymbol{t}_i^{\lambda}(t)$, then we call this a $\Gamma$-mutation, and color it red.
Otherwise if the maximum is $\sum_j\Delta_{jk}\boldsymbol{t}_j^{\lambda}(t)$, then we call this a $\Delta$-mutation, and color it blue.

Recall that the number of roots in a root system is $h\cdot r$, where $h$ is the Coxeter number and $r$ is the rank.
If we color mutations for one period $h_{\Gamma} + h_{\Delta}$ in the tropical $T$-system, we observe an interesting pattern:

\begin{conjecture}
\label{conj:1}
    Let $(\Gamma, \Delta)$ be an admissible Dynkin biagram with $r$ vertices.
    The number of $\Gamma$-mutations in one period is the number of roots in the root systems associated to each $\Gamma$-component, totaling to $h_{\Gamma}\cdot r$.
    Similarly, the number of $\Delta$-mutations in one period is the total number of roots in the root systems associated to each $\Delta$-component, $h_{\Delta}\cdot r$.
\end{conjecture}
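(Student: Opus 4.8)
The plan is to reduce, via Proposition~\ref{DynkinFromADE}, to admissible ADE bigraphs and then to tensor products, where the statement becomes a root‑system count. The natural quantity to track is a per‑vertex refinement. For a Zamolodchikov periodic $B$‑matrix and a generic $\lambda\in\mathbb{R}^n$, let $r_k$ and $b_k$ be the numbers of $\Gamma$‑ and $\Delta$‑mutations occurring at vertex $k$ over one period, i.e.\ over the window of $2(h_\Gamma+h_\Delta)$ consecutive time steps; this makes sense because $h_\Gamma+h_\Delta$ is a period (remark following Theorem~\ref{5-way}) and $h_\Gamma,h_\Delta$ are well defined (Corollary~\ref{coxeterNum}). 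Vertex $k$ mutates at exactly half of these time steps, so $r_k+b_k=h_\Gamma+h_\Delta$, and the conjecture is precisely $\sum_k r_k=h_\Gamma\cdot r$ (equality with the sum of the sizes of the $\Gamma$‑component root systems being automatic from Corollary~\ref{coxeterNum}). It thus suffices to prove the stronger
$$r_k=h_\Gamma\qquad\text{for every }k\in[n]\text{ and every generic }\lambda .$$

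I would first show this refined statement is preserved by the two operations of Proposition~\ref{DynkinFromADE}. Under a global flip, Lemma~\ref{transposeMutation} and Proposition~\ref{flipsCommute} intertwine the tropical $T$‑systems of $(\Gamma,\Delta)$ and $(\Gamma^\top,\Delta^\top)$ after the rescaling $\lambda_i\mapsto\lambda_i/c_i$; since both arguments of each $\max$ are scaled by the \emph{same} positive factor $c_k$, the $\Gamma$/$\Delta$ colour of every mutation at every vertex and time is unchanged, and the Langlands dual preserves Coxeter numbers, so $r_k$ is unchanged. Under a fold along a bicolored automorphism $f$, the computation in the proof of Lemma~\ref{foldingMutation} shows that, for an $f$‑symmetric state, the two $\max$‑arguments at a vertex $k$ agree with those of its $f$‑orbit $K$ in $f(B)$; hence the mutations at all $k\in K$ have the same colour, equal to that of the orbit‑$K$ mutation in $f(B)$. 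As folding preserves Coxeter numbers, the per‑orbit count for $f(B)$ equals the common per‑vertex count over that orbit for $B$, so the refined statement descends from $B$ to $f(B)$ (taking the lifted labeling generic among $f$‑symmetric labelings, which suffices). It then remains to prove the refined statement for admissible ADE bigraphs.

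Among ADE bigraphs, periodicity of the twists $\Gamma\times\Gamma$, the bindings $A_{2n-1}\ast D_{n+1}$, $E_6\ast E_6$, $D_6\ast D_6$, $E_8\ast E_8$, and the exceptionals $D_5\boxtimes A_7$, $E_7\boxtimes D_{10}$ is deduced in~\cite{pasha} from tensor products through explicit identifications, and one would carry the $\Gamma$/$\Delta$ labels along those identifications; so the heart of the matter is $\Gamma\otimes\Delta$. For such a product the refined claim says each vertex undergoes $h_\Gamma$ red and $h_\Delta$ blue mutations per period, so the total $\Gamma$‑count is $|\Phi(\Gamma)|$ times the number of $\Delta$‑vertices --- one copy of the $\Gamma$‑root system for each $\Gamma$‑component of $\Gamma\otimes\Delta$. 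I would establish this via the known bijective description of the piecewise‑linear $T$‑ and $Y$‑system dynamics of a tensor product in terms of the root system, going back to Fomin--Zelevinsky~\cite{fomin3}, Keller~\cite{keller}, and the $Q/T$‑system machinery of Kuniba--Nakanishi--Suzuki~\cite{kuniba2}: each mutation carries a root label, the $\Gamma$‑mutations are those whose label lies in the $\Gamma$‑part of the root system, and over one period of length $h_\Gamma+h_\Delta$ these enumerate each $\Gamma$‑copy of $\Phi(\Gamma)$ exactly once.

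The main obstacle is this last step: making the root labelling of tropical $T$‑system mutations for $\Gamma\otimes\Delta$ precise and proving the per‑vertex counts are independent of the choice of generic $\lambda$. The colour of a mutation records which argument of the $\max$ is larger, and a priori this could change as $\lambda$ ranges over the (open but possibly disconnected) generic locus; one expects the counts to be locally and then globally constant, most plausibly through a conservation‑law argument obtained by summing the tropical mutation relation over one period and using $\max(p,q)=\tfrac12(p+q)+\tfrac12|p-q|$. A second subtlety is that $h_\Gamma+h_\Delta$ need not be the minimal period, so the count must be taken over this fixed window and shown compatible with the root count. Given the cited lemmas the reduction steps above should be routine; essentially all the difficulty is concentrated in the tensor‑product base case and in this $\lambda$‑independence.
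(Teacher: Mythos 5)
This statement is a conjecture in the paper: the paper gives no proof at all, only Sage verification on small cases, so there is no argument of the paper to compare yours against and your sketch has to stand on its own. Its skeleton is reasonable. The colour-preservation under global flips is correct as you argue it: in Lemma~\ref{transposeMutation} both arguments of the $\max$ at vertex $k$ are scaled by the same $c_k$, so which side wins is unchanged, and Langlands duality preserves Coxeter numbers. Likewise Lemma~\ref{foldingMutation} does show that for $f$-symmetric states the two $\max$-arguments at $k\in K$ coincide with those at the orbit $K$ in $f(B)$, so colours agree along orbits. Your per-vertex refinement $r_k=h_\Gamma$ is strictly stronger than the conjecture but is at least consistent with the paper's $A_2\otimes A_3$ example, and the bookkeeping $\sum_i h_\Gamma\ell_i=h_\Gamma\cdot r$ via Corollary~\ref{coxeterNum} is fine.

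However, as you yourself concede, the proposal is not a proof, and the gaps are exactly where the content of the conjecture lives. First, the tensor-product base case is only asserted: none of \cite{fomin3}, \cite{keller}, \cite{kuniba2} hands you a root-labelling of \emph{which argument of the tropical max wins} at each vertex and each time for an arbitrary generic $\lambda$; constructing such a labelling (or any other mechanism counting $h_\Gamma$ red wins per vertex) is the theorem to be proved, not a citation. Second, $\lambda$-independence is never established, and it is needed twice: to make the counts $r_k$ well defined on the generic locus at all, and in the folding descent, where the lifted labeling is $f$-symmetric and hence lies in a proper subspace of $\mathbb{R}^n$ --- "generic in $\mathbb{R}^n$" does not formally imply anything about generic points of that subspace unless you control the exceptional locus (e.g.\ show the counts are constant off a finite union of hyperplanes none of which contains the fixed subspace); your proposed conservation-law argument via $\max(p,q)=\tfrac12(p+q)+\tfrac12|p-q|$ is only a hope at this stage. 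Third, the transfer from tensor products to twists and the exceptional ADE families "through the identifications in \cite{pasha}" is another gap: those periodicity arguments are not mere relabelings of tensor-product dynamics, so carrying the red/blue census through them requires a genuine argument, possibly family by family. So the strategy is promising and candid about its holes, but all three load-bearing steps remain open, and the conjecture is not settled by this sketch.
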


This conjecture has been verified by SageMath on all non-ADE families in Theorem \ref{classThm} for small $n$, as well as all infinite ADE families for $n\leq 10$.
\begin{example}
    Let us consider the Dynkin biagram given by $A_2\otimes A_3$, with $\Gamma$ consisting of two $A_3$ components, and $\Delta$ consisting of three $A_2$ components.
    The Coxeter numbers of each are $h_{\Gamma} = 4$ and $h_{\Delta} = 3$, respectively.
    So, the length of one period is $h_{\Gamma} + h_{\Delta} = 7$.
    In the tropical $T$-system, this corresponds to iterating $t$ by 14 time steps, as each time step only mutates half of the vertices.
    
    Suppose our initial labeling is $\lambda = [2, 0, -0.9, 1, -1, 4]\in\mathbb{R}^6$.
    In Table \ref{table:evolution}, we can see the evolution of the $T$-system $\boldsymbol{t}^{\lambda}(t)$ over half a period of mutations.
    In this case, after half a period we reach the same state as $t = 0$, but with the rows flipped.
    The second half of the period behaves identically.
    The total number of $\Gamma$-mutations in one period is $6h_{\Gamma} = 24$, which is two times the number of roots in the $A_3$ root system.
    Similarly, the total number of $\Delta$-mutations in one period is $6h_{\Delta} = 18$.
 \begin{table} 
  \begin{tabular}{|c|c|c|c|c|c|c|c|c|}\hline
      $t$&
      $0$&
      $1$&
      $2$&
      $3$&
      $4$&
      $5$&
      $6$&
      $7$\\\hline
     $\boldsymbol{t}^{\lambda}(t)$&
     \sm{2}{0}{-0.9}{1}{-1}{4}&
     \sm{\textcolor{blue}{\textbf{-1}}}{}{\textcolor{blue}{\textbf{4.9}}}{}{\textcolor{red}{\textbf{6}}}{}&
     \sm{}{\textcolor{blue}{\textbf{6}}}{}{\textcolor{red}{\textbf{5}}}{}{\textcolor{red}{\textbf{2}}}&
     \sm{\textcolor{red}{\textbf{7}}}{}{\textcolor{red}{\textbf{1.1}}}{}{\textcolor{red}{\textbf{1}}}{}&
     \sm{}{\textcolor{red}{\textbf{2.1}}}{}{\textcolor{blue}{\textbf{2}}}{}{\textcolor{blue}{\textbf{-0.9}}}&
     \sm{\textcolor{red}{\textbf{-4.9}}}{}{\textcolor{red}{\textbf{1}}}{}{\textcolor{blue}{\textbf{1.1}}}{}&
     \sm{}{\textcolor{blue}{\textbf{-1}}}{}{\textcolor{red}{\textbf{-0.9}}}{}{\textcolor{red}{\textbf{2}}}&
     \sm{\textcolor{blue}{\textbf{4}}}{}{\textcolor{blue}{\textbf{1}}}{}{\textcolor{red}{\textbf{0}}}{}
     \\\hline
   \end{tabular}
   \caption{\label{table:evolution}The evolution of the tropical $T$-system of type $A_2\otimes A_3$ for one half period. The red boldface numbers are the $\Gamma$-mutations, while the blue boldface numbers are the $\Delta$-mutations.}
 \end{table}
\end{example}

In fact, one can visualize the space of all initial labelings $\lambda\in\mathbb{R}^r$ as partitioned into chambers based on the sequence of colored mutations $\lambda$ produces in one period.
Going from one chamber to another involves crossing a linear wall which swaps some number of red and blue mutations in the period.
Let $\Lambda$ be a Dynkin diagram.
For the small case of $\Lambda\otimes A_1$, we observed the following, implying a potential bijection between chambers and clusters.

\begin{conjecture}
    Let $n \geq 2$. 
    Let $\Lambda \in \{A_{2n}, B_{2n}, C_{2n}, E_n, F_4, G_2\}$ be a Dynkin diagram of rank $k$.
    Consider the Dynkin biagram $\Lambda\otimes A_1$.
    The space of initial labelings $\mathbb{R}^{k}$ is partitioned into $c_{\Lambda}$ distinct chambers, where $c_{\Lambda}$ is the number of seeds in the cluster algebra of type $\Lambda$.
\end{conjecture}

The following conjecture generalizes Conjecture \ref{conj:1}, and applies to affine $\boxtimes$ finite ADE bigraphs as defined in \cite{subadd}.
\begin{conjecture}
    Let $(\Gamma, \Delta)$ be an admissible biagram with $r$ vertices where the components of $\Gamma$ are affine Dynkin diagrams and the components of $\Delta$ are Dynkin diagrams. 
    The number of $\Delta$-mutations is bounded above by the total number of $\Delta$-roots, $h_{\Delta}\cdot r$.
\end{conjecture}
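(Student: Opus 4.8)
The plan is to follow the template of Theorem~\ref{5-way}: reduce to the ADE case by global flips and foldings, and then settle the ADE base case directly. Concretely, one first generalizes Proposition~\ref{DynkinFromADE} to show that every admissible Dynkin biagram whose $\Gamma$-components are affine and whose $\Delta$-components are finite is obtained from an affine~$\boxtimes$~finite ADE bigraph (classified in \cite{subadd}) by a sequence of global flips and foldings along bicolored automorphisms, and then checks that both operations keep the number of $\Delta$-mutations under control. For the folding step to go through smoothly it is natural to aim instead for the stronger, \emph{per-vertex} statement: for a generic initial labeling $\lambda$, every vertex $k$ undergoes at most $h_\Delta$ $\Delta$-mutations over all $t\in\mathbb Z$. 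Summing over the $r$ vertices recovers the bound $h_\Delta\cdot r$, and the per-vertex form transfers immediately under the two operations: a vertex of $f(B)$ is an orbit $K$, and by Lemma~\ref{foldingMutation} a $\Delta$-mutation at $K$ forces simultaneous $\Delta$-mutations at every $i\in K$, so $K$ inherits the bound from any of its members; and by Lemma~\ref{transposeMutation} a global flip scales both arguments of the $\max$ at each vertex by the same positive constant, hence preserves the color of every mutation, with $h_{\Delta^\top}=h_\Delta$ and $r$ unchanged.

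For the ADE base case I would fix a generic $\lambda$ and study the piecewise-linear trajectory $\boldsymbol t^\lambda(t)$. Since each $\Gamma$-component is affine, $C_\Gamma=2I-\Gamma$ is positive semidefinite with a one-dimensional kernel spanned by a positive imaginary root, equivalently the Perron eigenvalue of $\Gamma$ is $2$, so the trajectory grows linearly in $|t|$ along the imaginary roots and, modulo their span, is eventually governed by the positive-definite $\Delta$-dynamics. The key device is a monovariant: from the positive-definite Cartan form $C_\Delta$ one builds a piecewise-quadratic ``Coxeter energy'' that is exactly preserved across every $\Delta$-mutation step---there the recurrence is the bipartite Coxeter recurrence for $C_\Delta$, which is $h_\Delta$-periodic in the linear regime---and strictly monotone across every $\Gamma$-mutation step, the affine growth making the monotonicity genuine for $|t|$ large. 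This confines all $\Delta$-mutations at a fixed vertex to a bounded window in time, inside which the $\Delta$-mutations realize honest reflections of the $\Delta$-Weyl group on the (finite) $\Delta$ root system; counting how often a wall can be crossed as $t\to\pm\infty$ then yields the per-vertex bound $h_\Delta$.

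The hard part will be making this monovariant rigorous in the tropical setting. The quadratic invariants conserved by the \emph{linear} bipartite Coxeter recurrence for $C_\Delta$ are only \emph{piecewise} conserved once the $\max$ switches between the $\Gamma$- and $\Delta$-branches, so the argument hinges on showing that the affine $\Gamma$-growth ultimately dominates the $\Delta$-contribution in a uniform, monotone way---in effect, that the component of $\boldsymbol t^\lambda(t)$ along the imaginary root is eventually strictly monotone with a uniform gap. A secondary difficulty is the interface between a $\Gamma$-component and a $\Delta$-component: a vertex lying in both an affine $\Gamma$-component and a finite $\Delta$-component must be treated so that the $\Delta$-reflection picture is not corrupted by the affine directions, which seems to require precisely the mixed affine-finite analysis of \cite{subadd}. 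Once the per-vertex bound is established for affine~$\boxtimes$~finite ADE bigraphs, the flip/fold propagation described above completes the proof.
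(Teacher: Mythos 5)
The statement you are trying to prove is not a theorem of the paper: it is stated as a conjecture, and the paper offers no proof at all, only Sage verification on the 19 affine~$\boxtimes$~finite ADE families of \cite{subadd} for small $n$. So there is no argument of the paper to compare yours against, and your text should be judged as a research plan rather than a proof. As a plan it is reasonable in outline (it mirrors the flip/fold reduction used for Theorem~\ref{5-way}), but it does not constitute a proof, and you yourself flag the decisive gap: the ``Coxeter energy'' monovariant is only heuristic. The quadratic invariants of the linear bipartite Coxeter recurrence for $C_\Delta$ are conserved only while the $\max$ selects the $\Delta$-branch at \emph{all} relevant vertices simultaneously; at a vertex sitting in an affine $\Gamma$-component and a finite $\Delta$-component the branches interleave, and nothing in the paper (or in your sketch) shows that the component along the imaginary root is eventually strictly monotone with a uniform gap, which is exactly what your confinement-to-a-bounded-window argument needs. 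Without that, even the finiteness of the number of $\Delta$-mutations is unproven, let alone the bound $h_\Delta\cdot r$ or your stronger per-vertex bound $h_\Delta$, which is itself an unverified strengthening of the conjecture.

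Two further steps are asserted but not available. First, the reduction to the ADE case presupposes an affine analogue of Proposition~\ref{DynkinFromADE}, i.e.\ that every admissible affine~$\boxtimes$~finite biagram arises from an affine~$\boxtimes$~finite ADE bigraph by global flips and foldings; the paper proves this only for finite-type biagrams, via the classification of Theorem~\ref{classThm}, and no classification of the affine~$\boxtimes$~finite non-simply-laced case is established here or cited. Second, the folding transfer has a genericity gap: your per-vertex bound is asserted for \emph{generic} initial labelings, but Lemma~\ref{foldingMutation} only lets you compare the folded system with $f$-symmetric labelings upstairs, which form a measure-zero set on which the upstairs bound is not known (and on which the color of a mutation may even be ambiguous because of ties in the $\max$). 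You would need the bound for all labelings upstairs, or a limiting/perturbation argument showing the count of $\Delta$-mutations is upper semicontinuous in $\lambda$, before the fold step ``transfers immediately.'' The global-flip step via Lemma~\ref{transposeMutation} is fine. In short: the approach is a plausible program, but the base case and the folding reduction both rest on claims that are currently conjectural, so the statement remains open after your argument.
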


This conjecture has also been verified by SageMath on all 19 families in the classification from \cite{subadd} of affine $\boxtimes$ finite ADE bigraphs for $n\leq 5$.

\bibliographystyle{alpha}
\bibliography{biblio}

\end{document}